    \theoremstyle{plain}
\newtheorem{theorem}{Theorem}[section]
\newtheorem{lemma}[theorem]{Lemma}
\newtheorem{proposition}[theorem]{Proposition}
\newtheorem{corollary}[theorem]{Corollary}
\newtheorem{definition}[theorem]{Definition}
\newtheorem{Alocal}[theorem]{The $\A$-Local Structure Theorem}
\newtheorem{AVeech}[theorem]{$\A$-Veech's Theorem}
\theoremstyle{definition}
\newtheorem{example}[theorem]{Example}
\newtheorem{remark}[theorem]{Remark}
\newtheorem{remarks}[theorem]{Remarks}
\def\<#1>{\langle\, #1\,\rangle}
\newcommand{\squ}{\raisebox{.20mm}{\scalebox{0.5}{$\square$\,}}}
\newcommand\restr[2]{{% we make the whole thing an ordinary symbol
  \left.\kern-\nulldelimiterspace % automatically resize the bar with \right
  #1 % the function
  \vphantom{|} % pretend it's a little taller at normal size
  \right|_{_{#2}} % this is the delimiter
  }}
\newcommand{\I}{\mathscr{I}}
\newcommand{\R}{\mathbb{R}}
\newcommand{\C}{\mathbb{C}}
\newcommand{\T}{\mathbb{T}}
\newcommand{\N}{\mathbb{N}}
\newcommand{\B}{\mathscr{B}}
\newcommand{\A}{\mathscr{A}}
\newcommand{\Z}{\mathbb{Z}}
\DeclareMathOperator{\supp}{supp}
\newcommand{\cnaught}{{\mathit{C}_0(G)}}
\newcommand{\luc}{\mathit{LUC}(G)}
\renewcommand{\emptyset}{\varnothing}
\font\seis=cmr6
\def\CB{\mathscr{CB}}
\def\ruc{{\seis{\mathscr{RUC}}}}
\def\uc{\mathscr{UC}}
\def\luc{{\seis{\mathscr{LUC}}}}
\def\wap{{\seis{\mathscr{WAP}}}}
\def\lc{{\seis{\mathscr{LC}}}}
\def\B{{\mathscr{B}}}
\def\ap{{\seis{\mathscr{AP}}}}
\def\lwap{{\seis{\luc_\ast}}}
\begin{document}

%\nocite{*}

\title[Pym's and Veech's Theorems]{Algebraic structure of semigroup compactifications: Pym's and Veech's Theorems and  strongly prime points
%\newline OR \newline
%Pym's and Veech's Theorems on semigroup compactifications, with applications to strongly prime points
}

\author[Filali and Galindo]{M. Filali \and  J. Galindo}
\keywords{semigroup compactifications, approximable interpolation sets,  strongly prime points, Local Structure Theorem, Veech's property, invariant mean}

\thanks{Research of the second  author  partially supported by
%Universitat Jaume I, grant
%P1$\cdot$B2014-35 and by
 Spanish MICINN $\mbox{ grant MTM2016-77143-P}$}
\address{\noindent Mahmoud Filali,
Department of Mathematical Sciences\\University of Oulu\\Oulu,
Finland. \hfill\break \noindent E-mail: {\tt mfilali@cc.oulu.fi}}
\address{\noindent Jorge Galindo, Instituto Universitario de Matem\'aticas y
Aplicaciones (IMAC)\\\\ Universidad Jaume I, E-12071, Cas\-tell\'on,
Spain. \hfill\break \noindent E-mail: {\tt jgalindo@mat.uji.es}}

\subjclass[2010]{Primary 22D15, 43A46,  43A60; Secondary  54H11, 54H20}

\date{\today}

%\end{document}

\begin{abstract}
The spectrum of an admissible subalgebra $\A(G)$ of  $\luc(G)$, the algebra of  right uniformly continuous functions on a locally compact group $G$, constitutes a semigroup compactification $G^\A$ of $G$. In this paper we analyze the algebraic behaviour of those  points  of $G^\A$ that  lie in the closure of $\A(G)$-sets, sets
whose  characteristic function can be approximated  by functions in $\A(G)$.

This analysis  provides a common ground for far reaching generalizations of  Veech's property (the action of $G$ on $G^\luc$ is free)  and Pym's Local Structure Theorem.  This approach is  linked to  the concept of translation-compact set,  recently developed by the authors, and leads to characterizations of stronlgy prime points in $G^\A$,
 points that do not belong to the closure of $G^*G^*$, where $G^\ast=G^\A\setminus G$. All these results will be applied to show that, in many of  the most important algebras,  left invariant means  of $\A(G)$ (when such means are present) are supported in the closure of $G^*G^*$.
\end{abstract}

\maketitle

\section{Introduction}
Let $\A(G)$ be a $C^\ast$-algebra of complex-valued
functions on a locally compact group $G$.
  Under mild assumptions (usually summarized with  the term \emph{admissible}, see Section \ref{prelim}) the spectrum of $\A(G)$, here denoted as $G^\A$ to emphasize its role as a compactification of $G$,  can  be made into a right topological semigroup. The  Gelfand map becomes then a semigroup isomorphism of $G$ onto a dense subgroup of $G^\A$.

  The algebraic structure of  the semigroup $G^\A$ is rather involved, as algebraic properties of $G^\A$ are  usually deeply woven into the properties of $\A(G)$ and  the combinatorics of $G.$
In this paper we address some of these properties. Our starting point are two  of  maybe the most important theorems on the  algebraic structure of $G^\luc$:  Veech's property and Pym's Local Structure Theorem.

\begin{theorem}[Veech's Theorem, \cite{V}]\label{veech}
Let $G$ be a locally compact group. Then $sx\neq x$ for every $s\in G$, $s\neq e$ and every $x\in G^\luc$.
\end{theorem}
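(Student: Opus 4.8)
The plan is to reduce the statement to a separation problem and then to build, by hand, a function in $\luc(G)$ that detects the translation by $s$. Since $\luc(G)$ is precisely the algebra whose spectrum is $G^{\luc}$, the inequality $sx\neq x$ is equivalent to the existence of some $f\in\luc(G)$ with $\widehat f(sx)\neq\widehat f(x)$. Writing $x=\lim_\alpha g_\alpha$ as the limit of a net in $G$ (possible as $G$ is dense in $G^{\luc}$) and using that $G$ lies in the topological centre, so that $y\mapsto sy$ is continuous, one gets $sx=\lim_\alpha sg_\alpha$; hence $\widehat f(sx)=\lim_\alpha f(sg_\alpha)$ and $\widehat f(x)=\lim_\alpha f(g_\alpha)$. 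It therefore suffices to produce $f\in\luc(G)$ with $\lim_\alpha f(sg_\alpha)\neq\lim_\alpha f(g_\alpha)$, and the cleanest target is a single $f$ with a uniform gap $\inf_{g\in G}\card{f(sg)-f(g)}\geq\delta>0$, which would settle every $x$ at once.

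The mechanism is transparent when $G$ carries a character $\chi$ with $\chi(s)\neq 1$: characters are right uniformly continuous, $\widehat\chi(sx)=\chi(s)\,\widehat\chi(x)$, and $\card{\widehat\chi(x)}=\lim_\alpha\card{\chi(g_\alpha)}=1\neq 0$, so $\widehat\chi(sx)\neq\widehat\chi(x)$ with gap $\card{\chi(s)-1}$. Applied to matrix coefficients of finite-dimensional unitary representations, the same idea disposes of every maximally almost periodic $G$. This special case also shows that a \emph{real}-valued gap is the wrong object: on $\R$ no bounded continuous $f$ can satisfy $\inf_x\card{f(x+1)-f(x)}>0$, since the difference would keep a constant sign and force $f$ to be unbounded. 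One must allow complex, circle-valued oscillation.

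For a general $G$ — in particular a minimally almost periodic one, where no such $\chi$ exists — I would construct $f$ geometrically. As $G$ is Hausdorff, $\bigcap_V VV\inv=\single{e}$, so I can choose a relatively compact symmetric neighbourhood $V$ of $e$ with $sV^2\cap V^2=\emptyset$, giving a buffer between each cell and its $s$-translate. By Zorn's lemma take a maximal $V$-separated set $D\sub G$, so that the cells $\set{dV}{d\in D}$ are pairwise disjoint while $\set{dV^2}{d\in D}$ cover $G$, and fix a uniformly continuous partition of unity subordinate to this cover. Left translation by $s$ is a fixed-point-free homeomorphism of $G$, and it organises the cells into the orbits of an ``$s$-shift'' graph on $D$ (joining $d$ to those $d'$ with $sdV\cap d'V\neq\emptyset$), whose components are lines and cycles. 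I would colour these orbits cyclically by roots of unity and interpolate the colours along short arcs of the circle using the partition of unity, obtaining $f\in\luc(G)$ whose value is multiplied by a fixed root of unity $\omega\neq 1$ as $g$ passes to $sg$; a uniform gain $\inf_g\card{f(sg)-f(g)}\geq\card{\omega-1}\cdot\inf_g\card{f(g)}>0$ then follows as long as only consecutive colours are ever combined.

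The hard part is the coherence of this colouring-and-smoothing. One must simultaneously keep $f$ uniformly continuous (so that it genuinely lies in $\luc(G)$), keep the interpolated values bounded away from $0$ at every cell boundary (which is exactly what forces the \emph{cyclic} rather than linear colour scheme, together with the buffer $sV^2\cap V^2=\emptyset$), and control the orbit structure of the $s$-shift when $\langle s\rangle$ fails to be discrete, so that finitely many colours and a consistent arc-interpolation remain available. Arranging these three conditions at once — uniform continuity, nonvanishing at boundaries, and global consistency of the orbit colouring — is the technical heart of the argument; everything preceding it is the routine reduction to the production of one separating function.
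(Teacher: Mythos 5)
Your opening reduction is sound: a single $f\in\luc(G)$ with $\inf_{g\in G}\vert f(sg)-f(g)\vert\geq\delta>0$ would indeed settle every $x\in G^\luc$ at once, and the character argument does dispose of maximally almost periodic groups. But the construction you propose for general $G$ rests on a false structural claim. If $D$ is a \emph{maximal} $V$-separated set, the cells have diameter comparable to their mutual separation, so a translated cell can straddle several cells: already in $G=\R$ with $V=(-1/8,1/8)$ and $s=1$, a maximal separated set may contain both $d+7/8$ and $d+9/8$, and then $s\,dV$ meets the cells of both. Hence the relation ``$d\to d'$ iff $sdV\cap d'V\neq\emptyset$'' is not functional, vertices have degree greater than two, and the components are \emph{not} lines and cycles. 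Consequently the scheme ``multiply the colour by a fixed $\omega$ along each edge'' has no consistent solution: it forces every closed walk to have length divisible by the order of $\omega$, and precisely in the case you flag — $\langle s\rangle$ non-discrete, so that $\overline{\langle s\rangle}$ is compact and powers of $s$ recur near $e$ — closed walks of incompatible lengths pass through every cell. So the ``technical heart'' you defer is not a technicality; it is the entire proof, and as sketched it fails.

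The paper's argument shows exactly what repairs this, and it is a genuinely different route. Functionality of the shift is recovered by decoupling the two scales you merged: take $T$ right $U$-uniformly discrete (\emph{not} maximal, hence not covering), and choose $V\subseteq U$ symmetric with $s\notin V$ and $s^{-1}V^2s\subseteq U$. Then for $t\in T_0=\{t\in T: st\in VT\}$ the equation $st=v_tf(t)$ determines $f(t)\in T$ uniquely, and $f\colon T_0\to T$ is an injective, fixed-point-free map. The Three Sets Lemma (Lemma \ref{3setslemma}) is then the correct replacement for your colouring: it is a \emph{proper} three-colouring, $f(T_i\cap T_0)\cap T_i=\emptyset$, which only requires the two endpoints of each edge to receive different colours — no cyclic shift along edges, hence no consistency problem on closed walks. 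The $\luc(G)$-set property of $T$ (every right uniformly discrete set is one) then yields a function separating $sx$ from $x$ for each $x\in\overline{T}$, and coverage of all of $G^\luc$ is obtained point-by-point via Pym's conjugation trick in Corollary \ref{Veech}, rather than by one global function. Note finally that your target is strictly stronger than the theorem: Veech's theorem plus compactness of $G^\luc$ only yields a \emph{finite family} $f_1,\dots,f_n$ with $\max_i\vert f_i(sg)-f_i(g)\vert\geq\delta$; whether a single scalar $f$ with a uniform gap exists for every locally compact $G$ is not addressed by your sketch and cannot serve as the engine of the proof.
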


\begin{theorem}[Pym's Local Structure Theorem \cite{Pym}]\label{LST}
Let U be an open symmetric neighbourdood of $e$ with compact closure in $G$. Let $X \subseteq G $ be maximal with respect to the
property that $\{Ux\colon x\in X\}$ is a disjoint family. Then $\overline{X}$ (the closure in $G^\luc$) is
homeomorphic with the Stone-\v Cech compactification $\beta X$ of $X$     and for each open neighbourhood $V$ of $e$ in $ G$ with $\overline V \subseteq  U$,
the subspace $V\overline{X}$ is open in $G^\luc$ and homeomorphic with $V \times \beta X$.  Moreover, given
any $x\in G^\luc$,  $U $ and $X $ can be chosen so that $x\in    \overline{X}$.
\end{theorem}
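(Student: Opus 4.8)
The plan is to prove the three assertions in order, using two standard facts about $G^\luc$: that $G$ acts on $G^\luc$ by the jointly continuous left action $(g,p)\mapsto gp$, and that $\luc(G)$ is precisely the algebra whose Gelfand spectrum is $G^\luc$. First I record the combinatorial content of the hypotheses. Since $U$ is symmetric, $Ux\cap Ux'=\emptyset$ is equivalent to $x'x^{-1}\notin U^2$, so $X$ is $U^2$-separated, and maximality gives $G=\bigcup_{x\in X}U^2x$. Because $\overline V\subseteq U$, the cells $\{Vx:x\in X\}$ are pairwise disjoint and $(v,x)\mapsto vx$ is a homeomorphism of $V\times X$ onto the open set $VX\subseteq G$.

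The technical heart is one extension construction. Fix a continuous $\psi$ with $\supp\psi\subseteq U$ and, for bounded $h\colon X\to\C$, set $\Psi_h(g)=\sum_{x\in X}h(x)\,\psi(gx^{-1})$. Since the translates $\psi(\cdot\,x^{-1})$ live in the disjoint cells $Ux$, at most one term is nonzero at each $g$, so $\Psi_h$ is well defined with $\|\Psi_h\|_\infty\le\|h\|_\infty\|\psi\|_\infty$; applying the same disjointness to the difference of $\psi$ and a small translate (whose support still lies in $U$) bounds the modulus of uniform continuity of $\Psi_h$ by that of $\psi$, so $\Psi_h\in\luc(G)$. Evaluating with a fixed bump $\phi$, $\phi(e)=1$, at the identity shows that $\widehat{\Phi_h}$ restricts on $\overline X$ to a continuous extension $\bar h$ of $h$; running the same computation for general $\psi$ and $v\in V$, and using the $U^2$-separation to kill every term but $x=x_\alpha$ along a net $x_\alpha\to p$ (joint continuity legitimizing the limit), yields the key identity $\widehat{\Psi_h}(vp)=\bar h(p)\,\psi(v)$ for all $v\in V$, $p\in\overline X$. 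Since every bounded $h$ extends continuously to $\overline X$ and the bumps separate the (now isolated) points of $X$, the universal property of $\beta X$ gives $\overline X\cong\beta X$.

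For the product structure I consider $\mu\colon V\times\overline X\to G^\luc$, $(v,p)\mapsto vp$, continuous by joint continuity. Injectivity follows from the identity: if $v_1p_1=v_2p_2$, then $\psi(v_1)=\widehat{\Psi_1}(v_1p_1)=\widehat{\Psi_1}(v_2p_2)=\psi(v_2)$ for every continuous $\psi$ supported in $U$, so $v_1=v_2$ (such functions separate points of $U$), and cancelling the group element $v_1$ gives $p_1=p_2$. For openness, given $v_0p_0$ pick $\psi\ge0$ with $\supp\psi\subseteq V$ and $\psi(v_0)>0$; then $\{\widehat{\Psi_1}>0\}$ is an open neighbourhood of $v_0p_0$, and each of its points is a limit of a net $v_\alpha x_\alpha$ with $v_\alpha$ in the compact set $\supp\psi$, so a subnet converges to some $vp\in V\overline X$, whence $\{\widehat{\Psi_1}>0\}\subseteq V\overline X$ and $V\overline X$ is open. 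Finally $\mu^{-1}$ is continuous: $q\mapsto v(q)$ is continuous because $v(q)$ ranges in the compact set $\overline V$ and is pinned down by the continuous data $\psi(v(q))=\widehat{\Psi_1}(q)$, after which $q\mapsto p(q)=v(q)^{-1}q$ is continuous by joint continuity. Composing with $\overline X\cong\beta X$ gives $V\overline X\cong V\times\beta X$.

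For the last assertion, given $q\in G^\luc$ (the point called $x$ in the statement) I must produce a maximal $U$-separated $X$ with $q\in\overline X$. Since enlarging a $U$-separated set to a maximal one (Zorn) only enlarges its closure, it suffices to find a $U$-separated $Y$ meeting the trace $N\cap G$ of every neighbourhood $N$ of $q$. If $q\in G$ this is trivial. If $q\in G^\ast$, then $G$ being open forces every such trace to escape each compact set, so along a net $g_\alpha\to q$ I select points by transfinite recursion, discarding at each stage the compact shadow $U^2y$ around the points already chosen; the escaping property guarantees that every neighbourhood of $q$ still contains an admissible point after these exclusions, so the selected $U$-separated set still clusters at $q$. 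I expect this final step to be the main obstacle: keeping the selected set simultaneously $U$-separated and clustering at the prescribed $q\in G^\ast$ requires a careful choice and is the only place where the fine interplay between the uniformity $U$ and the point $q$ is genuinely used, whereas the joint-continuity inputs to the earlier steps, though essential, are standard for $G^\luc$.
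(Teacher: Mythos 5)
Your first two steps are essentially correct and run parallel to the paper's own route. Your weighted bump sums $\Psi_h=\sum_{x\in X}h(x)\,\psi(\cdot\,x^{-1})$ are precisely the functions $1_{T_1,\psi}$ through which the paper (via \cite[Lemma 4.6]{FG1}) shows that right uniformly discrete sets are $\luc(G)$-sets; from there the identification $\overline X\cong\beta X$, the openness of $V\overline X$, and the homeomorphism $V\overline X\cong V\times\beta X$ are obtained in Theorem \ref{openlocals} and the $\A$-Local Structure Theorem \ref{locals} by the same mechanisms you use: disjointness of the cells $Ux$, joint continuity of $G\times G^\luc\to G^\luc$ on compact subsets of $G$, and the fact that the interpolation property forces $\overline X=\beta X$. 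So, up to and including the product structure, your argument is sound (modulo the typo $\Phi_h$ for $\Psi_h$).

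The genuine gap is the final ``moreover'' step, and it is not a removable detail. First, you have set yourself a stronger goal than the theorem asks: the statement allows \emph{both} $U$ and $X$ to be chosen in terms of the given point, whereas you fix $U$ and try to produce a maximal $U$-separated $X$ with $q\in\overline X$. Second, your transfinite recursion does not prove even that: the ``escaping property'' says each neighbourhood trace $N\cap G$ is not relatively compact, which lets you avoid a \emph{compact} excluded region, i.e.\ finitely many shadows $U^2y_\gamma$; but after infinitely many stages the excluded set $\bigcup_{\gamma<\beta}U^2y_\gamma$ is neither compact nor closed, and non-relative-compactness of $N_\beta\cap G$ then gives no admissible point outside it. Concretely, in $G=\R$ with $U=(-1,1)$: if $q\in\overline{\{n!\,:\,n\in\N\}}$ and the earlier stages happen to select exactly the points $n!$, then a later neighbourhood of $q$ whose trace lies in $\{n!\,:\,n\in\N\}+(-0.1,0.1)$ is entirely covered by the shadow $\bigcup_n\bigl(n!+(-2,2)\bigr)$, and the recursion halts. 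The paper avoids all of this with Pym's one-line \emph{astuce}, recorded in the proof of Corollary \ref{Veech}: take $X_0$ maximal right $U$-uniformly discrete, so that $G=U^2X_0$ and, by joint continuity, $G^\luc=\overline{U^2}\,\overline{X_0}$; write $x=ua$ with $u\in\overline{U^2}$ and $a\in\overline{X_0}$; then replace $U$ by $uUu^{-1}$ and $X_0$ by $uX_0$, which is maximal $uUu^{-1}$-uniformly discrete and satisfies $x\in u\overline{X_0}=\overline{uX_0}$. Note that this changes $U$ by a conjugate --- exactly what the clause ``$U$ and $X$ can be chosen'' permits, and, outside the Abelian case (where $uUu^{-1}=U$ would rescue your fixed-$U$ formulation), what the argument genuinely needs. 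You should either adopt this astuce or supply a proof of the fixed-$U$ claim, which at present you do not have.
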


Veech's theorem was first proved in the context of topological  dynamics, see the references in \cite[page 393]{vries}, and  turned out to be
an  essential tool for the theory of semigroup compactifications, see for instance  \cite{F1}, \cite{F07}, \cite{HS}, \cite{LMP}. For  discrete $G$, it can be obtained  as  a direct consequence    of a set theoretic partition lemma called the \emph{Three Sets Lemma}, already used by  Ellis \cite{E} (see \cite{Pym} for a direct  proof of the   version of this lemma adapted to the present context).

The first application of Pym's Local Structure Theorem (and probably  its  original motivation) was towards  a simplified proof of Veech's Theorem consisting of a  combination of  the Local Structure Theorem with the Three Sets Lemma. Its significance however goes far beyond this  application, as  it bridges  the discrete and    locally compact cases  in  quite a precise way.

Our first objective in the sections that follow is to  obtain general versions of both Veech's Theorem and Pym's  Local Structure Theorem. We find that beneath both theorems lies a common structure, that of  $\A(G)$-sets. When $G$ is discrete, a subset $T\subset G$ will be  said to be  an $\A(G)$-set if the  characteristic functions $1_{T_1}\in \A(G)$, for every $T_1\subseteq T$.  For nondiscrete $G$ the concept has to be suitably adapted, see  Section \ref{prelim}. The recourse to $\A(G)$-sets allows us to deal with the analogs of Theorems \ref{veech} and \ref{LST} in considerable generality, encompassing  not only $\luc(G)$ but any unital left invariant $C^\ast$-subalgebra of $\luc(G)$, and sometimes even general topological groups.   Veech's Theorem and Pym's  Local Structure Theorem will be addressed in sections \ref{sec:veech} and \ref{sec:pym},  after  our knowledge of $\A(G)$-sets is developed in sections \ref{sec:ag} and \ref{sec:lwap}. Other theorems related to the Local Structure Theorems proved in \cite{BP}, such as the Two-Point and the Compact-set Local Structure Theorems, follow as well from  our description of the structure of $G^\A$ around closures of $\A(G)$-sets.

 A difference between our approach and that of Pym in \cite{Pym} is that, instead of deducing  Veech's property from  the Local Structure Theorem, we  see both theorems as complementary descriptions of the behaviour, local in one case  and global in the other, of   points of $G^\A$  that are  in the closure of some $\A(G)$-set. The properties of $\A(G)$-sets (together with the Three Sets Lemma in the case of Veech's property) yield independent, elementary proofs of both theorems.

Our work on  the right-topological semigroup structure at   those  points of $G^\A$ that lie in the closure of $\A(G)$-sets is pushed further at the end of Section \ref{sec:pym} to  find subsets of $G^\A$ larger than $G$ whose points
are injectivity points (i.e., points whose action by multiplication on $G^\A$ is free).

$\A(G)$-sets are applied further in  Section \ref{sec:sp} to characterize  strongly prime points, i.e., points  of $G^\A$ that are not in the closure $\overline{G^{\ast}G^\ast}$, where $G^\ast=G^\A\setminus G$. A key tool here is the algebra  $\luc_\ast(G)$, an admissible algebra  introduced in this paper whose interpolation sets have  interesting combinatorial properties.

 In our last section, we deduce from  the results obtained in sections 4, 6 and 7, that the support of any left invariant mean on $\A(G)$ is  contained in $\overline{G^{\ast}G^\ast}$ for a large family of  amenable algebras $\A(G)$, including $\luc(G)$ for any amenable locally compact group $G$,  all algebras  containing $\wap(G)$ for Abelian $G$ and $\luc_\ast(G)$ for any locally compact group $G$.  The case  generalizes a result obtained by  Dales, Lau  and Strauss \cite{DLS} for $G$ discrete and $\A(G)=\ell_\infty(G)$.

\section{The algebras $\A(G)$ and the compactifications $G^\A$}
\label{prelim}
We recall here  the necessary definitions concerning semigroup compactifications that are used throughout  the paper. As long as possible, we follow  notation and terminology from  \cite{BJM} and \cite{FG1}.
Throughout the paper $G$ is a Hausdorff topological group with identity $e$, most of the time  locally compact.

We use $\CB(G)$ to denote the $C^\ast$-algebra of continuous bounded scalar-valued functions on $G$
equipped with the supremum norm, and $C_0(G)$ to denote  the $C^\ast$-subalgebra of the functions in $\CB(G)$ vanishing at infinity.

For each function $f$ defined on $G$ and $s\in G$, let ${}_sf$ be the left translate of
$f$ by $s$, this is defined on $G$ by ${}_sf(t)=f(st)$.

Following \cite{hewiross}, a  bounded function $f$ on $G$
is  {\it right uniformly continuous} when, for every $\epsilon>0$,
there exists a neighbourhood $U$ of $e$ such that
\[
|f(s)-f(t)|<\epsilon\quad\text{whenever}\quad st^{-1}\in U.\]
These are functions which are left norm continuous in the sense that
 \[s\mapsto {}_sf: G\to \CB(G)\] is continuous.  In the literature, the $C^*$-algebra formed by these functions is denoted  by $C_{ru}(G)$,  $LC(G)$, $\lc(G)$ or  $\luc(G)$. In this paper, we shall use the latter notation.

The $C^*$-algebra $\ruc(G)$ is defined in the same way replacing  the right  uniformity of $G$ by the left uniformity. Again, the notation
$\ruc(G)$ stresses the fact that $f\in \ruc(G)$ if and only if the map  \[s\mapsto f_s: G\to \CB(G)\] is continuous, where now $f_s$
is the right translate of $f$ by $s$. We let $\uc(G)=\luc(G)\cap \ruc(G).$

A function $f$ is {\it weakly almost periodic} when the set of all
its left (equivalently, right) translates forms a relatively weakly
compact subset in $\CB(G).$
When the set of all
the left (equivalently, right) translates of  a function $f$  forms a relatively norm-compact subset of $\CB(G)$, the  function $f$ is said to be  {\it almost periodic}. The almost periodic and weakly almost periodic functions   constitute $C^*$-subalgebras of $\CB(G)$ which we denote by $\ap(G)$ and $\wap(G)$, respectively.

Let $\A(G)$ be a $C^\ast$-subalgebra of $\CB(G)$ and $G^\A$ be the spectrum (non-zero multiplicative linear functionals) of $\A(G)$ with the Gelfand topology.
There is a canonical morphism  $\epsilon_\A \colon G\to G^\A$  given by evaluations:
\[\epsilon_\A(s)(f)=f(s),\; \text{for every}\;f\in \A(G)\; \text{and}\; s\in G.\] We shall use $G^\ast$ to denote \emph{the remainder} $G^\A\setminus \epsilon_\A(G).$ The map $\epsilon_\A$
is continuous if  $\A(G)\subseteq \CB(G)$, and it is injective if and only if $\A(G)$ separates
the points of $G$.
It is a routine to check that the map $\epsilon_\A: G\to G^\A$ is a homeomorphism when  $G$ is locally compact and  $C_0(G)\subseteq \A(G).$

As known, Gelfand duality identifies $\A(G)$ with $\CB(G^\A)$.
So to every $f\in \A(G)$ there corresponds a function $f^\A\in \CB(G^\A)$ such that $f^\A\circ \epsilon_\A=f.$
When $\epsilon_\A$ is injective, $f^\A$ can be seen as a \emph{continuous extension} of $f$ to $G^\A$.

We say that a subalgebra $\A(G)$ of $\CB(G)$ is \emph{left translation invariant} when ${}_sf\in\A(G)$ for every $f\in \A(G)$ and $s\in G.$
In  such algebras, there is a natural action of $G$ on $G^\A$
given by \[G\times G^\A \to G^\A: (s,x)\mapsto \epsilon_\A(s)x,\] where $\epsilon_\A(s)x(f)=x({}_sf)$  for every $s\in G$, $x\in G^\A$ and $f\in\A(G).$
Right translation invariance is defined analogously. We say that a subalgebra $\A(G)$ of $\CB(G)$ is  translation invariant when it is both left and right translation invariant.

We say that the $C^\ast$-subalgebra $\A(G)$ of $\CB(G)$ is {\it admissible} when it satisfies the following properties:
(i) $1\in \A(G)$,
(ii) $\A(G)$ is translation invariant,
and (iii) for each $f\in\A(G)$ and every $x\in G^\A$, the function ${}_xf$ defined on $G$ by the equality ${}_xf(s)=\epsilon_\A(s)x(f)=x({}_sf)$ is in $\A(G)$.
 When $\A(G)$ is admissible,
  it is possible to introduce a binary operation on $G^\A$  by the rule
 \[xy(f)=x({}_yf)\quad \text{for every}\quad x,y\in G^\A\;\text{and}\; f\in\A(G).\]
With this operation $G^\A$ is a compact semigroup and  $\epsilon_\A$ is   a semigroup homomorphism with $\epsilon_\A(G)$  dense in $G^\A$. Furthermore,
 the mappings
\[
x\mapsto xy\colon  G^\A\rightarrow  G^\A
\,\,\text{ and } \,\,
x\mapsto \epsilon_\A(s)x\colon G^\A\rightarrow  G^\A
\]
are continuous for every $y\in G^\A$ and $s\in G$.
In other words,   $G^\A$ becomes a semigroup compactification of the group $G$. We will refer to it as the {\it $\A$-compactification of $G$}.

All these concepts admit a right analog. If $\A(G)$ is a translation invariant, unital $C^*$-subalgebra of $\CB(G)$,
and   the function defined on $G$ by $f_x(s)=x\epsilon_\A(s)(f)=x(f_s)$ is in $\A(G)$ for every $x\in G^\A$ and $f\in\A(G)$ (where now $f_s$ is the right translate of $f$ by $s$), then  $G^\A$ becomes a compact semigroup
 with the product given by
\[ x\squ y(f)=y(f_x)\quad \text{for every}\quad x,y\in G^\A\;\text{and}\; f\in\A(G).\]
  With this product, the mappings
\[
y\mapsto x\squ y\colon  G^\A\rightarrow  G^\A
\,\,\text{ and } \,\,
y\mapsto y\squ \epsilon_\A(s)\colon G^\A\rightarrow  G^\A
\]
are continuous for every $x\in G^\A$ and $s\in G$.

With any of the two products, the $\wap$-compactification $G^\wap$ is the largest semitopological semigroup compactification, while the
Bohr or  $\ap$-compactifiaction is the largest topological (semi)group compactification.

The $\luc$-compactification $G^{\luc}$ is the
largest semigroup compactification in the sense that any other
semigroup compactification appears as a  quotient of $G^{\luc}.$

 When  $G$ is discrete,  $\luc(G)=\ruc(G)=\ell_\infty(G)$ and $G^{\luc}=G^\ruc$ coincides  with the Stone-\v Cech compactification $\beta G$ of $G.$
 In this case, the reader is directed to \cite{HS} for the algebra in $\beta G$ and its applications to Ramsey theory.

To avoid cumbersomeness,  we will in general omit the function $\epsilon_\A$.
It will appear only when there is more than one compactification under the spotlight as it is the case in Subsection \ref{injectivity}.
So for instance,   $\epsilon_\A(s)x$ will be written as $sx$ when $s\in G$ and $x\in G^\A$,
and $\epsilon_\A(V)\overline{\epsilon_\A(T)}$ will be written simply as $V\overline T$ when $V,$ $T\subseteq G$ and the closure is taken in $G^\A$.

\subsection{$SIN$-groups, $E$-groups and $\wap(G)$}
Our results on $G^\wap$ usually require that $\wap(G)$ is rich enough. One way to guarantee this is to restrict $G$ to the classes of $SIN$- or $E$-groups, whose definitions we recall here.

A topological group $G$ is said to be an \emph{$SIN$-group}
(acronym for small invariant neighbourhoods) when the left and the right uniformities on $G$ coincide, i.e., if $G$ has a base at $e$ made of open sets invariant under all inner automorphisms.

A   non-relatively compact $X\subset G$  is said to be an \emph{$E$-set} if, for
each neighbourhood $U$ of $e,$ the set \[{\displaystyle \bigcap \{x^{-1}Ux: t\in
X\cup X^{-1}\}}\] is again a neighbourhood of $e.$
A locally compact group $G$
is an {\it E-group} if it contains an  $E$-set $X$.
This is a large class of locally compact groups which  contains properly the noncompact $SIN$-groups,
see \cite{C1}.

\subsection{Invariant means and $\A_0(G)$ algebras.}\label{sub0}
    {\it A left invariant mean} on a unital left translation invariant subspace $\A(G)$ of $\CB(G)$ is a positive functional $\mu\in \A(G)^*$ such that $\mu(1) = 1,$ and
$\mu({}_sf) = \mu(f)$ for  all $f\in\A(G)$ and $s \in G.$ Right invariant means are defined analogously.
A mean is invariant when it is both left and right invariant.
When a left invariant mean exists in $\A(G)^*,$ we say that $\A(G)$ is amenable.
While $\ap(G)$ and $\wap(G)$ are always amenable (with the invariant mean being the Haar measure on the $\ap$-compactification $G^\ap$ of
$G$), $\luc(G)$ may not be amenable as it is the case when $G$ is the free group $\mathbb F_2$. Invariant means  on semigroups have been extensively studied since Day's seminal paper \cite{Day}.
We direct the reader to \cite{Paterson} and \cite{Pier} for more details.

 To every   amenable $C^\ast$-subalgebra $\A(G)$  of $\CB(G)$ one can associate the algebra  $\A_0(G)$ consisting of those $f\in \A(G)$ such  that $\mu(|f|)=0)$ for every  left invariant mean $\mu$ in $\A(G)^*$. Note that $\A_0(G)$ is always an ideal in $\A(G)$.

We will at several points refer to the support of invariant means.  Since, by the Riesz representation theorem,  means can  be regarded as  Borel measures on $G^\A$, we have two equivalent ways to define the   support  of a positive  element $\mu$ of $\A(G)^*$:
\begin{align*}\supp(\mu) &= \{x \in G^\A : \mu(f)\ne 0 \;\text{whenever}\;  f\in \A(G), f \ge 0\;\text{ and}\; f^\A(x) \ne 0\}\\&=
G^\A \setminus \{O : O\;\text{ open in}\;  G^\A \;\text{and}\; \mu(O)=0\},
\end{align*} depending on whether $\mu$ is seen as a functional or as a measure.

\section{$\A(G)$-sets}\label{sec:ag}
We introduce in this section a family of sets that will be important  for the remainder of the paper,  $\A(G)$-sets. These sets are closely related to approximable interpolation sets that were introduced in \cite{FG1}, see this reference and \cite{FG2} for more details on this class. We review here some  basic properties of these sets that will be needed later. We in particular observe that  a right uniformly discrete set (the case of interest for our applications) is an  $\A(G)$-set if and only if it is an  approximable $\A(G)$-interpolation set. Thus the concept  of $\A(G)$-set can be seen as a simplification of the concept of $\A(G)$-interpolation set.

\begin{definition} \label{approx} Let $G$ be a topological group and $\A(G)\subseteq \CB(G)$.
A subset $T\subseteq G$ is said to be
\begin{enumerate}
  \item an \emph{$\A(G)$-interpolation set} if every bounded function
$f\colon T\to \C$ can be extended to a function $\overline{f}\colon G\to
\C$ such that $\overline{f}\in \A(G)$.
\item an $\A(G)$-set when  for every open
 neighbourhood $U$ of $e$, there is an open neighbourhood $V$ of $e$ with $\overline{V}\subseteq U$ such
that, for each $T_1\subseteq T$  there is   $h\in \A(G)$ with
$h(VT_1)=\{1\}$ and $h(G\setminus (UT_1))=\{0\}$.
\item an \emph{approximable $\A(G)$-interpolation set} if it is both an
 $\A(G)$-interpolation set and an $\A(G)$-set.
\end{enumerate}
\end{definition}

In addition to these general definitions,  we need also to recall the known notion of uniform discreteness.
Along with translation-finite and translation-compact sets, these sets determine combinatorially the $\luc(G)$- and the $\wap(G)$-sets.
 The latter sets are essential for the  next and the last two sections.

 \begin{definition} \label{def:rud}
   Let $G$ be a topological group  and $U$ be a neighbourhood of the identity $e$.
We say that a subset $T$ of $G$ is \emph{right $U$-uniformly
discrete}  if \[Ut\cap Ut^\prime=\emptyset\quad\text{ for every}\quad t\neq
t^\prime\in T.\] \emph{Left $U$-uniformly discrete} sets are defined analogously.

We say that $T$ is \emph{right (left) uniformly discrete} when it is right (left) $U$-uniformly
discrete for some  neighbourhood $U$ of $e$.
\end{definition}

\begin{lemma}[Lemma 4.8(i) and Theorem 4.9 of \cite{FG1}]\label{udluc}
Let $G$ be a topological group and let $T\subseteq G$:
\begin{enumerate}[label=\textup{(\roman{enumi})}]
\item \label{i}   If $T$ is  right (respectively,  left) uniformly discrete, then $T$ is an approximable $\luc(G)$ (resp. $\ruc(G)$)-interpolation set.
\item If $G$ is metrizable, then $T$  is  an approximable $\luc(G)$(resp. $\ruc(G)$)-interpolation  set  if and only if it is right (resp.  left) uniformly discrete.
        \end{enumerate}
        \end{lemma}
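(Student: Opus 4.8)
\emph{Part (i).} I will treat the right uniformly discrete / $\luc(G)$ case; the left / $\ruc(G)$ case follows by symmetry (replacing each $f$ by $x\mapsto f(x^{-1})$ interchanges the two uniformities). Fix a neighbourhood $U_0$ of $e$ with $U_0t\cap U_0t'=\emptyset$ for distinct $t,t'\in T$, let an arbitrary neighbourhood $U$ of $e$ be given, and put $N=U\cap U_0$. The plan is to manufacture one ``bump'' in $\luc(G)$ and spread it over $T$. Using the Birkhoff--Kakutani construction I would produce a continuous length function $\rho\colon G\to[0,\infty)$ with $\rho(e)=0$, $\rho(g)=\rho(g^{-1})$, $\rho(gh)\le\rho(g)+\rho(h)$ and $\{\rho<1\}\subseteq N$. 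Setting $\phi=\max\{0,\min\{1,2-2\rho\}\}$ gives $\phi\in\CB(G)$ with $\phi\equiv1$ on the open set $V:=\{\rho<\tfrac12\}$, $\phi\equiv0$ off $\{\rho<1\}\subseteq N$, and $\overline V\subseteq\{\rho\le\tfrac12\}\subseteq N\subseteq U$. Subadditivity and symmetry of $\rho$ give $|\rho(s)-\rho(x)|\le\rho(sx^{-1})$, hence $|\phi(s)-\phi(x)|\le 2\rho(sx^{-1})$; since $\{\rho<\delta\}$ is a neighbourhood of $e$ for every $\delta>0$, this shows $\phi\in\luc(G)$.

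Given a bounded $f\colon T\to\C$, I would set $\overline f(x)=\sum_{t\in T}f(t)\,\phi(xt^{-1})$. As $\phi(xt^{-1})\ne0$ forces $x\in Nt\subseteq U_0t$ and the sets $U_0t$ are pairwise disjoint, at most one summand is nonzero at each $x$, so $\overline f$ is well defined and $\overline f(t)=f(t)\phi(e)=f(t)$. The key step is $\overline f\in\luc(G)$. Writing $\psi_t(x)=\phi(xt^{-1})$, the identity $(st^{-1})(xt^{-1})^{-1}=sx^{-1}$ shows that every $\psi_t$ has the \emph{same} modulus of right uniform continuity as $\phi$, namely $|\psi_t(s)-\psi_t(x)|\le 2\rho(sx^{-1})$ for all $t$. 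Fixing $\epsilon>0$ and taking $sx^{-1}\in\{\rho<\epsilon\}$, the disjointness of the supports (at most one $\psi_t\neq0$ at $x$, at most one at $s$) combines with this uniform estimate, through a short case analysis (both points in the same $Nt$; in different ones; one in none), to yield $|\overline f(s)-\overline f(x)|\le 2\norm{f}_\infty\,\epsilon$. Thus $T$ is an $\luc(G)$-interpolation set. Taking $f=1$ on $T_1$ and $0$ on $T\sm T_1$ produces $h=\overline f$ with $h(VT_1)=\single1$ and $h(G\sm(UT_1))=\single0$ (again by disjointness), so with this $V$ the set $T$ is also an $\luc(G)$-set; hence it is an approximable $\luc(G)$-interpolation set.

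\emph{Part (ii).} The implication $(\Leftarrow)$ is (i). For $(\Rightarrow)$ I argue by contraposition. Since $G$ is metrizable it carries a compatible \emph{right-invariant} metric $d$ (so $d(sg,xg)=d(s,x)$ and the $d$-balls form a base at $e$); consequently every function in $\luc(G)$ is $d$-uniformly continuous. If $T$ is not right uniformly discrete, testing the $d$-balls gives $\inf\{d(t,t')\colon t\ne t'\in T\}=0$. I would then distinguish two cases: either some $t_0\in T$ is an accumulation point of $T$, yielding distinct $s_n\in T$ with $d(s_n,t_0)\to0$; or no point is, so that $\inf_{t'\ne t}d(t,t')>0$ for every $t$, and a greedy extraction produces an infinite \emph{matching} $(a_k,b_k)$ of pairwise disjoint pairs with $d(a_k,b_k)\to0$ --- here the positivity of each $\inf_{t'\ne t}d(t,t')$ is exactly what lets each newly chosen pair avoid the finitely many vertices already used. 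In either case I define a bounded $f\colon T\to\C$ taking the value $0$ on one half of the pairs and $1$ on the other; any extension $\overline f$ would then assume the values $0$ and $1$ on points whose $d$-distance tends to $0$, contradicting $d$-uniform continuity. Hence $f$ has no extension in $\luc(G)$ and $T$ is not an $\luc(G)$-interpolation set.

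\emph{Main obstacle.} I expect the delicate points to be, in (i), the verification that the infinite sum $\overline f$ is right uniformly continuous --- which rests on the twin facts that right uniform continuity is preserved with a common modulus under right translation and that the translates $\psi_t$ have pairwise disjoint supports --- and, in (ii), the combinatorial extraction of the accumulation point or of the infinite matching from the mere failure of uniform discreteness.
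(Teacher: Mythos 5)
Your proof is correct. Note first that the paper contains no internal proof of this lemma at all: it is imported verbatim from \cite{FG1} (Lemma 4.8(i) and Theorem 4.9), so the only possible comparison is with that source and with the machinery the present paper quotes from it elsewhere. Measured that way, your part (i) is essentially the same argument: your interpolant $\overline f=\sum_{t\in T}f(t)\,\phi(\,\cdot\,t^{-1})$ is a sum of right translates $\phi_{t^{-1}}$ of a bump with pairwise disjoint supports, i.e.\ precisely the function $1_{T_1,\phi}=\sum_{t\in T_1}\phi_{t^{-1}}$ of Lemma 4.6 of \cite{FG1} that this paper invokes in the proofs of Lemma \ref{lem:lucint} and Theorem \ref{lwap}; the Birkhoff--Kakutani length function $\rho$ is the standard device for manufacturing such a bump in $\luc(G)$ for an arbitrary topological group, and your identity $(st^{-1})(xt^{-1})^{-1}=sx^{-1}$ is exactly the reason the sum stays in $\luc(G)$. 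Part (ii), via a compatible right-invariant metric $d$ (under which every function in $\luc(G)$ is $d$-uniformly continuous) together with the accumulation-point/matching dichotomy, is also sound and is the natural argument behind Theorem 4.9 of \cite{FG1}. Three minor remarks. First, in your case analysis the worst case (the two points lying in supports of \emph{different} translates) yields the bound $4\norm{f}_\infty\,\rho(sx^{-1})$ rather than $2\norm{f}_\infty\,\epsilon$; this is immaterial. Second, once $T$ is known to be an $\luc(G)$-set, the interpolation half could instead be deduced from Proposition \ref{apg=ag} of the paper (disjoint subsets of a right uniformly discrete $\A(G)$-set have disjoint closures in $G^\A$), which is how the paper organizes this equivalence; your single construction proves both halves at once, which is fine. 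Third, your reduction of the $\ruc(G)$ case by inversion is the right move, but it implicitly requires reading the $\A(G)$-set clause of Definition \ref{approx} for $\ruc(G)$ with right-hand translates, $h(T_1V)=\{1\}$ and $h(G\sm T_1U)=\{0\}$; this mirrored convention is clearly what the authors intend, and under it your symmetry argument transfers everything correctly.
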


\begin{proposition}\label{apg=ag} Let $G$ be a topological  group, $\A(G)$ be a unital $C ^\ast$-subalgebra of
$\CB(G)$
and let $T$ be a right uniformly discrete subset of $G.$
Then $T$ is an $\A(G)$-set if and only if it is an approximable $\A(G)$-interpolation set.
\end{proposition}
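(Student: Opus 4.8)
The equivalence splits into a trivial and a substantial implication. Since an approximable $\A(G)$-interpolation set is by definition simultaneously an $\A(G)$-interpolation set and an $\A(G)$-set, the reverse implication needs no argument; the whole content is the forward one. So, assuming $T$ is right $U_0$-uniformly discrete and an $\A(G)$-set, I must show it is moreover an $\A(G)$-interpolation set, i.e.\ that every bounded $f\colon T\to\C$ extends to a member of $\A(G)$. The plan is to first produce, for each subset $T_1\subseteq T$, a single function in $\A(G)$ that restricts to the characteristic function $1_{T_1}$ on $T$ and is bounded by $1$ on all of $G$, and then to assemble the extension of a general $f$ from these building blocks by a rapidly convergent series.

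First I would fix an open neighbourhood $U$ of $e$ with $U\subseteq U_0^{-1}U_0$ and feed it into the definition of $\A(G)$-set to obtain a corresponding $V$. For a given $T_1\subseteq T$ this yields $h\in\A(G)$ with $h(VT_1)=\{1\}$ and $h(G\setminus UT_1)=\{0\}$. Two adjustments turn $h$ into the desired block. On the one hand, replacing $h$ by $\psi(\Re h)$, where $\Re h=(h+\overline h)/2\in\A(G)$ and $\psi(x)=\max(0,\min(1,x))$, preserves the values $1$ on $VT_1$ and $0$ off $UT_1$ while forcing $0\le\psi(\Re h)\le1$; this is legitimate because $\A(G)$, being a unital $C^\ast$-subalgebra, is closed under continuous functional calculus (equivalently, $\psi$ is a uniform limit of polynomials on a compact interval). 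On the other hand, right uniform discreteness pins down the restriction to $T$: since $e\in V\subseteq U$, every $t\in T_1$ lies in $VT_1$, so $h(t)=1$; and if some $t\in T\setminus T_1$ lay in $UT_1$, then $t=ut'$ with $u\in U$ and $t'\in T_1$, whence $t(t')^{-1}=u\in U\subseteq U_0^{-1}U_0$, so that $U_0t\cap U_0t'\neq\emptyset$, contradicting right $U_0$-uniform discreteness. Thus $t\notin UT_1$ and $h(t)=0$. Writing $h_{T_1}$ for the adjusted function, we have $h_{T_1}\in\A(G)$, $0\le h_{T_1}\le 1$ and $h_{T_1}\restr{}{T}=1_{T_1}$.

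Finally, to extend an arbitrary bounded $f\colon T\to\C$, I would reduce to the case $f\colon T\to[0,1]$ by splitting into real and imaginary parts and applying an affine normalization (permissible since $1\in\A(G)$). Expanding each value in binary, $f(t)=\sum_{n\ge1}2^{-n}b_n(t)$ with $b_n(t)\in\{0,1\}$, and setting $B_n=\{t\in T: b_n(t)=1\}$, one has $f=\sum_{n\ge1}2^{-n}1_{B_n}$ pointwise on $T$. The function $\overline f=\sum_{n\ge1}2^{-n}h_{B_n}$ is then the required extension: the series converges in supremum norm because $\norm{2^{-n}h_{B_n}}\le 2^{-n}$, so $\overline f\in\A(G)$ by norm-closedness, while for $t\in T$ we get $\overline f(t)=\sum_{n\ge1}2^{-n}1_{B_n}(t)=f(t)$. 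The crux of the whole argument—and the reason the building blocks must be normalized to $[0,1]$—is precisely this last convergence: an unnormalized extension of a characteristic function could be arbitrarily large on the transition region $UT_1\setminus VT_1$, so the series would fail to converge away from $T$, and it is exactly the $C^\ast$-algebra structure (via the functional calculus step) that removes this obstacle.
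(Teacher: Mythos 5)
Your proof is correct, and it takes a genuinely different route from the paper's for the main (extension) step. Both arguments open identically: the uniform-discreteness neighbourhood is fed into the definition of an $\A(G)$-set to produce, for each $T_1\subseteq T$, a function $h\in\A(G)$ equal to $1$ on $T_1$ and to $0$ on $T\setminus T_1$ (your computation $t=ut'$, $u\in U\subseteq U_0^{-1}U_0$ contradicting $U_0t\cap U_0t'=\emptyset$ is the same cancellation the paper uses to get $UT_1\cap UT_2=\emptyset$). From there the paper leaves the algebra and works in the compactification: it rephrases the separation as "disjoint subsets of $T$ have disjoint closures in $G^\A$", invokes the standard fact (Engelking, Corollary 3.2.1) that this makes every bounded function on $T$ extend continuously to $\overline{T}$ (equivalently $\overline{T}\cong\beta T$), and then extends to all of $G^\A$ and pulls back through the Gelfand identification $\A(G)\cong\CB(G^\A)$. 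You instead stay entirely inside the function algebra: you normalize $h$ to a $[0,1]$-valued element $h_{T_1}$ via continuous functional calculus on $\Re h$ (legitimate, since a unital $C^\ast$-subalgebra is closed under functional calculus, which acts pointwise in $\CB(G)$), and then synthesize an arbitrary $[0,1]$-valued $f$ as the norm-convergent series $\sum_{n\ge 1}2^{-n}h_{B_n}$ arising from binary expansion, using completeness of $\A(G)$; the reduction of general bounded $f$ to the $[0,1]$-valued case via $1\in\A(G)$ is also fine. What the paper's route buys is brevity and a by-product it reuses later: the homeomorphism $\overline{T}\cong\beta T$ reappears in the $\A$-Local Structure Theorem. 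What your route buys is self-containedness and constructiveness: no recourse to the spectrum, to Stone--\v{C}ech theory, or to Tietze extension, only the $C^\ast$-algebra axioms, together with explicit norm control on the extension (for $[0,1]$-valued $f$ one gets $0\le\overline f\le 1$), which is essentially the "same-norm" extension property the paper imports from elsewhere. Both treatments dispose of the converse implication in the same one-line way, as it holds by definition.
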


\begin{proof} Let $U$ be a neighbourhood of $e$ and suppose that $T$ is right $U$-uniformly discrete.
We check first that disjoint subsets of $T$ have disjoint closures in $G^\A.$
 Let $T_1$ and $T_2$ be disjoint subsets of $T$.
Pick a neighbourhood  $V$  of $e$ with $\overline{V}\subseteq U $ and  a function $h\in\A(G)$ such that $h(VT_1)=\{1\}$
and $h(G\setminus U T_1)=\{0\}.$
Since $T$ is right $U$-uniformly discrete, we see that $U T_1\cap U T_2=\emptyset$, i.e.,
$UT_2\subseteq G\setminus UT_1.$ We have then that  $h(T_1)=1$ while $h(T_2)=0$. Thus, $\overline{T_1}\cap\overline{T_2}=\emptyset$ (the closure is taken of course in $G^\A$).

As known and easy to check (see for example \cite[Corollary 3.2.1]{EN}), this proves that  bounded functions on $T$ extend to continuous functions on $\overline T,$ and so to the whole $G^\A.$
In other words, every bounded function on $T$ will extend to the whole group $G$ as a function in $\A(G),$ showing that $T$ is an  approximable
$\A$-interpolation set. The converse is clear.
\end{proof}

\begin{corollary} \label{cor:rudA} Let $G$ be a metrizable topological group and $\A(G)$ be a unital $C ^\ast$-subalgebra of
$\luc(G)$. Then $T$ is an approximable $\A(G)$-interpolation set if and only if it is a right uniformly discrete  $\A(G)$-set.
\end{corollary}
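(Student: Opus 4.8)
The plan is to combine Proposition~\ref{apg=ag} with Lemma~\ref{udluc} to reduce the metrizable statement to the general equivalence already established. The corollary asserts, for metrizable $G$ and $\A(G)$ a unital $C^\ast$-subalgebra of $\luc(G)$, that $T$ is an approximable $\A(G)$-interpolation set if and only if it is a right uniformly discrete $\A(G)$-set. The right-to-left direction is immediate: if $T$ is right uniformly discrete, Proposition~\ref{apg=ag} tells us that being an $\A(G)$-set is equivalent to being an approximable $\A(G)$-interpolation set, so a right uniformly discrete $\A(G)$-set is in particular an approximable $\A(G)$-interpolation set. No metrizability is even needed here.

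For the left-to-right direction the key observation is that an approximable $\A(G)$-interpolation set, when $\A(G)\subseteq\luc(G)$, is automatically an approximable $\luc(G)$-interpolation set, since the approximating and interpolating functions furnished by $\A(G)$ already lie in $\luc(G)$. First I would note that $T$ being an approximable $\A(G)$-interpolation set makes $T$ an $\A(G)$-set, hence also a $\luc(G)$-set, and an $\A(G)$-interpolation set, hence also a $\luc(G)$-interpolation set; thus $T$ is an approximable $\luc(G)$-interpolation set. Now I would invoke Lemma~\ref{udluc}(ii), which says that in a metrizable group an approximable $\luc(G)$-interpolation set is precisely a right uniformly discrete set. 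This yields that $T$ is right uniformly discrete. Combined with the hypothesis that $T$ is an $\A(G)$-set, this shows $T$ is a right uniformly discrete $\A(G)$-set, as required.

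The only point demanding a little care is the monotonicity claim that $\A(G)\subseteq\luc(G)$ forces every $\A(G)$-set to be a $\luc(G)$-set and every $\A(G)$-interpolation set to be a $\luc(G)$-interpolation set; but this is purely formal, since the defining functions $h$ (respectively the extensions $\overline f$) belong to $\A(G)$ and therefore to the larger algebra $\luc(G)$. This is the step I expect to be the main, though minor, obstacle, as it is the only place where the inclusion hypothesis on $\A(G)$ is used. Here metrizability is essential, because it is exactly the hypothesis under which Lemma~\ref{udluc}(ii) converts the analytic notion of approximable $\luc(G)$-interpolation set into the combinatorial notion of right uniform discreteness; without it the forward implication would fail. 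The whole argument is therefore a short chain: Proposition~\ref{apg=ag} handles one direction, and the inclusion $\A(G)\subseteq\luc(G)$ together with Lemma~\ref{udluc}(ii) handles the other.
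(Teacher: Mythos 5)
Your proof is correct and follows essentially the same route as the paper: the paper's own (one-line) proof likewise uses the inclusion $\A(G)\subseteq\luc(G)$ together with Lemma~\ref{udluc}(ii) to conclude that approximable $\A(G)$-interpolation sets are right uniformly discrete, with Proposition~\ref{apg=ag} supplying the converse direction. Your write-up merely makes explicit the monotonicity step that the paper leaves implicit.
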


\begin{proof} Since $\A(G)\subseteq \luc(G),$ we know from Lemma \ref{udluc} that approximable $\A(G)$-interpolation sets are necessarily right uniformly discrete.
\end{proof}

\begin{remark}
 \begin{enumerate}
    \item
 In the non-metrizable case, Corollary \ref{cor:rudA} is not true as already observed in \cite[Example 6.2]{FG1}.

\item In general, $\A(G)$-sets (even discrete $\A(G)$-sets) are not necessarily approximable $\A(G)$-interpolation sets. Consider a  nondiscrete locally compact group $G$ and  an admissible subalgebra  $\A(G)$ of $\luc(G)$ such that $C_0(G)\subseteq \A(G)$. Let $T$ be a nondiscrete infinite relatively compact subset of $G$  and let $U$ be any open   neighbourhood of $e$. Pick a relatively compact symmetric neighbourhood $W$ of $e$ such that $W^2\subseteq U$ and let $V$ be a relatively compact neighbourhood of $e$
such that $\overline V\subseteq W$. Let $T_1$ be any subset of $T.$ Since $\overline{T_1}\subset WT_1$, then
 \[\overline{VT_1}\subseteq \overline{V}WT\subseteq W^2T_1\subseteq UT_1,\]
where all closures are taken in $G$. We may take  then $h\in C_0(G)$ such that  $h(VT_1)=\{1\}$ and $h(G\setminus UT_1)=\{0\}.$ It follows that  $T$ is an $\A(G)$-set. Not being discrete, it cannot be an $\A(G)$-interpolation set. When $G$ is metrizable, we do not even need to require that $T$ is nondiscrete, any infinite relatively compact subset of $G$ will do by  Corollary \ref{cor:rudA}.
 \end{enumerate}
\end{remark}

\begin{definition}\label{rud}
Let $G$ be a topological  group
% with identity $e$
and let $T$ be a subset of $G$.
\begin{enumerate}
\item
$T$ is \emph{ right translation-finite} if every  infinite subset $L$ of $G$
contains a finite subset $F$ such that $\bigcap\{b^{-1}T\colon b\in
F\}$ is finite.
\item
$T$ is \emph{ right translation-compact} if every  non-relatively compact subset $L$ of $G$
contains a finite subset $F$ such that $\bigcap\{b^{-1}T\colon b\in
F\}$ is relatively compact.
\item $T$ is a \emph{right t-set}
if there exists a compact subset $K$ of $G$ containing the identity $e$ such that $gT\cap T$
is
relatively compact for every $g\notin K$.
\item Left translation-finite sets, left translation-compact sets, and left t-sets are defined analogously.
 \item
 $T$ is
 \emph{ translation-finite} when it is both right and left translation-finite, {\it translation-compact} when it is both right and left translation-compact, and it is a \emph{t-set} when it is both a  right and a left t-set.
\end{enumerate}

\end{definition}
\begin{remarks}$\;$ \smallskip
  \begin{enumerate}
    \item The term translation-finite was first introduced by Ruppert \cite{rup}. Chou  used the term $R_W$-sets in \cite{C4}. Right translation-finite sets were termed
\emph{left sparse} in  \cite{FP}. (Right) translation-compact sets were introduced in \cite{FG1}.
\item
It should be remarked that, while finite unions of right (or left) t-sets are clearly right (left) translation-compact sets,  the converse is not true even if $G$ is discrete, see \cite[Section 3]{C4} and \cite[ Examples 11]{rup}.
\item
Right (Left) translation-finite sets are clearly right (left) translation-compact in any topological  group.
It is easy to see that the converse does not hold in general.
But the converse is true if the sets are right (left) uniformly discrete sets as next proposition shows.
  \end{enumerate}
\end{remarks}

\proposition\label{prop}
Let $G$ be a topological  group, $U$ be a symmetric neighbourhood of the identity and $T$ be a right (left)
$U$-uniformly discrete subset of $G$. Then $T$ is right (left) translation-compact if and only if
$T$ is right (left) translation-finite.
\endproposition

\proof Suppose that $T$ is not  right translation-finite, and let $L$
be an infinite subset of $G$ such that $\bigcap_{b\in F}b^{-1}T$ is infinite for every finite subset $F$ of $L$. Since $T$ is discrete, the sets  $\bigcap_{b\in F}b^{-1}T$ are not relatively compact, otherwise they would be finite.
So it is enough to check that $L$ is not relatively compact.
Suppose, otherwise, that $L$ is relatively compact, and let $\{Ux_1,Ux_2,...,Ux_n\}$ be a finite cover of $L$ by finitely many translates of $U$ in $G$. We claim, that for each $1\le i\le n$, the set  $L\cap Ux_i$ contains at most one point. For, if $u,$ $v\in U$ are such that $ux_i,$ $vx_i\in L$, then
$x_i^{-1}u^{-1}T\cap x_i^{-1}v^{-1}T\ne\emptyset$ (remember that $\cap_{b\in F}b^{-1}T$ is infinite for every finite subset $F$ of $L$) implies that $u^{-1}T\cap v^{-1}T\ne\emptyset$, and so $u^{-1}t_1=v^{-1}t_2$ for some $t_1,$ $t_2\in T.$ Hence, $t_1=t_2$ since $T$ is right $U$-uniformly discrete, and so $u=v$, as required. But this is impossible since  $L$ is infinite.
\endproof

We next indicate how to construct  a non-relatively compact t-set, and so  a translation-compact set, in  a noncompact locally compact group.

\begin{example}\label{t-set}
Let $G$  be a noncompact locally compact group and  let $X\subseteq G$ be non-relatively compact. Fix a compact neighbourhood $V$ of the identity $e$. Start with $x_0=e,$ say. Suppose that the elements $x_n$ have been picked for every
$n<n_0$. Set ${\displaystyle T_{n_0}= \bigcup\limits_{n_1,n_2,n_3<n_0}V^2x_{n_1}^{\epsilon_1}x_{n_2}^{\epsilon_2}
V^2x_{n_3}^{\epsilon_3},}$
where each $\epsilon_i=\pm1.$
Since $T_{n_0}$ is compact
we may pick another element $x_{n_0}\in X\setminus T_{n_0}$ for our set $T$. In such a way, we form a set $T=\{x_n:n<\omega\}$.
As already checked in \cite{F07}, the set $T$ is right $V^2$-uniformly discrete and both $s(VT) \cap (VT)$ and $(VT)s \cap (VT)$ are relatively compact for every $s \not\in V^2$, i.e., $VT$ is a t-set.

The construction can be easily adapted for the set $T$ to have the  cardinality of the smallest covering of $X$ by compact subsets.\end{example}

\section{Right translation-compact sets and the related algebra}\label{sec:lwap}
Before we get to the core of the paper, we introduce in this section
a new admissible subalgebra of $\luc(G)$, which we denote by $\luc_\ast(G).$
We show that the right translation-compact sets are precisely the approximable $\lwap(G)$-interpolation sets.
Lemma \ref{general**},  based on Theorem 7 of \cite{rup},  is the key to obtain this characterization. This lemma will also be very useful in Sections \ref{sec:sp} and \ref{0meansets}.
Our second application of Lemma \ref{general**} in this section is Theorem \ref{wap=wap_0}. This is one of the main results obtained in our previous article, namely \cite[Theorem 4.22]{FG1}. The proof there  was involved and required some pretty heavy combinatorial tools. Lemma \ref{general**} furnishes a more direct approach from which  an even  more general result can be  obtained.

We start with a collection of lemmas needed for this section and for the rest of the paper. The first  two of them characterize
 $\wap(G)$-interpolation sets.
 A detailed study  of these sets  is carried  out in our recent paper  \cite{FG1}.

 \begin{lemma}\label{lem:lucint}
Let $G$ be an $E$-group and let $U$ be a neighbourhood of the identity. If $T\subseteq G$ is a right (or left) $U$-uniformly discrete $E$-set  such that $UT$ is translation-compact, then $T$ is an approximable $\wap(G)$-interpolation set.
\end{lemma}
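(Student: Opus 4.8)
The plan is to reduce the statement to the construction of a single weakly almost periodic ``bump'' over $T_1$ and then to verify weak almost periodicity through Grothendieck's double limit criterion. By the symmetry between the two uniformities I treat the right case, the left case being entirely analogous with the roles of left and right interchanged throughout. Since $T$ is right $U$-uniformly discrete, Proposition \ref{apg=ag} shows that $T$ is an approximable $\wap(G)$-interpolation set as soon as it is a $\wap(G)$-set. Thus the whole lemma reduces to producing, for the given $U$, a neighbourhood $V$ with $\overline V\subseteq U$ such that every $T_1\subseteq T$ admits $h\in\wap(G)$ with $h(VT_1)=\{1\}$ and $h(G\setminus UT_1)=\{0\}$.

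First I would set up the neighbourhoods using the $E$-structure. Because $T$ is an $E$-set, the set $W=\bigcap\{t^{-1}Ut : t\in T\cup T^{-1}\}$ is a neighbourhood of $e$; this is the real gain of the hypothesis, since it controls conjugation by \emph{every} element of $T$ simultaneously ($tWt^{-1}\subseteq U$ for all $t\in T\cup T^{-1}$) and therefore couples the left and the right behaviour of translates, which is exactly what the two-sided notion of weak almost periodicity demands. Shrinking, choose symmetric neighbourhoods $V,U'$ of $e$ with $\overline V\subseteq U'\subseteq\overline{U'}\subseteq U\cap W$, and, using complete regularity of $G$, a continuous $\phi\colon G\to[0,1]$ with $\phi=1$ on $\overline V$ and $\phi=0$ off $U'$. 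I then set
\[
h(x)=\sum_{t\in T_1}\phi(xt^{-1})\qquad(x\in G).
\]
Right $U$-uniform discreteness makes the supports $U't\subseteq Ut$ pairwise disjoint, so at each point at most one summand is nonzero; hence $h$ takes values in $[0,1]$, equals $1$ on $VT_1$ and vanishes off $UT_1$. Since $G$ is locally compact (being an $E$-group) and $T$ meets every compact set in a finite set, each point has a relatively compact neighbourhood meeting only finitely many supports, so $h$ is locally a finite sum and $h\in\CB(G)$.

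The crux, and the main obstacle, is to show $h\in\wap(G)$, which I would establish through Grothendieck's double limit criterion: $h\in\wap(G)$ precisely when, for all sequences $(a_n)$ and $(b_m)$ in $G$, the iterated limits $\lim_n\lim_m h(a_nb_m)$ and $\lim_m\lim_n h(a_nb_m)$ coincide whenever both exist. Fix such sequences and assume both limits exist. Whenever $h(a_nb_m)\ne 0$ we have $a_nb_m\in U'T_1\subseteq UT$, so uniform discreteness assigns a \emph{unique} $t_{n,m}\in T_1$ with $a_nb_m\in U't_{n,m}$, and $h(a_nb_m)=\phi(a_nb_m t_{n,m}^{-1})$. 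The analysis then tracks these points $t_{n,m}$: the two orders of iteration can disagree only if $t_{n,m}$ depends on both indices in a genuinely non-product way, forcing a non-relatively-compact family of distinct values. This is where I would feed the escaping index sequence into the translation-compactness of $UT$, obtaining a finite set $F$ with $\bigcap_{b\in F}b^{-1}(UT)$ relatively compact; the neighbourhood $W$ enters precisely to convert the right translates produced by the $b_m$ into left-controlled neighbourhoods of the $t_{n,m}$ uniformly in $n$, so that the \emph{same} finite intersection governs both orders of the limit. On the resulting relatively compact set $h$ is continuous, so the iterated limits agree there; a genuine disagreement would instead produce infinitely many distinct $t_{n,m}$ surviving every finite intersection of translates, contradicting the translation-compactness of $UT$. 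Hence the two limits always coincide and $h\in\wap(G)$.

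The delicate part throughout is exactly this bookkeeping inside the double limit argument: matching the two orders of iteration to the right and left translation-compactness of $UT$ while using the $E$-set condition to keep the neighbourhood estimates uniform over the infinitely many points $t_{n,m}\in T_1$. Once this is in place, Proposition \ref{apg=ag} closes the argument for the right case, and the left case follows by the symmetric construction, completing the proof that $T$ is an approximable $\wap(G)$-interpolation set.
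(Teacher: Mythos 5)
Your reduction via Proposition \ref{apg=ag} and your bump function $h=\sum_{t\in T_1}\phi(\cdot\,t^{-1})$ coincide exactly with the paper's construction (there written $h=\sum_{t\in T_1}\psi_{t^{-1}}$), but the entire weight of the lemma rests on proving $h\in\wap(G)$, and that is where your argument has genuine gaps. The paper disposes of this step with two citations: by \cite[Lemma 4.6(iii)]{FG1} the $E$-set hypothesis forces $h\in\uc(G)$ (two-sided uniform continuity, not mere continuity), and by \cite[Lemma 4.3]{FG1} a $\uc(G)$-function supported in a translation-compact set is weakly almost periodic. You replace these by an inline Grothendieck double-limit verification, but (i) you only establish $h\in\CB(G)$ (local finiteness of the sum), never $h\in\ruc(G)$, and the $E$-set neighbourhood $W$ enters your proof only in words (``converts right translates into left-controlled neighbourhoods'') without any estimate. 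This matters because continuity alone is not enough for the mixed cases of the double-limit criterion: when $(a_n)$ is relatively compact and $(b_m)$ escapes, interchanging the limits requires $\sup_t|h(a_nt)-h(at)|\to 0$, i.e.\ $h\in\luc(G)$, and the symmetric mixed case requires $h\in\ruc(G)$; establishing the latter is precisely where the $E$-set property must do concrete work (one writes $st^{-1}=t^{-1}(tst^{-1})$ and uses that $tst^{-1}$ stays small uniformly over $t\in T_1$). Your sentence ``on the resulting relatively compact set $h$ is continuous, so the iterated limits agree there'' silently covers only the case in which \emph{both} sequences are relatively compact, where continuity of $h$ and of multiplication do suffice.

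(ii) Your claim that ``the \emph{same} finite intersection governs both orders of the limit'' is structurally wrong. In the escaping case, $\lim_n\lim_m h(a_nb_m)\neq 0$ yields $b_m\in a_n^{-1}(U'T_1)$ and is contradicted by \emph{right} translation-compactness of $UT$ (relative compactness of $\bigcap_{a\in F}a^{-1}UT$), whereas $\lim_m\lim_n h(a_nb_m)\neq 0$ yields $a_n\in (U'T_1)b_m^{-1}$ and needs \emph{left} translation-compactness (relative compactness of $\bigcap_{b\in F}(UT)b^{-1}$). The two orders of iteration are killed by the two different sides of the hypothesis; this is exactly why the lemma assumes $UT$ translation-compact rather than merely right translation-compact (cf.\ Remark \ref{typo}). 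A further, minor, point: a $\wap(G)$-set must admit such bumps for \emph{every} neighbourhood $U_0$ of $e$, not only the given $U$; the fix is to run your construction inside $U_0\cap U$, as the paper does, noting that subsets of translation-compact sets are translation-compact.
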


\begin{proof} The proof is  a slight modification of that given in \cite[Lemma 4.8(ii)]{FG1}. By Proposition \ref{apg=ag}, it is enough to check that $T$ is a $\wap(G)$-set.
Let $U_0$ be an arbitrary neighbourhood of $e$,  $V$ and $W$ be symmetric neighbourhoods of $e$ such that
$\overline V\subseteq W\subseteq W^2\subseteq U_0\cap U$,  and $T_1\subseteq T.$ Let $\psi\in \luc(G)$ with
$\psi(\overline V)=1$ and $\psi(G\setminus W)=\{0\}$.
 Then as seen in \cite[Lemma 4.6]{FG1}, the function $h=1_{T_1,\psi}=\sum_{t\in
T_1}\psi_{t^{-1}}\in\uc(G)$. Since $h(G\setminus WT_1)=\{0\}$ and $WT_1$ is translation-compact, the function
$h\in\wap(G)$ by \cite[Lemma 4.3]{FG1}. Since clearly $h(G\setminus U_0T_1)=\{0\}$ and $h(VT_1)=\{1\}$,
the claim follows.
\end{proof}

Next Lemma is a slight variant of \cite[Corollary 4.12]{FG1}.

 \begin{lemma}
   \label{cor:wap2}
   Let $G$ be a locally compact group and $\A(G)$ be an admissible subalgebra of $\wap(G)$.
     Let $T$ be a
right  $U$-uniformly discrete subset of $G$ for some symmetric relatively compact
neighbourhood $U$ of $e$. If $T$ is an $\A(G)$-set, then $UT$ is translation-compact.
 \end{lemma}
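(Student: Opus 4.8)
The plan is to argue by contraposition: assuming $UT$ is not translation-compact, I will manufacture, out of the $\A(G)$-set structure, a function $h\in\A(G)$ that violates Grothendieck's double-limit criterion and hence cannot lie in $\wap(G)$, contradicting $\A(G)\subseteq\wap(G)$. Since weak almost periodicity is stable under the left/right symmetry, it suffices to treat the case in which $UT$ fails to be \emph{right} translation-compact; the left case is handled by the mirror construction (using that $U$ is symmetric). Applying Definition \ref{approx}(2) to the neighbourhood $U$, fix once and for all an open $V$ with $\overline V\subseteq U$ such that every $T_1\subseteq T$ admits $h_{T_1}\in\A(G)$ with $h_{T_1}(VT_1)=\{1\}$ and $h_{T_1}(G\setminus UT_1)=\{0\}$.

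By Definition \ref{rud}(2), the failure of right translation-compactness yields a non-relatively-compact set $L\subseteq G$ such that $\bigcap_{b\in F}b^{-1}(UT)$ is non-relatively compact for every finite $F\subseteq L$. First I would extract from $L$ a sequence $(b_n)$ leaving every compact set, and then choose inductively $g_m\in\bigcap_{i\le m}b_i^{-1}(UT)$, escaping to infinity, so that $b_ig_m\in UT$ for all $i\le m$. Because $T$ is right $U$-uniformly discrete the cosets $\{Ut:t\in T\}$ are pairwise disjoint, so for each such pair there is a \emph{unique} $t_{i,m}\in T$ with $b_ig_m\in Ut_{i,m}$, and writing $b_ig_m=u_{i,m}t_{i,m}$ gives $u_{i,m}\in U$.

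The heart of the argument is upgrading ``near $T$'' to ``in $VT_1$'' and arranging a clean pattern. Using that $\overline U$ is compact, I would pass to a diagonal subsequence of $(g_m)$ along which $u_{i,m}\to u_i\in\overline U$ for each fixed $i$; replacing $b_i$ by $\tilde b_i:=u_i^{-1}b_i$ (still escaping to infinity, since $u_i$ ranges in the compact set $\overline U$) then forces $\tilde b_i g_m\in Vt_{i,m}$ for all $m$ large relative to $i$. Simultaneously I would build $T_1$ by an interleaved induction, adding at each stage only the relevant diagonal points $t_{i,m}$ while keeping $T_1$ disjoint from the (finitely many, at each stage) points of $T$ whose cosets capture the already-constructed ``lower'' products $\tilde b_ig_m$ with $m$ small relative to $i$; here the uniform discreteness again guarantees that each product meets at most one coset, so the bookkeeping stays finite. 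The resulting grid satisfies $h_{T_1}(\tilde b_i g_m)=1$ when $m$ is large relative to $i$ and $h_{T_1}(\tilde b_ig_m)=0$ when $i$ is large relative to $m$, whence the two iterated limits of $h_{T_1}(\tilde b_i g_m)$ equal $1$ and $0$ respectively, contradicting $h_{T_1}\in\wap(G)$.

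The main obstacle I anticipate is precisely this last construction: the failure of translation-compactness only places the upper products in the large set $UT$, whereas the value $h_{T_1}=1$ is guaranteed only on the smaller set $VT_1$, and $h_{T_1}$ is uncontrolled on the collar $UT_1\setminus VT_1$. Bridging this gap—via the compactness-diagonalization that contracts $U$ to $V$—while simultaneously choosing $T_1$ so that the complementary lower products fall outside $UT_1$ is the delicate point, and it is where the relative compactness of $U$ and the uniform discreteness of $T$ are both essential.
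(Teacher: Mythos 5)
Your contrapositive strategy for the \emph{right} half of the conclusion is sound, but you should be aware that it amounts to reproving from scratch what the paper simply quotes at this point: the paper's own proof is a few lines long, observing that an $\A(G)$-set is automatically a $\wap(G)$-set (because $\A(G)\subseteq\wap(G)$) and then invoking \cite[Lemma 4.11]{FG1}, which produces a $T_1\subseteq T$ such that no bounded function equal to $1$ on $T_1$ and to $0$ off $UT_1$ can be weakly almost periodic. Within your direct argument two technical points need repair, though both are repairable. First, $\overline U$ is compact but in a non-metrizable group need not be sequentially compact, so there is in general no ``diagonal subsequence along which $u_{i,m}\to u_i$''; one must instead cover $\overline U$ by finitely many translates $wV'$ and pigeonhole, obtaining $u_{i,m}\in w_iV'$ for all $m$ in a nested-then-diagonal subsequence, which serves the same purpose. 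Second, your interleaved induction is circular as phrased: the ``lower'' products $\tilde b_ig_m$ involve $\tilde b_i=u_i^{-1}b_i$, which only becomes defined \emph{after} the diagonal extraction, i.e.\ after the induction is finished. The fix is to run the avoidance conditions with the compact hulls $\overline U\,b_ig_m$ (which dominate every candidate $\tilde b_ig_m$) and to put into $T_1$ all upper points $t_{i,m}$, $i\le m$; since a compact set meets the right uniformly discrete set $T$ in only finitely many points, the bookkeeping stays finite, as you anticipated.

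The genuine gap is the \emph{left} half. Your claim that it follows ``by the mirror construction (using that $U$ is symmetric)'' is not justified, because the hypotheses are not left/right symmetric: $T$ is \emph{right} uniformly discrete, and the $\A(G)$-set property controls $h$ on $VT_1$ and off $UT_1$, that is, with the collar attached on the \emph{left} of the points of $T$. If $UT$ fails to be left translation-compact, the grid you can manufacture consists of products $g_mb_i\in UT$ for $i\le m$, hence $g_mb_i=u_{i,m}t_{i,m}$ with the collar $u_{i,m}$ again on the far left, adjacent to the \emph{column} entry $g_m$; but the pigeonhole can only make $u_{i,m}$ approximately constant in $m$ for fixed $i$, i.e.\ constant along \emph{rows} (for fixed $m$ there are only the finitely many indices $i\le m$ to pigeonhole over). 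So the collar can be absorbed into neither sequence: replacing $b_i$ by $u_i^{-1}b_i$ does nothing, since it sits on the wrong side of the product, and replacing $g_m$ would require $u_{i,m}$ to depend on $m$ alone. Consequently one iterated limit of $h(g_mb_i)$ sees only the uncontrolled values of $h$ on the collar $UT_1\setminus VT_1$, and Grothendieck's criterion yields no contradiction. This sidedness issue is exactly what the paper is careful about (see Remark \ref{typo}): its treatment of the left case is not a mirror of a direct construction but an appeal to the left version of \cite[Lemma 4.11]{FG1}, and closing the gap needs a different device --- for instance working with the sandwich points $w_i^{-1}g_{m_k}b_i$ and deriving the contradiction inside $G^\wap$ from separate continuity of its multiplication together with joint continuity of the $G$-action, rather than from the sequential double-limit criterion. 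As it stands, your proposal proves only that $UT$ is right translation-compact, not the stated two-sided conclusion.
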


\begin{proof} Suppose that $UT$ is not right translation-compact.
We check that $T$ is not a $\wap(G)$-set. Let $V$ be any open neighbourhood of $e$ with $\overline{V}\subseteq U$.
Since $UT$ is not right translation-compact, we may apply
 \cite[Lemma 4.11]{FG1} to find $T_1\subseteq T$
  such that no bounded function  $f\colon G\to \C$ with
  $f(T_1)=\{1\}$
  and $f(G\setminus UT_1)=\{0\}$ is  weakly almost periodic. Therefore,
   $T$ is not a $\wap(G)$-set, and so it cannot be an $\A(G)$-set either.

The argument is symmetric if we suppose that $UT$ is not left translation-compact.
\end{proof}

\begin{remark}\label{typo}
In the statement of  several Theorems of \cite{FG1}, translation-compactness was  erroneously exchanged with
 right translation-compactness. This affects to  theorems
4.15 and 4.16 of [loc. cit.]  where  the set  $VT$ appearing there should have been required to be
translation-compact instead of just right translation-compact.
%as it was.
The properties actually used in the  proofs of these theorems are the following: Theorem 4.16 relies on Theorem 4.15 and the latter relies on Lemma 4.3 that requires the set to be translation-compact. The proof of Lemma 4.11, on the other hand, works for one sided (either right or left) translation-compact sets.\end{remark}

Recall that $\epsilon_\A : G\to G^\A$ is the natural
mapping, and  $G^\ast=G^\A\setminus\epsilon_\A(G)$.
The following lemma will be relevant when $\A(G)\subseteq \wap(G)$.

\begin{lemma}[Corollary  2.9 of \cite{deleglick} or Theorem 4.2.14 of \cite{BJM}] \label{im}
Let $\A(G)$  be an admissible subalgebra of $\wap(G)$. The support of the (unique) invariant mean of $\A(G)$ is then the minimal ideal of $G^\A$, which  is a compact subgroup of $G^\ast=G^\A \setminus\epsilon_A( G)$. It is thus contained in $G^\ast G^\ast$.
\end{lemma}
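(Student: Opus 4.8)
The plan is to combine the structure theory of compact semitopological semigroups with the defining invariance of the mean. The starting observation is that, since $\A(G)$ is admissible and $\A(G)\subseteq\wap(G)$, both multiplications $x\mapsto xy$ and $y\mapsto xy$ on $G^\A$ are continuous; that is, $G^\A$ is a compact \emph{semitopological} semigroup, not merely a right topological one. Moreover every $\epsilon_\A(s)$, $s\in G$, is invertible in $G^\A$, so the translations $x\mapsto \epsilon_\A(s)x$ and $x\mapsto x\squ\epsilon_\A(s)$ are homeomorphisms of $G^\A$, with inverses given by $\epsilon_\A(s^{-1})$.

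First I would recall that any compact right topological semigroup possesses a smallest two-sided ideal $K=K(G^\A)$ (the kernel), which is completely simple and is simultaneously the union of the minimal left ideals and of the minimal right ideals. The first substantial step is to upgrade this to: $K$ is a compact topological group. Here I would exploit that $G$ is a dense subgroup acting by the translation homeomorphisms above to force a \emph{unique} minimal left ideal and a \emph{unique} minimal right ideal; their intersection is then all of $K$ and is a maximal subgroup, so $K$ is algebraically a group. Being a compact semitopological semigroup that is algebraically a group, $K$ is a compact topological group by Ellis's theorem.

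Next I would produce the invariant mean as a measure and locate its support. Viewing the unique invariant mean $\mu$ through the Riesz representation theorem as a probability measure on $G^\A$, it is invariant under all left and right translations by elements of $G$; writing $\Phi_f(x)=\int f^\A(xy)\,d\mu(y)$ one has $\Phi_f\equiv\mu(f)$ on $\epsilon_\A(G)$, and since $x\mapsto \Phi_f(x)$ is continuous (this is exactly where weak almost periodicity and separate continuity enter), density of $G$ gives $\delta_x\ast\mu=\mu=\mu\ast\delta_x$ for \emph{every} $x\in G^\A$, whence $\mu$ is idempotent, $\mu\ast\mu=\mu$. Pushing $\mu$ forward under $y\mapsto xy$ and using $\delta_x\ast\mu=\mu$ yields $\overline{x\,\supp(\mu)}=\supp(\mu)$, so $\supp(\mu)$ is a closed two-sided ideal and therefore contains $K$. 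Conversely, the support of an idempotent probability measure on a compact semitopological semigroup is a completely simple subsemigroup carrying $\mu$ as its normalized Haar measure; being a two-sided ideal it must coincide with the minimal ideal. Hence $\supp(\mu)=K$ and $\mu$ is the Haar measure of the compact group $K$.

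Finally, $K\subseteq G^\ast$: if some $\epsilon_\A(g)$ with $g\in G$ lay in $K$, then invertibility of $\epsilon_\A(g)$ together with $K$ being an ideal would drag the identity of $G^\A$, and hence all of $G^\A$, into $K$, contradicting properness of the kernel (valid whenever $\epsilon_\A(G)\neq G^\A$). Since $K$ is the minimal ideal it satisfies $K=KK$, so $K\subseteq G^\ast$ gives $K=KK\subseteq G^\ast G^\ast$, as required. The main obstacle is the upgrade in the second paragraph, from ``completely simple kernel'' to ``single compact group'', since this is precisely where the group structure of $G$ and the semitopological (rather than merely right topological) nature of $G^\A$ are indispensable; the measure-theoretic identification $\supp(\mu)=K$ is comparatively routine once the continuity of $\Phi_f$, and thus $\delta_x\ast\mu=\mu$, has been secured.
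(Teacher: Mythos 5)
The paper does not actually prove this lemma: it quotes it from de Leeuw--Glicksberg and from Berglund--Junghenn--Milnes, so your proposal has to be measured against that classical argument, which it essentially reconstructs. Its weak point is the step you yourself flag as the main obstacle: you assert that density of $\epsilon_\A(G)$, acting by translation homeomorphisms, forces a \emph{unique} minimal left ideal and a \emph{unique} minimal right ideal. The ingredients you name cannot suffice as stated: for $G$ countably infinite and discrete, $G^{\luc}=\beta G$ is a compact right topological semigroup in which $G$ is a dense group of units acting by homeomorphisms, yet $\beta G$ has $2^{\mathfrak{c}}$ minimal left ideals. So separate continuity must enter the uniqueness proof in an essential, quantitative way, and your sketch supplies no mechanism; this step \emph{is} the substance of the de Leeuw--Glicksberg structure theorem (proved there via the splitting of the associated weakly almost periodic operator semigroup into reversible and flight parts). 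Fortunately, the gap is repairable with tools you already introduce, and in a way that also lets you drop the idempotent-measure structure theorem, whose validity on semitopological semigroups is itself not routine. Grant your third paragraph, namely $\delta_x\ast\mu=\mu=\mu\ast\delta_x$ for every $x\in G^\A$ (this is where real work hides: continuity of $\Phi_f$ is \emph{not} a formal consequence of separate continuity, since dominated convergence is unavailable for nets; one integrates the weakly continuous $C(G^\A)$-valued map $y\mapsto f^\A(\,\cdot\,y)$ and invokes Krein--\v{S}mulian, which is exactly where $\A(G)\subseteq\wap(G)$ is used). Now put $M=\supp(\mu)$. As you argue, $M$ is a closed two-sided ideal. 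Moreover, for $x\in M$ the set $xM$ is compact, hence closed, so $\overline{xM}=M$ gives $xM=M$, and likewise $Mx=M$. Thus both equations $ax=b$ and $ya=b$ are solvable in $M$, so $M$ is algebraically a group; by Ellis's theorem it is a compact topological group, and $\mu$ is its Haar measure. Finally, a two-sided ideal which is a group is automatically the minimal ideal: for any two-sided ideal $I$, the set $I\cap M$ is a two-sided ideal of the group $M$, hence equals $M$, i.e. $M\subseteq I$. This gives $\supp(\mu)=K(G^\A)$, a compact topological group, with no appeal to uniqueness of minimal ideals or to idempotent-measure theory.

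Your last step is genuinely wrong as written: properness of the kernel is \emph{not} ``valid whenever $\epsilon_\A(G)\neq G^\A$''. Take $G=\Z$ and $\A(G)=\ap(G)$, which is an admissible subalgebra of $\wap(G)$: then $G^\A=\Z^{\ap}$ is a compact topological group, so $K(G^\A)=G^\A$, even though $\epsilon_\A(\Z)$ is a proper dense subgroup; note that in this example the conclusion $K(G^\A)\subseteq G^\ast$ of the lemma itself fails, so the printed statement is tacitly assuming more than it says, and your proof cannot be expected to close this without an added hypothesis. The condition that actually characterizes $K(G^\A)\cap\epsilon_\A(G)=\emptyset$ is $K(G^\A)\neq G^\A$, equivalently $\A(G)\not\subseteq\ap(G)$. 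In every application the paper makes of this lemma one has $C_0(G)\subseteq\A(G)$ with $G$ noncompact; then Lemma \ref{homeo} makes $G^\ast$ a closed two-sided ideal of $G^\A$, and since the kernel is contained in every two-sided ideal, $K(G^\A)\subseteq G^\ast$ follows with no invertibility argument at all, whence $K=KK\subseteq G^\ast G^\ast$. Replacing your final paragraph by this observation (or by the hypothesis $\A(G)\not\subseteq\ap(G)$) is what the statement actually needs.
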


    Remainders of semigroup compactifications are often ideals.
We summarize the simplest cases in the next two lemmas. The first one is probably well-known. We omit the easy proof.

\begin{lemma} \label{ideal} Let $G$ be a locally compact group, $\A(G)$ be an admissible subalgebra of $\luc(G)$.
Then $\epsilon_\A(G)$ is open in $G^\A$ if and only if
$G^\ast$ is a closed two-sided ideal in $G^\A.$
\end{lemma}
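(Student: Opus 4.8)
The plan is to treat the two implications quite asymmetrically, since one of them is essentially a tautology. Writing $H=\epsilon_\A(G)$, we have by definition $H=G^\A\setminus G^\ast$, so $H$ is open precisely when $G^\ast$ is closed. Consequently the implication ``$G^\ast$ a closed two-sided ideal $\Rightarrow$ $\epsilon_\A(G)$ open'' needs nothing beyond this remark: if $G^\ast$ is closed then its complement $H$ is open, and the ideal hypothesis is not even used. All the content therefore sits in the forward direction, where from the openness of $H$ (equivalently, the closedness of $G^\ast$) I must manufacture the ideal property $G^\A G^\ast\cup G^\ast G^\A\subseteq G^\ast$.

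First I would record the purely algebraic half. Since $\epsilon_\A$ is a semigroup homomorphism of the group $G$, the set $H$ is a subgroup of $G^\A$: its identity is $\epsilon_\A(e)$, which is the identity of $G^\A$, and $\epsilon_\A(g)$ has inverse $\epsilon_\A(g^{-1})$. From this alone, with no topology involved, one gets $HG^\ast\subseteq G^\ast$ and $G^\ast H\subseteq G^\ast$: if $\epsilon_\A(g)x\in H$ for some $x\in G^\ast$, then multiplying on the left by $\epsilon_\A(g^{-1})$ and using associativity forces $x\in H$, a contradiction, and symmetrically on the right. Thus the only products that could conceivably escape $G^\ast$ are those in $G^\ast G^\ast$, and the whole problem reduces to proving $G^\ast G^\ast\subseteq G^\ast$.

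This last inclusion is where the topology, and the semigroup structure recalled in Section \ref{prelim}, enter. Suppose towards a contradiction that $s,x\in G^\ast$ but $sx=h\in H$. Using that $\epsilon_\A(G)$ is dense in $G^\A$, I would pick a net $(h_\alpha)$ in $H$ with $h_\alpha\to s$. Because $G^\A$ is right topological, the map $t\mapsto tx$ is continuous, so $h_\alpha x\to sx=h$; as $H$ is open and $h\in H$, the net $(h_\alpha x)$ is eventually in $H$. But $h_\alpha\in H$ and $x\in G^\ast$, so $h_\alpha x\in HG^\ast\subseteq G^\ast$ by the previous step, contradicting $H\cap G^\ast=\emptyset$. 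Hence $sx\in G^\ast$. Combining this with the algebraic step yields $G^\A G^\ast=HG^\ast\cup G^\ast G^\ast\subseteq G^\ast$ and likewise $G^\ast G^\A\subseteq G^\ast$, so $G^\ast$ is a two-sided ideal; being the complement of the open set $H$, it is also closed. There is no serious obstacle here, but the one point I would be careful about is that $G^\A$ is merely \emph{right} topological, so the argument must use continuity of right multiplication $t\mapsto tx$ rather than of left multiplication by the (non-group) element $x$; everything else is bookkeeping with the subgroup $H$.
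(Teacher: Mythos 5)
Your proof is correct: the backward direction is indeed trivial, the inclusions $H G^\ast\subseteq G^\ast$ and $G^\ast H\subseteq G^\ast$ follow purely algebraically from $H=\epsilon_\A(G)$ being a subgroup of $G^\A$ whose identity is the identity of $G^\A$, and your density argument for $G^\ast G^\ast\subseteq G^\ast$ correctly uses only the continuity of the maps $t\mapsto tx$, which is all that the right topological structure of $G^\A$ provides (left multiplication by a point of $G^\ast$ is indeed not available). Note that the paper itself omits the proof of this lemma (``The first one is probably well-known. We omit the easy proof.''), so your argument simply supplies the missing one, and it is the natural standard argument; it in fact works for any admissible subalgebra of $\CB(G)$, not just of $\luc(G)$.
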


%\begin{proof} Assume that $\epsilon_\A(G)$ is open and, hence, that $G^\ast$ is closed.  To see  that %$G^\ast$
%is a  left ideal it suffices to observe that $G^\ast$ is an invariant subset of $G^\A,$,  that is %$\epsilon_A(G)G^\ast\epsilon_\A(G)\subseteq G^\ast$,
%To check that it is also a right ideal, let $y\in G^\A$ and $x\in G^\A.$
%If $x\in \epsilon_\A(G),$ then $yx\in G^\ast$ since $G^\ast$ is right ivariant.
% If $x\in G^\ast$, then $yx\in G^\ast$ since $G^\ast$ is a left ideal.

% The converse is clear. \end{proof}

\begin{lemma} \label{homeo} Let $G$ be a locally compact group, $\A(G)$ be an admissible
subalgebra of $\luc(G)$. Then the following statements are equivalent.
\begin{enumerate}
\item  $\epsilon_\A$ is a homeomorphism of $G$ onto $\epsilon_\A(G)$.
\item $\epsilon_\A$ is injective and $\epsilon_\A(G)$ is open in $G^\A.$
  \item  $C_0(G)\subseteq \A(G)$.
\item $\epsilon_\A$ is injective and $G^\ast$ is a closed two-sided ideal of $G^\A.$
\end{enumerate}
%Then the three first statements are equivalent and imply the fourth.
\end{lemma}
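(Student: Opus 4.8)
The plan is to prove the chain of equivalences in Lemma \ref{homeo} by establishing $(\mathrm{iii})\Rightarrow(\mathrm{i})\Rightarrow(\mathrm{ii})\Rightarrow(\mathrm{iii})$ and then closing the loop with the equivalence $(\mathrm{ii})\Leftrightarrow(\mathrm{iv})$, which is essentially Lemma \ref{ideal}. The implication $(\mathrm{iii})\Rightarrow(\mathrm{i})$ is already recorded in the preliminaries: it was observed in Section \ref{prelim} that $\epsilon_\A$ is a homeomorphism onto its image whenever $G$ is locally compact and $C_0(G)\subseteq\A(G)$, so I would simply cite that remark. The implication $(\mathrm{i})\Rightarrow(\mathrm{ii})$ requires showing that a homeomorphic embedding of a locally compact $G$ into the compact space $G^\A$ necessarily has open image; this is the standard fact that a locally compact subspace of a Hausdorff space is locally closed, hence the intersection of an open and a closed set, combined with density of $\epsilon_\A(G)$ to upgrade ``locally closed'' to ``open''.

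For $(\mathrm{ii})\Rightarrow(\mathrm{iii})$ I would argue as follows. Assume $\epsilon_\A$ is injective with open image and identify $G$ with $\epsilon_\A(G)$. Given $f\in C_0(G)$, I want to produce $\tilde f\in\A(G)$; since $\A(G)$ is a uniformly closed $C^\ast$-algebra it suffices to handle a dense subclass, say functions supported in a fixed compact set, or to approximate $f$ uniformly. The key point is that openness of $G$ in $G^\A$ lets me extend a compactly supported continuous function on $G$ by zero to a continuous function on all of $G^\A$: if $K=\supp f$ is compact and $G$ is open, then the extension by $0$ on $G^\A\setminus G$ is continuous precisely because $f$ vanishes off a compact (hence closed in $G^\A$) set and $G$ is open. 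Gelfand duality identifies $\CB(G^\A)$ with $\A(G)$, so this extended function corresponds to an element of $\A(G)$ whose restriction to $G$ is $f$. Running this over a uniformly dense family of compactly supported functions and using that $\A(G)$ is norm-closed gives $C_0(G)\subseteq\A(G)$.

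Finally, the equivalence $(\mathrm{ii})\Leftrightarrow(\mathrm{iv})$ is immediate from Lemma \ref{ideal}, which states that $\epsilon_\A(G)$ is open in $G^\A$ if and only if $G^\ast$ is a closed two-sided ideal; adjoining the common injectivity hypothesis to both sides converts this into the stated equivalence of (ii) and (iv). Thus the whole loop closes. I expect the main obstacle to be the careful verification in $(\mathrm{ii})\Rightarrow(\mathrm{iii})$ that the zero-extension of a compactly supported function is genuinely continuous on $G^\A$: one must check continuity at the boundary points of $G$ inside $G^\A$, and here both the openness of $G$ (so that points of $G$ have neighbourhoods inside $G$) and the compactness of the support (so that the closure of $\supp f$ meets no point of $G^\ast$) are used in tandem. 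Everything else is either a direct citation of an earlier statement or a standard point-set topology argument about locally compact subspaces of compact Hausdorff spaces.
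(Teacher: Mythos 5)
Your architecture---the cycle (iii)$\Rightarrow$(i)$\Rightarrow$(ii)$\Rightarrow$(iii), closed off by (ii)$\Leftrightarrow$(iv) via Lemma \ref{ideal}---is reasonable, and three of its four pieces are fine: (iii)$\Rightarrow$(i) is indeed the remark recorded in Section \ref{prelim}, (i)$\Rightarrow$(ii) is the standard fact that a dense locally compact subspace of a Hausdorff space is open, and (ii)$\Leftrightarrow$(iv) is exactly Lemma \ref{ideal}. The gap is in (ii)$\Rightarrow$(iii), and it sits precisely where the nontrivial content of the lemma lives. Under (ii) alone, $\epsilon_\A\colon G\to\epsilon_\A(G)$ is only a continuous bijection; the subspace topology that $\epsilon_\A(G)$ inherits from $G^\A$ may a priori be strictly coarser than the topology of $G$. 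Your phrase ``identify $G$ with $\epsilon_\A(G)$'' silently assumes the two topologies coincide, i.e., assumes (i). Concretely, for a compactly supported continuous $f$ on $G$, the zero-extension $\tilde f$ is continuous at every point of $G^\ast$ for trivial reasons ($\tilde f$ vanishes on the open set $G^\A\setminus\epsilon_\A(\supp f)\supseteq G^\ast$), and it is continuous at points $\epsilon_\A(t)$ with $f(t)=0$ by a compactness-plus-injectivity argument; but at a point $\epsilon_\A(t)$ with $f(t)\neq 0$, continuity requires that no net $(s_\beta)$ eventually leaving every compact subset of $G$ can have $\epsilon_\A(s_\beta)\to\epsilon_\A(t)$: along such a net $\tilde f(\epsilon_\A(s_\beta))=0\not\to f(t)$. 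Ruling this out does not follow from openness of $\epsilon_\A(G)$ together with compactness of $\supp f$; demanding it for all such $f$ is equivalent to (i) itself. So your argument establishes (i)$\Rightarrow$(iii), not (ii)$\Rightarrow$(iii), and the cycle does not close. Note also that you locate the obstacle at ``the boundary points of $G$ inside $G^\A$''---that is exactly the easy part; the danger is at the points of $\epsilon_\A(G)$.

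For comparison, the paper makes no attempt at a self-contained proof: it states that the equivalence of (i)--(iii) ``follows as in the case $\A(G)=\wap(G)$'', citing \cite[Proposition III.4.5]{BH} and \cite[Theorem 3.6]{B}, and then obtains (iv) from Lemma \ref{ideal}. The content hidden in those citations is exactly the implication you are missing, and filling it needs genuinely more than point-set topology of the compactification: for instance, one can first observe that openness makes $\epsilon_\A(G)$ a locally compact semitopological group, hence a topological group by Ellis' theorem, and then prove that the continuous bijective homomorphism $\epsilon_\A$ is open---an open-mapping-type argument (via Baire category or Haar measure) in which the hypothesis $\A(G)\subseteq\luc(G)$ and the joint continuity of the $G$-action enter essentially. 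The compact-support trick alone will not do it.
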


\begin{proof} The equivalence of the three first statements follows as in the case of $\A(G)=\wap(G)$, see
 \cite[Proposition III.4.5]{BH} or \cite[Theorem 3.6]{B}.

The rest follows from Lemma \ref{ideal}.

\end{proof}

\begin{remark} 1. When $C_0(G)$ is not contained in $\A(G),$ the map $\epsilon_\A$ may be injective but it is not a homeomorphism as it is the case when $\A(G)=\ap(G)$ and $G$ is a locally compact Abelian group.

2. When $G$ is not locally compact,
 $\epsilon_\A$ may fail to be  a homeomorphism also when $\A(G)=\wap(G)$. In fact,  $G^\A$ might even be a singleton.  For example, if $G$ is  the group of all orientation-preserving self-homeomorphisms of
$ [0,1]$, endowed with the compact-open topology, then $\wap(G)=\C1$, see \cite{Me}.

3. The map $\epsilon_\luc$ is a homeomorphism onto $\epsilon_\luc(G)$ for every topological group, see \cite{T} or \cite{gali10}, but $\epsilon_\luc(G)$ is  open in $G^\luc $ if and only if $G$ is locally compact.\end{remark}

\begin{lemma} \label{general} Let $G$ be a locally compact group, $\A(G)$ be an admissible subalgebra of $\CB(G)$,  $I$ be a proper closed two-sided ideal of $G^\A$, and put \[I^\perp=\left\{f\in \A(G):f^\A(I)=\{0\}\right\}.\]
Then $\I(G)=I^\perp\oplus\C1$ is an admissible  subalgebra of $\A(G)$  containing  $C_0(G)\cap\A(G)$.
\end{lemma}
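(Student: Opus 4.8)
The plan is to first recognise $\I(G)$ concretely. Since $g=f+\lambda 1$ with $f\in I^\perp$ has $g^\A\equiv\lambda$ on $I$, and conversely any $g\in\A(G)$ with $g^\A$ constant on $I$ lies in $I^\perp\oplus\C1$, we have $\I(G)=\{g\in\A(G):g^\A|_I\text{ is constant}\}$. As $I^\perp$ is a norm-closed $*$-ideal of $\A(G)$ (the condition $g^\A(I)=\{0\}$ is closed and stable under products and under conjugation) and $\C1$ is one-dimensional, the sum $I^\perp\oplus\C1$ is a norm-closed unital $*$-subalgebra; this settles that $\I(G)$ is a unital $C^*$-subalgebra of $\A(G)$ and gives condition (i) of admissibility. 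I would record at the outset the two facts that drive everything: (a) $\epsilon_\A(G)\cap I=\emptyset$, hence $I\subseteq G^\ast$; and (b) the restriction map $\pi\colon G^\A\to G^\I$, $\pi(x)=x|_{\I(G)}$, is a continuous surjection (its image is compact, hence closed, and contains $\pi(\epsilon_\A(G))$, the dense canonical image of $G$ in $G^\I$). For (a), if some $\epsilon_\A(s)\in I$ then multiplying on the left by $\epsilon_\A(s^{-1})$ forces the identity $\epsilon_\A(e)$ into $I$, whence $x=x\,\epsilon_\A(e)\in I$ for every $x\in G^\A$, against properness of $I$.

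For translation invariance (condition (ii)) it suffices to treat $I^\perp$, since $\C1$ is translation invariant. Using $({}_sf)^\A(z)=f^\A(\epsilon_\A(s)z)$ and $(f_s)^\A(z)=f^\A(z\,\epsilon_\A(s))$ for $z\in G^\A$ (both immediate from the definitions of the $G$-action and of $f_s={}_{\epsilon_\A(s)}f$), together with the fact that $I$ is a two-sided ideal, one gets $\epsilon_\A(s)z,\,z\,\epsilon_\A(s)\in I$ whenever $z\in I$; hence ${}_sf$ and $f_s$ again vanish on $I$, so $I^\perp$ is translation invariant. The core of the proof is condition (iii), where one must be careful that it refers to the spectrum $G^\I$ of the subalgebra, not to $G^\A$. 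Given $g\in\I(G)$ and $y\in G^\I$, I would lift $y$ through $\pi$ to some $x\in G^\A$ with $y=x|_{\I(G)}$. Since ${}_sg\in\I(G)$ by (ii), the function ${}_yg(s)=y({}_sg)=x({}_sg)$ coincides on $G$ with ${}_xg$, which lies in $\A(G)$ because $\A(G)$ is admissible. It then remains to see that ${}_xg\in\I(G)$, i.e. that $({}_xg)^\A$ is constant on $I$; but $({}_xg)^\A(z)=(zx)(g)=g^\A(zx)$, and for $z\in I$ the ideal property gives $zx\in I$, on which $g^\A$ takes the constant value of $g$. Thus ${}_yg={}_xg\in\I(G)$, proving (iii). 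Conceptually this is just the statement that $G^\I$ is the Rees quotient $G^\A/I$, again a semigroup compactification of $G$, with the lifting argument as its concrete translation. I expect this interplay between the two distinct spectra $G^\A$ and $G^\I$, resolved through the surjection $\pi$ and the identity $g^\A(zx)=({}_xg)^\A(z)$, to be the main obstacle.

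Finally, for $C_0(G)\cap\A(G)\subseteq\I(G)$ it is enough, by $I\subseteq G^\ast$, to show that $f^\A$ vanishes on $G^\ast$ for every $f\in C_0(G)\cap\A(G)$; then $f\in I^\perp\subseteq\I(G)$. Fix $\epsilon>0$ and set $L=\overline{\{t\in G:|f(t)|>\epsilon\}}$, a compact subset of $G$ since $f\in C_0(G)$. The open set $U_\epsilon=\{x\in G^\A:|f^\A(x)|>\epsilon\}$ meets the dense set $\epsilon_\A(G)$ exactly in $\epsilon_\A(\{|f|>\epsilon\})$ (because $f^\A\circ\epsilon_\A=f$), so $U_\epsilon\subseteq\overline{\epsilon_\A(\{|f|>\epsilon\})}\subseteq\epsilon_\A(L)\subseteq\epsilon_\A(G)$, the middle inclusion holding because $\epsilon_\A(L)$ is compact, hence closed. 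Letting $\epsilon\downarrow 0$ yields $\{f^\A\ne0\}\subseteq\epsilon_\A(G)$, that is $f^\A\equiv0$ on $G^\ast$, which completes the argument.
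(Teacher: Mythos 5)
Your proof is correct and takes essentially the same route as the paper's: both reduce the key admissibility condition (iii) to lifting a point of $G^{\I}$ to a point of $G^\A$ --- you via the canonical restriction surjection $\pi\colon G^\A\to G^{\I}$, the paper via a net in $G$ converging in $G^{\I}$ whose subnet converges in $G^\A$ (the same compactness fact in different packaging) --- and then conclude with admissibility of $\A(G)$ together with the two-sided ideal property of $I$, exactly as in the identity $({}_xg)^\A(z)=g^\A(zx)$. The only real difference is thoroughness: you spell out what the paper dismisses as clear, namely conditions (i)--(ii), the inclusion $I\subseteq G^\ast$, and $C_0(G)\cap\A(G)\subseteq\I(G)$.
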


\begin{proof}  Since, necessarily $I\subseteq G^\ast$,
it is clear that $I^\perp$ is translation invariant $C^*$-subalgebra of $\A(G)$
 containing $\cnaught\cap\A(G).$
It remains to check that  if $f\in I^\perp,$ $a\in G^{\I}$, then the function defined on $G$ by ${}_af(s)=a({}_sf)$ is in
$I^\perp.$ It is a straightforward check that ${}_af\in \A(G).$
For this, note that if  $(s_\alpha)$ a net in $G$ with $(\epsilon_\I(s_\alpha))$ converging to $a$ in $G^{\I},$ then by taking a subnet we may assume that $(\epsilon_\A(s_\alpha))$ converges to $\bar a$ in $G^\A.$ Accordingly,
\[({}_af)(s)=a({}_sf)=\lim_\alpha f(ss_\alpha)= f^\A(s\bar a)=({}_{\bar a}f)(s)\;\text{ for every}\; s\in G,\] and so ${}_af={}_{\bar a}f\in \A(G).$

Let now $x\in I$ and $(x_\beta)$ be a net in $G$ converging to $x$ in $G^\A.$
Then
\[ ({}_af)^\A(x)=\lim_\beta \epsilon_\A(x_\beta) ({}_af)=\lim_\beta {\epsilon_\A(x_\beta)}a(f)=xa(f)=f^\A(x a)=0.\]
Therefore,  ${}_af\in I^\perp.$
\end{proof}

We now identify  the algebra for which right translation-compact sets are the approximable interpolation sets.
To avoid confusion on which remainder we are dealing with in our coming arguments, we put
 $G^{*l}=G^\luc\setminus G$,  $ G^{*r}=G^\ruc\setminus G$ and  $G^{*w}=G^\wap\setminus G$.
Let also $K(G^\wap)$ be the minimal ideal in $G^\wap.$
Recall that, by Lemma \ref{im}, $K(G^\wap)$ is the support of the invariant mean of $\wap(G)$.

\begin{definition} For a topological group $G,$
let \begin{align*}\luc_a(G)&=\overline{G^{*l}G^{*l}}^\perp=\left\{f\in \luc(G):f^\luc(G^{*l}G^{*l})=\{0\}\right\},\\
\ruc_a(G)&=\overline{G^{*r}\squ G^{*r}}^\perp=\left\{f\in \ruc(G):f^\ruc(G^{*r}\squ G^{*r})=\{0\}\right\},\\
\wap_a(G)&=\overline{G^{*w} G^{*w}}^\perp=\left\{f\in \wap(G):f^\wap(G^{*w} G^{*w})=\{0\}\right\},\\
\luc_\ast(G)&=\luc_a(G)\oplus\C1,\;\; \ruc_\ast(G)=\ruc_a(G)\oplus\C1
 \;\;\;\text{and}\;\;\;\wap_\ast(G)=\wap_a(G)\oplus\C1.\end{align*}
\end{definition}

\begin{lemma} \label{12Other} Let $G$ be a topological group, and let $\epsilon_l^w:G^\luc\to G^\wap$ and $\epsilon_r^w:G^\ruc\to G^\wap$ be
 the natural homomorphisms. Then
\begin{enumerate}
\item $f^\wap \circ \epsilon_l^w= f^\luc$ and $f^\wap \circ \epsilon_r^w=f^\ruc$
for every $f\in\wap(G)$.
\item If $G$ is locally compact, then $\epsilon_l^w(G^{*l})=\epsilon_r^w(G^{*r})=
G^{*w}.$
\end{enumerate}
\end{lemma}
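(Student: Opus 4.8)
The plan is to read both natural homomorphisms as the adjoints of the inclusions $\wap(G)\subseteq\luc(G)$ and $\wap(G)\subseteq\ruc(G)$, so that $\epsilon_l^w(x)=x|_{\wap(G)}$ for $x\in G^\luc$ and $\epsilon_r^w(x)=x|_{\wap(G)}$ for $x\in G^\ruc$; in particular $\epsilon_l^w\circ\epsilon_\luc=\epsilon_\wap$ and $\epsilon_r^w\circ\epsilon_\ruc=\epsilon_\wap$ as maps on $G$. With this reading, part (i) falls out by density. Fixing $f\in\wap(G)$, both $f^\wap\circ\epsilon_l^w$ and $f^\luc$ are continuous scalar functions on the compact space $G^\luc$, and for every $s\in G$ they take the common value $f(s)$, since $(f^\wap\circ\epsilon_l^w)(\epsilon_\luc(s))=f^\wap(\epsilon_\wap(s))=f(s)=f^\luc(\epsilon_\luc(s))$. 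As $\epsilon_\luc(G)$ is dense in $G^\luc$, the two continuous functions coincide; the identity $f^\wap\circ\epsilon_r^w=f^\ruc$ follows verbatim with $\ruc$ in place of $\luc$ (note $\wap(G)\subseteq\ruc(G)$, so $f^\ruc$ is defined).

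For part (ii) I would first record that $\epsilon_l^w$ and $\epsilon_r^w$ are surjective: being a continuous image of a compact space, $\epsilon_l^w(G^\luc)$ is compact, hence closed in $G^\wap$, and it contains the dense set $\epsilon_\wap(G)$, so it equals all of $G^\wap$. The second ingredient, available because $G$ is locally compact, is that $C_0(G)\subseteq\wap(G)\subseteq\luc(G)$, together with the description underlying Lemma \ref{homeo}: when $C_0(G)$ sits inside an admissible algebra, the remainder is exactly the common zero set of the Gelfand transforms of the functions in $C_0(G)$.

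With these in hand I would establish the two inclusions. If $x\in G^{*l}$, then $f^\luc(x)=0$ for every $f\in C_0(G)$, so by part (i) $f^\wap(\epsilon_l^w(x))=f^\luc(x)=0$ for all such $f$; hence $\epsilon_l^w(x)$ lies in the zero set of $C_0(G)$ in $G^\wap$, that is, $\epsilon_l^w(x)\in G^{*w}$, giving $\epsilon_l^w(G^{*l})\subseteq G^{*w}$. Conversely, given $y\in G^{*w}$, surjectivity yields $x\in G^\luc$ with $\epsilon_l^w(x)=y$; were $x=\epsilon_\luc(s)\in\epsilon_\luc(G)$ we would get $y=\epsilon_\wap(s)\in\epsilon_\wap(G)$, contradicting $y\in G^{*w}$, so $x\in G^{*l}$ and $y\in\epsilon_l^w(G^{*l})$. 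Thus $\epsilon_l^w(G^{*l})=G^{*w}$, and the identical argument with $\ruc$ gives $\epsilon_r^w(G^{*r})=G^{*w}$.

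The only point needing care, and the expected main obstacle, is the second ingredient of part (ii): one must invoke $C_0(G)\subseteq\wap(G)$ (the standard fact that functions vanishing at infinity are weakly almost periodic on a locally compact group) and the clean identification of the remainder with the common zero set of $C_0(G)$. Once these are secured, everything else is soft topology, namely density of $G$ and compactness of the compactifications.
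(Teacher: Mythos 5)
Your proof is correct and follows essentially the same route as the paper: part (i) is the commutativity of the diagrams $\epsilon_l^w\circ\epsilon_\luc=\epsilon_\wap$, $\epsilon_r^w\circ\epsilon_\ruc=\epsilon_\wap$ combined with density of $\epsilon_\luc(G)$ in $G^\luc$, and part (ii) rests on the characterization (valid since $C_0(G)\subseteq\wap(G)$ for locally compact $G$) of the remainder of each of the three compactifications as the common zero set of the Gelfand transforms of $C_0(G)$. The paper states this in two sentences; your write-up merely makes explicit the surjectivity of $\epsilon_l^w$ and $\epsilon_r^w$ (compact image containing a dense set), which the paper leaves implicit.
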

\begin{proof}
The proof of the first statement follows from  the commutativity of the following diagrams:
 \begin{equation*}
\xymatrix{
G \ar[r]^{\epsilon_{_{\wap}}} \ar[d]^{\!\epsilon_{_{\luc}}} & G^\wap \\
 G^\luc \ar[ur]_{\epsilon^w_l} &}\qquad \hfill \qquad\xymatrix{
G \ar[r]^{\epsilon_{_{\wap}}} \ar[d]^{\!\epsilon_{_{\ruc}}} & G^\wap \\
 G^\ruc   \ar[ur]_{\epsilon^w_r} &}\end{equation*}

The second statement is now clear since $x$ is in the remainder of any of the three compactifications if and only if $x(f)=0$ for every
$f\in C_0(G).$
\end{proof}

\begin{proposition} \label{abelian} Let $G$ be a locally compact group. Then:
\begin{enumerate}
\item $\wap_\ast(G)$, $\luc_\ast(G)$
and  $\ruc_\ast(G)$  are admissible  subalgebras of $\wap(G),$ $\luc(G)$ and $\ruc(G)$, respectively,
and each contains  $C_0(G)$.
\item $\wap_\ast(G)\subseteq \luc_\ast(G)\cap \ruc_\ast(G)\subseteq \wap_0(G)\oplus\C1=K(G^\wap)^\perp\oplus\C1$.
\item If $G$ is Abelian, then $\luc_\ast(G)=\ruc_\ast(G).$
\end{enumerate}
\end{proposition}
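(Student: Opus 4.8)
The plan is to handle the three items separately: reduce (1) to Lemma \ref{general}, read off the displayed equality in (2) from Lemma \ref{im}, and prove the two inclusions in (2) by transporting the vanishing conditions between the three compactifications along the canonical homomorphisms $\epsilon_l^w$ and $\epsilon_r^w$ of Lemma \ref{12Other}. For part (1) the key point is that the relevant self-product of the remainder is a proper closed two-sided ideal, after which Lemma \ref{general} (and its evident right analogue for the $\squ$-product) gives admissibility and the inclusion $C_0(G)=C_0(G)\cap\A(G)$, using $C_0(G)\subseteq\luc(G),\ruc(G),\wap(G)$. Since $C_0(G)\subseteq\luc(G)$, Lemma \ref{homeo} (and its right and $\wap$ versions) shows $G^{*l}$, $G^{*r}$, $G^{*w}$ are closed two-sided ideals; writing $I=G^{*l}$, associativity makes $II$ a two-sided ideal, and I would upgrade this to $\overline{II}$ as follows: the right-ideal property is immediate from continuity of the right translations $y\mapsto yx$, while for the left-ideal property I approximate $x\in G^\luc$ by a net $s_\alpha\in G$, use $s\overline{II}\subseteq\overline{II}$ for $s\in G$ (continuity of $y\mapsto sy$), and then pass to the limit again through continuity of $y\mapsto yp$. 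The same scheme with the two one-sided continuities interchanged treats $\overline{G^{*r}\squ G^{*r}}$ in $(G^\ruc,\squ)$, and two-sided continuity treats $\overline{G^{*w}G^{*w}}$ in $G^\wap$. Properness is clear since each ideal sits inside $G^{*l}$, $G^{*r}$ or $G^{*w}$.

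For part (2), the identity $\wap_0(G)=K(G^\wap)^\perp$ is immediate from Lemma \ref{im}: the unique invariant mean $\mu$ has $\supp\mu=K(G^\wap)$, and $\mu(|f|)=0$ exactly when the continuous nonnegative $|f^\wap|$ vanishes on $\supp\mu$. For $\wap_\ast(G)\subseteq\luc_\ast(G)\cap\ruc_\ast(G)$ it suffices to treat $f\in\wap_a(G)$. By Lemma \ref{12Other}(1), $f^\luc=f^\wap\circ\epsilon_l^w$, and by Lemma \ref{12Other}(2) together with $\epsilon_l^w$ being a homomorphism, $\epsilon_l^w(G^{*l}G^{*l})=\epsilon_l^w(G^{*l})\epsilon_l^w(G^{*l})=G^{*w}G^{*w}$; hence $f^\wap(G^{*w}G^{*w})=\{0\}$ forces $f^\luc(G^{*l}G^{*l})=\{0\}$, that is $f\in\luc_a(G)$. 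The symmetric computation with $\epsilon_r^w$ gives $f\in\ruc_a(G)$.

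The middle inclusion of (2) is the crux. Let $f\in\luc_\ast(G)\cap\ruc_\ast(G)$; then $f\in\uc(G)$ and there are constants $c,d$ with $f^\luc\equiv c$ on $G^{*l}G^{*l}$ and $f^\ruc\equiv d$ on $G^{*r}\squ G^{*r}$. I plan to prove $f\in\wap(G)$ via the Grothendieck double-limit criterion. Unravelling the two products one finds $\lim_m\lim_n f(a_mb_n)=f^\luc(xy)$ and $\lim_n\lim_m f(a_mb_n)=f^\ruc(x'\squ y')$, where $x,y$ and $x',y'$ are the limits of $(a_m),(b_n)$ in $G^\luc$ and in $G^\ruc$ respectively. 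If one sequence has relatively compact range the two iterated limits agree by uniform continuity of $f$; if both sequences leave every compact set, then $x,y\in G^{*l}$ and $x',y'\in G^{*r}$, so the two limits equal $c$ and $d$. Once $f$ is known to be weakly almost periodic, Lemma \ref{12Other} transports $f^\luc(G^{*l}G^{*l})=\{c\}$ to $f^\wap(G^{*w}G^{*w})=\{c\}$, and since $K(G^\wap)\subseteq G^{*w}G^{*w}$ by Lemma \ref{im}, $f^\wap$ is constant on $K(G^\wap)$, i.e.\ $f\in\wap_0(G)\oplus\C1$ (in fact $f\in\wap_\ast(G)$). The main obstacle is precisely the assertion $f\in\wap$: by the above this is equivalent to $c=d$, and the two constants are read off on the different product sets $G^{*l}G^{*l}$ and $G^{*r}\squ G^{*r}$, both surjecting onto $G^{*w}G^{*w}$ but whose points are linked only through the noncommuting pair of iterated limits, so no purely formal argument forces $c=d$. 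I expect to close this gap by exploiting the combinatorial description of $\luc_\ast(G)$ and $\ruc_\ast(G)$ through right- and left-translation-compact sets, together with Lemma \ref{lem:lucint}, which converts the two-sided translation-compactness forced by membership in both algebras into weak almost periodicity and thereby identifies $c$ with $d$.

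Finally, for part (3) the plan is short. For abelian $G$ the left and right uniformities coincide, so $\luc(G)=\ruc(G)$ and $G^\luc=G^\ruc$ coincide as topological spaces, while the $\squ$-product is the opposite of the ordinary product; indeed ${}_sf=f_s$ for all $s$ gives $x\squ y=yx$. Consequently $G^{*r}\squ G^{*r}=G^{*l}G^{*l}$ as subsets, and $f^\ruc=f^\luc$ for every $f$ since the Gelfand transform does not depend on the product. Hence $\luc_a(G)=\ruc_a(G)$ and therefore $\luc_\ast(G)=\ruc_\ast(G)$.
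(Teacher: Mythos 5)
Your handling of items (i) and (iii), and of the outer parts of (ii), is correct and essentially the paper's own argument: (i) is reduced to Lemma \ref{general} (your verification that $\overline{G^{*l}G^{*l}}$ is a proper closed two-sided ideal --- right translations are continuous, left translations by elements of $G$ are continuous, then approximate a general left factor and pass to the limit --- is exactly the detail the paper leaves implicit); the first inclusion in (ii) and the equality $\wap_0(G)\oplus\C1=K(G^\wap)^\perp\oplus\C1$ come, as in the paper, from Lemmas \ref{12Other} and \ref{im}; and (iii) is the same commutativity computation ($x\squ y=yx$) that the paper carries out with iterated limits.

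The middle inclusion of (ii) is where your proposal is incomplete, and you say so yourself. Concretely: membership $f\in\luc_\ast(G)\cap\ruc_\ast(G)$ only yields two decompositions $f=g+c1=h+d1$ with $g\in\luc_a(G)$, $h\in\ruc_a(G)$, hence $f^\luc\equiv c$ on $G^{*l}G^{*l}$ while $f^\ruc\equiv d$ on $G^{*r}\squ G^{*r}$, and the Grothendieck/two-products argument produces $f\in\wap(G)$ only if $c=d$. Your plan to force $c=d$ ``by exploiting the combinatorial description through translation-compact sets and Lemma \ref{lem:lucint}'' is not an argument: Lemma \ref{lem:lucint} and Theorem \ref{lwap} concern uniformly discrete ($E$-)sets and functions supported on their thickenings; they supply no mechanism for comparing the two constants attached to an arbitrary element of the intersection. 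So the crux of (ii) remains unproven in your write-up. Two further remarks. First, the same elision occurs in the paper's own proof: its chain $f^\uc(xy)=f^\luc(xy)=0=f^\ruc(x\squ y)=f^\uc(x\squ y)$, ``by assumption'', is justified only when $c=d=0$, i.e.\ only for $f\in\luc_a(G)\cap\ruc_a(G)$; so you have located a genuine soft spot of the published argument, not merely of your own. Second, the natural way to close the gap is algebraic rather than combinatorial: it suffices to exhibit a single point lying in both product sets. If $(s_\alpha)$ and $(t_\beta)$ are nets of pairwise commuting elements of $G$ leaving every compact set (for instance, powers of one element generating a non-relatively compact subgroup), with limits $x,y$ in the remainder of $G^\uc$, then $x\squ y(f)=\lim_\beta\lim_\alpha f(s_\alpha t_\beta)=\lim_\beta\lim_\alpha f(t_\beta s_\alpha)=(yx)(f)$, so $x\squ y=yx$ is a common point; transporting the constants to $G^\uc$ via the canonical quotients (as in Lemma \ref{12Other}) and evaluating $f^\uc$ at this point gives $c=d$, after which your (and the paper's) argument runs verbatim on $f-c$. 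This settles the matter for every $G$ containing a non-relatively compact set of pairwise commuting elements --- in particular for all abelian groups and all groups with an element generating a noncompact closed subgroup; for a completely general locally compact group the equality $c=d$, equivalently $\luc_\ast(G)\cap\ruc_\ast(G)=\bigl(\luc_a(G)\cap\ruc_a(G)\bigr)\oplus\C1$, is precisely what a complete proof must supply.
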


\begin{proof} Statement (i) follows from Lemma \ref{general}.

As for the second statement, the first inclusion follows  easily
from  Lemma \ref{12Other}, and the equality follows from the definition.
 Moreover, if $f\in\luc_\ast(G)\cap \ruc_\ast(G)$, then $f\in \uc(G)=\luc(G)\cap \ruc(G).$
Suppose that $f$ is not constant, the claim is trivial otherwise.
We show first that
$f^\uc(xy)=f^\uc(x\squ y)$ for every $x,y\in G^\uc;$ that will imply that $f\in\wap(G).$
Let $(x_\alpha)$ and $(y_\beta)$ be two nets in $G$
converging, respectively, to $x$ and $y$ in $G^\uc.$ Let $x$ and $y$ denote also the cluster points of $(x_\alpha)$ and $(y_\beta)$ in both $G^\luc$ and $G^\ruc$. The points $x$ and $y$ are in $G^{*l}$ if and only if  the corresponding points are in $G^{*r}$, and so in this case,
\[f^\uc(xy)=f^\luc(xy)=0=f^\ruc(x\squ y)=f^\uc(x\squ y),\]
by assumption. If $x$, say, is in $G$ then \[f^\uc(xy)=\lim_\beta f^\uc(xy_\beta)=f^\uc(x\squ y).\]

 We now show that $f\in\wap_0(G)$. Since, by  Lemma \ref{im}, the invariant mean on $\wap(G)$ has its support contained in  $G^{*w}G^{*w},$  it will suffice show that $f^\wap\left(G^{*w}G^{*w}\right)=\{0\}$.
If $p,q \in G^{*w},$ we may pick $x$ and $y$ (see Lemma \ref{12Other})
in $G^{*l}$ with $\epsilon_l^w(x)=p$ and $\epsilon_l^w(y)=q$,  then \[f^\wap(pq)=f^\wap\circ\epsilon_l^w(xy)=f^\luc(xy)=0,\]
as we wanted to prove.

(iii)
Suppose now that $G$ is Abelian, let $f\in\luc(G)$ and $x,y\in G^{*l}$
with nets $(x_\alpha)$, $(y_\beta)$ converging to $x$ and $y$, respectively, in $G^\luc.$
 Clearly, $f\in \ruc(G)$, and so the last statement follows from the following observation
\[f^\ruc(x\squ y)=\lim_\beta \lim_\alpha f(x_\alpha y_\beta)
 =\lim_\beta \lim_\alpha f( y_\beta x_\alpha)=f^{\luc}(yx).\]
\end{proof}

\begin{remarks} \label{promisse}
 \begin{enumerate}
\item Although in some cases $\luc_\ast(G)\cap \ruc_\ast(G)= \wap_0(G)\oplus\C1$  (as for instance when $G=SL(2,\R)$ for, in that case, $\wap(G)=\cnaught\oplus \C1$), this equality does   not hold in general.
We see here that actually $\luc_\ast(G)=\ruc_\ast(G)$ is properly contained in $\wap_0(G)$.
Consider towards a counterexample the norm-closed unit ball $B_\infty$ of $L^\infty([0,1])$. With the weak$^*$-topology and pointwise
multiplication, $B_\infty$ is a compact commutative semitopological semigroup. This semigroup  $B_\infty$ contains a
homomorphic dense copy of the group of the integers $\Z,$
and it is a semigroup compactification of $\Z$ (see \cite{Bou}
and \cite{Pym2}).
Accordingly, there  is a continuous surjective homomorphism   $\pi:\Z^\wap\to B_\infty $.
Let now $K(\Z^\wap)$ be the minimal ideal of $\Z^\wap,$ which is a compact subgroup of $\Z^\wap$ contained in $\Z^{*w}\Z^{*w}$.
Since the minimal ideal  $K(B_\infty)$ of $B_\infty$ is easily seen to be  trivial, we see  that $\pi(K(\Z^\wap))=\{0\}.$

Let now  $f\in B_\infty$ be  the characteristic function of an open, proper, subset of $[0,1]$,  and let $x\in \Z^\wap$ be such that $ f=\pi(x)$. Then $xx\in \Z^{*w}\Z^{*w}$ but $xx\notin K(\Z^\wap)$, for otherwise $0=\pi(xx)=f^2=f$ which is absurd.

We may therefore pick $f\in \wap(\Z)$ with \[f^\wap(xx)=1
\quad\text{but}\quad f^\wap(K(\Z^\wap))=\{0\}.\]
The function $f$ is clearly in $\wap_0(\Z)$ since $K(\Z^\wap)$ is the support of the invariant mean
$\mu$ in $\wap(\Z)^*.$ But $f$ cannot be in $\luc_\ast(G)\cap\ruc_\ast(G)$,
since by Lemma \ref{12Other}, \[f^\luc(pp)=f^\wap\circ\epsilon_l^w(pp)=f^\wap(xx)=1,\]
where $p$ is a preimage in $G^{*l}$ of $x$.

This example shows also that $\wap_\ast(G)$ is properly contained in $\wap_0(G)\oplus\C1.$
\medskip

\item When $G$ is Abelian, Proposition \ref{abelian} implies that $\luc_a(G)$  is contained in $\luc_0(G).$ We will prove in Section 8, that
this is true for any locally compact group.
Note that the previous example shows that in general $\luc_a(G)$ is properly contained in $\luc_0(G).$
\medskip

\item Statement (iii) in Proposition \ref{abelian} does not hold when $G$
is not Abelian. We will show this in Example \ref{lwapvsrwap}, below, after identifying the $\luc_\ast(G)$-sets  in Theorem \ref{lwap}.
\end{enumerate}
\end{remarks}

We prove now this section's  key lemma.

 \begin{lemma}\label{general**}
  Let $G$ be a locally compact group,  $\A(G)$ be an admissible subalgebra of
$\CB(G)$.
% and let $G^\ast=G^\A\setminus G$.
If  $A\subseteq G$  is right (left) translation-compact and $f\in \A(G)$ is such that $f(G\setminus A)=\{0\}$, then $f^\A(G^\ast G^\ast)=\{0\}$ ($f^\A(G^\ast \squ G^\ast)=\{0\}$, respectively).
 \end{lemma}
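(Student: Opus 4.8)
The plan is to prove the right-handed statement; the left-handed one (for the $\squ$-product) follows by the symmetric argument, interchanging left with right translates and right with left translation-compactness. So fix $p,q\in G^\ast$; I must show $f^\A(pq)=0$. Choose nets $(t_\beta)$ and $(s_\alpha)$ in $G$ with $\epsilon_\A(t_\beta)\to p$ and $\epsilon_\A(s_\alpha)\to q$ in $G^\A$. Unwinding the definition of the product gives $pq(f)=p({}_qf)$, where ${}_qf\in\A(G)$ by admissibility and, for each $t\in G$, $({}_qf)(t)=q({}_tf)=\lim_\alpha f(ts_\alpha)$; evaluating the functional $p$ along the net $(t_\beta)$ then yields
\[f^\A(pq)=\lim_\beta ({}_qf)(t_\beta).\]
The first step I would isolate is a structural fact: any net in $G$ converging to a point of $G^\ast$ must eventually leave every compact subset of $G$. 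Indeed, were $(s_\alpha)$ frequently inside a compact $K\subseteq G$, the corresponding cofinal subnet would admit a further subnet converging in $K$ to some $t$, and continuity of $\epsilon_\A$ would force $q=\epsilon_\A(t)\in\epsilon_\A(G)$, contradicting $q\in G^\ast$. The same applies to $(t_\beta)$ and $p$.

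The core of the argument is to show that $N=\{t\in G: ({}_qf)(t)\neq 0\}$ is relatively compact, and this is exactly where right translation-compactness of $A$ enters. Fix $t\in N$. Since $\lim_\alpha f(ts_\alpha)=({}_qf)(t)\neq 0$, convergence to a nonzero value forces $f(ts_\alpha)\neq 0$ eventually, hence $ts_\alpha\in A$ eventually (as $f$ vanishes off $A$), i.e. $s_\alpha\in t^{-1}A$ eventually. Now suppose, towards a contradiction, that $N$ is not relatively compact. By right translation-compactness there is a finite $F\subseteq N$ with $\bigcap_{b\in F}b^{-1}A$ relatively compact. For each of the finitely many $b\in F$ we have $s_\alpha\in b^{-1}A$ eventually, so by directedness of the index set $s_\alpha$ lies eventually in $\bigcap_{b\in F}b^{-1}A$, a relatively compact set. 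This contradicts the structural fact that $(s_\alpha)$ eventually leaves every compact set. Hence $N$ is relatively compact.

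To conclude, ${}_qf$ vanishes off the compact set $\overline N$. Since $(t_\beta)$ eventually leaves $\overline N$, we have $({}_qf)(t_\beta)=0$ eventually, whence $f^\A(pq)=\lim_\beta ({}_qf)(t_\beta)=0$, as required. I expect the main obstacle to lie in the bookkeeping with the net limits: specifically, guaranteeing that the memberships $s_\alpha\in b^{-1}A$ hold \emph{eventually} rather than merely \emph{frequently} (so that they may be intersected over the finite set $F$), and verifying carefully that convergence of $({}_qf)(t)$ to a nonzero value really does place $ts_\alpha$ in $A$ eventually. Everything else reduces to correctly invoking the definition of the product, the Gelfand convergence of the evaluation nets, and the definition of translation-compactness.
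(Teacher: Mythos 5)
Your proof is correct and follows essentially the same route as the paper's: both exploit the iterated-limit form of the product $pq$ and apply right translation-compactness to a non-relatively-compact set of left translators (your $N$; the paper uses a tail of the net converging to $p$, which sits inside your $N$), deriving the contradiction from the tail of the net converging to $q$ being trapped in the relatively compact intersection $\bigcap_{b\in F}b^{-1}A$. Your repackaging through the support of ${}_qf$ is a harmless reorganization --- it even isolates the slightly stronger fact that ${}_qf$ has relatively compact support for every $q\in G^\ast$ --- but the mathematical content is identical.
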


\begin{proof}
  Let $p,q\in G^\ast$ and suppose that $f^\A(pq)\neq 0$.

  Choose nets $(s_\sigma)_{\sigma\in \Sigma } \subset G$ and $(t_\gamma)_{\gamma\in \Gamma}\subset  G$ such that \[\lim_\sigma \epsilon_\A(s_\sigma)=p\quad\text{ and}\quad \lim_\gamma\epsilon_\A( t_\gamma)=q\quad\text{ in}\quad G^\A.\]
  Since $pq=\lim_\sigma\lim_\gamma\epsilon_\A( s_\sigma t_\gamma)$, there are  $\varepsilon>0$ and $\sigma_0$ such that, for each $\sigma\geq \sigma_0$, $|\lim_\gamma f(s_\sigma t_\gamma)|>\varepsilon$.  Therefore, for each $\sigma\geq \sigma_0$, there is $\gamma(\sigma)$ such that for each $\gamma\geq \gamma(\sigma)$, we have $f(s_\sigma t_\gamma)\neq 0$. This implies that  $s_\sigma t_\gamma\in A$ for all such indices $\sigma$ and $\gamma$.

  Now, since $p\in G^\ast$,  the set $\{ s_\sigma \colon \sigma \geq \sigma_0\}$ is not relatively compact in $G$.  Since $A$ is right translation-compact, there is a finite subset $\{\sigma_1,\ldots,\sigma_n\}$ of $\{\sigma\in \Sigma\colon \sigma\geq \sigma_0\}$, such that $\bigcap_{i=1}^n s_{\sigma_{i}}^{-1} \,A$ is relatively compact. But this is impossible since
  \[ \left\{t_\gamma\colon \gamma>\gamma(\sigma_i),\; i=1,\ldots, n\right\} \subset \bigcap_{i=1}^n s_{\sigma_{i}}^{-1}\,A, \] and the former set is  not  relatively compact because $q\in G^\ast$.

  We conclude that $f^\A(pq)= 0$.
\end{proof}

The following theorem characterizes the approximable $\luc_\ast(G)$-interpolation sets with the right translation-compact sets, its mirror theorem for $\ruc_\ast(G)$ with left translation-compact sets follows also with the same arguments.

\begin{theorem}\label{lwap} Let $G$ be a noncompact, locally compact group and $T$ be a right uniformly
discrete with respect to some symmetric relatively compact neighbourhood $U$ of $e.$
Then the
following statements are equivalent.
\begin{enumerate}
\item Every function in $\luc(G)$ which is supported in $U T$ is in $\luc_\ast(G)$.
\item $T$ is  an $\luc_a(G)$-set.
\item $T$ is an $\luc_\ast(G)$-set.
\item $T$ is an approximable $\luc_\ast(G)$-interpolation set.
\item $T$ is  an approximable $\luc_a(G)$-interpolation set.
\item $UT$ is right translation-compact.
\end{enumerate}
\end{theorem}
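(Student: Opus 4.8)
The plan is to split the six conditions into two clusters, the ``$\luc_a$/right translation-compact'' cluster (ii), (v), (vi) and the ``$\luc_\ast$'' cluster (i), (iii), (iv), and to show that every condition implies (vi) and that (vi) implies all the rest. All the downward implications from (vi) are soft consequences of Lemma \ref{general**}, while the substance lies entirely in the converse. Two facts organize the work. First, since $T$ is right $U$-uniformly discrete, Lemma \ref{udluc} already makes $T$ an approximable $\luc(G)$-interpolation set, so for every neighbourhood $U_0\subseteq U$ of $e$ there is $V$ with $\overline V\subseteq U_0$ such that each $T_1\subseteq T$ admits $h\in\luc(G)$ with $h(VT_1)=\{1\}$ and $h(G\setminus U_0T_1)=\{0\}$; such an $h$ is supported in $U_0T_1\subseteq UT$ (an arbitrary $U_0$ is handled by replacing it with $U_0\cap U$). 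Second, because $\luc_a(G)=\overline{G^{*l}G^{*l}}^\perp$ and $\luc_\ast(G)=\luc_a(G)\oplus\C1$, a function $f\in\luc(G)$ lies in $\luc_\ast(G)$ exactly when $f^\luc$ is constant on $G^{*l}G^{*l}$, and in $\luc_a(G)$ exactly when that constant is $0$.

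The downward implications are then immediate. For (vi)$\Rightarrow$(i): if $UT$ is right translation-compact and $f\in\luc(G)$ is supported in $UT$, then $f^\luc(G^{*l}G^{*l})=\{0\}$ by Lemma \ref{general**}, so $f\in\luc_a(G)\subseteq\luc_\ast(G)$. Applying this to the interpolating functions $h$ above simultaneously shows $T$ is a $\luc_a(G)$-set, which is (vi)$\Rightarrow$(ii); the inclusion $\luc_a(G)\subseteq\luc_\ast(G)$ gives (ii)$\Rightarrow$(iii); Proposition \ref{apg=ag}, applied to the unital admissible algebra $\luc_\ast(G)$, gives (iii)$\Leftrightarrow$(iv); and (i)$\Rightarrow$(iii) holds because the $\luc$-interpolating functions $h$, being supported in $UT$, are promoted to $\luc_\ast(G)$ by (i). Finally (vi) yields (v), since the extensions coming from $T$ being a $\luc(G)$-interpolation set are supported in $UT$ and hence in $\luc_a(G)$ by Lemma \ref{general**}, while (v)$\Rightarrow$(ii) is part of Definition \ref{approx}. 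Thus it remains only to prove the two converse arrows (ii)$\Rightarrow$(vi) and (iii)$\Rightarrow$(vi).

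For (ii)$\Rightarrow$(vi) I argue contrapositively. Assuming $UT$ is not right translation-compact, there is a non-relatively-compact $L$ with $\bigcap_{b\in F}b^{-1}(UT)$ non-relatively-compact for every finite $F\subseteq L$. Following the mechanism behind \cite[Lemma 4.11]{FG1} I would draw $p$ from $L$ and $q$ from these intersections, obtaining $p,q\in G^{*l}$ and a subset $T_1\subseteq T$ for which the iterated limit $pq=\lim_\sigma\lim_\gamma s_\sigma t_\gamma$ lands in $\overline{T_1}\cap G^{*l}G^{*l}$ (closures in $G^\luc$). Any candidate $h\in\luc_a(G)$ with $h(T_1)=\{1\}$ would then have $h^\luc(pq)=1\neq0$, contradicting $h\in\overline{G^{*l}G^{*l}}^\perp$; so $T$ is not a $\luc_a(G)$-set. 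I expect the main obstacle to be forcing the product point into the bare set $\overline{T_1}$ (equivalently into $\overline{VT_1}$, where $h=1$) rather than merely into $\overline{UT_1}$: this requires refining the nets, absorbing the relatively compact $U$-factor along a subnet, and matching the small neighbourhood $V$ to the $U$ governing $UT$.

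Closing the $\luc_\ast$ cluster, i.e. proving (iii)$\Rightarrow$(vi), needs an extra idea because the additive constant cannot be dropped: for $h\in\luc_\ast(G)$ one only knows $h^\luc\equiv c$ on $G^{*l}G^{*l}$, and the inside product point above merely forces $c=1$. To pin $c=0$ as well, I would, when $UT$ fails to be right translation-compact, split $T=T_1\sqcup T_2$ so that both $UT_1$ and $UT_2$ are non-right-translation-compact; by Proposition \ref{prop} this amounts to partitioning a non-right-translation-finite uniformly discrete set into two such sets, which one achieves by extracting an infinite ``staircase'' $\{g_n+b_j:j\le n\}$ witnessing the failure and distributing the levels $b_j$ into two infinite families. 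With this $T_1$, the interpolating $h$ has a product point in $\overline{T_1}$ forcing $c=1$, while the non-right-translation-compactness of $UT_2$ furnishes a product point in $\overline{T_2}\subseteq\overline{G\setminus U_0T_1}$, where $h=0$, forcing $c=0$; the contradiction shows $T$ is not a $\luc_\ast(G)$-set. Combining everything, each condition implies (vi) and (vi) implies each, so all six are equivalent. The mirror characterization for $\ruc_\ast(G)$ and left translation-compact sets follows by the symmetric argument with $G^{*r}$ and the product $\squ$.
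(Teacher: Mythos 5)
Your proof is correct, and its skeleton is the paper's: condition (vi) is the hub, Lemma \ref{general**} yields all the implications out of (vi), Proposition \ref{apg=ag} gives (iii)$\Leftrightarrow$(iv), and a Ruppert-type staircase construction gives the converse. The genuine divergence is in (iii)$\Rightarrow$(vi), and there your extra step is not optional polish but a necessary repair. The paper runs the staircase once, obtains $T_1\subseteq T$ and a point $uxy\in G^{*l}G^{*l}\cap\overline{T_1}$, and concludes from $h^\luc(uxy)=1$ that the interpolating function $h$ is not in $\luc_\ast(G)$. As you observe, that single evaluation only excludes $h\in\luc_a(G)$: since $\luc_\ast(G)=\luc_a(G)\oplus\C1$, membership in $\luc_\ast(G)$ merely forces $h^\luc$ to be \emph{constant} on $G^{*l}G^{*l}$, and one point pins that constant at $1$ without any contradiction. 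So the paper's argument, read literally, proves (ii)$\Rightarrow$(vi) and leaves the $\luc_\ast$ cluster unclosed; the omission is not cosmetic, because a careless run of the construction can make it unfixable: in $G=\Z$ with $T=\Z$ and $U=\{0\}$ one can take $s_m$ enumerating $\Z$ and $(t_n)$ injective but growing slowly enough along a subsequence that the blocks $\{s_m+t_n\colon m\le n\}$ cover $\Z$; then $T_1=\Z$, its complement is empty, and the constant $1$ is a legitimate interpolating function lying in $\luc_\ast(\Z)$. Your two-staircase device --- arrange matters so that two disjoint pieces $T_1$ and $T_2$ of $T$ each contain a full staircase, take $T_1$ as the witness set, and use $T_2\subseteq G\setminus U_0T_1$ (disjoint subsets of the right $U$-uniformly discrete $T$ have disjoint $U$-thickenings) to produce a second point of $G^{*l}G^{*l}$ where $h^\luc=0$ --- yields two distinct values of $h^\luc$ on $G^{*l}G^{*l}$, which is exactly the contradiction needed.

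Two small repairs to your own write-up. First, the splitting should not be routed through Proposition \ref{prop}: that proposition concerns uniformly discrete sets, whereas the failure of right translation-compactness is a property of $UT$; the clean move is to split the staircase construction itself, partitioning the index set of the $(t_n)$ into two infinite families after arranging (as one may, by choosing each $t_n$ outside finitely many compact sets) that blocks with distinct $n$ are $U$-separated, so the two pieces are genuinely disjoint subsets of $T$. Each piece then carries a point of $G^{*l}G^{*l}$ in its closure by the same iterated-limit computation, using compactness of $\overline{U}$ to absorb the $u_{\alpha\beta}$-factor (your point lands at $upq$, not $pq$) and the fact that $G^{*l}$ is an ideal of $G^\luc$, valid because $G$ is locally compact. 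Second, in (vi)$\Rightarrow$(v) the extensions furnished by the $\luc(G)$-interpolation property are not automatically supported in $UT$; one must multiply them by the bump $h$ that is $1$ on $T$ and vanishes off $UT$, which is precisely the product trick the paper uses in its (iv)$\Rightarrow$(v) step. With these adjustments your argument is complete.
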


\begin{proof}
We first prove that
(i),  (iii) and (vi) are equivalent.

(i)$\implies$(iii) Let $U_0$ be any open neighbourhood of $e$, and put $W=U_0\cap U.$ Let $V$ be   another  open neighbourhood of $e$ with $V^4\subseteq W$, and let $T_1\subseteq T$. Then, take
any right uniformly continuous function $\varphi$ with support
contained in $V^2$ and value $1$ on $\overline{ V}$  (note that $\overline V\subseteq V^2\subseteq U_0\cap U$), and  consider the function
$h:=\sum_{t\in T_1}\varphi_{t^{-1}}$. Then $h$ is supported in $UT$, and  $h\in \luc(G)$  by  \cite[Lemma 4.6 (ii)]{FG1} (where $h$ appears as $1_{T_1,\varphi}$).
Clearly, $h(VT_1)=\{1\}$ and $h(G\setminus (U_0T_1))=\{0\}$, and by assumption, $h\in\luc_\ast(G)$. Statement (iii) follows.

(iii)$\implies$(vi)
 Suppose that $UT$ is not right translation-compact.
 As seen in \cite[Lemma 4.11]{FG1}, following the argument used by Ruppert in \cite{rup} in the discrete
case,  it is possible to construct a subset $T_1$ of $T$ and a function $f$ with support contained in $UT_1$ such that  $f\notin\wap(G).$ We repeat here part of that construction  and see that, actually, $f\notin \luc_\ast(G)$.

Let  $L$ be a non-relatively compact subset of $G$ which contains no finite
subset $F$ for which $\bigcap_{b\in F}b^{-1}UT$ is relatively
compact. With no loss of generality, we may assume that $L$ is countable
and write it as $L=\{s_m:1\le m<\infty\}$. We may also assume that $L$ is right uniformly discrete
since for a fixed compact neighbourhood $W$ of $e$ we can find an infinite right $W$-uniformly discrete subset of $L$, and so $L$ may be taken as this subset.
Define inductively a sequence $(t_n)$ in $G$
as follows. Start with $s_1\in L$ and let $t_1\in s_1^{-1}T$.
Then $s_1t_1\in T$.
Suppose that $t_1,t_2,...,t_{n-1}$ have been
selected in $G$ such that $s_kt_l\in UT$ for every $1\le k\le l\le n-1$. Then
take \[t_n\in
\bigcap_{k\le n}s_k^{-1}UT.\] Note that this is possible because this set is not relatively compact.
The selection of the points $t_n$ is also made so that $(t_n)$ is not relatively compact.
To make sure of this, we fix again a compact neighbourhood $W$ of $e$ and take by Zorn's lemma a maximal
right $W$-uniformly subset $X$ of $s_1^{-1}UT.$ Then $s_1^{-1}UT\subseteq W^2X$ and so $\bigcap_{k\le n}s_k^{-1}UT \subseteq W^2X$ for each $n\in\N.$ Since each of these sets is not relatively compact,
we may select our points $t_n=w_nx_n$ so that the sequence $(x_n)$ is injective.

Note that the sequences $(s_m)$ and $(t_n)$ have  subnets $(s_\alpha)$ and $(t_\beta)$ with respective limits $s$ and $t$ in $G^{*l}$.
 For, if $(t_\alpha)=(w_\alpha x_\alpha)$ is a subnet of $(t_n)$ which converges to $t\in G^\luc,$
 then we may assume that $w_\alpha$ has a limit $w$ in $G$ since $W^2$ is compact, and $(x_\alpha)$ has a limit
 $x$ in $G^{*l}$
 since $(x_n)$ is an injective sequence in $X.$ The joint continuity property gives then $t=wx\in G^{*l}.$
The limit $s$ of $(s_\alpha)$ is clearly in $G^{*l}$ since $L$ is an infinite right uniformly discrete set.

Therefore,  we have $s_mt_n\in UT$ for every $1\le m\le n<\infty,$  and so for each $1\le m\le n$,
there exists $u_{nm}\in U$ such that $u_{nm}s_mt_n\in T.$
 This way we obtain a subset $T_1$ of $T$
 given by \[T_1=\{u_{nm}s_mt_n: 1\le m\le n<\infty\}\]
so that no $\luc$-function with value $1$ on $T_1$ is in $\luc_\ast(G).$ This will imply that $T$ cannot be an $\luc_\ast(G)$-set,
against statement (iii).
To check that $T_1$ is not an $\luc_\ast(G)$-set, note first that the net
$(u_{\alpha \beta})$ corresponding to the nets $(s_\alpha)$ and $(t_\beta)$  has a subnet,
  which we denote also by $(u_{\alpha\beta})$, with limit $u\in \overline U$ (the closure in $G$).
  Then,  for each fixed $\alpha,$ choosing $\beta$ sufficiently large, we obtain  $u_{\alpha\beta} s_\alpha t_\beta\in T_1$. Let then $f$ be any function in $\luc(G)$ with value $1$ on $T_1$. (Such functions indeed exist; in fact, since $T$ is an $\luc(G)$-set by Lemma \ref{udluc}, there is a neighbourhood $V$ of $e$ with $\overline{V}\subseteq U$ and  $f \in \luc(G)$ supported  in $U$ with $f( VT_1)=\{1\}$). Using the joint continuity property in
$G^\luc$,
we see  that
\begin{align*} f^\luc(uxy)=
\lim_{\alpha}\lim_{\beta} f(u_{\alpha\beta} s_{\alpha}t_{\beta})=1\end{align*}
Since $uxy\in G^{*l}G^{*l},$ we conclude that $f\notin\luc_\ast(G).$
So Statement (iii) fails, showing that (iii) implies (vi).

The implication (vi)$\implies$(i) follows from Lemma \ref{general**}.

We now prove that (ii), (iii), (iv) and (v) are equivalent.

It is obvious that (ii)$\implies$(iii). The equivalence  (iii) $\Longleftrightarrow$ (iv) was proved in Proposition \ref{apg=ag}.

Since it is obvious that (v) implies (ii), the theorem will be proved once we show that (iv) implies (v).

 (iv)$\implies $(v) Let $T$   be an approximable $\luc_\ast(G)$-interpolation set.
 Since, (iv) implies (iii),  and we already proved that (iii) implies (vi),  we know that $UT$ is right translation-compact.

 Let   $U_0$ be any neighbourhood  of $e$.
Since $T$ is an $\luc_\ast(G)$-set, there is an open neighbourhood $V$ of $e$ with $\overline{V}\subseteq U\cap U_0$ such
that for every   $T_1\subseteq T$  there is   $h\in \luc_\ast(G)$ with
$h(VT_1)=\{1\}$ and $h(G\setminus (U\cap U_0)T_1))=\{0\}$.
Since $(U\cap U_0)T_1$ is right translation-compact, Lemma \ref{general**}  implies  that  $h^\luc(G^{*l}G^{*l})=\{0\},$
thus $h$ is actually in $\luc_a(G)$. We have shown that $T$ is an $\luc_a(G)$-set. Now any bounded function $f\colon T\to \C$ has, by assumption, an extension $\tilde f\in \luc_\ast(G)$. The product $\tilde f\cdot h$ is then a function in $\luc_a(G)$ that extends $f$. We have  thus proved that $T$ is an approximable  $\luc_a(G)$-interpolation set.
\end{proof}

 If we allow $T$ in Theorem \ref{lwap} to be an $E$-set, then $T$ is an  $\A(G)$-set if and only if every $\uc(G)$-function supported in $UT$ is in $\A(G)$. This leads to a   characterization of $\wap_0(G)$-sets, as the one   obtained in  Theorem 4.22 of  \cite{FG1} where  some pretty deep  combinatorial results had to be invoked.

\begin{theorem}
   \label{wap=wap_0}
Let $G$ be a locally compact $E$-group and $\A(G)$ be an admissible subalgebra of $\wap(G)$.
Let  $T$ be a right  $U$-uniformly discrete $E$-set   for some symmetric relatively compact  neighbourhood $U$ of the identity, and consider the following statements.

 \begin{enumerate}
 \item Every function in $\uc(G)$ which is supported in $U T$ is in $\A(G)$.
   \item $T$ is  an $\A_0(G)$-set.
  \item $T$ is  an $\A(G)$-set.
   \item $T$ is  an approximable $\A(G)$-interpolation set.
   \item $T$ is  an approximable $\A_0(G)$-interpolation set.
     \item $UT$ is translation-compact.
  \end{enumerate}
   Then the four statements (ii)-(v) are equivalent, necessary for (i) and sufficient for  (vi).

  When  $\wap_\ast(G)\subseteq \A(G)\subseteq\wap(G)$, the six statements are equivalent.
  In particular, this is true when $\A(G)=\wap(G),$   $\wap_\ast(G)$ or $\wap_0(G)\oplus \C1$.
 \end{theorem}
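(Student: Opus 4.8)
The plan is to prove Theorem \ref{wap=wap_0} by reducing it, as much as possible, to the one-sided situation already handled in Theorem \ref{lwap}, and then symmetrizing. The structure of the equivalences suggests treating the core block (ii)--(v) first, since those four statements do not refer to the ambient algebra $\wap(G)$ beyond $\A(G)$ itself, and only afterwards bringing in the two-sided translation-compactness of (vi) and the $\uc(G)$-condition of (i).

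First I would establish the equivalence of (ii)--(v). The implication (ii)$\implies$(iii) is immediate since $\A_0(G)\subseteq \A(G)$, and (iii)$\Longleftrightarrow$(iv) is exactly Proposition \ref{apg=ag} applied to the right $U$-uniformly discrete set $T$. Similarly (v)$\implies$(ii) is trivial. The substantive step is (iv)$\implies$(v), which I would model closely on the argument (iv)$\implies$(v) given for Theorem \ref{lwap}: starting from an approximable $\A(G)$-interpolation set $T$, I would use the $\A(G)$-set property to produce, for each $T_1\subseteq T$, a function $h\in\A(G)$ supported in a translation-compact set $(U\cap U_0)T_1$, and then invoke Lemma \ref{general**} \emph{on both sides} (both $f^\A(G^\ast G^\ast)=\{0\}$ and $f^\A(G^\ast\squ G^\ast)=\{0\}$) to conclude $h\in\A_0(G)$. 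Here the hypothesis that $T$ is an $E$-set and that $UT$ is translation-compact (two-sided) is what allows Lemma \ref{general**} to be applied in \emph{both} directions; the support in Lemma \ref{im} sits inside $G^{\ast w}G^{\ast w}$, so killing $f$ on both $G^\ast G^\ast$ and $G^\ast\squ G^\ast$ forces $\mu(|h|)=0$. Multiplying a generic bounded extension $\tilde f\in\A(G)$ by such an $h$ then yields an $\A_0(G)$-extension, giving (v).

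For the \textbf{necessity of (i)} and \textbf{sufficiency for (vi)}, I would argue as follows. That (i)$\implies$(iii) follows by the same explicit construction of $h=\sum_{t\in T_1}\varphi_{t^{-1}}$ as in Theorem \ref{lwap}: the hypothesis that $T$ is a right $U$-uniformly discrete $E$-set with $UT$ translation-compact guarantees, via Lemma \ref{lem:lucint}, that such an $h$ lands in $\uc(G)$, so that (i) places it in $\A(G)$ and exhibits $T$ as an $\A(G)$-set. For (iii)$\implies$(vi), I would use Lemma \ref{cor:wap2}: since $\A(G)\subseteq\wap(G)$ is admissible and $T$ is an $\A(G)$-set, the conclusion $UT$ is translation-compact is immediate. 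Thus (i)$\implies$(iii)$\implies$(vi), establishing that the block (ii)--(v) is necessary for (i) and sufficient for (vi).

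Finally, to \textbf{close the loop} when $\wap_\ast(G)\subseteq\A(G)\subseteq\wap(G)$, I must show (vi)$\implies$(i). This is the step I expect to be the main obstacle, and it is where the hypothesis $\wap_\ast(G)\subseteq\A(G)$ does the work. Given that $UT$ is translation-compact and $f\in\uc(G)$ is supported in $UT$, I would show $f\in\wap_\ast(G)$, hence $f\in\A(G)$. By Lemma \ref{general**} applied on both sides, $f^\wap$ vanishes on $G^{\ast w}G^{\ast w}$ once one knows $f\in\wap(G)$; the delicate point is first verifying $f\in\wap(G)$ itself, which is precisely the content of Lemma \ref{lem:lucint} (using that $T$ is an $E$-set and $UT$ is translation-compact, a $\uc$-function supported there is weakly almost periodic). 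Then $f^\wap(G^{\ast w}G^{\ast w})=\{0\}$ places $f$ in $\wap_a(G)$, so $f\in\wap_a(G)\oplus\C1=\wap_\ast(G)\subseteq\A(G)$, giving (i) and thereby the full six-fold equivalence. The listed special cases $\A(G)=\wap(G)$, $\wap_\ast(G)$, or $\wap_0(G)\oplus\C1$ all satisfy $\wap_\ast(G)\subseteq\A(G)\subseteq\wap(G)$ by Proposition \ref{abelian}(ii), so they are immediate corollaries.
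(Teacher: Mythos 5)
Your overall strategy is the paper's own: reduce everything to Theorem \ref{lwap}, adapt (iv)$\implies$(v) by combining Lemma \ref{general**} with Lemma \ref{im} (so that vanishing on $G^\ast G^\ast$ forces membership in $\A_0(G)$, and an ideal argument with $\tilde f\cdot h$ finishes), get (iii)$\implies$(vi) from Lemma \ref{cor:wap2}, and close the loop via (vi)$\implies$(i) when $\wap_\ast(G)\subseteq\A(G)$. Your handling of (ii)--(v) is correct, and your two-step argument for (vi)$\implies$(i) --- first $f\in\wap(G)$ because its support is translation-compact, then $f^\wap(G^{*w}G^{*w})=\{0\}$ by Lemma \ref{general**}, hence $f\in\wap_a(G)\subseteq\wap_\ast(G)\subseteq\A(G)$ --- usefully spells out what the paper compresses into a single citation of \cite[Lemma 4.3]{FG1}. (Two cosmetic remarks: applying Lemma \ref{general**} ``on both sides'' is redundant here, since for $\A(G)\subseteq\wap(G)$ the semigroup $G^\A$ is semitopological and the two products coincide, which is why the paper only ever uses the right-sided version together with Lemma \ref{im}; and the fact you keep attributing to Lemma \ref{lem:lucint} is not its statement but an ingredient of its proof, namely \cite[Lemma 4.3]{FG1} and \cite[Lemma 4.6]{FG1}.)

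There is, however, a genuine flaw in your proof of (i)$\implies$(iii): you invoke ``$UT$ translation-compact'' --- that is, statement (vi) --- to guarantee that $h=\sum_{t\in T_1}\varphi_{t^{-1}}$ lies in $\uc(G)$. Statement (vi) is not available at that stage. The theorem's first assertion claims (i)$\implies$(ii)--(v)$\implies$(vi) for an \emph{arbitrary} admissible $\A(G)\subseteq\wap(G)$, and the only route from (i) to (vi) passes through (iii); so, as written, your argument is circular and only establishes (i)$\wedge$(vi)$\implies$(iii), which is strictly weaker than what the first part of the theorem asserts (in the general case one cannot recover (i) from (vi) to repair this). The error stems from a misreading of where translation-compactness enters Lemma \ref{lem:lucint}: there, translation-compactness of the tube is what pushes $h$ from $\uc(G)$ into $\wap(G)$ (via \cite[Lemma 4.3]{FG1}); the membership $h\in\uc(G)$ itself comes from \cite[Lemma 4.6(iii)]{FG1} and needs only that $T$ is a right uniformly discrete $E$-set, which is a standing hypothesis of the theorem. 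This is precisely the one point the paper singles out: its proof of (i)$\implies$(iii) differs from the corresponding step of Theorem \ref{lwap} only in replacing statement (ii) of \cite[Lemma 4.6]{FG1} by statement (iii), exactly so that $h\in\uc(G)$ is obtained with no appeal to (vi). Once you drop the translation-compactness assumption from this step and cite the $E$-set version of the partition construction, your proposal coincides with the paper's proof.
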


\begin{proof}
The  proof of this Theorem is essentially the same as that of   Theorem \ref{lwap}. We will just point out the differences.

(i) $\implies$ (iii)  The only difference with the corresponding implication of Theorem \ref{lwap} is that one should invoke Statement (iii) of Lemma 4.6 of \cite{FG1} instead of   Statement (ii), to have $h\in\uc(G)$.

The implication (iii) $\implies$ (vi) is proved in
Lemma \ref{cor:wap2}.

To proof of the  equivalence of  statements (ii)-(v) in  Theorem \ref{lwap} works here if (iv)$\implies $(v) is slightly adapted.  The  corresponding proof in Theorem \ref{lwap} would produce   $h\in \A(G)$ with $h^{\A}(G^\ast G^\ast)= \{0\}$. Since by Lemma \ref{im}, the support of the
invariant mean $\mu\in \A(G)^\ast$ is contained in $G^\ast G^\ast$, this implies that $h\in \A_0(G)$.

 When $\A(G)=\wap_\ast(G)$, and hence  when $\A(G) $ is  any subalgebra of $\wap(G)$ containing $\wap_\ast(G)$, we apply
 \cite[Lemma 4.3]{FG1} where it is proven that (vi)  implies  (i).
All six statements  are then  equivalent.
   \end{proof}

Here is now an example of an $\ruc_\ast(G)$-set which is not an $\luc_\ast(G)$-set and hence it is not  a $\wap_0(G)$-set either.

 \begin{example}\label{lwapvsrwap}
 \emph{ Let $X=\{a_1,a_2,\ldots\}$ be a countable set and let $G=F(X)$ denote the free group on $X$. Define
 \[T=\left\{w\in G\colon w=a_1^{n_1}a_2^{n_2}\ldots a_k^{n_k} \mbox{ with }1\leq n_1<n_2<\cdots<n_k,\; k\in\N \right\}.\]
Then  $T$ is left translation-finite      but it is not right translation-finite. As a consequence,  $T$ is an $\ruc_\ast(G)$-set which is not an  $\luc_\ast(G)$  and so $\luc_\ast(G)\neq \ruc_\ast(G)$.
The set $T$ is not a $\wap(G)$-set either.}
\end{example}

\begin{proof}
We first show that $T$ is not right translation-finite.

Define for every $k\in \N$, $w_k=a_1a_2^{-k}a_1^{-1}\in G$ and
 $L=\{w_k\colon k\in \N\}$ and let
$\{w_{k_j}\colon 1\leq j\leq N\}$ be any finite subset of $L$.
For each $j$, $1\leq j\leq N$, and each  $n\in\N$,
 $w_{k_j} a_1 a_2^n=a_1a_2^{n-k_j}$. Therefore,

 \[a_1a_2^{n}=w_{k_1}^{-1}a_1a_2^{n-k_1}=w_{k_2}^{-1}a_1a_2^{n-k_2}
 =\cdots = w_{k_N}^{-1}a_1a_2^{n-k_N}\in w_{k_1}^{-1}T\cap w_{k_2}^{-1}T\cap\cdots\cap w_{k_N}^{-1}T,\] provided $n>k_j+1$ for $1\leq j \leq N.$

 Hence,
 $ w_{k_1}^{-1}T\cap w_{k_2}^{-1}T\cap\cdots\cap w_{k_N}^{-1}T$ is infinite and $T$ is not right translation-finite.

 We see now that  $T$ is a left t-set.
 Suppose there is $e\neq w\in G$ such that $T\cap Tw\ne\emptyset$, and pick  $n_{1,k}<\cdots<n_{j(k),k}$ and  $m_{1,k}<\cdots<m_{l(k),k},$ $k\in \N$ such that
 \[ a_{1}^{n_{1,k}}\cdots a_{j(k)}^{n_{j(k),k}}w=a_{1}^{m_{1,k}}\cdots a_{l(k)}^{m_{l(k),k}}\in Tw\cap T.\]
We assume that $n_{1,k}\neq m_{1,k};$ otherwise we start with $n_{2,k}$ and $m_{2,k}$.
 Then, there are no cancellations in the word
 \begin{align*}
   w&=a_{j(k)}^{-{n_{j(k),k}}}\cdots a_{2,k}^{-n_{2,k}} a_{1}^{(m_{1,k}-n_{1,k})} a_{2}^{m_{2,k}} \cdots a_{l(k)}^{m_{l(k),k}}
 \end{align*}
since $n_{1,k}\neq m_{1,k}$.  But that means that the integers $l(k)$ are all equal, and so are the integers $ m_{l(k),k}$ for all $k$. If we put $l(k)=l_0$ and $ m_{l(k),k}=m_0$, then we see that
\[Tw\cap T\subset \left\{w\in G\colon w=a_1^{n_1}a_2^{n_2}\ldots a_{l_0}^{n_{l_0}} \mbox{ with }1\leq n_1<n_2<\cdots<n_{l_0}=m_0 \right\},\] showing
that $Tw\cap T$ must be finite.

By Theorem \ref{lwap},  $T$ is   an $\ruc_\ast(G)$-set that is not an $\luc_\ast(G)$-set,  and  the characteristic function $1_{_{T}}$ of $T$ is in $\ruc_\ast(G)\setminus \luc_\ast(G)$. Also by Theorem \ref{wap=wap_0} (or, directly by \cite[Theorem 7]{rup} or \cite[Proposition 2.4]{C4}), $T$ is not a $\wap(G)$-set.

Since $G$ is discrete in this case, this example shows in particular that there are bounded functions on $G$ (for instance $1_{_{T}}$) which annihilate
 all $G^*\square G^*$ but do not annihilate $G^* G^*$ and vise-versa, where $G^*=\beta G\setminus G$.
 \end{proof}

\section{Veech's Theorem}\label{sec:veech}

In 1960, Ellis proved that $sx\ne x$ for every  $s\in G$, $s\ne e$ and $x\in \beta G$ (see \cite{E}).
 In 1977, Veech proved the theorem for $G^\luc$ with $G$ being a locally compact group (see \cite{V} or \cite{BJM}, and \cite{Ruppert} for  special cases).
In 1999, Pym simplified Veech's arguments (see \cite{Pym}). In all these references,  the proofs relied on  the Three Sets Lemma  stated below, originally due to Baker \cite{Ba}.

In this section,  we prove that  the points in the closure of right uniformly discrete $\A(G)$-sets satisfy Veech's property
in  the spectrum $G^\A$ of {\it any} unital left translation invariant $C^\ast$-subalgebra $\A(G)$ of $\CB(G)$ and for
 {\it any} topological group $G$.

 As indicated in the previous section, noting that  right uniformly discrete sets are $\luc(G)$-sets,  this may be regarded as a generalization of Veech's Theorem from the $\luc$-compactification $G^\luc$ of a locally compact group to the spectra $G^\A$ of these $C^\ast$-algebras.

Our  proof still relies  on the Three Sets Lemma but it is immediate
and neither the technical arguments needed by Veech in \cite{V}  or Ruppert in \cite{Ruppert} nor the Local Structure Theorem used by Pym in \cite{Pym} are necessary to obtain this generalization.

We recall first the Three Sets Lemma as presented by Pym in \cite[Lemma]{Pym}.

\begin{lemma} \label{3setslemma} Let $T_0$ be a subset of a given non-empty set $T,$ let $f:T_0\to T$ be an injective map such that
$f(t)\ne t$  for every $t\in T_0$. Then there is a partition $T=T_1\cup T_2\cup T_3$ such that
\[f(T_i\cap T_0)\cap T_i=\emptyset\quad \text{for each} \quad  i=1,2,3.\]
\end{lemma}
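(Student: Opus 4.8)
The statement to prove is a purely combinatorial/set-theoretic fact: given an injective $f\colon T_0\to T$ with $f(t)\neq t$ for all $t\in T_0$ (here $T_0\subseteq T$), one can partition $T$ into three pieces $T_1,T_2,T_3$ such that none of them contains both a point $t\in T_0$ and its image $f(t)$.

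\medskip

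The plan is to construct the partition by a greedy/recursive colouring argument, thinking of the problem graph-theoretically. First I would set up the functional graph: form a directed graph on vertex set $T$ with an edge $t\to f(t)$ for each $t\in T_0$. Because $f$ is injective and $f(t)\neq t$, each vertex has out-degree at most $1$ and in-degree at most $1$, so every connected component of the underlying undirected graph is either a finite simple path, a two-way infinite path, or a cycle (no loops, since $f(t)\neq t$ rules out length-$1$ cycles). The condition we must enforce, $f(T_i\cap T_0)\cap T_i=\emptyset$, says exactly that no edge $t\to f(t)$ has both endpoints in the same colour class $T_i$; in other words, I must properly $3$-colour this graph so that adjacent vertices receive different colours.

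\medskip

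Next I would carry out the colouring component by component. On paths (finite or infinite) two colours already suffice by alternating, so three colours are certainly enough. On cycles the only obstruction to $2$-colouring is an odd cycle, but with three colours every cycle can be properly coloured: colour all but one edge by alternating two colours and assign the third colour to the single remaining vertex that would clash. Since the components are pairwise vertex-disjoint, these local colourings combine into a global proper $3$-colouring $T=T_1\cup T_2\cup T_3$, which is the desired partition. To avoid invoking the axiom of choice implicitly in the uncountable case, I would phrase this as: define a colouring on each component independently (the local rule on paths and cycles is canonical once an orientation is fixed by $f$ itself), so the assembled function $T\to\{1,2,3\}$ is well defined.

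\medskip

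The main obstacle I anticipate is not the path case but the presence of \emph{odd cycles}, which is precisely why two colours do not suffice and three are needed; handling these cleanly (choosing the special vertex that receives the third colour) is the crux. A secondary subtlety is making the argument uniform across all component types and sizes without an ad hoc case explosion. A cleaner alternative I would keep in mind, and which is likely closer to the direct proof Pym gives, is to colour by controlling the forward orbits of $f$ directly: for each $t$, assign colours based on the parity of its position within its orbit, using the third colour only to resolve the wrap-around in a cycle. Either route reduces the lemma to the standard fact that the chromatic number of a graph of maximum degree $2$ is at most $3$, so I expect the proof to be short once the graph structure is made explicit.
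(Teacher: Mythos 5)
The first thing to note is that the paper contains no proof of this lemma: it is quoted verbatim from Pym's note (and attributed ultimately to Baker's solution of Monthly problem 5077), so your argument has to stand on its own rather than be matched against an in-paper proof. Your reformulation is the right one: the condition $f(T_i\cap T_0)\cap T_i=\emptyset$ for $i=1,2,3$ says exactly that the colouring $T\to\{1,2,3\}$ is proper for the graph with an edge $\{t,f(t)\}$ for each $t\in T_0$; functionality and injectivity of $f$ give every vertex out-degree and in-degree at most $1$, hence degree at most $2$; the components are therefore paths (finite, one-way infinite --- a case missing from your explicit list, though your later phrase ``paths (finite or infinite)'' covers it and it is harmless, since a ray even has a canonical alternating $2$-colouring starting from its unique initial vertex --- or two-way infinite) and finite cycles, and each of these admits a proper $3$-colouring. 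Up to the point below, this is a correct and entirely standard proof.

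The one genuine flaw is your parenthetical claim that the axiom of choice can be avoided because ``the local rule on paths and cycles is canonical once an orientation is fixed by $f$ itself.'' That is false: for a finite cycle the orientation does not determine which vertex receives the third colour, and for a two-way infinite orbit it does not determine which of the two alternating $2$-colourings to use --- either decision amounts to choosing a base point in the orbit, and no uniform rule exists. Indeed the lemma cannot be proved in ZF at all: apply it to a fixed-point-free involution $f\colon T\to T$ (so $T$ is a disjoint union of pairs $\{t,f(t)\}$); any admissible partition puts the two elements of each pair into classes with distinct indices, and selecting from each pair the element lying in the class of smaller index yields a choice function for the family of pairs, which is known to be unprovable in ZF. The repair is trivial and costs nothing in this paper's context (everything here is ZFC): note that each component admits at least one proper $3$-colouring, use the axiom of choice to select one colouring per component, and take the union over the pairwise disjoint components. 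With that substitution your proof is complete.
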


\begin{AVeech}\label{GeneralVeech}  Let $G$ be a topological group,  and $\A(G)$
be a unital left
invariant $C^\ast$-subalgebra of $\CB(G)$,
   and $T$ be an $\A(G)$-set which is right uniformly discrete.
   Then   $sx\ne x$ for every $x\in \overline T$ and $s\in G,$
$s\ne e$.
\end{AVeech}

\begin{proof}
Let $s\in G,$ $s\ne e$ and $x\in \overline T.$ Let $T$ be right uniformly discrete with respect to some neighbourhood
$U$ of the identity. Let
$V\subseteq U$ be a symmetric neighbourhood of $e$ with $s\notin  V$ and $s^{-1}V^2 s\subseteq U$, and let $T_0=\{t\in T:st\in VT\}$.
The case of $x\in \overline {T\setminus T_0}$ follows directly from the fact that $T$ is an $\A(G)$-set.
To see this, let $W$ be a neighbourhood of $e$ such that $\overline W\subseteq V$ and take $h\in \A(G)$ with
$h(G\setminus VT)=0$ and $h(WT)=1.$ Since $s{(T\setminus T_0)}\cap VT=\emptyset,$ this clearly implies that $h^\A(sx)=0$
and $h^\A(x)=1.$

Before  dealing with the case
 $x\in \overline{T_0}$, we note that for each $t\in T_0$ there exist unique $v_t\in V$ and $t_1\in T$ such that $st=v_tt_1.$ For, if $t_1$ and $t_2$ satisfy
$st=v_1t_1=v_2t_2$ for some $v_1,v_2\in V,$ then $t_1=t_2$ since $T$ is right $U$-uniformly discrete, and so $v_1=v_2.$
We may therefore define a function $f$ on $T_0$ by $st=v_tf(t)$, $v_t\in V$ and $f(t)\in T.$
Since $s\notin V,$ we see that $f(t)\ne t$.
Moreover, if $f(t_1)=f(t_2)$ for some $t_1$ and $t_2\in T_0,$  then $v_1st_1=v_2st_2$ for some $v_1,$ $v_2\in V$, and so we have
 $su_1t_1=su_2t_2$ for some
$u_1,$ $u_2\in U$ (since $s^{-1}V^2s\subseteq U$). Since $T$ is right $U$-uniformly discrete, we must have $t_1=t_2.$ In other words, the function $f:T_0\to T$
is injective.

 We apply  the Three Sets Lemma (Lemma \ref{3setslemma}) to $f$ and obtain  a  partition of $T$ into three sets
$T_1,$ $T_2$ and $T_3$ such that \[f(T_i\cap T_0)\cap T_i=\emptyset\quad\text{ for each}\quad i=1,2,3.\]
This latter property means precisely that $s(T_i\cap T_0)\cap VT_i=\emptyset$ (otherwise, $st=vt'$ for $v\in V$, $t\in T_i\cap T_0,$ $t'\in T_i$
means that $t\in T_i\cap T_0$ and $t'=f(t)\in T_i$).

Let now
 $x\in \overline{T_0}$. Then $x$ must belong to $\overline{T_i\cap T_0}$ for some $i=1,2,3$. The fact that $T$ is an $\A(G)$-set,
gives again a function $h\in\A(G)$ such that $h^\A$ separates $sx$ from $\overline{T_i\cap T_0}$ (and hence, from $x$) in $G^\A.$
\end{proof}

Veech's Theorem is then an easy consequence.

\begin{corollary}\label{Veech} Let $G$ be a locally compact group. Then $sx\ne x$ for every $x\in G^\luc$ and $s\in G,$ $s\ne e$.
\end{corollary}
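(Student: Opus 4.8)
The plan is to obtain Corollary \ref{Veech} as a direct specialization of the $\A$-Veech Theorem \ref{GeneralVeech}. The strategy rests on matching the three hypotheses of Theorem \ref{GeneralVeech} --- that $\A(G)$ is a unital left invariant $C^\ast$-subalgebra of $\CB(G)$, that $G$ is a topological group, and that every point $x$ lies in the closure $\overline{T}$ of a right uniformly discrete $\A(G)$-set $T$ --- to the concrete setting $\A(G)=\luc(G)$ with $G$ locally compact.

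First I would verify the structural hypotheses on the algebra. Taking $\A(G)=\luc(G)$, this is a unital $C^\ast$-subalgebra of $\CB(G)$ (indeed it is admissible, as recalled in Section \ref{prelim}), so in particular it is left translation invariant and unital, meeting the requirements of Theorem \ref{GeneralVeech}. Since $G$ is locally compact, it is in particular a topological group, so that hypothesis is immediate as well.

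The crux --- and the one step requiring a genuine argument --- is to show that every point $x\in G^\luc$ lies in $\overline{T}$ for some right uniformly discrete $\luc(G)$-set $T$. Here I would invoke Lemma \ref{udluc}\ref{i}, which tells us that every right uniformly discrete subset of $G$ is an approximable $\luc(G)$-interpolation set, hence in particular an $\luc(G)$-set. So it suffices to produce, for each $x\in G^\luc$, a right uniformly discrete $T\subseteq G$ with $x\in\overline{T}$. If $x\in G$ this is trivial (take $T=\{x\}$). If $x\in G^{*l}$, I would use the construction of Example \ref{t-set}: applied to a non-relatively compact $X\subseteq G$ it yields a right $V^2$-uniformly discrete set $T=\{x_n:n<\omega\}$ whose image is cofinal enough that $x$ can be captured in its closure. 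The point needing care is that this gives a \emph{specific} such $T$, whereas we need \emph{some} right uniformly discrete set whose closure contains the arbitrary point $x$; the cleanest route is to note that $G^{*l}$ is covered by closures of right uniformly discrete sets, or alternatively to quote Pym's Local Structure Theorem \ref{LST}, which explicitly guarantees that $U$ and $X$ can be chosen with $x\in\overline{X}$ and $X$ right uniformly discrete.

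Once $x\in\overline{T}$ is secured, I would conclude by applying Theorem \ref{GeneralVeech} directly: for the given $s\in G$ with $s\neq e$ and this $T$, the theorem yields $sx\neq x$. Since $x\in G^\luc$ was arbitrary, the corollary follows. I expect the main obstacle to be the covering step --- confirming that every point of the remainder $G^{*l}$ genuinely sits in the closure of some right uniformly discrete set --- but this is precisely what the t-set construction of Example \ref{t-set} (or the final clause of Theorem \ref{LST}) is designed to deliver.
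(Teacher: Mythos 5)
Your reduction to Theorem \ref{GeneralVeech} with $\A(G)=\luc(G)$ is exactly the paper's strategy, and you correctly isolate the crux: every $x\in G^\luc$ must be shown to lie in the closure of some right uniformly discrete set (which is then an $\luc(G)$-set by Lemma \ref{udluc}). But your treatment of that crux has a genuine gap. The construction of Example \ref{t-set} cannot do the job: it produces a right uniformly discrete $T$ such that $VT$ is a t-set, hence right translation-compact, and by Theorem \ref{sprime} (see Corollary \ref{cor:luc}) the closure of any such set is disjoint from $\overline{G^{*l}G^{*l}}$. Since $G^{*l}G^{*l}\neq\emptyset$ whenever $G$ is noncompact, the closures of the sets produced by Example \ref{t-set} provably fail to cover $G^\luc$; no choice of the starting set $X$ ``captures'' an arbitrary $x$. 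Your first fallback --- that $G^{*l}$ is covered by closures of right uniformly discrete sets --- is precisely the claim that needs proof, so it is circular as stated. Your second fallback, quoting the final clause of Theorem \ref{LST}, is logically sound as an appeal to the literature, but it is exactly what the paper is designed to avoid: the announced point of Section \ref{sec:veech} is that Veech's Theorem follows \emph{without} the Local Structure Theorem, and in Section \ref{sec:pym} the paper deduces Theorem \ref{LST} (including the very clause you would invoke) from the covering argument of this corollary, so inside the paper's development that route runs in a circle.

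The missing idea is a short maximality trick (Pym's \emph{astuce}, which is what the paper uses). Fix a relatively compact symmetric neighbourhood $U$ of $e$ and, by Zorn's lemma, a maximal right $U$-uniformly discrete subset $X$ of $G$. Maximality forces $G=U^2X$, and the joint continuity of multiplication on compact subsets of $G$ gives $G^\luc=\overline{U^2}\,\overline{X}$. Hence $x=ua$ with $u\in\overline{U^2}$ and $a\in\overline{X}$; now replace $U$ and $X$ by $uUu^{-1}$ and $T=uX$. Then $T$ is right $uUu^{-1}$-uniformly discrete and $x=ua\in u\overline{X}=\overline{T}$. By Lemma \ref{udluc}, $T$ is an $\luc(G)$-set, and Theorem \ref{GeneralVeech} finishes the proof. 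Note that, unlike the sets of Example \ref{t-set}, this $T$ is in general \emph{not} translation-compact --- it cannot be when $x\in G^{*l}G^{*l}$ --- which is exactly why maximality, and not the t-set construction, is the right tool here.
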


\begin{proof} We only need to observe that any $x\in G^\luc$ can be seen in the closure of some right uniformly discrete set $T$.
To see this, fix a relatively compact neighbourhood $U$ of $e$ and consider first  a maximal right $U$-uniformly discrete subset $X$ of
 $G$.
Then $G=U^2X$, and so the joint continuity gives $G^\luc=\overline{U^2}\overline X.$ Thus, $x=ua$ for some $u\in \overline{U^2}$ and $a\in\overline X$. Then instead of $U$ and $X$, we take the relatively compact neighbourhood  $uUu^{-1}$ of $e$ and $T=uX$, respectively. Then $T$ is right $uUu^{-1}$-uniformly discrete and $x\in \overline{T}$. (This {\it astuce} was used by Pym first in \cite{Pym} and was useful also in
\cite{FP}.)
Since right uniformly discrete sets are $\luc$-sets, Theorem  \ref{udluc}, the corollary is immediate.
\end{proof}

\begin{remarks}  \begin{enumerate}
\item The reader may have noticed that while Theorem \ref{GeneralVeech} is proved for any topological group $G$, Corollary \ref{Veech} is proved only for locally compact groups. In fact, Veech's theorem may fail  dramatically at some points of $G^\luc$ when $G$ is not locally compact, as for example  when $G$ is  extremely amenable. A topological group $G$  is said to be extremely amenable when $G^\luc$ has a point which is left invariant, i.e., when there exists $x\in G^\luc$ such that $sx=x$ for every  $s\in G.$  Examples of extremely amenable groups, and further details  on this  interesting theory can be found, for instance, in  \cite{farahsole08,giordpest07,pest05}.

 \item Even if Veech's  theorem may fail,  the set of points at  which it fails is always   topologically small, see Corollary
\ref{failveech}, below.

\item Theorem \ref{GeneralVeech}  brings out another curious property of extremely amenable groups: if $G$ is such a group, the left invariant elements in $G^\luc$   cannot be  in the closure of any uniformly right discrete subset of $G$. This may be compared  with the locally compact case, when $G$ is locally compact, every point $x\in G^\luc\setminus G$ can be found in the closure of some  right uniformly discrete subset  of $G$, see  the proof of Corollary \ref{Veech}.
 \end{enumerate}
  \end{remarks}

 Since translation-compact sets exist in abundance in a noncompact locally compact group, and they are $\wap$-approximable interpolation sets,  provided they are $E$-sets as well, the main results obtained in \cite{BF},  \cite{FS} and \cite{F07}  follow easily.
 We re-state for instance the main result of \cite{BF}.
More properties using translation-compact sets will be developed in the rest of the paper.

\begin{corollary}\label{wap} Let $G$ be a noncompact, locally compact $SIN$-group.
Then the interior of the  set of points in $G^\wap\setminus G$
for which $xs\ne x$ and $sx\ne x$ whenever $s\ne e $ is dense  in $G^\wap\setminus G.$

%Then the set of points in $G^\wap\setminus G$
%for which $xs\ne x$ and $sx\ne x$ whenever $s\ne e $ has a dense interior in $G^\wap\setminus G.$

\end{corollary}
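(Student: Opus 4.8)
The goal is to exhibit, for a noncompact locally compact $SIN$-group $G$, a dense open subset of $G^\wap\setminus G$ all of whose points are two-sided injectivity points (i.e.\ satisfy both $xs\neq x$ and $sx\neq x$ for $s\neq e$). The plan is to cover the remainder $G^\wap\setminus G$ by closures of well-chosen interpolation sets and then apply $\A$-Veech's Theorem (Theorem \ref{GeneralVeech}) to each of them. First I would fix a symmetric relatively compact neighbourhood $U$ of $e$ and, using the construction of Example \ref{t-set}, produce a right $U^2$-uniformly discrete set $T$ with $U^2 T$ a $t$-set, hence translation-compact. In an $SIN$-group the left and right uniformities coincide, so $T$ is simultaneously right and left uniformly discrete, and by Lemma \ref{lem:lucint} (noting that in a noncompact $SIN$-group the whole group and such sets can be taken to be $E$-sets) $T$ is an approximable $\wap(G)$-interpolation set. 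Thus $T$ is a $\wap(G)$-set in the sense of Definition \ref{approx}.

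\textbf{Key steps.} The heart of the matter is to apply Theorem \ref{GeneralVeech} (and its right analogue) to $G^\wap$. Because $T$ is a right uniformly discrete $\wap(G)$-set, Theorem \ref{GeneralVeech} gives $sx\neq x$ for every $x\in\overline T$ and every $s\neq e$; since $T$ is also left uniformly discrete and $\wap(G)$ is translation invariant, the mirror version yields $xs\neq x$ for the same points. So every point of $\overline T\setminus G$ is a two-sided injectivity point. Next I would show that the union of the interiors of such closures $\overline T$ is dense in $G^\wap\setminus G$. For this I would invoke the openness part of the Local Structure picture: for $T$ right $U^2$-uniformly discrete one has, for a smaller neighbourhood $V$ with $\overline V\subseteq U$, that $V\overline T$ is open in $G^\wap$, so $\overline T$ has nonempty interior in the remainder. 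Finally, to get density, I would argue that every $x\in G^\wap\setminus G$ lies in the closure of some such translation-compact interpolation set: using the \emph{astuce} from the proof of Corollary \ref{Veech}, write $x=ua$ with $u\in\overline{U^2}$ and $a$ in the closure of a maximal uniformly discrete set, then conjugate $U$ and translate the set by $u$ to place $x$ inside the closure of a right $uU^2u^{-1}$-uniformly discrete translation-compact $\wap(G)$-set (the $SIN$ hypothesis keeps conjugates of $U$ uniformly comparable to $U$).

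\textbf{Main obstacle.} The delicate point is the \emph{density} of the injectivity interior, not injectivity itself. Showing each individual $\overline T$ has interior in $G^\wap\setminus G$ and that such $x$ can be captured is the astuce of Corollary \ref{Veech}, but one must guarantee that the neighbourhood $V\overline T$ genuinely meets an arbitrary open subset of the remainder, which requires that translation-compact $\wap(G)$-sets through $x$ exist with the right $E$-set properties so that Lemma \ref{lem:lucint} applies. The $SIN$ assumption is exactly what makes conjugates of $U$ interchangeable and what forces the set $T$ from Example \ref{t-set} to be an $E$-set (so that $\wap(G)$ is rich enough to separate points). Carefully checking that these interpolation sets can be arranged to cover a dense portion of $G^\wap\setminus G$, rather than just a single point, is where the real work lies; this is the content established in \cite{BF} whose statement we are re-deriving.
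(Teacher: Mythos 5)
Your toolkit is the right one (Example \ref{t-set}, Lemma \ref{lem:lucint}, Theorem \ref{GeneralVeech}, and the openness statement of Theorem \ref{openlocals}), and your treatment of the two-sided property is workable: in an $SIN$-group, with invariant neighbourhoods, the left and right notions of uniform discreteness and of $\wap(G)$-set coincide, so the mirror version of Theorem \ref{GeneralVeech} does give $xs\neq x$ (the paper gets this more cheaply from $sx\neq x$ via inversion-invariance of $\wap(G)$, i.e. $\check f\in\wap(G)$). The genuine gap is your density step. You propose to show that \emph{every} $x\in G^\wap\setminus G$ lies in the closure of a right uniformly discrete $T$ with $UT$ translation-compact, by taking a \emph{maximal} right $U$-uniformly discrete set $X$, writing $x=ua$ with $a\in\overline X$, and translating/conjugating. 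This cannot work. First, a maximal right $U$-uniformly discrete set satisfies $G=U^2X$, so $U^2X$ is as far from translation-compact as possible, and translating or conjugating $X$ does not change this; Example \ref{t-set} only produces a (thin) subset of a given non-relatively compact set, and there is no way to force its closure through a prescribed point. Second, and more fundamentally, the paper's own results forbid it: if $T$ is right uniformly discrete and $UT$ is translation-compact, then $T$ is a $\wap(G)$-set (Lemma \ref{lem:lucint}) and $\overline T\cap\overline{G^\ast G^\ast}=\emptyset$ (Theorem \ref{sprime}, (iii)$\Rightarrow$(v)); since the minimal ideal $K(G^\wap)$ is a nonempty subset of $G^\ast G^\ast$ (Lemma \ref{im}), no point of $\overline{G^\ast G^\ast}$ can ever lie in such a closure. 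The \emph{astuce} of Corollary \ref{Veech} is special to $\luc(G)$, where arbitrary right uniformly discrete sets are already $\luc(G)$-sets; it does not transfer to $\wap(G)$.

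The correct density argument does not try to capture a given point at all. Given an open $W_0\subseteq G^\wap$ meeting $G^\ast$, choose an open $W$ with $\overline W\subseteq W_0$ and $W\cap G^\ast\neq\emptyset$; then $W\cap G$ is not relatively compact, so one can run the construction of Example \ref{t-set} \emph{inside} $W\cap G$ to produce a right $U$-uniformly discrete $T\subseteq W\cap G$ with $UT$ translation-compact. Now $\overline T\subseteq\overline W\subseteq W_0$, $\overline T$ meets $G^\ast$, and the set $G\overline T$ is open (Theorem \ref{openlocals}), consists of two-sided injectivity points (Theorem \ref{GeneralVeech} together with invertibility of translations by elements of $G$), and meets $W_0\setminus G$. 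Since $W_0$ was arbitrary, the interior of the injectivity set is dense in $G^\wap\setminus G$. Note also that your closing appeal to \cite{BF} ("whose statement we are re-deriving") makes the argument circular; the device of planting $T$ inside $W$ is exactly what replaces it.
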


\begin{proof}
 Let $\mathcal{V}=\left\{x\in G^\wap\colon sx\ne x \mbox{ whenever $s\ne e $}\right\}$.
   Observe that whenever $x\in \mathcal{V}$ then $xs\neq x$ as well,  since for every    $f\in \wap(G)$ the function $\check{f}$ defined on $G$ by $\check{f}(t)=f(t^{-1})$ is also in $\wap(G)$.

We prove that the interior of $\mathcal{V}\setminus G$ is dense in $G^\wap\setminus G$.

 Let $W_0\not\subset G$ be an open subset of $G^\wap$,  and take another open subset $W$ of  $G^\wap$  with $\overline{W}\subset W_0$ and $W\not\subset G$.  Fix  a relatively compact neighbourhood $U$ of $e$. The set $W\cap G$ is not relatively compact in $G,$ and so we can use  Example \ref{t-set} to find an infinite  right $U$-uniformly discrete subset $T\subset W\cap G$ such that $UT$ is translation-compact.
Since, by Lemma  \ref{lem:lucint}, these sets are $\wap(G)$-sets, Theorem \ref{GeneralVeech} implies that each point in $x\in \overline T$ satisfies $sx\ne x$ for every $s\in G,$ $s\ne e.$
Clearly then $sux\neq ux$  for every $u\in G$, $x\in \overline T$ and $s\in G,$ $s\ne e.$
Therefore $G\overline T\subset \mathcal{V}$.  Since $G\overline T\cap (W_0\setminus G)\neq \emptyset$ and  $G\overline T$ is open in $G^\wap$ by Theorem \ref{openlocals}, proved in the next section, the Corollary follows.
\end{proof}

\begin{corollary}\label{luc_a} Let $G$ be a noncompact, locally compact group. Then the set of points in $G^{\lwap}\setminus G$
for which $sx\ne x$ whenever $s\ne e $ has a dense interior in $G^{\lwap}\setminus G.$
\end{corollary}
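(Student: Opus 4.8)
The plan is to transcribe the argument of Corollary \ref{wap} almost verbatim, replacing $\wap(G)$ by $\luc_\ast(G)$ and using Theorem \ref{lwap} in the role played there by Lemma \ref{lem:lucint}. Set $\mathcal{V}=\{x\in G^{\lwap}\colon sx\ne x\text{ whenever }s\ne e\}$; the assertion is that $\mathcal{V}\setminus G$ has dense interior in $G^{\lwap}\setminus G$. Note that, in contrast to Corollary \ref{wap}, only the one-sided statement $sx\ne x$ is required, so the reflection trick with $\check f$ is unnecessary and the $SIN$ hypothesis can be dropped.

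First I would fix a nonempty open $W_0\subset G^{\lwap}$ with $W_0\not\subset G$, choose an open $W$ with $\overline W\subset W_0$ and $W\not\subset G$, and fix a relatively compact symmetric neighbourhood $U$ of $e$. Since $W\cap G$ is not relatively compact, Example \ref{t-set} furnishes an infinite right $U$-uniformly discrete set $T\subset W\cap G$ with $UT$ translation-compact, hence in particular right translation-compact. The key step is then the equivalence (iii)$\Longleftrightarrow$(vi) of Theorem \ref{lwap}, which, because $UT$ is right translation-compact, shows that $T$ is an $\luc_\ast(G)$-set. As $\luc_\ast(G)$ is a unital left invariant (indeed admissible, by Proposition \ref{abelian}) $C^\ast$-subalgebra of $\CB(G)$ and $T$ is right uniformly discrete, $\A$-Veech's Theorem \ref{GeneralVeech} applies and yields $sx\ne x$ for every $x\in\overline T$ (closure in $G^{\lwap}$) and every $s\ne e$. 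A conjugation then upgrades this to all of $G\overline T$: if $s(ux)=ux$ for some $u\in G$, then $(u^{-1}su)x=x$ with $u^{-1}su\ne e$, a contradiction; hence $G\overline T\subseteq\mathcal V$.

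Finally I would invoke Theorem \ref{openlocals} of the next section to conclude that $G\overline T$ is open in $G^{\lwap}$. Since $T\subset W$ is infinite and right uniformly discrete, $\overline T\subseteq\overline W\subseteq W_0$ meets $G^{\lwap}\setminus G$, so $(G\overline T\cap W_0)\setminus G$ is a nonempty open subset of $G^{\lwap}\setminus G$ lying inside both $W_0$ and $\mathcal V$. This places a point of the interior of $\mathcal V\setminus G$ in every basic open set of $G^{\lwap}\setminus G$, which is precisely the claimed density. The only delicate point is the openness of $G\overline T$, which rests on the local structure result (Theorem \ref{openlocals}) proved later and applied here to the $\luc_\ast(G)$-set $T$; everything else is a direct adaptation of the $\wap$ argument, now valid for an arbitrary noncompact locally compact group because the $SIN$ assumption was only needed to guarantee the $\wap(G)$-set property, a role now taken over by right translation-compactness through Theorem \ref{lwap}.
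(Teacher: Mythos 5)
Your proof is correct and is essentially identical to the paper's own argument: the paper proves this corollary by simply repeating the proof of Corollary \ref{wap}, with Theorem \ref{lwap} supplying the $\luc_\ast(G)$-set property of $T$ (in place of Lemma \ref{lem:lucint}) so that the $SIN$ hypothesis can be dropped. Your write-up merely makes explicit the details (the conjugation step, the openness of $G\overline T$ via Theorem \ref{openlocals}, and the observation that the one-sided statement needs no reflection trick) that the paper leaves implicit.
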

\begin{proof} We can repeat the proof of Corollary \ref{wap} here. By  Theorem \ref{lwap}, the set $T$ in that proof is always an $\luc_\ast(G)$-set and   we do  need the $SIN$-property anymore. \end{proof}

Theorem \ref{GeneralVeech} has the following interesting corollary.
\begin{corollary}\label{failveech} Let $G$ be a topological group. Then
\begin{enumerate}
\item The set of points in $G^\luc\setminus G$ at which Veech's property holds is dense in $G^\luc\setminus G.$
 \item The set of left invariant points in $G^\luc\setminus G$ has an empty interior in $ G^\luc\setminus G.$
 \end{enumerate}
\end{corollary}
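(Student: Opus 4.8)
The plan is to derive both statements from the $\A$-Veech Theorem~\ref{GeneralVeech}. Write $\mathcal{V}=\{\,x\in G^\luc : sx\neq x \text{ for every } s\neq e\,\}$ for the set of Veech points. Since, by Lemma~\ref{udluc}, every right uniformly discrete subset of $G$ is an $\luc(G)$-set, Theorem~\ref{GeneralVeech} gives $\overline{T}\subseteq\mathcal{V}$ for each such $T$. I would first record that (ii) is a formal consequence of (i): if the left invariant points had nonempty interior in $G^\luc\setminus G$, some nonempty relatively open $O\subseteq G^\luc\setminus G$ would consist entirely of points $x$ with $sx=x$ for all $s\in G$; but (i) furnishes a point of $\mathcal{V}$ inside $O$, and for $G\neq\{e\}$ a point cannot be simultaneously left invariant and a Veech point. (When $G=\{e\}$ the statement is vacuous, since then $G^\luc\setminus G=\emptyset$.) Thus everything reduces to the density asserted in (i).

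For (i), I would start from a nonempty open $W_0\subseteq G^\luc$ with $W_0\not\subseteq G$ and look for a Veech point in $W_0\setminus G$. Using regularity of the compact Hausdorff space $G^\luc$, shrink to an open $W$ with $\overline{W}\subseteq W_0$ and $W\not\subseteq G$, and fix $x\in W\setminus G$. As $G$ is dense there is a net in $W\cap G$ converging to $x$, and since $x\notin G$ this net has no subnet convergent in $G$. The decisive step is then to extract from it an \emph{infinite} right $U$-uniformly discrete set $T\subseteq W\cap G$, say by taking a maximal such set inside $W\cap G$ via Zorn's lemma. Granting $T$, the argument closes quickly: from $T\subseteq W$ one gets $\overline{T}\subseteq\overline{W}\subseteq W_0$; a right uniformly discrete set is closed and discrete in $G$, so $\overline{T}\cap G=T$ and hence $\overline{T}\setminus T\subseteq G^\luc\setminus G$; and since $T$ is an infinite approximable $\luc(G)$-interpolation set (Lemma~\ref{udluc}), $\overline{T}$ is a copy of $\beta T$, whence $\overline{T}\setminus T\neq\emptyset$. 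Any point of $\overline{T}\setminus T$ then lies in $W_0\cap(G^\luc\setminus G)\cap\mathcal{V}$, as desired.

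The main obstacle is precisely this extraction. It succeeds whenever $W\cap G$ fails to be totally bounded, which is the generic situation and the only one that can occur for noncompact locally compact $G$ (there totally bounded means relatively compact, so a relatively compact $W\cap G$ would force $x\in G$; this step recovers the device used in Corollary~\ref{Veech}). The genuinely delicate case is the opposite one, in which $x$ is approached only through totally bounded subsets of $G$, so that no infinite uniformly discrete set is available at all --- as happens, for instance, for precompact $G$. Here the uniformly discrete method produces nothing, and I would argue separately that such points are Veech points for structural reasons: local total boundedness pins $x$ down inside (a translate of) a compact completion of $G$, on which the maps $x\mapsto sx$ are restrictions of group multiplication and hence free. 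Treating the two regimes --- an infinite uniformly discrete set accumulating at $x$, versus local total boundedness forcing freeness --- together is what I expect to complete (i), and with it (ii).
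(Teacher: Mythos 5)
Your overall architecture matches the paper's: (ii) is reduced to (i), and for (i) one fixes open sets $W\subseteq\overline{W}\subseteq W_0$ meeting $G^\luc\setminus G$ and splits according to whether the trace $W\cap G$ (the paper's $P\cap G$) is right totally bounded. In the non-totally-bounded regime your argument --- extract an infinite right uniformly discrete $T\subseteq W\cap G$, note that $T$ is an $\luc(G)$-set, that $\overline{T}\setminus T$ is nonempty and lies in $W_0\cap(G^\luc\setminus G)$, and apply Theorem \ref{GeneralVeech} --- is correct and is exactly what the paper does.

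The gap is in the other regime, which you rightly flag as the delicate one but then dispatch with the claim that ``local total boundedness pins $x$ down inside (a translate of) a compact completion of $G$, on which the maps $x\mapsto sx$ are restrictions of group multiplication and hence free.'' This structure does not exist in general. Unless $G$ itself is precompact (the one case where your phrase is literally true, since then $G^\luc=G^\ap$ is the Ra\u{\i}kov completion, a compact group), $G$ has no compact completion; and the completion that is actually relevant --- the completion of $G$, or of the precompact piece $W\cap G$, with respect to the \emph{right} uniformity --- is in general only a semigroup (the Weil completion), not a group, so freeness cannot be read off from ``group multiplication.'' There is also a second unaddressed point: one must identify the closure $\overline{W\cap G}$ taken \emph{inside} $G^\luc$ with that uniform completion (this needs something like Kat\v{e}tov's extension theorem for bounded right uniformly continuous functions on subspaces), and a quotient of the completion could a priori collapse an orbit $\{sx:s\in G\}$ onto a point. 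The paper closes precisely this hole by a direct computation: if $P\cap G$ is right totally bounded, every $x\in\overline{P\cap G}$ is the limit of a right Cauchy net $(x_\alpha)$ from $P\cap G$; taking an accumulation point $a\in G^\luc$ of the net of inverses $(x_\alpha^{-1})$, one gets
\[
f^\luc(xa)=\lim_\alpha\lim_\beta f\bigl(x_\alpha x_\beta^{-1}\bigr)=f(e)\quad\text{for every } f\in\luc(G),
\]
so $xa=e$; then $sx=x$ forces $s=s(xa)=(sx)a=xa=e$. This right-invertibility of points approached through right totally bounded sets is the missing ingredient in your sketch; either reproduce it, or replace it by an honest uniform-space argument (Kat\v{e}tov extension to identify $\overline{W\cap G}$ with the completion, plus the observation that $(sx_\alpha)$ and $(x_\alpha)$ stay uniformly apart when $s\neq e$), but as written the ``structural reasons'' are not available.
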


\begin{proof} If $G$ is totally bounded, the corollary is clear since $G^\luc=G^\ap$ (which is the Ra\u{i}kov completion of $G$).

In general, let $O$ and $P$ be  open sets in $G^\luc$ such that $\overline{P}
\subseteq O$ and $P\cap (G^\luc\setminus G)\ne\emptyset.$
Suppose first that $P\cap G$ is right
totally bounded (i.e., for every neighbourhood $U$ of $e$, there exists a finite subset $F$
of $G$ such that $P\cap G\subseteq UF).$ We claim that the points in $\overline{P\cap G}$ (closure in $G^\luc$)
are limits of right Cauchy nets. In other words, $\overline{P\cap G}$ is contained in a subsemigroup of $G^\luc$ which is isomorphic to  Weil right completion. So let $x$ be any of such points, and suppose that $x$ is the limit of some net  $ (x_\alpha)$ taken from the set $=P\cap G.$ Let $U$ be any open neighbourhood of $e$ in $G$
and choose another neighbourhood $V$ of $e$ such that $V^2\subseteq U.$
Since $P\cap G$ is assumed to be right totally bounded, we have $P\cap G\subseteq VF$ for some finite subset $F$ of $G.$ It follows that $x=\bar xa, $ where $x\in \overline V$ and $a\in F.$ In other words,
$x$ is eventually the limit of a net $(v_\alpha a)$ taken from  $Va$, and so  $x_\alpha x_\beta^{-1}=v_\alpha v_\beta^{-1}\in V^2\subseteq U$. Thus, $(x_\alpha)$ is a right Cauchy net in $G$, as wanted.

Consider then an accumulation point $a\in G^\luc$ of the net $(x_\alpha^{-1})$. Then \[f^\luc(xa)=\lim_\alpha\lim_\beta f(x_\alpha x_\beta^{-1} )=f(e)\] for every $f\in \luc(G)$, and so $xa=e$.
%In other words, $x$ has a right inverse.
%(See also \cite[Theorem 2.3 (vi)]{Ruppert}, where more is proved)
%$xG^\luc=G^\luc$.
%(The reader must be careful here as our right topological semigroups are Ruppert's left topological %semigroups).
Since $x$ has a right inverse in $G^\luc$,
 this case is now clear. In fact, even more is true: the point $x$ is right cancelable in $G^\luc$
 in the sense that $yx\ne zx$ whenever $y\ne z$ in $G^\luc$ (not only in $G$).

If, otherwise, $P\cap G$ is not right totally bounded, then  we may construct by induction an infinite right uniformly discrete subset $T$ of $O_1\cap G$, and apply Theorem \ref{GeneralVeech}.
\end{proof}

\section{Pym's Theorem}\label{sec:pym}
Pym's  Local Structure Theorem, Theorem \ref{LST},
clarifies the structure of $G^\luc$. Its analog for $G^\wap$
was obtained in \cite[Corollary 1]{F07} although in this case the structure theorem only  applies at points that are in the closure of  right uniformly discrete t-sets. Such a restriction is however  necessary for,  as already observed without proof in \cite[page 170]{BP},  the Local Structure Theorem does not hold at points belonging to  the minimal ideal of $G^\wap$. This will be clarified in Section \ref{sec:sp}.
As with Veech's Theorem,  we will also see in Section \ref{sec:sp} that the Local Structure Theorem holds on a dense subset of $G^\A\setminus G$, for many admissible subalgebras $\A(G)$ of $\luc(G)$.

 The sets used in   Theorem \ref{LST} are $\luc(G)$-sets  (since they are right uniformly discrete)
 and the sets used in \cite{F07} are $\wap(G)$-sets (since they are right uniformly discrete t-sets, see Lemma  \ref{lem:lucint}).
 We observe therefore that, in both cases,  $\A(G)=\luc(G)$ and $\A(G)=\wap(G)$, the  Local Structure Theorem actually describes the neighbourhoods in $G^\A$  of closures of  right uniformly discrete $\A(G)$-sets.

          In this section we see that this a completely general feature of the $\A(G)$-property and we obtain an $\A$-Local Structure Theorem (Theorem \ref{locals}), that holds  in the spectrum $G^\A$ of 					
					any unital left tranlation invariant C$^*$-subalgebra $\A(G)$ of $\luc(G)$.

In \cite{BP}, Budak and Pym     extended the Local Structure Theorem to the Two-Point Local Structure Theorem and to the Compact-set Local Structure Theorem.  With $\A$-Veech's Theorem and the $\A$-Local Structure Theorem now at  hand, Budak's and Pym's steps can be easily followed so that both the $\A$-Two-Point Local Structure Theorem and the $\A$-compact-set Local Structure Theorem hold as well.

We end the section by  showing that $G^\A$ contains sets larger than $G$ on which the map $p\mapsto px$ is injective, for any $x\in G^\A$, thus extending Veech's Theorem to sets larger than $G$. This was obtained by Budak and Pym  \cite{BP} in $G^\luc$ for $\sigma$-compact groups on which the canonical map $G\to G^\ap$ is not surjective, and is done here for any metrizable locally compact group that contains an infinite $\ap(G)$-interpolation set that is an $E$-set   and any unital left tranlation invariant C$^*$-subalgebra $\A(G)$ of $\luc(G)$.

\medskip
\subsection{The Local Structure Theorem for $\A(G)$}

The necessity in following theorem is part of Pym's Local Structure Theorem. We  prove it here  for any unital left tranlation invariant $C^*$-subalgebra
of $\luc(G)$. The set $T$ does not even need to be right uniformly discrete, we only have to require it to be an   $\A(G)$-set.

\begin{theorem}\label{openlocals}
Let $G$ be a locally compact group, $\A(G)$ be a unital left tranlation invariant $C^*$-subalgebra
of $\luc(G)$ and $T$ be a subset of $G$. Then $T$ is an $\A(G)$-set if and only if
the set $ V\overline{T_1}$ (the closure in  $G^\A$) is  open in $G^\A$ for every subset $T_1$ of $T$ and every open set $V$ in $G.$
\end{theorem}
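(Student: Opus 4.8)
The plan is to treat the two implications separately, using throughout the Gelfand identification $\A(G)\cong\CB(G^\A)$ (so that producing a function in $\A(G)$ is the same as producing a continuous function on $G^\A$) and the fact that, since $\A(G)\subseteq\luc(G)$, the action $G\times G^\A\to G^\A$, $(s,x)\mapsto sx$, is jointly continuous; in particular $\overline V\,\overline{T_1}$ is compact (hence closed) whenever $V\subseteq G$ has compact closure. Throughout I follow the paper's convention of writing $sx$, $V\overline{T_1}$, etc., with $\epsilon_\A$ suppressed.

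\emph{Necessity ($\A(G)$-set $\Rightarrow$ openness).} I would show directly that every point of $V\overline{T_1}$ is interior. Fix $y_0=s_0x_0$ with $s_0\in V$ and $x_0\in\overline{T_1}$, and use local compactness to choose a symmetric open $U_0\ni e$ with $\overline{U_0}$ compact and $s_0\overline{U_0}\subseteq V$. Feeding $U_0$ into the $\A(G)$-set property yields a neighbourhood $V_0$ with $\overline{V_0}\subseteq U_0$ and, for our fixed $T_1$, a function $h\in\A(G)$ with $h(V_0T_1)=\{1\}$ and $h(G\setminus U_0T_1)=\{0\}$. The key object is the open set $O_0:=\{x\in G^\A:h^\A(x)\ne0\}$, for which I would verify the sandwich
\[
\overline{T_1}\subseteq V_0\overline{T_1}\subseteq O_0\subseteq\overline{U_0T_1}\subseteq\overline{U_0}\,\overline{T_1}.
\]
The inclusion $V_0\overline{T_1}\subseteq O_0$ holds because each $sx$ with $s\in V_0,\,x\in\overline{T_1}$ is a limit of points $st_i\in V_0T_1$ (continuity of $s\cdot$), where $h^\A=1$; the inclusion $O_0\subseteq\overline{U_0T_1}$ holds because any $x\in O_0$ is a limit of $\epsilon_\A(g_\alpha)$ with $h(g_\alpha)\to h^\A(x)\ne0$, forcing $g_\alpha\in U_0T_1$ eventually; and the last inclusion uses joint continuity (compactness of $\overline{U_0}\,\overline{T_1}$). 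Consequently $O:=s_0O_0$ is open, contains $y_0$ (since $x_0\in\overline{T_1}\subseteq O_0$), and satisfies $O\subseteq s_0\overline{U_0}\,\overline{T_1}\subseteq V\overline{T_1}$, so $y_0$ is interior.

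\emph{Sufficiency (openness $\Rightarrow$ $\A(G)$-set).} Given a neighbourhood $U$ of $e$ I would fix, depending only on $U$, symmetric open sets $W\subseteq\overline W\subseteq V$ with $\overline V$ compact and $V^2\subseteq U$. For an arbitrary $T_1\subseteq T$ the hypothesis makes $V\overline{T_1}$ open, while $\overline{WT_1}\subseteq\overline W\,\overline{T_1}\subseteq V\overline{T_1}$ is compact; Urysohn's lemma in the compact Hausdorff space $G^\A$ then produces $h\in\CB(G^\A)=\A(G)$ with $h\equiv1$ on $\overline{WT_1}$ and $h\equiv0$ off $V\overline{T_1}$. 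This gives $h(WT_1)=\{1\}$ immediately, and it remains to check that $h$ vanishes on $G\setminus UT_1$, for which it suffices to show $V\overline{T_1}\cap G\subseteq UT_1$. A direct group computation gives $V\overline{T_1}\cap G=V(\overline{T_1}\cap G)$, and $\overline{T_1}\cap G=\cl_G(T_1)$ once $\epsilon_\A$ is a homeomorphism onto its image; combining this with the elementary observation that $\cl_G(T_1)\subseteq VT_1$ for symmetric open $V$ (a limit point $x$ of $T_1$ lies in the open set $Vx$, which meets $T_1$) yields $V\overline{T_1}\cap G\subseteq V^2T_1\subseteq UT_1$. Thus $W$ witnesses that $T$ is an $\A(G)$-set.

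The real work is in the necessity direction: $V\overline{T_1}$ is a union of translates of the \emph{closed} set $\overline{T_1}$ and is open for no formal reason. The crux is to manufacture, from the single separating function supplied by the $\A(G)$-set property, the open set $O_0=\{h^\A\ne0\}$ trapped between $V_0\overline{T_1}$ and $\overline{U_0}\,\overline{T_1}$, and then to transport it by $s_0$; joint continuity of the $G$-action is precisely what keeps $s_0\overline{U_0}\,\overline{T_1}$ compact and inside $V\overline{T_1}$. In the sufficiency direction the only delicate point is the identity $V\overline{T_1}\cap G=V\cl_G(T_1)$, which relies on $\epsilon_\A$ being a homeomorphic embedding onto its image (automatic in the situations where the theorem is applied, for instance whenever $C_0(G)\subseteq\A(G)$, cf. Lemma \ref{homeo}).
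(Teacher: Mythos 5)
Your proof is correct and follows essentially the same route as the paper's in both directions. For necessity, the paper also fixes a point $vx\in V\overline{T_1}$, feeds a small neighbourhood into the $\A(G)$-set property, and uses the resulting function $h$ to build an open set around the point which joint continuity of the action traps inside $V\overline{T_1}$; the only cosmetic difference is that the paper translates the function (working with $f=h(v^{-1}\cdot)$ and the open set $(f^\A)^{-1}(]1/2,\infty[)$) where you translate the open set $O_0=\{h^\A\neq 0\}$ by $s_0$. For sufficiency, the paper applies Urysohn's lemma in $G^\A$ to the same pair of disjoint closed sets $\overline{WT_1}$ and $G^\A\setminus V\overline{T_1}$ and restricts the resulting function to $G$.

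The one substantive difference is the endgame of the sufficiency direction, where you are more careful than the paper. The paper stops at ``$\restr{H}{G}\in \A(G)$ is the required function'' without checking that this function vanishes on $G\setminus UT_1$, i.e.\ that $V\overline{T_1}\cap G\subseteq UT_1$. You carry out this check, via $V\overline{T_1}\cap G=V(\overline{T_1}\cap G)=V\cl_G(T_1)\subseteq V^2T_1\subseteq UT_1$, and you correctly flag that the middle equality requires $\epsilon_\A$ to be a homeomorphic embedding onto its image, a hypothesis absent from the statement of the theorem. This caveat is essential rather than pedantic: without some such assumption the ``if'' direction is false as stated. Indeed, for $\A(G)=\C 1$, which is a unital left translation invariant $C^*$-subalgebra of $\luc(G)$, the spectrum $G^\A$ is a singleton, so $V\overline{T_1}$ is open in $G^\A$ for \emph{every} $T\subseteq G$; yet $T=\{e\}$ in $G=\R$ is not a $\C 1$-set, since no constant function can equal $1$ on $VT_1$ and $0$ on $G\setminus UT_1$. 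Thus the hypothesis you add (automatic whenever $C_0(G)\subseteq \A(G)$, hence in the paper's uses of this implication, such as Corollary \ref{cor:wap}) is genuinely needed, and your argument closes a gap that the paper's own proof leaves open.
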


\begin{proof} Let $T$ be an $\A(G)$-set in $G$. We prove the claim for $T$, the proof is similar if $T_1$ is any subset of $T.$
Since any translation in $G^\A$ by  an element of $G$ is a homeomorphism, it is enough to assume that  $V$ is an open neighbourhood of $e.$
Let $vx\in V\overline T$ with $v\in V$ and $x\in\overline T.$
Choose another neighbourhood $V_0$ of $e$ such that $v\overline{V_0}\subseteq V.$ Since $T$ is an $\A(G)$-set, there exist a neighbourhood $W$ of $e$, with
$\overline{W}\subseteq V_0$,  and
$h\in \A(G)$ such that $\restr{h}{WT}=1$ and $\restr{h}{G\setminus V_0T}=0.$
Define $f\in\A(G)$ by $f(s)=h(v^{-1}s)$ and extend it to a continuous function $f^\A$ on $G^\A$.
Since $f(vT)=\{1\}$, we have $f^\A(vx)=1$ and so
 $vx\in {(f^\A)}^{-1}(]1/2,\infty[)\subseteq V\overline T.$ To see the inclusion, note that
if $y\in G^\A$ is such that $f^\A(y)>1/2$ and $(t_\alpha)$ is a net in $G$ converging to $y,$ then
\[1/2<f^\A(y)=\lim_\alpha f(t_\alpha)=\lim_\alpha h(v^{-1}t_\alpha),\] and so
the net $(v^{-1}t_\alpha)$ is eventually in $V_0T.$  Since $\overline{V_0} $ is compact in $G$, the joint continuity property implies that
$v^{-1}y\in \overline{V_0}\,\overline T,$
and so $y\in (v\overline{V_0})\,\overline T\subseteq V\overline T.$

Therefore, $vx\in {(f^\A)}^{-1}(]1/2,\infty[)\subset V\overline{T}$, and so $V\overline{T}$ is a neighbourhood of each of its points.

For the converse, suppose that $ V\overline{T_1}$ is  open in $G^\A$ for every subset $T_1$ of $T$ and every open set $V$ in $G.$ Let $U$ be any open neighbourhood of $e$. Choose arbitrarily a relatively compact neighbourhood   $W$ of $e$ such that $\overline{W}\subset U\cap V$. Let also $T_1$ be any subset of $T$.
Since $V\overline{T_1}$ is open in $G^\A$, the sets $\overline{WT_1}$ and $G^\A\setminus V\overline{T_1}$
are disjoint and closed in $G^\A,$ and so we can take a continuous function $H\colon G^\A\to \C$ that takes the value 1 on $\overline{WT_1}$ and vanishes on $G^\A\setminus V\overline{T_1}$. Then $\restr{H}{G}\in  \A(G)$ is the required function, showing that $T$ is an $\A(G)$-set.
\end{proof}

\begin{Alocal}\label{locals}
Let G be a locally compact group $G$, $\A(G)$ be a unital left tranlation invariant C$^*$-subalgebra
of $\luc(G)$, $U$ an open neighbourhood of $e$,  $T$ an $\A(G)$-set which is right $U$-uniformly discrete, and $T_1$ any subset of $T$.
Then $\overline {T_1}$ (the closure in  $G^\A$) is homeomorphic to
$\beta {T_1}$. Furthermore, for each open neighbourhood $V$ of $e$ in $ G$  with $\overline V \subseteq U,$ the set $ V\overline {T_1}$ is  open in $G^\A$ and homeomorphic to $V \times \beta {T_1}.$
\end{Alocal}

\begin{proof} As in the previous theorem, we prove the claim for $T$ as the proof is similar if $T_1$ is any subset of $T.$
Let $V$ be a fixed neighbourhood of $e$ such that $\overline V \subseteq U.$
That
$ V\overline T$ is  open in $G^\A$ follows from Theorem \ref{openlocals} and that the set  $\overline T$ is homeomorphic to $\beta T$  is a consequence of $T$ being  an $\A(G)$-interpolation set   (see Proposition \ref{apg=ag} and \cite[Lemma 2.3]{FG1}).

 For the rest of the claim, we use  the joint continuity property to extend the map $(u,t)\mapsto ut: V\times T\to VT$
to a continuous surjective map $\overline V\times \overline T\to \overline{VT}$. So   we obtain a continuous surjective map $\overline V\times \beta T\to \overline V\overline T.$
For this map to be a homeomorphism, we only need to check that it is injective.
Let $(u,x)\neq (v,y)$ be two distinct elements in $\overline{V}\times \overline{T}$.

Suppose first that $u\ne v$. Then  $uT\cap vT=\emptyset$ since $T$ is right $U$-uniformly discrete. Let then $W$ be a neighbourhood of $e$
such that $Wu\cup Wv\subseteq U$ and $Wu\cap Wv=\emptyset.$ Then   $uT\cup vT$ is right $W$-uniformly discrete, too. As  $uT$ and $vT$ are clearly $\A(G)$-sets, it follows from \cite[Proposition 5.2]{FG1} that $uT\cup vT$ is an $\A(G)$-set as well.  By \cite[Lemma 2.3]{FG2} (a standard property of the Stone-\v{C}ech compactification),  $u\overline T\cap v\overline T=\emptyset,$ and so $ux\ne vx.$

Suppose now  that $u=v$. Then $x\neq y$, and so it is clear that $ux\ne uy$ (if $h\in \A(G)$ separates $x$ and $y$, then the left translate ${}_{u^{-1}}h$ of $h$ separates $ux$ and $vy$).

Consequently, $\overline V\times \beta T$ and $\overline{VT}$ are homeomorphic, and so are  $V \times \beta T$ and $V\overline T$ since $V \times \beta T$ is open in $\overline V \times \beta T$.
\end{proof}

Since right uniformly discrete sets in $G$ are $\luc(G)$-sets, and since when $G$ is locally compact, each point $x\in G^\luc$ can be found in the closure of such sets as seen in the proof of Corollary \ref{Veech}, Pym's Local Structure Theorem \ref{LST} is immediate.

If we put $\mathcal N$ for a base at $e$ made of relatively compact
neighbouroods of $e$
and $\mathcal X$ for the family of all right uniformly discrete subsets of $G$, then the following consequence of Pym's Local Structure Theorem \ref{LST} is obtained.  Ko\c{c}ak and Strauss obtained a more general version   of this result  for  uniform spaces  in \cite{KS}.

\begin{corollary} \label{lucbase} For a locally compact group $G,$ the family
\[\mathcal N_\luc=\{V\overline T:V\in\mathcal N\;\text{and}\;T\in \mathcal X\}\] is a base for the topology in $G^\luc.$
\end{corollary}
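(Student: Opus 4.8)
The plan is to show that the collection $\mathcal{N}_\luc = \{V\overline{T} : V \in \mathcal{N},\ T \in \mathcal{X}\}$ forms a base for the topology of $G^\luc$. Since $G^\luc$ is compact Hausdorff, it suffices to verify two things: first, that every set $V\overline{T}$ in the family is open in $G^\luc$, and second, that given any point $x \in G^\luc$ and any open neighbourhood $O$ of $x$, there exists some $V\overline{T} \in \mathcal{N}_\luc$ with $x \in V\overline{T} \subseteq O$. The openness is already essentially free: every right uniformly discrete $T$ is an $\luc(G)$-set by Lemma \ref{udluc}\ref{i}, so Theorem \ref{openlocals} (applied with $\A(G) = \luc(G)$) guarantees that $V\overline{T}$ is open in $G^\luc$ for every $V \in \mathcal{N}$ and $T \in \mathcal{X}$.

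For the second requirement, I would first produce, for an arbitrary $x \in G^\luc$, a neighbourhood $V \in \mathcal{N}$ and a right uniformly discrete set $T \in \mathcal{X}$ with $x \in V\overline{T}$. This is exactly the content of the ``\emph{astuce}'' used in the proof of Corollary \ref{Veech}: fixing a relatively compact neighbourhood $U$ of $e$ and a maximal right $U$-uniformly discrete subset $X$ of $G$, one has $G = U^2 X$, hence $G^\luc = \overline{U^2}\,\overline{X}$ by joint continuity, so $x = ua$ with $u \in \overline{U^2}$ and $a \in \overline{X}$; replacing $U$ by $uUu^{-1}$ and $X$ by $T = uX$ yields a right $uUu^{-1}$-uniformly discrete set $T$ with $x \in u\overline{X} = \overline{uX} = \overline{T} \subseteq V\overline{T}$ for any $V \in \mathcal{N}$. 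Equivalently, one may simply invoke the final clause of Pym's Local Structure Theorem \ref{LST}, which states that $U$ and $X$ can be chosen so that $x \in \overline{X}$.

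The remaining step is to shrink the basic set $V\overline{T}$ so that it fits inside the given open neighbourhood $O$ of $x$. Here I would exploit the local product structure from the $\A$-Local Structure Theorem \ref{locals}: with $T$ right $U$-uniformly discrete and $V$ chosen with $\overline{V} \subseteq U$, the set $V\overline{T}$ is homeomorphic to $V \times \beta T$, and the point $x \in \overline{T}$ corresponds to $(e, x)$ under this identification. Given the open neighbourhood $O$ of $x$ in $G^\luc$, its intersection with $V\overline{T}$ pulls back to an open neighbourhood of $(e, x)$ in $V \times \beta T$; by the product topology I can find a smaller neighbourhood $V' \in \mathcal{N}$ of $e$ (with $\overline{V'} \subseteq V$) and a clopen subset $T_1 \subseteq T$ with $x \in \overline{T_1}$ such that the image of $V' \times \overline{T_1}$, namely $V'\overline{T_1}$, lies inside $O$. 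Since $T_1 \subseteq T$ is itself right uniformly discrete, $V'\overline{T_1} \in \mathcal{N}_\luc$, and we have produced $x \in V'\overline{T_1} \subseteq O$ as required.

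The step I expect to require the most care is the last one, matching the product topology on $V \times \beta T$ against an arbitrary open set $O$ of $G^\luc$. One must be careful that the homeomorphism $V \times \beta T \cong V\overline{T}$ from Theorem \ref{locals} is used on the open (not the compactified) factor $V$, and that neighbourhoods of $x$ in $\beta T$ are generated by closures $\overline{T_1}$ of subsets $T_1 \subseteq T$ (the clopen sets of $\beta T$), so that the resulting basic block genuinely has the form $V'\overline{T_1}$ with $T_1 \in \mathcal{X}$. Everything else is a direct appeal to the two theorems just proved, so the argument is short once this correspondence is set up correctly.
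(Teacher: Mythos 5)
Your proof is correct, but it reaches the containment $V'\overline{T_1}\subseteq O$ by a different mechanism than the paper. The paper \emph{pre-localizes}: given $x\in O$, joint continuity of the action on compact sets produces $U\in\mathcal N$ and a closed neighbourhood $P$ of $x$ with $UP\subseteq O$; the maximal right $V$-uniformly discrete set $T$ is then chosen inside $G\cap P$ (with $V\overline{V^2}\subseteq U$), so that after the translation \emph{astuce} one gets $x\in Vu\overline T\subseteq V\overline{V^2}\,\overline T\subseteq UP\subseteq O$ automatically, because $\overline T\subseteq P$. In particular the paper only ever needs the \emph{openness} of sets $V\overline T$ (Theorem \ref{openlocals}), never the product homeomorphism. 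You instead \emph{post-localize}: you take the maximal uniformly discrete set globally, apply the astuce to get $x\in\overline T$, and then shrink inside the compactification, using the full homeomorphism $V\overline T\cong V\times\beta T$ of Theorem \ref{locals} together with the fact that closures of subsets $T_1\subseteq T$ form a clopen base of $\beta T\cong\overline T$; here one should note (as you implicitly do) that the identification $\beta T\cong \overline T$ carries $\mathrm{cl}_{\beta T}(T_1)$ exactly onto the closure $\overline{T_1}$ in $G^\luc$, so the block $h(V'\times\mathrm{cl}_{\beta T}(T_1))$ really is $V'\overline{T_1}$ with $T_1\in\mathcal X$. Both routes are sound; the paper's is more economical in its use of the Local Structure Theorem, while yours exploits the product structure more fully and has the mild extra payoff that the basic block can always be taken of the form $V'\overline{T_1}$ with $T_1$ a subset of \emph{any} prescribed uniformly discrete set whose closure contains $x$.
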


\begin{proof}
If $O$ is any open subset in $G^\luc$ and $x\in O,$ then the joint continuity property provides a member $U$ of $\mathcal N$
and a closed neighbourhood  $P$ of $x$ in $G^\luc$ such that $UP\subseteq O.$ Let $V\in \mathcal N$ be symmetric and satisfy  that $V\overline{V^2}\subseteq U$. Pick then a maximal right $V$-uniformly discrete set
$T$ in $G\cap P.$ Then $x\in \overline{G\cap P}\subseteq \overline{ V^2}\overline T.$ As in the proof of Corollary \ref{Veech}, we can see
$x$ in $u\overline {T}$ for some $u\in \overline{V^2}$, where $uT$ is right $(uVu^{-1})$-uniformly discrete. By Theorem \ref{locals}, $V(u\overline T)$ is a member of the family $\mathcal N_\luc$. Since  $x\in Vu\overline T\subseteq V\overline{V^2}\;\overline T\subseteq UP\subset O,$ the claim follows.
\end{proof}

\begin{remarks}  \begin{enumerate}
\item As already mentioned, Corollary \ref{lucbase} does not hold in general. We shall return to this point in Section \ref{sec:sp}, where we show that
the Local Structure Theorem holds on a dense subset of $G^\A\setminus G$ for many admissible subalgebras $\A(G)$ of $\luc(G)$.

\item  The $\A$-analogues of the three main results obtained by Budak and Pym in \cite{BP}, namely
 the Two-Point Local Structure \cite[Lemma 3.8]{BP}, the Compact-Set Local Structure \cite[Theorem 3.9]{BP} as well as \cite[Theorem 3.10]{BP},
can also be proved by repeating  \emph{mutatis mutandis} their respective  proofs in \cite{BP} with $\A(G)$ instead of $\luc(G)$.
We only  state here the $\A$-Two Point Local Structure Theorem. The corresponding versions of  \cite[Theorem 3.9]{BP} and \cite[Theorem 3.10]{BP} follow the same pattern. \end{enumerate}
\end{remarks}

\begin{theorem}\label{twopoints-locals}
Let G be a noncompact locally compact group, $\A(G)$ be a unital left tranlation invariant C$^*$-subalgebra
of $\luc(G)$, $U$ an open neighbourhood of $e$, $T$ an $\A(G)$-set which is right $U$-uniformly discrete
and $x\in\overline T$.
Then, there exists an open neighbourhood $V$ of $e$ in $ G$ and a subset $P$ of $T$
  such that for every $s\in G,$
the set $ sV\overline P \cup V\overline P$ is  an open neighbourhood of $sx$ and $x$ in $G^\A$  and is  homeomorphic to $(sV\cup V) \times \beta P.$
\end{theorem}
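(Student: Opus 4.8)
The plan is to produce the neighbourhood $V$ and the set $P$ \emph{before} looking at $s$, and then to verify the two required properties for each $s$ separately, distinguishing whether the translates $sV$ and $V$ meet or not. Since $x\in\overline T$ and $T$ is a right uniformly discrete $\A(G)$-set, $\A$-Veech's Theorem~\ref{GeneralVeech} already gives $sx\neq x$ for every $s\neq e$; this is the separation we must upgrade into a separation of whole tubes. I would first fix a symmetric open neighbourhood $V$ of $e$ with $\overline V\,\overline V\,\overline V\subseteq U$ (so that $\overline V\subseteq U$ and also $s\overline V\subseteq U$ whenever $s\in V^2$). For $P$ I would take a sufficiently sparse subset of $T$ with $x\in\overline P$: thinning $T$ as in Example~\ref{t-set} so that $VP$ becomes a $t$-set with witness $V^2$, i.e. $s(VP)\cap(VP)$ is relatively compact for every $s\notin V^2$, while arranging that $P$ still belongs to the point $x$ (viewed through the homeomorphism $\overline T\cong\beta T$), so that $x\in\overline P$. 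Being a subset of the right $U$-uniformly discrete $\A(G)$-set $T$, the set $P$ is itself a right $U$-uniformly discrete $\A(G)$-set, so Theorems~\ref{openlocals} and~\ref{locals} apply to $P$ and to all of its subsets.

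Suppose first that $s\in V^2$, i.e. $sV\cap V\neq\emptyset$. Then $W:=sV\cup V$ is an open neighbourhood of $e$ with $\overline W=s\overline V\cup\overline V\subseteq U$, and $W\overline P=sV\overline P\cup V\overline P$. Applying the $\A$-Local Structure Theorem~\ref{locals} to the (not necessarily symmetric) neighbourhood $W$ and to $P$ shows at once that $sV\overline P\cup V\overline P=W\overline P$ is open in $G^\A$ and homeomorphic, via $(w,\xi)\mapsto w\xi$, to $W\times\beta P=(sV\cup V)\times\beta P$. Since $x\in V\overline P\subseteq W\overline P$ and $sx\in sV\overline P\subseteq W\overline P$, this set is an open neighbourhood of both $x$ and $sx$, which is the entire conclusion in this case.

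Now suppose $s\notin V^2$, so that $sV\cap V=\emptyset$ and $(sV\cup V)\times\beta P$ is the disjoint union $(sV\times\beta P)\sqcup(V\times\beta P)$. By Theorem~\ref{locals} each of $V\overline P\cong V\times\beta P$ and $sV\overline P=s\,(V\overline P)\cong sV\times\beta P$ is an open tube on which $(w,\xi)\mapsto w\xi$ is a homeomorphism; hence, \emph{provided the two tubes are disjoint}, the same map is a homeomorphism of $(sV\cup V)\times\beta P$ onto the open set $sV\overline P\cup V\overline P$, which again contains $x$ and $sx$. To obtain disjointness I would argue by contradiction: a common point $\eta$ can be written $\eta=v_2\xi_2=sv_1\xi_1$ with $v_1,v_2\in V$ and $\xi_1,\xi_2\in\overline P$, whence $\xi_2=g\xi_1$ with $g=v_2^{-1}sv_1\in VsV$; as $g=e$ would give $s=v_2v_1^{-1}\in V^2$, we have $g\neq e$ and $g\overline P\cap\overline P\neq\emptyset$. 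The $t$-set property of $VP$ controls $gP\cap P$ for $g\notin V^2$, and, combined with the fact that disjoint subsets of the right uniformly discrete set $P$ have disjoint closures (Proposition~\ref{apg=ag}), this is what one uses to force the contradiction.

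The hard part is precisely this disjointness, together with the requirement that it hold \emph{simultaneously for all} $s\notin V^2$ with one and the same $P$: this is exactly what forces $P$ to be chosen as a $t$-set and is the only place where sparseness of $P$ (rather than mere uniform discreteness) enters, and it is also where one must check that thinning $T$ to a $t$-set does not push $x$ out of the closure. Once the two tubes are separated, everything else---openness of the tubes, the two product homeomorphisms, and membership of $x$ and $sx$---is a direct reading of Theorems~\ref{openlocals} and~\ref{locals}, just as in the single-point Local Structure Theorem, and follows the argument of Budak and Pym in~\cite{BP} \emph{mutatis mutandis} with $\A(G)$ in place of $\luc(G)$.
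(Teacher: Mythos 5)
Your proposal attacks a stronger statement than the one the paper actually proves, and the extra strength is fatal. You read the quantifiers literally (one pair $(V,P)$ working for \emph{all} $s\in G$ at once) and, as you yourself flag, everything then hinges on thinning $T$ to a set $P$ such that $VP$ is a t-set \emph{while keeping} $x\in\overline P$. That step is impossible in general. Take $G=\Z$, $\A(G)=\ell_\infty(\Z)$, $T=\Z$ (right $\{0\}$-uniformly discrete and trivially an $\A(G)$-set) and $x=p+q$ with $p,q\in \Z^\ast$, so $x\in \overline T\cap G^\ast G^\ast$. If $P\subseteq T$ were a t-set with $x\in\overline P$, then $P$ would be right translation-compact, hence right translation-finite by Proposition \ref{prop}, and Corollary \ref{cor:luc} (equivalently, the implication (iii)$\Rightarrow$(v) of Theorem \ref{sprime}) would give $\overline P\cap\overline{G^\ast G^\ast}=\emptyset$, contradicting $x\in\overline P$. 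The recursion of Example \ref{t-set} chooses its points freely and cannot be steered so as to remain inside the prescribed ultrafilter corresponding to $x$ under $\overline T\cong\beta T$; indeed your uniform conclusion (disjoint tubes for all $s\notin V^2$, with the canonical map $(w,\xi)\mapsto w\xi$ a homeomorphism) would force every $x\in\overline T$ to be strongly prime, which the example refutes. So the theorem must be read, as in Budak and Pym's Two-Point Lemma in \cite{BP}, with $V$ and $P$ allowed to depend on $s$, and that is what the paper proves.

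With $s$ fixed, the paper's argument is far shorter than your two-case analysis and needs no t-sets at all: for $s=e$ the statement is Theorem \ref{locals}; for $s\neq e$, $\A$-Veech's Theorem \ref{GeneralVeech} gives $sx\neq x$, and since the sets $V\overline P$ (with $P\subseteq T$, $x\in\overline P$) are basic neighbourhoods of $x$ by Theorems \ref{openlocals} and \ref{locals} --- and one may use the same $P$ for both points because $\overline T\cong\beta T$ allows intersecting the two subsets of $T$ without losing $x$ from the closure --- one separates $x$ and $sx$ by \emph{disjoint} tubes $V\overline P$ and $sV\overline P$; disjointness makes the union trivially homeomorphic to $(sV\cup V)\times\beta P$. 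Two further, now moot, holes in your sketch: for $s\in V^4\setminus V^2$ the element $g=v_2^{-1}sv_1\in VsV$ may lie in $V^2$, where your t-set hypothesis (witness $V^2$) says nothing, so your case analysis does not cover all $s\notin V^2$ without readjusting constants; and the final step, from relative compactness of $gP\cap P$ to $g\overline P\cap\overline P=\emptyset$, is only gestured at --- it needs the union $gP\cup P$ to be an $\A(G)$-set (\cite[Proposition 5.2]{FG1}-type argument) plus care with the relatively compact part, whose closure lies in $\epsilon_\A(G)$ and must be excluded separately.
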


\begin{proof}
 The direct proof provided in \cite{BP}, using the Three Sets Lemma as the main tool, may be applied  here to show the theorem. We deduce instead this theorem from $\A$-Veech's Theorem \ref{GeneralVeech}
 and $\A$-Local Structure Theorem \ref{locals}.

 If $s=e$, this is  the $\A$-Local Structure Theorem, Theorem \ref{locals}.  If  $s\ne e$, then by $\A$-Veech's Theorem \ref{GeneralVeech},  $sx\ne x,$ and so we may separate $sx$ and $x$ by two basic open sets in $G^\A.$
  By Theorem \ref{locals}, these open sets may be taken of the form $sV\overline P$ and $V\overline P$ for some neighbourhood $V$ of $e$.
  Since $sV\overline P$ and $V\overline P$ are disjoint, the theorem follows.
   \end{proof}

\subsection{Injectivity points} \label{injectivity}
 In \cite[Section 4]{BP},  Budak and Pym
introduced,  for  each $E\subseteq G^\luc,$ the set \[\mathrm{Inj}(E) = \{x \in G^\luc: y\mapsto yx \;\text{is injective
on}\; E\}.\] With this terminology,  Veech's Theorem asserts  that $\mathrm{Inj} (G) = G^\luc$. In \cite[Theorem 4.1]{BP}, they proved that if $G$ is a locally compact, $\sigma$-compact group such that the
natural homomorphism $\phi: G \to G^\ap$ is not surjective, then there is an open  set $U$ in
$G^\luc$ which contains properly  $G$ and satisfies $\mathrm{Inj} (U) = \mathrm{Inj}(G)=G^\luc.$

The key tool used to reach this conclusion is a homomorphism   $\phi\colon G \to M$, with $M$ as a compact metric group,  that maps a countable right uniformly discrete set $X$ onto a sequence that converges to a point of $M\setminus \phi(G)$ (this can hold even if  $G^\ap$ is rather small, but  definitely  not trivial).
 The fact that $X$ is an $\luc(G)$-set allows the use of the Local Structure Theorem,
and $\sigma$-compactness  of $G$  allows the use of the so-called    \emph{butterfly lemma}, that belongs to  Stone-\v{C}ech compactifications of countable discrete spaces. These two powerful tools  not being available in smaller compactifications, we could not extend this approach to algebras such as $\wap(G)$ or $\B(G)$. We do  get analogous results on these algebras under the condition that $G^\ap$ is somewhat larger,  namely when $G$ contains at least one infinite $\ap(G)$-interpolation set. Any second countable locally compact group with uncountably many nonequivalent finite dimensional representations contains such a set, \cite{galihern04}.
This will be achieved in  Theorem \ref{injA}, but some preparation is needed first.

We shall begin by   observing  that approximate interpolation is always possible in a neighbourhood of an interpolation set, this is necessary since we are after an open set and our interpolation sets are always discrete.  We shall then have to recall some known facts on $\ap(G)$-interpolation sets (also known as $I_0$-sets), our main tool in Theorem \ref{injA}.  We will be ready after  a technical topological lemma that will help us to   relate the $\A(G)$-compactification with the $\ap(G)$-compatctification around $\ap(G)$-interpolation sets.

The approximate interpolation lemma  that follows is  a simplified  extension of Theorem  2 of \cite{kalt93} (see also \cite[Theorem 7]{rams96}  and \cite[Theorem 3.2.5]{GH}).

\begin{lemma}\label{apprint}
  Consider a metrizable locally compact  group $G$ and a $C^\ast$-subalgebra  $\A(G)$  of $\luc(G)$. If $T\subset G$ is an $\A(G)$-interpolation set, then for every $\varepsilon>0$ there is a neighbourhood $V=V(T,\varepsilon)$ of the identity such that for every  $f\colon T\to \T$ there is $\phi\in \A(G)$ such that $\left|\phi(ut)-f(t)\right|<\varepsilon$ for every $t\in T$ and every $u\in V$.
\end{lemma}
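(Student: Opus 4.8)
The plan is to reduce the approximation of arbitrary circle-valued functions to the exact interpolation of finitely-valued ones, and then to manufacture the interpolant inside $\A(G)$ with control over a \emph{fixed} neighbourhood of $T$. First I would record that, because $\A(G)\subseteq\luc(G)$, the interpolation set $T$ is necessarily right uniformly discrete: if there were $t_n\neq t'_n$ in $T$ with $t_n(t'_n)\inv\to e$, then the extension in $\A(G)$ of the bounded function equal to $0$ on the $t_n$ and $1$ on the $t'_n$ would violate right uniform continuity. Fix a symmetric neighbourhood $U_0$ witnessing this discreteness and, given $\varepsilon$, choose a finite net $z_1,\dots,z_N$ of $\T$ so that every point of $\T$ lies within $\varepsilon$ of some $z_j$. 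Every $f\colon T\to\T$ is then within $\varepsilon$ of a function $g\colon T\to\{z_1,\dots,z_N\}$, and $g$ is encoded by the partition $T=A_1\cup\dots\cup A_N$ with $A_j=g\inv(z_j)$.

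The heart of the argument is to produce, for a neighbourhood $V$ \emph{depending only on $T$ and $\varepsilon$}, functions $h_{A_j}\in\A(G)$ that are identically $1$ on $VA_j$ and vanish off a slightly larger neighbourhood of $A_j$. This is precisely the statement that $T$ is an $\A(G)$-set, the crucial point being that one and the same $V$ serves every subset of $T$ simultaneously. In the present metrizable setting this is available: a right uniformly discrete $\A(G)$-interpolation set is an approximable $\A(G)$-interpolation set (Proposition \ref{apg=ag} and Corollary \ref{cor:rudA}), hence an $\A(G)$-set. Since $T$ is right $U_0$-uniformly discrete, the translates $U_0A_j$ are pairwise disjoint, so on $VA_j$ one has $h_{A_k}=0$ for every $k\neq j$. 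Setting $\phi=\sum_{j=1}^N z_j\,h_{A_j}\in\A(G)$, for $t\in A_j$ and $u\in V$ the point $ut$ lies in $VA_j$, whence $\phi(ut)=z_j=g(t)$ and therefore $|\phi(ut)-f(t)|=|g(t)-f(t)|<\varepsilon$. As $V$ was chosen before $f$, this is the desired uniform approximation.

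The main obstacle is exactly the uniformity of $V$ over all $f$ (equivalently, over all admissible partitions $A_1,\dots,A_N$). A naive compactness attempt---extracting a limit of bad $f$'s in the compact space $\T^T$ and using the exact interpolant of the limit together with its right uniform continuity---fails, because convergence in $\T^T$ is only pointwise and gives no uniform control of $\phi$ across the infinite set $T$. The uniform neighbourhood must therefore be supplied by the equi-localization built into the $\A(G)$-set property (via Theorem \ref{openlocals}, this is the openness of $V\overline{T_1}$ for \emph{all} $T_1$ with a single $V$), not by any single interpolant; this is the one place where metrizability and $\A(G)\subseteq\luc(G)$ are genuinely used. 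Once a uniform $V$ and the localized functions $h_{A_j}$ are in hand, the remaining verification is the routine computation above, and one may even observe that the construction yields \emph{exact} interpolation of the discretised function $g$ on the whole of $VT$.
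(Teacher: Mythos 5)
The fatal step is your claim, in the second paragraph, that the $\A(G)$-interpolation set $T$ is an $\A(G)$-set, which you justify by citing Proposition \ref{apg=ag} and Corollary \ref{cor:rudA}. Neither result yields this. Proposition \ref{apg=ag} says that, for right uniformly discrete $T$, being an $\A(G)$-set is equivalent to being an \emph{approximable} $\A(G)$-interpolation set, and Corollary \ref{cor:rudA} identifies approximable $\A(G)$-interpolation sets with right uniformly discrete $\A(G)$-sets. In both statements the adjective ``approximable'' appears on the interpolation side, and by Definition \ref{approx} it means precisely ``interpolation set \emph{and} $\A(G)$-set''. So the implication you need --- plain interpolation set $\Rightarrow$ $\A(G)$-set --- is exactly what these results presuppose; your reduction is circular at this point. (Your first paragraph, showing that $T$ is right uniformly discrete, is fine; the problem is only this step.)

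Moreover, the missing implication is not merely unproved but false, and in exactly the case the lemma is designed for. Take $G=\Z$ and $\A(\Z)=\ap(\Z)$, and let $T=\{2^n\colon n\in\N\}$. Being lacunary, $T$ is an $I_0$-set, i.e.\ an $\ap(\Z)$-interpolation set. But for discrete $G$ the $\A(G)$-set property means $1_{T_1}\in\A(G)$ for every $T_1\subseteq T$, and $1_T$ is not almost periodic: a $\{0,1\}$-valued almost periodic function on $\Z$ extends to the characteristic function of a nonempty clopen subset of the Bohr compactification, whose Haar measure is positive, whereas the mean of $1_T$ equals the density of $T$, which is $0$. Hence $T$ is an $\ap(\Z)$-interpolation set that is not an $\ap(\Z)$-set, and the functions $h_{A_j}$ of your construction (identically $1$ near $A_j$, vanishing off a neighbourhood of $A_j$) simply do not exist in $\A(G)$. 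This matters for the paper: Lemma \ref{apprint} is invoked in Theorem \ref{injA} precisely for $\ap(G)$-interpolation sets, with no $\A(G)$-set hypothesis available; indeed your argument proves a strictly stronger conclusion (exact interpolation of the discretised function on $VT$ by a function supported near $T$), a sign that a stronger hypothesis has been smuggled in. The paper's actual proof avoids localization altogether: it is a Baire category argument (after Kalton) in the compact group $\T^T$, writing $\T^T$ as the countable union, over a neighbourhood base $(V_n)$, of the weak$^*$-closed sets of restrictions to $T$ of norm-one functions of $\A(G)$ whose right uniform continuity is controlled by $V_n$, extracting one class with nonempty interior, and covering $\T^T$ by finitely many of its group translates; given $f$, one then factors $f=\restr{\phi_j}{T}\cdot\restr{\phi_0}{T}$ and takes $\phi=\phi_j\phi_0$. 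This is not the ``naive compactness'' argument you rightly dismiss --- it uses category plus the group structure of $\T^T$, and only the interpolation hypothesis.
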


\begin{proof}
  Fix, to begin with, a base of neighbourhoods of the identity $\{V_n\colon n<\omega\}$ with $V_{n+1}\subset V_n$ for every $n$.

   Define for each $n$ and $\varepsilon$, the sets
  \begin{align*} \mathcal{A}(n,\varepsilon)= \{\phi\in \A(G)\colon \|\phi\|\leq 1,\quad \phi(T)\subset \T &\mbox{ and } |\phi(ux)-\phi(x)|\le\frac{\varepsilon}{2}\\& \mbox{ for all } u \in V_n\mbox{ and } x\in G\}.\end{align*}
  We shall denote the restriction of   these sets to $T$ as   \[\restr{\mathcal{A}(n,\varepsilon)}{T}= \left\{\restr{\phi}{T   }\colon \phi\in \A(n,\varepsilon)\right\}.\]
The   $\A(G)$-interpolation property for $T$ implies that any function in $\ell_\infty(T)$ extends
to a function in $\A(G)$ with the same norm (see \cite[Lemma 2.3 (iii)]{FG1}). Therefore, using also that $\A(G)\subseteq \luc(G),$ we see that
     \[ \T^T=\bigcup_n \restr{\A(n,\varepsilon)}{T}.\]

Now, regarded as subsets of $\ell_\infty(G),$ the sets $\mathcal{A}(n,\varepsilon)$
are weak$^*$-compact. Hence, the sets $\restr{\mathcal A(n,\varepsilon)}{T}$, being restrictions of the  sets
 $\mathcal{A}(n,\varepsilon)$, are closed in $\T^T$.
 The Baire category theorem provides then $n_0$ such that
the set     $\restr{\A(n_0,\varepsilon)}{T}$
 has nonempty interior. Since $\T^T$ is a (multiplicative) compact topological group  there is a finite set $\{ \phi_j\colon 1\leq j\leq N  \}\subset \A(G)$ such that \begin{equation} \label{TT} \T^T=\bigcup_{j=1}^N\restr{\phi_j}{T}\restr{\A(n_0,\varepsilon)}{T}.\end{equation}

 Since $\A(G)\subset \luc(G)$, there are neighbourhoods $V_{n(j)}$, $1\leq j \leq N$ such that
 \[\left|\phi_j(ux)-\phi_j(x)\right|<\frac{\varepsilon}{2}, \mbox{ for all  $u\in V_{n(j)}$ and $x\in G$.}\]

Take now $V=V_n$ with   $n\ge\max\{n_0,n(1),\ldots,n(N)\}$,
and consider an arbitrary function $f\colon T\to \T$.
By \eqref{TT} one  can then find $j$, with $1\leq j \leq N$ and $\phi_0\in \A(n_0,\varepsilon)$ such that
 \[ f=\restr{\phi_j}{T}\cdot \restr{\phi_0}{T}.\]
 Taking $\phi=\phi_j\cdot \phi_0$,   the desired approximation is obtained as follows
  \begin{align*}
    \left|\phi(ut)-f(t) \right|&\leq \left|\phi_{j}(ut)\phi_0(ut)-\phi_{j}(ut)\phi_0(t) \right|+\left|\phi_{j}(ut)\phi_0(t) -f(t) \right|\\
    &\leq   \left|\phi_0(ut)-\phi_0(t) \right|+\left|\phi_{j}(ut) -\phi_{j}(t) \right|< \varepsilon
  \end{align*}
  for every $u\in V$ and every $t\in T.$
 \end{proof}

 Now, the general properties of $\ap(G)$-interpolation sets.

\begin{lemma}\label{lem:I0}
Let $G$ be a metrizable locally compact group and let $T\subseteq G$ be an $\A(G)$-interpolation set. Then
 \begin{enumerate}[label=\textup{(\roman{enumi})}]
   \item \label{one}            $\epsilon_{\ap}$ is injective on $T.$
       \item \label{three}$\epsilon_{\ap}(T)$ is discrete.
       	\item \label{four} $\epsilon_{\ap}(T)$ is closed in $\epsilon_{\ap}(G)$.
	 \end{enumerate}
\end{lemma}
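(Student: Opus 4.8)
The plan is to treat the three assertions separately, reading off (i) and (ii) at once from the exact interpolation property and concentrating the real work in (iii). Throughout I use that every bounded $f\colon T\to\C$ extends to a function in $\ap(G)$, the defining property of an $\ap(G)$-interpolation set (an $I_0$-set); this is the interpolation relevant to the three conclusions, which all concern $\epsilon_{\ap}$.

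For (i), given $s\neq t$ in $T$ I would interpolate the function on $T$ taking the value $0$ at $s$ and $1$ at $t$ by some $h\in\ap(G)$; then $h^{\ap}(\epsilon_{\ap}(s))=0\neq 1=h^{\ap}(\epsilon_{\ap}(t))$, whence $\epsilon_{\ap}(s)\neq\epsilon_{\ap}(t)$. For (ii), fixing $t_0\in T$ and interpolating the indicator of $\{t_0\}$ by $h\in\ap(G)$, the set $\{x\in G^{\ap}\colon |h^{\ap}(x)-1|<1/2\}$ is open in $G^{\ap}$, contains $\epsilon_{\ap}(t_0)$ and misses every other $\epsilon_{\ap}(t)$, so $\epsilon_{\ap}(t_0)$ is isolated in $\epsilon_{\ap}(T)$. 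The same separation argument, applied to $\{0,1\}$-valued functions, shows that disjoint subsets of $T$ have disjoint closures in $G^{\ap}$; together with (i) and (ii) this yields the standard fact that $\overline{\epsilon_{\ap}(T)}$ (closure in $G^{\ap}$) is homeomorphic to $\beta T$, with $\epsilon_{\ap}(T)$ the discrete copy of $T$ inside it (cf. \cite[Lemma 2.3]{FG1}).

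For (iii) the task reduces, by the previous paragraph, to showing that the remainder $\overline{\epsilon_{\ap}(T)}\setminus\epsilon_{\ap}(T)$, a copy of $\beta T\setminus T$, is disjoint from $\epsilon_{\ap}(G)$: no $g\in G$ may satisfy $\epsilon_{\ap}(g)\in\overline{\epsilon_{\ap}(T)}\setminus\epsilon_{\ap}(T)$. So suppose $\epsilon_{\ap}(g)=\lim_{\mathcal U}\epsilon_{\ap}(t)$ for a free ultrafilter $\mathcal U$ on $T$, so that $\phi(g)=\lim_{\mathcal U}\phi(t)$ for every $\phi\in\ap(G)$. Here the approximate interpolation Lemma \ref{apprint} is decisive for the \emph{local} part. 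Applying it with $\varepsilon=1/4$ produces a neighbourhood $V$ of $e$ such that every $f\colon T\to\T$ is matched on $VT$ by some $\phi\in\ap(G)$ with $|\phi(ut)-f(t)|<1/4$ for all $u\in V$, $t\in T$; choosing $f$ with $f(t_1)=1$, $f(t_2)=-1$ already forces $Vt_1\cap Vt_2=\emptyset$, so $T$ is right $V$-uniformly discrete. Now, if $g\in Vt_1$ for some $t_1\in T$, I would take $f$ equal to $1$ at $t_1$ and to $-1$ elsewhere: the matching $\phi$ then satisfies $\operatorname{Re}\phi(g)>3/4$ (evaluating at $g=ut_1\in Vt_1$), while $\operatorname{Re}\phi(t)<-3/4$ for every $t\in T\setminus\{t_1\}\in\mathcal U$, forcing $\operatorname{Re}\phi(g)=\operatorname{Re}\lim_{\mathcal U}\phi(t)\le -3/4$, a contradiction. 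Thus whenever $g$ lies in the fixed $G$-neighbourhood $VT$ of $T$, the ultrafilter is principal and $\epsilon_{\ap}(g)\in\epsilon_{\ap}(T)$.

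The main obstacle is the \emph{global} case $g\notin VT$. Bohr convergence $\epsilon_{\ap}(t)\to\epsilon_{\ap}(g)$ carries no information about the position of $g$ relative to $T$ in the topology of $G$, and once $\epsilon_{\ap}(g)$ lies in $\overline{\epsilon_{\ap}(T)}$ no almost periodic function can separate it from that closure: every $\phi\in\ap(G)$ is consistent with the offending ultrafilter, so the contradiction above genuinely needs $g$ close to $T$ \emph{in} $G$. Replacing $g$ by $\epsilon_{\ap}(g)^{-1}$-translates (which move the limit to $e$) leaves the analogous membership condition intact, so the local argument cannot be bootstrapped this way. Consequently the conclusion must rest on the finer structure of $I_0$-sets rather than on interpolation alone, namely on the known fact that an $I_0$-set is, through $\epsilon_{\ap}$, relatively discrete and \emph{closed} in $\epsilon_{\ap}(G)$. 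I would therefore invoke this property (see \cite{GH} and \cite{galihern04}) to exclude remainder points from $\epsilon_{\ap}(G)$ and thereby finish (iii); this last step is where the whole difficulty of the lemma is concentrated.
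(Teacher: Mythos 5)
Your parts (i) and (ii) are correct and coincide with the paper's own (one-line) treatment: both follow at once from exact interpolation of $\{0,1\}$-valued functions on $T$. The local half of your argument for (iii) --- using Lemma \ref{apprint} to show that no $g$ lying in the tube $VT$ can be the limit along a free ultrafilter on $T$ of $\epsilon_{\ap}(t)$ --- is sound as far as it goes. The problem is the global case $g\notin VT$, which you correctly identify as the heart of the lemma and then settle by ``invoking the known fact that an $I_0$-set is, through $\epsilon_{\ap}$, relatively discrete and closed in $\epsilon_{\ap}(G)$.'' That is not a proof: it is precisely statement (iii), so the step is circular unless the sources cited contain the statement in the generality required. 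They do not. The closedness theorem of M\'ela, Ramsey and Ryll-Nardzewski (\cite{mela68}, \cite{rams80}, \cite{ryll64b}) and its exposition in \cite[Theorem 3.5.1]{GH} concern \emph{Abelian} groups, whereas Lemma \ref{lem:I0} is stated for an arbitrary metrizable locally compact group and is applied later (Theorem \ref{injA}) with no commutativity hypothesis.

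The entire content of the paper's proof of (iii) is exactly the adaptation you skip: one repeats M\'ela's argument from \cite[Theorem 3.5.1]{GH} with the dual group $\widehat{G}$ replaced by the family $\mathcal{P}_f(G)$ of diagonal matrix coefficients of finite-dimensional unitary representations, verifies that each $\mathcal{P}_n(G)$ is $\sigma$-compact in the topology of pointwise convergence (using \cite[Theorem 2.1]{galihern04} and the continuity of $(\pi,\xi)\mapsto f_{\pi,\xi}$), and uses that every almost periodic function is a uniform limit of linear combinations of elements of $\mathcal{P}_f(G)$ (\cite[Theorem 16.2.1]{dixm77}). Without this noncommutative substitute for the character-based argument, your proof of (iii) has a genuine gap; the approximate-interpolation step you do carry out only controls those $g$ that are close to $T$ in the topology of $G$, and, as you yourself observe, Bohr-proximity of $\epsilon_{\ap}(g)$ to $\overline{\epsilon_{\ap}(T)}$ gives no information about the position of $g$ relative to $T$ in $G$.
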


\begin{proof}
  Statements  \ref{one} and \ref{three}
   follow easily from the fact that \emph{any} bounded complex-valued function on $T$ extends to a function in $\ap(G)$.

   Statement \ref{four}  is well-known in the Abelian case. It has been proved by M\'ela \cite{mela68}, Ramsey  \cite{rams80} and Ryll-Nardzewski \cite{ryll64b}.
The proof given in \cite[Theorem 3.5.1]{GH}, based on  M\'ela's [loc.cit.], can be easily adapted to the  noncommutative case. As a   substitute of the dual group $\widehat{G}$  we may use a set
    of diagonal matrix coefficients of representations of $G$.
       Recall that for a given unitary representation $\pi \colon G\to \mathcal{U}({\mathbb{H}})$ on a Hilbert space $\mathbb{H}$, the  diagonal matrix coefficient associated to $\pi$ and $\xi \in \mathbb{H}$ is the function $f_{\pi,\xi}\colon G\to \C $ given by $f_{\pi,\xi}(s)=\langle \pi(s)\xi,\xi\rangle$.
       Any positive-definite function on $G$ is actually of this form.
        Denote by $\mathcal{P}_n (G)$ and $\mathcal{P}_f (G)$
      the sets of, respectively,  $n$-dimensional and finite dimensional diagonal matrix coefficients. In the topology of pointwise convergence $\mathcal{P}_n(G)$ is $\sigma$-compact, this can be deduced   from Theorem 2.1 in \cite{galihern04} and the continuity of the map $(\pi,\xi)\mapsto \phi_{\pi,\xi}$ defined on $Hom(G,\mathcal{U}(\C^m))\times \mathbb{B}_m$ ($\mathbb{B}_m$ stands for the  unit ball of $\C^m$).
        The proof of \cite[Theorem 3.5.1]{GH} can then be repeated replacing $\widehat{G}$ by $\mathcal{P}_f(G)$ and taking into account that every almost periodic function can be uniformly approximated by a linear combination of functions in  $\mathcal{P}_f(G)$,  \cite[Theorem 16.2.1]{dixm77}.
       \end{proof}

\begin{lemma}\label{lem:top}
Let $X_1$, $X_2$ and $X_3$ be three topological spaces, let $T\subset X_1$ and let $f_{ij} \colon X_i\to X_j$ with $i<j$ be three continuous maps such that the following diagram commutes:
 \begin{equation}\label{comm1} \xymatrix{
X_1 \ar[r]^{f_{13}} \ar[d]^{f_{12}} & X_3 \\
X_2 \ar[ur]_{f_{23}}. &}\end{equation}
 If $\restr{f_{13}}{T}$ is injective and $f_{13}(T)$ is   discrete in the relative topology of $f_{13}(X_1)$, then
\[ f_{12}(T)=\overline{f_{12}(T)}\cap f_{23}^{-1}\left(f_{13}(T)\right).\]
\end{lemma}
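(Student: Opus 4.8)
The plan is to prove the two inclusions separately; the containment $f_{12}(T)\subseteq \overline{f_{12}(T)}\cap f_{23}^{-1}(f_{13}(T))$ is routine, and all the work goes into the reverse one.

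First I would dispose of the easy inclusion. The containment $f_{12}(T)\subseteq\overline{f_{12}(T)}$ is automatic, and if $y=f_{12}(t)$ with $t\in T$, then commutativity of \eqref{comm1} gives $f_{23}(y)=f_{23}(f_{12}(t))=f_{13}(t)\in f_{13}(T)$, so $y\in f_{23}^{-1}(f_{13}(T))$.

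For the reverse inclusion I would argue with nets, in the spirit of the rest of the paper. Take $x\in \overline{f_{12}(T)}\cap f_{23}^{-1}(f_{13}(T))$. Since $f_{23}(x)\in f_{13}(T)$, write $f_{23}(x)=f_{13}(t_0)$ for some $t_0\in T$; injectivity of $\restr{f_{13}}{T}$ makes $t_0$ unique. As $x\in\overline{f_{12}(T)}$, choose a net $(t_\alpha)$ in $T$ with $f_{12}(t_\alpha)\to x$. Applying the continuous map $f_{23}$ and invoking \eqref{comm1} again yields $f_{13}(t_\alpha)=f_{23}(f_{12}(t_\alpha))\to f_{23}(x)=f_{13}(t_0)$. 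Because every $f_{13}(t_\alpha)$, as well as the limit $f_{13}(t_0)$, lies in $f_{13}(T)\subseteq f_{13}(X_1)$, this convergence also takes place in the subspace $f_{13}(X_1)$; and since $f_{13}(T)$ is discrete there, the point $f_{13}(t_0)$ is isolated in $f_{13}(T)$, so $f_{13}(t_\alpha)=f_{13}(t_0)$ eventually. Injectivity of $\restr{f_{13}}{T}$ now forces $t_\alpha=t_0$ eventually, whence $f_{12}(t_\alpha)=f_{12}(t_0)$ eventually and $x=\lim_\alpha f_{12}(t_\alpha)=f_{12}(t_0)\in f_{12}(T)$.

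The one delicate point, and the main thing to be careful about, is the final equality $x=f_{12}(t_0)$: it rests on uniqueness of limits of nets, i.e. on $X_2$ being Hausdorff. This is harmless in context, since in every application of the lemma $X_2$ is a compact Hausdorff semigroup compactification; but some separation hypothesis is genuinely needed, as the statement already fails when $X_2$ is the two-point indiscrete space. A purely neighbourhood-filter version of the same argument (replacing the net by the observation that every neighbourhood of $x$ must contain a point $f_{12}(t)$ with $f_{13}(t)\in f_{13}(T)$ close to $f_{13}(t_0)$, forcing $t=t_0$) shows that $f_{12}(t_0)$ lies in every neighbourhood of $x$, which delivers $x=f_{12}(t_0)$ as soon as $X_2$ is $T_1$.
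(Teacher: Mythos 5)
Your proof is correct and, at bottom, it is the same argument as the paper's: push the approximation of the point forward into $X_3$ via $f_{23}$, then play the discreteness of $f_{13}(T)$ inside $f_{13}(X_1)$ against the injectivity of $\restr{f_{13}}{T}$. The paper phrases it as a contradiction with closures --- if $p\in\overline{f_{12}(T)}\cap f_{23}^{-1}\left(f_{13}(T)\right)$ had $p\neq f_{12}(t_p)$, then $p\in\overline{f_{12}(T\setminus\{t_p\})}$, hence $f_{13}(t_p)=f_{23}(p)\in\overline{f_{13}(T\setminus\{t_p\})}$, contradicting discreteness --- and your net argument is the direct version of exactly this step; the difference is cosmetic.

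Your caveat about separation is correct, and it is worth recording that it applies to the paper's own proof as well. The paper's step $\overline{f_{12}(T)}=\overline{f_{12}(T)\setminus\{f_{12}(t_p)\}}$ is not a set identity in general (it fails already when $T$ is a singleton); what the argument really needs is the implication that $p\in\overline{f_{12}(T)}$ together with $p\neq f_{12}(t_p)$ forces $p\in\overline{f_{12}(T)\setminus\{f_{12}(t_p)\}}$, and that implication is precisely what $T_1$-separation of $X_2$ (closedness of the point $f_{12}(t_p)$) buys --- the same place where your eventually constant net needs it. Consequently the lemma as literally stated, for arbitrary topological spaces, is false: your two-point indiscrete $X_2$, say with $X_1$ and $X_3$ singletons and $T=X_1$, is a genuine counterexample, so a $T_1$ (e.g.\ Hausdorff) hypothesis on $X_2$ must be read into the statement. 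As you observe, this is immaterial where the lemma is applied (Theorem \ref{injA}), since there $X_2=G^\A$ and $X_3=G^\ap$ are compact Hausdorff.
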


\begin{proof}
Since the diagram commutes, it is obvious that $ f_{12}(T)\subseteq\overline{f_{12}(T)}\cap f_{23}^{-1}\left(f_{13}(T)\right)$.

Let $p\in \overline{f_{12}(T)}\cap f_{23}^{-1}\left(f_{13}(T)\right)$  and let $t_p\in T $ such that $f_{23}(p)=f_{13}(t_p)$. Suppose  that $p\neq f_{12}(t_p)$. Then
\[ p\in \overline{f_{12}(T)}=\overline{f_{12}(T)\setminus \{f_{12}(t_p)\}}\subseteq \overline{f_{12}(T\setminus \{t_p)\}},\]
and hence \begin{align*}
f_{13}(t_p)&=f_{23}(p)\in f_{23}\left(\overline{f_{12}(T\setminus \{t_p\})}\right)\\
&\subseteq \overline{f_{23}\left(f_{12}(T\setminus \{t_p\})\right)}=\overline{f_{13}(T\setminus \{t_p\})}.
\end{align*}
Since $f_{13}$ is injective on $T,$ this contradicts the  discreteness of $f_{13}(T)$.
We conclude that $ p= f_{12}(t_p)$, and hence $p\in
f_{12}(T)$.
\end{proof}

\begin{theorem}\label{injA}
Let G be a metrizable locally compact group containing an infinite   $\ap(G)$-interpolation set $T$, let $\A(G)$ be an admissible algebra with $\ap(G) \subseteq  \A(G)\subseteq \luc(G)$, and suppose that $\mathrm{Inj}(G)=G^\A.$
Then there is an open neighbourhood $V$ of $e$
such that $\mathrm{Inj}(\epsilon_\A(G)\cup \epsilon_\A(V)\overline{\epsilon_\A(T)})=G^\A.$
\end{theorem}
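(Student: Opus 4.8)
The plan is to let the Bohr compactification strip off the factor $x$ and then let approximate interpolation recover the $\beta T$--coordinate, so that the hypothesis $\mathrm{Inj}(G)=G^\A$ can finish the job. First I would assemble the ingredients. Since $\ap(G)\subseteq\A(G)\subseteq\luc(G)$ and $G$ is metrizable, the infinite $\ap(G)$--interpolation set $T$ is in particular a $\luc(G)$--interpolation set and hence right $U$--uniformly discrete for some symmetric neighbourhood $U$ of $e$ (otherwise a $\{0,1\}$--valued interpolant on an infinite set of distinct pairs $t_n,t_n'$ with $t_n(t_n')\inv\to e$ could not be right uniformly continuous); being also an $\A(G)$--interpolation set, Proposition~\ref{apg=ag} makes it an $\A(G)$--set. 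Let $\pi\colon G^\A\to G^\ap$ be the canonical homomorphism induced by $\ap(G)\subseteq\A(G)$, so that $\pi\circ\epsilon_\A=\epsilon_{\ap}$, $\phi^\A=\phi^\ap\circ\pi$ for every $\phi\in\ap(G)$, and $G^\ap$ is a compact topological group. Applying Lemma~\ref{apprint} to the algebra $\ap(G)$ with $\varepsilon=1/2$ produces a neighbourhood $V(T,1/2)$, and I would fix a symmetric open $V$ with $\overline V\subseteq U$ and $V^2\subseteq V(T,1/2)$. For this $V$, Theorem~\ref{locals} gives that $\epsilon_\A(V)\overline{\epsilon_\A(T)}$ is open in $G^\A$ and that each of its points factors uniquely as $\epsilon_\A(u)p$ with $u\in V$ and $p\in\overline{\epsilon_\A(T)}$.

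Set $E=\epsilon_\A(G)\cup\epsilon_\A(V)\overline{\epsilon_\A(T)}$. Using Lemma~\ref{lem:top} (applied with $\epsilon_\A$ and $\pi$) together with Lemma~\ref{lem:I0}, I would record that $\epsilon_\A(T)=\overline{\epsilon_\A(T)}\cap\pi\inv(\epsilon_{\ap}(T))$ and that $\epsilon_{\ap}(T)$ is closed in $\epsilon_{\ap}(G)$; hence whenever $p\in\overline{\epsilon_\A(T)}\setminus\epsilon_\A(T)$ one has $\pi(p)\in\overline{\epsilon_{\ap}(T)}\setminus\epsilon_{\ap}(T)$, a set disjoint from $\epsilon_{\ap}(G)$. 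In particular the new points $N:=\epsilon_\A(V)\bigl(\overline{\epsilon_\A(T)}\setminus\epsilon_\A(T)\bigr)$ avoid $\epsilon_\A(G)$, giving a disjoint partition $E=\epsilon_\A(G)\sqcup N$. Now fix $x\in G^\A$ and distinct $y_1,y_2\in E$; the goal is $y_1x\ne y_2x$, and throughout I use that $G^\ap$ is a group (so $\pi(a)\pi(x)=\pi(b)\pi(x)$ forces $\pi(a)=\pi(b)$) and that each $\epsilon_\A(s)$ acts on $G^\A$ as a homeomorphism by left translation. If both $y_i\in\epsilon_\A(G)$, the hypothesis $\mathrm{Inj}(G)=G^\A$ is exactly what is needed. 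If $y_1=\epsilon_\A(g)$ and $y_2=\epsilon_\A(u_2)p_2\in N$, then $y_1x=y_2x$ would, after left multiplication by $\epsilon_\A(u_2)\inv$ and projection by $\pi$, yield $\epsilon_{\ap}(u_2\inv g)=\pi(p_2)$, which is impossible since the left side lies in $\epsilon_{\ap}(G)$ and the right in $\overline{\epsilon_{\ap}(T)}\setminus\epsilon_{\ap}(T)$; thus a mixed collision cannot occur.

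The decisive case is $y_1=\epsilon_\A(u_1)p_1$ and $y_2=\epsilon_\A(u_2)p_2$ both in $N$. Assuming $y_1x=y_2x$, left multiplication by $\epsilon_\A(u_1)\inv$ and associativity give $p_1x=\epsilon_\A(w)p_2x$ with $w=u_1\inv u_2\in V^2\subseteq V(T,1/2)$, and projecting and cancelling $\pi(x)$ yields $\pi(p_1)=\pi(\epsilon_\A(w)p_2)$, that is, $\phi^\A(p_1)=\phi^\A(\epsilon_\A(w)p_2)$ for every $\phi\in\ap(G)$. Given an arbitrary $f\colon T\to\T$, I would feed it into Lemma~\ref{apprint} to obtain $\phi\in\ap(G)$ with $|\phi(ut)-f(t)|<1/2$ for all $u\in V(T,1/2)$ and $t\in T$; evaluating $\phi^\A$ along a net in $T$ tending to $p_1$ (with $u=e$) and along its $w$--translate tending to $\epsilon_\A(w)p_2$ (with $u=w$) gives $|\phi^\A(p_1)-\hat f(p_1)|\le1/2$ and $|\phi^\A(\epsilon_\A(w)p_2)-\hat f(p_2)|\le1/2$, where $\hat f$ is the extension of $f$ to $\beta T\cong\overline{\epsilon_\A(T)}$. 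Hence $|\hat f(p_1)-\hat f(p_2)|\le1$ for every $f\colon T\to\T$, which forces $p_1=p_2$ (a $\pm1$--valued $f$ separating them would give difference $2$). With $p_1=p_2=:p$ the equation reads $\epsilon_\A(u_1)(px)=\epsilon_\A(u_2)(px)$, so $\mathrm{Inj}(G)=G^\A$ applied to the point $px$ gives $\epsilon_\A(u_1)=\epsilon_\A(u_2)$ and therefore $y_1=y_2$, a contradiction. This establishes injectivity of $y\mapsto yx$ on $E$ for every $x$, namely $\mathrm{Inj}(E)=G^\A$.

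The main obstacle is precisely this last case: multiplying by $x$ entangles the $V$--coordinate and the $\beta T$--coordinate of a point of $N$, and neither tool alone disentangles them. The group $G^\ap$ is essentially blind to the $\beta T$--coordinate, because $\epsilon_{\ap}(T)$ need not be uniformly discrete in $G^\ap$ and so no local structure theorem is available there, while Theorem~\ref{locals} says nothing about multiplication by $x$. The resolution is that these two deficiencies are complementary: cancellation in the group $G^\ap$ removes $x$, and the approximate interpolants of Lemma~\ref{apprint} — uniformly insensitive to the $V$--coordinate yet able to separate points of $\beta T$ — then recover $p_1=p_2$, after which $\mathrm{Inj}(G)=G^\A$ collapses the $V$--coordinate and closes the argument.
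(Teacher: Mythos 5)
Your proof is correct and follows essentially the same route as the paper's: the same ingredients (Lemma~\ref{apprint} with $\varepsilon$ of order $1/2$, Lemmas~\ref{lem:I0} and \ref{lem:top}, cancellation in the group $G^\ap$ through the homomorphism $\pi=\epsilon_\ap^{\A}$, and the hypothesis $\mathrm{Inj}(G)=G^\A$ to collapse the $V$-coordinate), with only cosmetic reorganization — you partition $E=\epsilon_\A(G)\sqcup N$ and rule out mixed collisions directly, where the paper proves a ``Claim'' reducing them to the case of two points of $\epsilon_\A(G)$; and in the decisive case you cancel $\pi(x)$ first and force $p_1=p_2$ via the estimate $|\hat f(p_1)-\hat f(p_2)|\le 1$ over all $f\colon T\to\T$, where the paper argues contrapositively that $p\neq q$ forces $\pi(p_0)\neq\pi(q_0)$, which is also why you need $V^2\subseteq V(T,1/2)$ while the paper only needs the approximation on $V$ itself. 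One inaccuracy to fix: Proposition~\ref{apg=ag} does \emph{not} let you deduce the $\A(G)$-set property from the $\A(G)$-interpolation property alone (the $\A(G)$-set property is exactly the ``approximable'' half of that equivalence), so your appeal to Theorem~\ref{locals} is unjustified; it is harmless, however, because nothing you actually use depends on it — existence of a factorization $\epsilon_\A(u)p$ is by definition of the set, and the continuous extension $\hat f$ to $\overline{\epsilon_\A(T)}\cong\beta T$ and the separation of $p_1\neq p_2$ by complementary subsets of $T$ need only the interpolation property, exactly as in the paper's own proof (openness of $\epsilon_\A(V)\overline{\epsilon_\A(T)}$ is only needed later, in Corollary~\ref{cor:InjA}, where the $\A(G)$-set hypothesis is explicitly added).
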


\begin{proof}
Since we will be mapping $G$ into both $G^\ap$ and $G^\A$, and these mappings may not be 1-1 (especially the former one), we will exceptionally  make  in this proof explicit use of
the canonical morphisms into the compactifications   $\epsilon_{\ap}\colon G\to G^\ap$ and $\epsilon_\A\colon G\to G^\A$.
 We will also need  $\epsilon_\ap^{\A}\colon G^\A\to
G^\ap$, the semigroup homomorphism dual to the inclusion map  $\ap(G)\hookrightarrow \A(G)$.  The
  following diagram  \begin{equation}\label{comm2}
\xymatrix{
G \ar[r]^{\epsilon_{\ap}} \ar[d]^{\epsilon_\A} & G^\ap \\
 G^\A \ar[ur]_{\epsilon_\ap^{\A}} &}\end{equation}
 then commutes.

We apply now  Lemma \ref{apprint} to $T$ and $\varepsilon<1/2$ to obtain a neighbourhood  $V$ of the identity such that
  every function $f\in \T^T$ can be approximated by an almost periodic function $\phi$ in such  a way that $|f(t)-\phi(vt)|<\varepsilon$ for every $v\in V$ and $t\in T$. Moreover, since $T$ is an $\luc(G)$-interpolation set in a metrizable group, it is necessarily right uniformly discrete by \cite[Theorem 4.9]{FG1}, and so $V$ may be chosen such that $T$ is right $V$-uniformly discrete.

 Let   $x\in G^\A$  and $p_0,q_0\in \epsilon_\A(G)\cup \epsilon_\A(V)\overline{\epsilon_\A(T)}$  be such that $p_0x=q_0x$. Since $G^\ap$ is a group we have  $\epsilon_\ap^{\A}(p_0)=\epsilon_\ap^{\A}(q_0)$.

\textbf{Claim:} \emph{If $p_0=\epsilon_\A(s)$ with $s\in G$,  then $q_0$ must be also in $\epsilon_\A(G).$}

To see this, let $q_0=\epsilon_\A(v) q$ with $v\in V$ and $q\in \overline{\epsilon_\A(T)}$. Then we have

\[\epsilon_{\ap}(s)=\epsilon_\ap^{\A}(\epsilon_\A(s))=\epsilon_\ap^{\A}(p_0)=\epsilon_\ap^{\A}(q_0)=\epsilon_\ap^{\A} (\epsilon_\A(v) q)=\epsilon_{\ap}(v)\epsilon_\ap^{\A}(q),\]
and so $\epsilon_\ap^{\A}(q)\in \epsilon_{\ap}(G)\cap \overline{\epsilon_{\ap}(T)}$. By property \ref{four}  of  $\ap(G)$-interpolation sets in Lemma \ref{lem:I0},
we see that
$\epsilon_\ap^{\A}(q)\in \epsilon_{\ap}(T)$, i.e, $q\in (\epsilon_\ap^{\A})^{-1}\left(\epsilon_{\ap}(T)\right).$

 Using properties  \ref{one} and  \ref{three}  of  Lemma \ref{lem:I0},  we  see that  Diagram \eqref{comm2} satisfies the properties of Diagram  \eqref{comm1}.  Lemma \ref{lem:top} can then be applied to obtain that $\epsilon_\A(T)=\overline{\epsilon_\A(T)}\cap (\epsilon_\ap^{\A})^{-1}\left(\epsilon_{\ap}(T)\right)$. We obtain thus that $q\in \epsilon_\A(T)$, and the claim is proved.\medskip

The proof now follows depending on whether $p_0$ or $q_0\in \epsilon_\A(G)$.

\textbf{Case I:} \emph{
Either $p_0$ or $q_0\in \epsilon_\A(G)$}

Note that, by the above Claim, this implies that both $p_0$ \emph{and} $q_0$ are in $\epsilon_\A(G)$.

Then, the  assumption $\mathrm{Inj}(G)=G^\A$ implies that $p_0=q_0$.

\textbf{Case II:} \emph{Both $p_0$ and $q_0$ are in $\epsilon_\A(V)\overline{\epsilon_\A(T)}$.}

 Let $p,q\in \overline{\epsilon_\A(T)}$ and  $u,v\in V$ be such that $p_0=\epsilon_\A(u)p$ and $q_0=\epsilon_\A(v)q$.

 If $p=q$, then $\epsilon_\A(u)$ and $\epsilon_\A(v)$ must be also equal. Otherwise,
   again the assumption $\mathrm{Inj}(G)=G^\A$ gives  \[p_0x=\epsilon_\A(u)(px))\ne \epsilon_\A(v)(px)=\epsilon_\A(v)(qx)=q_0x,\]
 whence a contradiction is derived.

Suppose finally that  $p\neq q$.
There is then  $T_1\subset T$ such that $p\in \overline{\epsilon_\A(T_1)}$ and
 $q\in \overline{\epsilon_\A(T\setminus T_1)}$.
Consider a function $f\colon T\to \T$ with $f(t)= 1$ for all $t\in T_1$ and $f(t)=-1$ for $t\in T\setminus T_1$. By Lemma \ref{apprint} there is $\phi\in \ap(G)$ such that \[|\phi(ut)-1|<\varepsilon\quad\text{ for all}\quad t\in T_1\quad\text{ and}\quad |\phi(vt)-1|\geq 2-\varepsilon\quad\text{ for all}\quad t\in T\setminus T_1.\]
 Accordingly, \[|\phi^\ap\left(\epsilon_\ap^{\A}(\epsilon_\A(u)p)\right)-1|\le \epsilon\quad \text{and}\quad  |\phi^\ap\left(\epsilon_\ap^{\A}(\epsilon_\A(v)q)\right)-1|\ge 2-\epsilon,\]
 and so
 \[\phi^\ap(\epsilon_\ap^{\A}(p_0))=\phi^\ap(\epsilon_\ap^{\A}(\epsilon_\A(u)p))\ne \phi^\ap(\epsilon_\ap^{\A}(\epsilon_A(v)q))=\phi^\ap(\epsilon_\ap^{\A}(q_0)). \]
Thus, $\epsilon_\ap^{\A}(p_0)$ and $\epsilon_\ap^{\A}(q_0)$ are distinct points in $G^\ap$, and so are the points
  $\epsilon_\ap^{\A}(p_0)\epsilon_\ap^{\A}(x)$ and $\epsilon_\ap^{\A}(q_0)\epsilon_\ap^{\A}(x)$ in $G^\ap.$
  The points $p_0x$ and $q_0x$ must then also be distinct in $G^\A$, which is a contradiction. Hence, $p_0=q_0,$   as required.
We conclude that ${\displaystyle x\in \mathrm{Inj}\left(\epsilon_\A(G)\cup \epsilon_\A(V)\overline{\epsilon_\A(T)}\right)}$.
      \end{proof}
			
\begin{corollary}\label{cor:InjA} Let $G$ be as in Theorem \ref{injA} with  $\mathrm{Inj}( G)=G^\A$
and $\cnaught\oplus\ap(G)\subseteq\A(G)\subseteq \luc(G)$, and let $T$
  be an infinite  $\ap(G)$-interpolation set in $G$ that is in addition an $\A(G)$-set. Then there is  an open and dense subset  $D\subseteq G^\A$ containing properly $G$  such that $\mathrm{Inj}(D)=G^\A.$
\end{corollary}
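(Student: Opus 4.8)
The plan is to let the set $D$ be exactly the one whose injectivity is asserted by Theorem \ref{injA}, and then to harvest openness, density and proper containment from the two hypotheses that the corollary adds to that theorem. Concretely, I would first invoke Theorem \ref{injA}: its hypotheses are met, since $\ap(G)\subseteq \A(G)\subseteq \luc(G)$ and $\mathrm{Inj}(G)=G^\A$ are assumed, so it yields an open neighbourhood $V$ of $e$ with $\mathrm{Inj}\bigl(\epsilon_\A(G)\cup \epsilon_\A(V)\overline{\epsilon_\A(T)}\bigr)=G^\A$. I then set $D=\epsilon_\A(G)\cup \epsilon_\A(V)\overline{\epsilon_\A(T)}$, so that $\mathrm{Inj}(D)=G^\A$ is precisely the conclusion of the theorem and nothing further is needed for that clause.

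To see that $D$ is open I would use the two strengthened hypotheses of the corollary. The inclusion $\cnaught\subseteq \A(G)$ lets Lemma \ref{homeo} guarantee that $\epsilon_\A(G)$ is open in $G^\A$. The hypothesis that $T$ is an $\A(G)$-set lets Theorem \ref{openlocals} apply to the open set $V$, giving that $\epsilon_\A(V)\overline{\epsilon_\A(T)}$ (written $V\overline T$) is open. Hence $D$ is a union of two open sets, and is open.

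Density is immediate once openness is in hand: $\epsilon_\A(G)$ is dense in $G^\A$ by the very definition of the compactification, and $D\supseteq \epsilon_\A(G)$, so $D$ is dense. For the proper containment I would show that $\overline T$ already reaches into the remainder $G^\ast$. Being an $\ap(G)$-interpolation set, $T$ is an $\luc(G)$-interpolation set and hence, in the metrizable group $G$, right uniformly discrete by Lemma \ref{udluc}; being also infinite it is closed and discrete in $G$, so $\overline T\cap \epsilon_\A(G)=\epsilon_\A(T)$ because $\epsilon_\A$ is a homeomorphism onto the open set $\epsilon_\A(G)$. The $\A$-Local Structure Theorem \ref{locals} identifies $\overline T$ with $\beta T$, which strictly contains $T$; thus any point of $\overline T\setminus \epsilon_\A(T)$ lies in $G^\ast$. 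Since $e\in V$ forces $\overline T\subseteq V\overline T\subseteq D$, the set $D$ contains points of $G^\ast$ and so properly contains $\epsilon_\A(G)$.

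The corollary carries no genuine difficulty of its own: all the substance resides in Theorem \ref{injA}. The only points demanding attention are that the set $\epsilon_\A(V)\overline{\epsilon_\A(T)}$ from that theorem actually becomes open and actually protrudes beyond $\epsilon_\A(G)$ into $G^\ast$; these are secured, respectively, by the added hypotheses $\cnaught\subseteq\A(G)$ and ``$T$ is an $\A(G)$-set'' through Lemma \ref{homeo}, Theorem \ref{openlocals} and Theorem \ref{locals}.
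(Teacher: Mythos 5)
Your proof is correct and takes essentially the same route as the paper's: the paper likewise sets $D=\epsilon_\A(G)\cup\epsilon_\A(V)\overline{\epsilon_\A(T)}$, gets openness of $\epsilon_\A(V)\overline{\epsilon_\A(T)}$ from Theorem \ref{openlocals} (using the added hypothesis that $T$ is an $\A(G)$-set) and openness of $\epsilon_\A(G)$ from Lemma \ref{homeo} via $\cnaught\subseteq\A(G)$, with density immediate. The only cosmetic difference is that the paper re-enters the proof of Theorem \ref{injA} to produce a suitable $V$, whereas you cite that theorem as a black box — which is legitimate, since Theorem \ref{openlocals} makes $V\overline{T}$ open for \emph{any} open $V$ — and you additionally spell out the proper-containment argument ($\overline{\epsilon_\A(T)}\setminus\epsilon_\A(T)\subseteq G^\ast$ is nonempty) that the paper leaves implicit.
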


\begin{proof}
  If we apply Theorem \ref{openlocals} to $\A(G)$ and Lemma \ref{apprint} to $\ap(G)$, we obtain a neighbourhood $V$ of $e$
	for which $T$ has the properties required in the proof of  Theorem \ref{injA}
	 (i.e., $T$ is ${\displaystyle V}$-right uniformly discrete and has the $\ap(G)$-approximation property)
	and such that $V\overline{T}$ is open in $G^\A$. Put $D=\epsilon_\A(G)\cup \epsilon_\A(V)\overline{\epsilon_\A(T)}$, $D$ is clearly dense in $G^\A$. The proof of Theorem \ref{injA} shows that $\mathrm{Inj}(D)=G^\A$. The condition  $\cnaught\subset \A(G)$ makes, by Lemma \ref{homeo}, $\epsilon_\A(G)$ open in $G^\A$, and so is $D.$
		\end{proof}

When $\A= \luc(G)$, this Corollary applies to every locally compact group containing infinite $\ap(G)$-interpolation sets

\begin{corollary}
\label{cor:inj-luc}
 If $G$ is a  metrizable locally compact group containing an infinite  $\ap(G)$-interpolation set,    then
  there is an open and dense subset $D$ of $G^\luc$ that contains properly  $G$ such that  $\mathrm{Inj} (D) = G^\luc.$
\end{corollary}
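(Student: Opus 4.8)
The plan is to obtain this as the special case $\A(G)=\luc(G)$ of Corollary \ref{cor:InjA}, so that the whole task reduces to verifying, for this choice of algebra, the three standing hypotheses of that corollary: that $\cnaught\oplus\ap(G)\subseteq\luc(G)$, that $\mathrm{Inj}(G)=G^\luc$, and that the given infinite $\ap(G)$-interpolation set $T$ is in addition an $\luc(G)$-set. The first of these is immediate, since both $\cnaught$ and $\ap(G)$ consist of right uniformly continuous functions and hence sit inside $\luc(G)$.

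For the injectivity hypothesis $\mathrm{Inj}(G)=G^\luc$ I would appeal to Veech's Theorem in the form of Corollary \ref{Veech}. Fix $x\in G^\luc$ and suppose $sx=tx$ for some $s,t\in G$ with $s\neq t$. Since $\epsilon_\luc$ is a semigroup homomorphism of the \emph{group} $G$ into $G^\luc$, the element $\epsilon_\luc(t^{-1})$ is a left inverse of $\epsilon_\luc(t)$; left-multiplying $sx=tx$ by it and using associativity of the product in $G^\luc$ yields $(t^{-1}s)x=x$. As $t^{-1}s\neq e$, this contradicts Corollary \ref{Veech}. Hence $y\mapsto yx$ is injective on $\epsilon_\luc(G)$ for every $x\in G^\luc$, which is precisely the statement $\mathrm{Inj}(G)=G^\luc$.

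It then remains to see that $T$ is an $\luc(G)$-set. Being an $\ap(G)$-interpolation set and $\ap(G)\subseteq\luc(G)$, the set $T$ is in particular an $\luc(G)$-interpolation set; in a metrizable group such a set is necessarily right uniformly discrete (the same fact used in the proof of Theorem \ref{injA}, via \cite[Theorem 4.9]{FG1}). By Lemma \ref{udluc} a right uniformly discrete set is an approximable $\luc(G)$-interpolation set, and in particular an $\luc(G)$-set. With all three hypotheses verified, Corollary \ref{cor:InjA} applied to $\A(G)=\luc(G)$ produces the open and dense subset $D$ of $G^\luc$ that contains $G$ properly and satisfies $\mathrm{Inj}(D)=G^\luc$, as required. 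The only point requiring any care --- and hence the closest thing to an obstacle --- is the reduction $\mathrm{Inj}(G)=G^\luc$, where one must exploit the group inverses available inside $\epsilon_\luc(G)$ to convert the fixed-point conclusion $sx\neq x$ of Veech's Theorem into the two-sided separation $sx\neq tx$; everything else is a direct unwinding of definitions and of facts already established in the excerpt.
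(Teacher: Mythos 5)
Your proposal is correct and follows essentially the same route as the paper: both deduce the statement from Corollary \ref{cor:InjA} applied with $\A(G)=\luc(G)$, using Veech's Theorem (Corollary \ref{Veech}) to secure the hypothesis $\mathrm{Inj}(G)=G^\luc$ and the fact that an $\ap(G)$-interpolation set is an $\luc(G)$-interpolation set and hence an $\luc(G)$-set. The only cosmetic difference is in the last sub-step, where the paper cites \cite[Proposition 3.3]{FG1} directly, while you route the argument through metrizability, right uniform discreteness and Lemma \ref{udluc}; both paths are valid, and your explicit conversion of Veech's fixed-point statement into injectivity of $y\mapsto yx$ on $G$ merely spells out what the paper takes for granted.
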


\begin{proof}
    This follows directly from Corollary \ref{cor:InjA} using Veech's Theorem (deduced in Corollary \ref{Veech}).
	It suffices to recall here that  $\ap(G)$-interpolation sets are also $\luc(G)$-interpolation sets, and so they are $\luc(G)$-sets by \cite[Proposition 3.3]{FG1}. 		
\end{proof}

 \begin{corollary}\label{cor:injE}
Let $G$ be a metrizable locally compact group containing an infinite   $\ap(G)$-interpolation set $T_0$ that is an $E$-set. Let $\A(G)$ be an admissible algebra with $\wap(G) \subseteq  \A(G)\subseteq \luc(G)$, and suppose that $\mathrm{Inj}( G)=G^\A.$
Then there is an open and dense subset  $D\subseteq G^\A$  that contains properly  $G$ such that $\mathrm{Inj}(D)=G^\A.$
  \end{corollary}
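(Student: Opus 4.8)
The plan is to deduce the statement directly from Corollary~\ref{cor:InjA}, whose hypotheses on the algebra are already in force: since $\cnaught\subseteq\wap(G)$ and $\ap(G)\subseteq\wap(G)$ for every locally compact group, the assumption $\wap(G)\subseteq\A(G)$ yields $\cnaught\oplus\ap(G)\subseteq\A(G)\subseteq\luc(G)$, and $\mathrm{Inj}(G)=G^\A$ is assumed. Thus the only thing that remains to be supplied is an \emph{infinite $\ap(G)$-interpolation set that is at the same time an $\A(G)$-set}; feeding such a set into Corollary~\ref{cor:InjA} produces at once the open dense $D$ with $G\subsetneq D$ and $\mathrm{Inj}(D)=G^\A$. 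Note that $G$ is an $E$-group, since it contains the $E$-set $T_0$, so the $\wap$-machinery of the previous sections is available.

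To manufacture such a set I would not argue with $T_0$ itself but with a carefully extracted infinite subset. First, since $G$ is metrizable and $T_0$ is an $\ap(G)$-interpolation set, it is in particular an $\luc(G)$-interpolation set, hence right uniformly discrete by Lemma~\ref{udluc} (that is, \cite[Theorem 4.9]{FG1}); being infinite and uniformly discrete, $T_0$ is non-relatively compact. I would then run the construction of Example~\ref{t-set} inside $X=T_0$: at each stage the auxiliary set $T_{n_0}$ is compact, so it meets the uniformly discrete $T_0$ in only finitely many points and a new point $x_{n_0}\in T_0\setminus T_{n_0}$ can always be chosen. This produces an infinite $T=\{x_n:n<\omega\}\subseteq T_0$ which is right $V^2$-uniformly discrete (hence right $V$-uniformly discrete) and for which $VT$ is a $t$-set, and therefore translation-compact.

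The key point is that $T$ inherits the two remaining properties from $T_0$. As a subset of an $\ap(G)$-interpolation set, $T$ is again an $\ap(G)$-interpolation set. Moreover $T$ is an $E$-set: it is infinite and uniformly discrete, hence non-relatively compact, and since $T\cup T^{-1}\subseteq T_0\cup T_0^{-1}$ one has $\bigcap_{x\in T\cup T^{-1}}x^{-1}Ux\supseteq\bigcap_{x\in T_0\cup T_0^{-1}}x^{-1}Ux$, which is a neighbourhood of $e$ because $T_0$ is an $E$-set. Thus $T$ is a right $V$-uniformly discrete $E$-set with $VT$ translation-compact, so Lemma~\ref{lem:lucint} makes $T$ an approximable $\wap(G)$-interpolation set, in particular a $\wap(G)$-set. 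Since $\wap(G)\subseteq\A(G)$, every function witnessing the $\wap(G)$-set property already lies in $\A(G)$, so $T$ is an $\A(G)$-set. Applying Corollary~\ref{cor:InjA} to this $T$ then finishes the proof.

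The main obstacle, and the reason to extract $T$ rather than to work with $T_0$ directly, is securing translation-compactness of the fattened set $UT$: an arbitrary $\ap(G)$-interpolation set need not be arranged so that $UT_0$ is translation-compact, whereas Example~\ref{t-set} builds this property in by construction, while the $E$-set and interpolation hypotheses pass to infinite subsets for free. The points requiring care are the bookkeeping between neighbourhoods ($V^2$-uniform discreteness of $T$ versus the $V$ used to invoke Lemma~\ref{lem:lucint}, which is harmless since $V^2$-uniform discreteness implies $V$-uniform discreteness) and the verification that the $t$-set property furnished by Example~\ref{t-set} indeed delivers the two-sided translation-compactness of $VT$ demanded by Lemma~\ref{lem:lucint}.
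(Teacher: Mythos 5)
Your proposal is correct and follows essentially the same route as the paper: extract $T\subseteq T_0$ via the construction of Example \ref{t-set} so that $VT$ is translation-compact, invoke Lemma \ref{lem:lucint} to conclude $T$ is a $\wap(G)$-set (hence an $\A(G)$-set since $\wap(G)\subseteq\A(G)$), and feed $T$ into Corollary \ref{cor:InjA}. Your additional verifications (that $T$ remains an $E$-set and an $\ap(G)$-interpolation set, and that the algebra hypotheses of Corollary \ref{cor:InjA} hold) are details the paper leaves implicit, and they are all correct.
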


  \begin{proof}
  Let $V$ be the neighbourhood  obtained in the proof of Corollary \ref{cor:InjA}.
  The set $V$ may be chosen so that $T_0$ is $V$-right uniformly discrete, \cite[Theorem 4.9]{FG1}. So we can apply the construction of Example \ref{t-set}
   to obtain  $T\subseteq T_0$ such that $VT$ is in addition translation-compact. Hence $T$ is an approximable $\wap(G)$-interpolation set
	by Lemma \ref{lem:lucint}. It follows that  $T$ is an $\A(G)$-set for any algebra with $\wap(G)\subset \A(G)$. We can then apply Corollary \ref{cor:InjA}. \end{proof}

\begin{corollary}
 Let $G$ be a  metrizable locally compact Abelian group and let  $\A(G)$ be an admissible algebra with $\wap(G) \subseteq  \A(G)\subseteq \luc(G)$. Then
  there is an open and dense  subset $D$ of $G^\A$ that contains properly  $G$ such that  $\mathrm{Inj} (D) = G^\A.$
\end{corollary}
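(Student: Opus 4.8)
The plan is to deduce the statement from Corollary~\ref{cor:injE}, whose three hypotheses I shall verify. I may assume that $G$ is noncompact, since for compact $G$ one has $\A(G)=\CB(G)$, whence $G^\A=G$ and no open set properly contains $G$; the content lies in the noncompact case. Because $\A(G)$ is admissible with $\wap(G)\subseteq\A(G)\subseteq\luc(G)$ by assumption, what remains to check is (a) that $G$ contains an infinite $\ap(G)$-interpolation set which is an $E$-set, and (b) that $\mathrm{Inj}(G)=G^\A$. Granting these, Corollary~\ref{cor:injE} produces an open dense $D\subseteq G^\A$ with $G\subsetneq D$ and $\mathrm{Inj}(D)=G^\A$, which is exactly the assertion.

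For (a) I first observe that $\widehat G$ is uncountable: were it countable it would, being a locally compact group, be discrete by the Baire category theorem, and then $G$ would be compact. Hence, by the mechanism recalled after Theorem~\ref{injA} (see \cite{galihern04}), $G$ carries an infinite $\ap(G)$-interpolation set $T_0$. Since $\ap(G)\subseteq\luc(G)$, $T_0$ is an $\luc(G)$-interpolation set, so Lemma~\ref{udluc} makes it right $U$-uniformly discrete for some neighbourhood $U$ of $e$. In the Abelian group $G$ any infinite right $U$-uniformly discrete set is non-relatively compact: a relatively compact set is covered by finitely many translates $x_iV$ (with $V$ symmetric and $V^2\subseteq U^{-1}U$), and each such translate meets $T_0$ in at most one point. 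Finally, $x^{-1}Ux=U$ for every $x\in G$, so $\bigcap\{x^{-1}Ux:x\in T_0\cup T_0^{-1}\}=U$ is a neighbourhood of $e$; being also non-relatively compact, $T_0$ is therefore an $E$-set.

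The main point is (b). Let $\epsilon_\ap^{\A}\colon G^\A\to G^\ap$ be the homomorphism dual to the inclusion $\ap(G)\hookrightarrow\A(G)$, so that $\epsilon_\ap^{\A}(\epsilon_\A(s))=\epsilon_\ap(s)$ for $s\in G$. Here $G^\ap$ is a compact \emph{group}, and since a locally compact Abelian group is maximally almost periodic, $\epsilon_\ap$ is injective. Suppose now that $\epsilon_\A(s)x=\epsilon_\A(t)x$ for some $s\ne t$ in $G$ and some $x\in G^\A$. Multiplying on the left by $\epsilon_\A(s^{-1})$ and using associativity of the product in $G^\A$ together with the fact that $\epsilon_\A$ is a homomorphism, I obtain $\epsilon_\A(r)x=x$ with $r=s^{-1}t\ne e$. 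Applying $\epsilon_\ap^{\A}$ yields $\epsilon_\ap(r)\,\epsilon_\ap^{\A}(x)=\epsilon_\ap^{\A}(x)$, and cancelling in the group $G^\ap$ forces $\epsilon_\ap(r)=e$, contradicting the injectivity of $\epsilon_\ap$. Hence $\epsilon_\A(s)x\ne\epsilon_\A(t)x$ for all such $s,t,x$; that is, $\mathrm{Inj}(G)=G^\A$.

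I expect the delicate part to be (b), and specifically the justification that $\epsilon_\A(s)x=\epsilon_\A(t)x$ reduces, via associativity of the $G$-action on $G^\A$, to a fixed-point equation $\epsilon_\A(r)x=x$ which the homomorphism $\epsilon_\ap^{\A}$ then transports into a genuine cancellation in the compact group $G^\ap$. Step (a) is comparatively routine once the uncountability of $\widehat G$ and the triviality of conjugation in an Abelian group are noted. With (a) and (b) in hand, the conclusion follows at once from Corollary~\ref{cor:injE}.
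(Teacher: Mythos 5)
Your overall route coincides with the paper's: reduce to Corollary \ref{cor:injE} by checking (a) that $G$ contains an infinite $\ap(G)$-interpolation set which is an $E$-set, and (b) that $\mathrm{Inj}(G)=G^\A$. Your reduction to noncompact $G$ is reasonable (the statement is indeed vacuous for compact $G$, where $G^\A=G$; the paper's proof also implicitly excludes this case since $E$-sets are non-relatively compact). Your step (b) is correct and in fact more self-contained than the paper's, which simply invokes \cite[Theorem 1]{BF}: pushing the equation $\epsilon_\A(r)x=x$ forward along the semigroup homomorphism $\epsilon_\ap^{\A}\colon G^\A\to G^\ap$ and cancelling in the compact \emph{group} $G^\ap$ is precisely why Veech's property for elements of $G$ is automatic over a MAP group, for any compactification dominating $G^\ap$. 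Your verification that an infinite $\ap(G)$-interpolation set in a metrizable Abelian group is right uniformly discrete, non-relatively compact, and hence an $E$-set, is also fine.

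The gap is in step (a). The mechanism you cite (the remark preceding the proof of Theorem \ref{injA}, taken from \cite{galihern04}) is stated for \emph{second countable} locally compact groups, and metrizability does not imply second countability: a metrizable locally compact group is second countable if and only if it is $\sigma$-compact. For an uncountable discrete Abelian group $G$ (metrizable, locally compact, noncompact, not $\sigma$-compact), your Baire argument does show that $\widehat{G}$ is uncountable, but the cited result then does not apply, so you have not produced the infinite $\ap(G)$-interpolation set that Corollary \ref{cor:injE} needs. The paper avoids this by citing \cite[Corollary 4.8]{galihern99}, a result specific to (MAP) Abelian groups which covers all noncompact locally compact Abelian groups. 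Your argument can be repaired --- e.g.\ pass to an open, $\sigma$-compact, noncompact subgroup $H$ of $G$ (second countable once $G$ is metrizable), find the interpolation set $T_0\subset H$ there, and then check that $T_0$ interpolates $\ap(G)$ as well, using that characters of $H$ extend to characters of $G$, so that $bH$ embeds as a closed subgroup of $bG$ and Tietze's theorem applies --- but as written your proof only covers the $\sigma$-compact case of the statement.
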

\begin{proof}
Locally compact Abelian groups are   $SIN$-groups and  contain infinite  $\ap(G)$-interpolation sets (plenty of them, actually, see for example \cite[Corollary 4.8]{galihern99}). Moreover   $\wap$-Veech's Theorem holds on them (as they are $MAP$-groups, see \cite[Theorem 1]{BF}). The corollary then follows from  Corollary \ref{cor:injE}.
\end{proof}

\section{Translation-compact sets and strongly prime points}\label{sec:sp}

Throughout the section, $G$ is a noncompact locally compact group, $\A(G)$ is an admissible  C$^\ast-$subalgebra of $\CB(G)$, and as before $G^\A$ is the $\A$-compactification of $G$  and  $G^*=G^{\A}\setminus \epsilon_\A(G)$. When $\A$ separates points and closed sets of $G$ (and this happens  whenever $\cnaught\subset \A(G)$), $G^\ast$ is closed in $G^\A$ and, therefore, both
$G^*G^*$ and $\overline{G^*G^*}$ are ideals in each of the semigroups
$G^{\A}$ and $G^*$, see Lemma \ref{homeo}. We say that a point $p\in G^{\A}$ is {\it prime} if
$p\notin G^*G^*$, and {\it strongly prime} if
$p\notin\overline{G^*G^*}$.

A combinatorial characterization of the prime points is still not known even in $\beta \Z$ when $\Z$ is the discrete additive group of the integers
 and $\A(\Z)=\ell_\infty(\Z)$.
This is related to the open question, known as Mary Ellen Rudin's question,  whether every point in $\Z^*=\beta \Z\setminus \Z$ belongs to some maximal principal left ideal $\beta\Z x$ of $\Z^*$, see \cite{HMS} or \cite[Question 33]{HM}.
In fact, it is not difficult to check that $\beta \Z x$ is a maximal principal left ideal whenever $x$
is prime, but the converse  is still not known.

In \cite{FP}, however,  a combinatorial characterization of the strongly prime points in $\beta G$
was obtained using right translation-finite sets in $G$.

In this section,
 we obtain a complete characterization of strongly prime points, in $G^{\luc}$ and in  $G^\wap$ for $SIN$-groups, in terms of translation-compact sets. This is then applied to prove that the union of right-translation-compact sets is translation-compact.
\subsection{Strongly prime points and translation-compact sets}

We start with our basic description of strongly prime points.

\begin{theorem}\label{sprime} Let $G$ be a locally compact group,  $\A(G)$ be an admissible subalgebra of
$\luc(G)$
and $T$ be a subset of $G.$  Consider the following statements.
 \begin{enumerate}
\item
$\overline T\cap G^*G^*=\emptyset.$
\item $KT$ is right translation-compact for every relatively compact subset $K$ of $G.$
\item For some  open relatively compact subset $V$ of $G$, $VT$ is  right translation-compact
and $T$ is an $\A(G)$-set.
\item For some  open relatively compact subset $V$ of $G$,  $VT$ is  right translation-compact  and $V\overline{T_1}$  is open in $G^\A $ for every $T_1\subseteq T.$
\item
$\overline T\cap \overline{G^*G^*}=\emptyset.$
\end{enumerate}
Then $(iii)\Longleftrightarrow(iv)\Longrightarrow (v) \Longrightarrow$ (i).
 If $C_0(G)\subseteq \A(G),$ then (i)$\Longrightarrow$ (ii).
 \end{theorem}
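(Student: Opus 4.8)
The plan is to dispose of the two trivial links first and then concentrate on the two substantial ones. The implication $(v)\Rightarrow(i)$ is immediate, since $G^*G^*\subseteq\overline{G^*G^*}$ and hence $\overline T\cap\overline{G^*G^*}=\emptyset$ forces $\overline T\cap G^*G^*=\emptyset$. For $(iii)\Leftrightarrow(iv)$ I would invoke Theorem \ref{openlocals}, which asserts precisely that $T$ is an $\A(G)$-set if and only if $V\overline{T_1}$ is open in $G^\A$ for every $T_1\subseteq T$ and every open $V$. The clause ``$VT$ is right translation-compact for some open relatively compact $V$'' is common to (iii) and (iv); moreover every left translation by an element of $G$ is a homeomorphism of $G^\A$ preserving both openness and right translation-compactness, so I may normalize the witnessing $V$ to be a neighbourhood of $e$. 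With this bookkeeping the equivalence is exactly the content of Theorem \ref{openlocals}.

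To prove $(iii)\Rightarrow(v)$ — which, by the equivalence just established, also gives $(iv)\Rightarrow(v)$ — fix an open relatively compact neighbourhood $V$ of $e$ with $VT$ right translation-compact. Applying the defining property of an $\A(G)$-set with this $V$ playing the role of $U$, I obtain a neighbourhood $W$ of $e$ with $\overline W\subseteq V$ and a function $h\in\A(G)$ satisfying $h(WT)=\{1\}$ and $h(G\setminus VT)=\{0\}$. The second equality says $h$ vanishes off the right translation-compact set $VT$, so Lemma \ref{general**} yields $h^\A(G^*G^*)=\{0\}$. On the other hand $h^\A\equiv 1$ on $\overline{WT}$, hence on the open set $W\overline T$ (open by Theorem \ref{openlocals}), which contains $\overline T$ because $e\in W$. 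Thus $W\overline T$ is an open neighbourhood of $\overline T$ on which $h^\A=1$, whereas $h^\A=0$ on $G^*G^*$; hence $W\overline T\cap G^*G^*=\emptyset$, and being open, $W\overline T$ is disjoint from $\overline{G^*G^*}$ as well. Therefore $\overline T\cap\overline{G^*G^*}=\emptyset$, which is (v).

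For the last implication assume $C_0(G)\subseteq\A(G)$; by Lemma \ref{homeo} this makes $\epsilon_\A$ a homeomorphism onto an open subset and turns $G^*$ into a closed two-sided ideal of $G^\A$. I would argue by contraposition: supposing $KT$ fails to be right translation-compact for some relatively compact $K$, I manufacture a point of $\overline T\cap G^*G^*$, contradicting (i). By hypothesis there is a non-relatively-compact $L\subseteq G$, which I may take infinite and uniformly discrete, such that $\bigcap_{b\in F}b^{-1}(KT)$ is non-relatively-compact for every finite $F\subseteq L$. Following the inductive selection used in the proof of Theorem \ref{lwap} (see also \cite[Lemma 4.11]{FG1}), I build sequences $(s_m)\subseteq L$ and a non-relatively-compact $(t_n)\subseteq G$ with $s_mt_n\in KT$ whenever $m\le n$. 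Passing to subnets $s_\alpha\to p$ and $t_\beta\to q$ in $G^\A$, uniform discreteness together with $\epsilon_\A$ being an embedding (this is where $C_0(G)\subseteq\A(G)$ enters) forces $p,q\in G^*$, while $s_mt_n\in KT$ gives $pq=\lim_\alpha\lim_\beta\epsilon_\A(s_\alpha t_\beta)\in\overline{KT}$.

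It then remains to transfer this point from $\overline{KT}$ into $\overline T$. Since $K$ is relatively compact, the joint continuity property yields $\overline{KT}\subseteq\overline K\,\overline T$ with $\overline K$ compact in $G$; hence $pq=ky$ for some $k\in G$ and $y\in\overline T$, so $y=\bigl(\epsilon_\A(k)^{-1}p\bigr)q=\bigl(\epsilon_\A(k^{-1})p\bigr)q$. Because $G^*$ is a two-sided ideal, $\epsilon_\A(k^{-1})p\in G^*$, whence $y\in G^*G^*\cap\overline T$, the desired contradiction. The hard part will be exactly this direction: unlike Lemma \ref{general**}, which needs only one inequality, here one must produce genuine products in $G^*G^*$ sitting inside $\overline{KT}$, which forces the delicate inductive net construction and the extraction of a non-relatively-compact, uniformly discrete sequence whose limits are guaranteed to lie in the remainder — and it is precisely at these two points (locating $p,q$ in $G^*$, and absorbing the compact factor $k$ through the ideal property) that the assumption $C_0(G)\subseteq\A(G)$, via Lemma \ref{homeo}, is indispensable.
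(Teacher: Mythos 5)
Your proposal is correct, and for three of the four links it coincides with the paper's own proof: (iii)$\Leftrightarrow$(iv) is quoted from Theorem \ref{openlocals} in both cases (with the same silent normalization of $V$ by translations), (iv)$\Rightarrow$(v) is in both cases the combination of Lemma \ref{general**} with the openness of a set of the form $W\overline T$ (the paper uses the openness of $V\overline T$ granted by (iv); you get openness of $W\overline{T}$ from Theorem \ref{openlocals} --- immaterial), and (v)$\Rightarrow$(i) is trivial in both. Where you genuinely diverge is (i)$\Rightarrow$(ii). The paper argues by contraposition as you do, but produces the offending product by pure compactness: from the failure of right translation-compactness of $KT$ it gets, for every finite $F\subseteq L$, that $\bigcap_{b\in F}b^{-1}\overline{KT}\cap G^*\neq\emptyset$ (here $C_0(G)\subseteq\A(G)$ guarantees, via Lemma \ref{homeo}, that non-relatively compact subsets of $G$ have closures meeting $G^*$); the finite intersection property inside the compact set $G^*$ then yields a \emph{single} $y\in G^*$ with $by\in\overline{KT}$ for all $b\in L$, and a cluster point $x\in G^*\cap\overline L$ gives $xy\in\overline{KT}$ by continuity of right translation by $y$. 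You instead re-run the Ruppert-type inductive construction from the proof of Theorem \ref{lwap} (Lemma 4.11 of \cite{FG1}), building $(s_m)\subseteq L$ and a non-relatively compact $(t_n)$ with $s_mt_n\in KT$ for $m\le n$, and then taking iterated limits. This does work, and the endgame (passing through $\overline{KT}\subseteq\overline K\,\overline T$ by joint continuity, then absorbing the compact factor $k$ through the ideal $G^*$) is identical in both proofs; but your route is heavier than needed: it forces you to take $L$ uniformly discrete and, more delicately, to arrange that the cluster point $q$ of $(t_n)$ actually lies in $G^*$ (the maximal-uniformly-discrete-set/injective-subsequence trick of Theorem \ref{lwap}, which you cite but do not reproduce) --- precisely the two points that the paper's finite-intersection-property argument gets for free from the compactness of $G^\A$.
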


\begin{proof}
(iii)$\Longleftrightarrow$(iv). This follows from Theorem \ref{openlocals}.

(iv)$\Longrightarrow$(v).
 Choose, according to the definition of $\A(G)$-sets, a neighbourhood   $W$  with  $\overline{W}\subset V$ and $h\in \A(G)$ with $h(G\setminus VT)=\{0\}$ and $h(WT)=1$. Since $VT$ is also assumed to be right-translation-compact,  $h^\A(G^\ast G^\ast)=\{0\}$ by Lemma \ref{general**}. Therefore,
  $W\overline T\cap  G^\ast G^\ast=\emptyset$.  In particular, $\overline{T}\cap  G^\ast G^\ast=\emptyset$.
  Since $V\subset G$ and $G^*$ is invariant, this yields clearly  $V\overline T\cap  G^\ast G^\ast=\emptyset$. Since $V\overline T$
  is assumed to be open in $G^\A$,  Statement (v) follows.

(i)$\Longrightarrow$(ii). Suppose now that $C_0(G)\subseteq \A(G).$ Let $T\subseteq G$ and suppose that $KT$ is not right translation-compact for some relatively compact subset $K$
in $G$. Then pick a non-relatively compact
subset $L$ of $G$ such that $\bigcap_{b\in F}b^{-1}KT$ is not relatively compact in $G$ for
any finite subset $F\subset L$. As a subset of $G^\A$ (i.e., the set $\epsilon_A(\bigcap_{b\in F}b^{-1}KT)$) is not relatively compact either since $\epsilon_\A$ is a homeomorphism.
Therefore,
\[
\bigcap_{b\in F}b^{-1}\;\overline{ K\, T}\cap G^*\ne\emptyset\]
 for every finite subset $F\subset L$.

By the finite intersection property, it follows that there exists $y\in G^*$ such that $by\in \overline{ K T}$ for every $b\in L$. By
continuity of the shift $x\mapsto xy:G^{\A}\to G^{\A},$ and since $L$ is not relatively compact, there exists $x\in G^*\cap\overline L$ such that
$xy\in \overline{K T}$. By the joint continuity property,  we see that $\overline{KT}=\overline K\;\overline T,$
and so $xy\in \overline{K}\;\overline{ T}$.
Since $\overline K\subset G$ and $G^*$ is invariant, this contradicts the fact that
$\overline{T}\cap (G^*G^*)=\varnothing$. Therefore, $KT$ must be right translation-compact.
\end{proof}

In general neither  assertion (i) nor  assertion (ii)    of the preceding theorem implies  assertion (iii), as the following simple example shows.

\begin{example} \label{example} \begin{enumerate}\item In Theorem \ref{sprime},
%If $T$ is a compact subset of a noncompact locally compact group,
%then    (ii) holds, while $T$ cannot be an $\ap(G)$-set, for
%$\ap(G)$ does not contain non-constant  functions with compact support.
%So, in general, (ii) does not imply (iii).
Statement (ii) does not imply (iii) in general even if $C_0(G)\subseteq \A(G).$
In \cite{C1} Chou constructed
 (see also \cite[Section 5]{FG1}) a translation-finite set (even a t-set) $T$ in a discrete group $G$  which is not a $\B(G)$-set. Since, by Theorem \ref{union}, below,   $FT$ is translation-finite for every finite subset $F$ of $G$, we see that (ii) does not imply (iii).
\item
Also, Statement (i) may not imply (ii) when $C_0(G)\not \subseteq \A(G)$. Consider to that effect the discrete group $G$ obtained from \emph{forgetting} the topology of a compact connected semisimple Lie group. By van der Waerden's continuity theorem, see \cite[Corollary 5.65]{hofmorr06},  $\epsilon_{\ap}\colon G\to G^\ap$ is then  a group isomorphism. Groups with that property are known as   \emph{van der Waerden}, or sometimes \emph{self-Bohrifying}, groups (see \cite{comfrobe87,hartkunen02,shtern} for more examples and information on such  groups).  If $G$ is a van der Waerden group,  then $G^\ast=\emptyset$ and, hence,  Statement (i)  holds   trivially for any $T\subset G$, while Statement (ii) obviously fails for some $T$'s.\end{enumerate}
\end{example}

But, if $T$ is an $\A(G)$-set then the statements of Theorem \ref{sprime} are all equivalent, since (ii) clearly implies (iii). In particular,
we have the following corollary.

\begin{corollary}\label{cor:luc} Let $G$ be a locally compact group, $\A(G)$ be an admissible subalgebra of $\luc(G)$ and  $T$ be a right  uniformly discrete subset of $G.$ If  $\luc_\ast(G)\subseteq \A(G), $ then all the statements in Theorem \ref{sprime} are equivalent.
\end{corollary}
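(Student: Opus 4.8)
The plan is to reuse the implications already established in Theorem \ref{sprime} and to close the chain by proving the single implication that is still missing. Recall that Theorem \ref{sprime} gives $(iii)\Longleftrightarrow(iv)\Longrightarrow(v)\Longrightarrow(i)$ with no extra hypothesis, and $(i)\Longrightarrow(ii)$ as soon as $C_0(G)\subseteq\A(G)$. The assumption $\luc_\ast(G)\subseteq\A(G)$ secures this last inclusion, since $C_0(G)\subseteq\luc_\ast(G)$ by Proposition \ref{abelian}(i). I therefore already have the chain $(iii)\Longleftrightarrow(iv)\Longrightarrow(v)\Longrightarrow(i)\Longrightarrow(ii)$, and it only remains to show $(ii)\Longrightarrow(iii)$ in order to render the five statements equivalent.

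For $(ii)\Longrightarrow(iii)$ I would first fix, using local compactness together with the right uniform discreteness of $T$, a symmetric relatively compact neighbourhood $U$ of $e$ for which $T$ is right $U$-uniformly discrete (shrinking, if necessary, the neighbourhood that witnesses uniform discreteness). Applying (ii) with $K=U$ shows that $UT$ is right translation-compact. Since $T$ is right $U$-uniformly discrete with $U$ symmetric and relatively compact, Theorem \ref{lwap} applies, and its equivalence between ``$UT$ is right translation-compact'' and ``$T$ is an $\luc_\ast(G)$-set'' yields that $T$ is an $\luc_\ast(G)$-set. Because $\luc_\ast(G)\subseteq\A(G)$, the interpolating functions witnessing the $\luc_\ast(G)$-set property already lie in $\A(G)$, so $T$ is an $\A(G)$-set. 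Finally, choosing any open relatively compact neighbourhood $V$ of $e$ and using (ii) once more (with $K=V$) makes $VT$ right translation-compact; thus both clauses of (iii) hold.

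The crux of the argument, and the point where the hypothesis $\luc_\ast(G)\subseteq\A(G)$ becomes indispensable, is the passage from the purely combinatorial datum that $UT$ is right translation-compact to the functional-analytic conclusion that $T$ is an $\A(G)$-set; this is exactly the content of Theorem \ref{lwap} combined with the inclusion of algebras. Without that inclusion the implication $(ii)\Longrightarrow(iii)$ fails in general, as Example \ref{example} shows that (ii) need not imply (iii) for an arbitrary admissible subalgebra, so the inclusion cannot simply be dropped. Once $(ii)\Longrightarrow(iii)$ is available, the loop $(iii)\Longrightarrow(v)\Longrightarrow(i)\Longrightarrow(ii)\Longrightarrow(iii)$ closes and, together with $(iii)\Longleftrightarrow(iv)$, establishes the equivalence of all five statements.
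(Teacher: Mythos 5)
Your proof is correct and follows essentially the same route as the paper: both reduce the corollary to the single missing implication (ii)$\implies$(iii) and obtain it by applying Theorem \ref{lwap} to conclude that $T$ is an $\luc_\ast(G)$-set, hence an $\A(G)$-set by the inclusion $\luc_\ast(G)\subseteq\A(G)$. Your write-up is merely more explicit than the paper's (e.g., justifying $C_0(G)\subseteq\luc_\ast(G)\subseteq\A(G)$ via Proposition \ref{abelian}(i) so that (i)$\implies$(ii) is available, and verifying the translation-compactness clause of (iii) by applying (ii) with $K=V$), details the paper leaves implicit.
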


\begin{proof}
We only have to show that (ii)$\implies$(iii). Suppose that $T$ is right  uniformly discrete with respect to some relatively compact neighbourhood $U$ of $e$. Then statement (ii) implies  that $UT$ is  right translation-compact, and so by Theorem \ref{lwap}, $T$ is an $\luc_\ast(G)$-set. Thus, $T$  is an $\A(G)$-set.

If $\A(G)=\luc(G)$, then Theorem \ref{lwap} is not necessary, as $T$, being right uniformly discrete, is already an $\luc(G)$-set.
\end{proof}

\begin{remark} \label{mirror}
The \emph{left} version of Theorem \ref{sprime} can be proved with only changing the product $G^* G^*$ by $G^*\squ G^*$ and right translation-compact  by left translation-compact, the left version of  Lemma \ref{general**}  has then to be used. It follows, in particular, as in Corollary \ref{cor:luc}, that if $T$ is  left uniformly discrete and  $\ruc_\ast(G)\subseteq \A(G)$, then all the statements in the left analogue theorem are equivalent.

When $\A(G)= \wap(G),$ the two products coincide, and so we have the following corollary.
\end{remark}

 \begin{corollary} \label{cor:wap} Let $G$ be a locally compact $E$-group, and $T$ be a right uniformly discrete $E$-subset of $G$. The  following  statements are then  equivalent:
\begin{enumerate}[label=\textup{(\roman{enumi})}]

\item\label{(ii)} $\overline T\cap G^*G^*=\emptyset.$
\item[\textup{(ii)}$^\prime$] \label{(iii)p} $KT$ is  translation-compact for every relatively compact subset $K$ of $G.$
\item[\textup{(iii)}$
^\prime$] \label{(iii)pp} $T$ is a  $\wap(G)$-set.
\item[\textup{(iv)}$^\prime$] \label{(iiii)} For some open relatively compact subset $V$ of $G$,    $V\overline{T_1}$  is open in $G^\wap $ for every $T_1\subseteq T$.
\item[\textup{(v)}] $\overline T\cap \overline{G^*G^*}=\emptyset.$
\end{enumerate}
\end{corollary}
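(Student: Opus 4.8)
The plan is to route everything through the already established Theorem \ref{sprime} and its left analogue from Remark \ref{mirror}, both specialised to $\A(G)=\wap(G)$. This is legitimate because $C_0(G)\subseteq\wap(G)\subseteq\uc(G)\subseteq\luc(G)$, so $\wap(G)$ is an admissible subalgebra of $\luc(G)$ containing $C_0(G)$, and both the right and the left forms of Theorem \ref{sprime} apply. The decisive simplification is that $G^\wap$ is a semitopological semigroup, so the two products of Section \ref{prelim} coincide on it; hence $G^*G^*=G^*\squ G^*$ and $\overline{G^*G^*}=\overline{G^*\squ G^*}$. Consequently statements (i) and (v) are shared verbatim by the right and left versions, while ``right translation-compact'' plus ``left translation-compact'' amount exactly to ``translation-compact''. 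With this dictionary in place I would prove the cycle $(\text{iii})'\Rightarrow(\text{iv})'\Rightarrow(\text{v})\Rightarrow(\text{i})\Rightarrow(\text{ii})'\Rightarrow(\text{iii})'$.

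The easy arrows come directly from the cited results. For $(\text{iii})'\Rightarrow(\text{iv})'$, a $\wap(G)$-set has $V\overline{T_1}$ open for \emph{every} open $V$ and every $T_1$ by Theorem \ref{openlocals}, so a fortiori for one open relatively compact $V$. The arrow $(\text{v})\Rightarrow(\text{i})$ is the trivial inclusion $G^*G^*\subseteq\overline{G^*G^*}$. For $(\text{i})\Rightarrow(\text{ii})'$, since $C_0(G)\subseteq\wap(G)$, the implication $(\text{i})\Rightarrow(\text{ii})$ of Theorem \ref{sprime} gives that $KT$ is right translation-compact for every relatively compact $K$; rewriting (i) as $\overline T\cap(G^*\squ G^*)=\emptyset$ and feeding it into the left version of Theorem \ref{sprime} (Remark \ref{mirror}) gives $KT$ left translation-compact; together $KT$ is translation-compact. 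Finally $(\text{ii})'\Rightarrow(\text{iii})'$: fix a symmetric relatively compact neighbourhood $U$ with $T$ right $U$-uniformly discrete and take $K=U$, so $UT$ is translation-compact; as $T$ is a right $U$-uniformly discrete $E$-set, Lemma \ref{lem:lucint} makes $T$ an approximable $\wap(G)$-interpolation set, in particular a $\wap(G)$-set.

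The one step that is not a direct citation, and which I expect to be the main obstacle, is $(\text{iv})'\Rightarrow(\text{v})$: statement $(\text{iv})'$ only asserts openness of $V\overline{T_1}$ at a \emph{single} scale $V$, whereas Theorem \ref{sprime}(iv) additionally demands that $VT$ be right translation-compact. My plan is to manufacture the missing translation-compactness. First I would normalise $V$ to a neighbourhood of $e$: picking $v_0\in V$ and replacing $V$ by $v_0^{-1}V$ preserves openness of each $V\overline{T_1}$, since left translation by $v_0^{-1}$ is a homeomorphism of $G^\wap$. Then, exactly as in the converse implication of Theorem \ref{openlocals} carried out at this fixed $V$, openness of $V\overline{T_1}$ together with normality of $G^\wap$ and Urysohn's lemma yields, for each $T_1\subseteq T$, a function $h_{T_1}\in\wap(G)$ with $h_{T_1}(T_1)=\{1\}$ and $h_{T_1}(G\setminus VT_1)=\{0\}$ (here one uses $\overline{WT_1}\subseteq\overline W\,\overline{T_1}\subseteq V\overline{T_1}$ for a small $W$, and the uniform discreteness of $T_1$ to pull the vanishing back to $G$).

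It remains to observe that these functions force translation-compactness. Were $VT$ \emph{not} right translation-compact, then \cite[Lemma 4.11]{FG1}, in the form used in the proof of Lemma \ref{cor:wap2}, would produce some $T_1\subseteq T$ admitting \emph{no} weakly almost periodic function $f$ with $f(T_1)=\{1\}$ and $f(G\setminus VT_1)=\{0\}$ --- directly contradicting the existence of $h_{T_1}$. Hence $VT$ is right translation-compact, so $(\text{iv})'$ now supplies precisely the hypotheses of Theorem \ref{sprime}(iv), and that theorem delivers $(\text{v})$. This closes the cycle and establishes the equivalence of all five statements.
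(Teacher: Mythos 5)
Most of your cycle coincides with the paper's own argument: the paper also specialises Theorem \ref{sprime} and its left analogue (Remark \ref{mirror}) to $\A(G)=\wap(G)$, uses the coincidence of the two products on $G^\wap$ to pass from (i) to (ii)$^\prime$, gets (ii)$^\prime\Rightarrow$(iii)$^\prime$ from Lemma \ref{lem:lucint} (or Theorem \ref{wap=wap_0}), and (iii)$^\prime\Rightarrow$(iv)$^\prime$ from Theorem \ref{openlocals}. You have also correctly isolated the crux, which the paper treats very briefly: (iv)$^\prime$ only asserts openness at a \emph{single} scale $V$. The paper's resolution is to claim that openness of $V\overline{T_1}$ for all $T_1\subseteq T$ at one $V$ self-improves to openness for \emph{every} open $V$, so that Theorem \ref{openlocals} makes $T$ a $\wap(G)$-set; then Theorem \ref{wap=wap_0} (equivalently Lemma \ref{cor:wap2}), applied at the uniform-discreteness scale of $T$, supplies the translation-compactness required in statement (iv) of Theorem \ref{sprime}. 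Your resolution instead tries to produce translation-compactness directly at the scale $V$, and this is where your proof has a genuine gap.

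The Urysohn step is fine: openness of $V\overline{T_1}$ does yield $h_{T_1}\in\wap(G)$ with $h_{T_1}(T_1)=\{1\}$ and $h_{T_1}(G\setminus VT_1)=\{0\}$. But \cite[Lemma 4.11]{FG1}, in the form it is used in Lemma \ref{cor:wap2} and in Theorem \ref{lwap}, assumes that $T$ is right uniformly discrete \emph{with respect to the same symmetric relatively compact neighbourhood} whose product set is supposed not to be right translation-compact; Ruppert's construction underlying the lemma (the points $u_{nm}s_mt_n\in T$ with $u_{nm}$ in that neighbourhood, and the verification that the unwanted products fall outside the enlarged set) runs precisely at that one scale. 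In your situation the discreteness scale $U$ of $T$ comes from the hypotheses of the corollary, whereas $V$ comes from (iv)$^\prime$; after your normalisation $V$ is an open relatively compact neighbourhood of $e$, but nothing makes it symmetric or contained in $U$, so $T$ need not be right $V$-uniformly discrete, and you cannot shrink $V$ without losing the only openness you have. Hence the cited lemma does not apply and no contradiction is obtained. The step is repairable: choose a symmetric relatively compact $V_1\supseteq \overline V\cup\overline V^{-1}$, split $T$ into finitely many right $V_1$-uniformly discrete pieces (possible because $T$ is right $U$-uniformly discrete and $V_1$ is relatively compact, so the relevant ``conflict graph'' has bounded degree), apply \cite[Lemma 4.11]{FG1} to each piece against your functions $h_{T_1}$, and recombine using that right translation-compactness passes to subsets and to finite unions (Theorem \ref{union}, whose proof does not depend on the present corollary). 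As written, however, the implication (iv)$^\prime\Rightarrow$(v) is not established.
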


\begin{proof} Note that by Theorem \ref{wap=wap_0}, statements (iii)$^\prime$ and (iii) of the previous theorem are the same. Note also that statement (iv)$^\prime$ means actually that the set $V\overline{T_1}$  is open in $G^\wap $ for any open set $V$ in $G$ since translations in $G^\wap$ by  elements of $G$ are homeomorphisms (as argued already in the proof of Theorem \ref{openlocals}).
Therefore, by Theorem \ref{openlocals}, $T$ is $\wap(G)$-set, and so again by Theorem \ref{wap=wap_0},  statements (iv)$^\prime$  and (iv) of Theorem \ref{sprime} are the same.
As a result only statement (ii)$^\prime$ seems to differ  from (ii) of Theorem \ref{sprime},
and accordingly we have only to check that (i)$\implies$(ii)$^\prime$ as (ii)$^\prime\implies$(iii)$^\prime$ is clear.  But since both products ($xy$ and $x\squ y$) coincide on $G^\wap$,  it follows from Theorem \ref{sprime} and   Remark \ref{mirror} that  $T$ is both right and left translation-compact when (i) is assumed. Thus, it is also clear that  (i)$\implies$(ii)$^\prime$.

Finally, the implication (ii)$^\prime\implies$(iii)$^\prime$  follows either from  Lemma   \ref{lem:lucint}
or Theorem  \ref{wap=wap_0}.
\end{proof}

\begin{corollary} \label{cor:wapab} Let $G$ be a locally compact Abelian group, and let $\A(G)$ be an admissible algebra  with $\wap(G)\subseteq \A(G)\subseteq\luc(G)$. If $T$ is a right uniformly discrete subset of $G$,
the following  statements are equivalent:
\begin{enumerate}[label=\textup{(\roman{enumi})}]
\item  $\overline T\cap G^*G^*=\emptyset.$
\item[\textup{(ii)}$^\prime$]   $KT$ is  translation-compact for every relatively compact subset $K$ of $G.$
\item[\textup{(iii)}] For some  open relatively compact subset $V$ of $G$, $VT$ is  right translation-compact and $T$ is an  $\A(G)$-set.
\item[\textup{(iv)}]  For some  open relatively compact subset $V$ of $G$,
$VT$ is  translation-compact and
 $V\overline{T_1}$  is open in $G^\A $ for every $T_1\subseteq T.$
\item[\textup{(v)}] $\overline T\cap \overline{G^*G^*}=\emptyset.$
\end{enumerate}
\end{corollary}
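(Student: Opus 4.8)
The plan is to deduce everything from Corollary~\ref{cor:luc}, which already asserts that all the statements of Theorem~\ref{sprime} are equivalent for a right uniformly discrete $T$ as soon as $\luc_\ast(G)\subseteq\A(G)$. Two features of the abelian setting are what let the present corollary fall out of that result, and I would isolate them first.

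The first is the observation that in an abelian group right translation-compactness and translation-compactness coincide. Indeed, commutativity gives $b\inv T=Tb\inv$ for every $b\in G$, hence $\bigcap_{b\in F}b\inv T=\bigcap_{b\in F}Tb\inv$ for every finite $F\subseteq G$; the defining condition for a set to be right translation-compact is therefore word-for-word the condition for it to be left translation-compact, and so both are equivalent to translation-compactness. Through this identification statement~(ii)$'$ becomes statement~(ii) of Theorem~\ref{sprime} and statement~(iv) above becomes statement~(iv) of Theorem~\ref{sprime}, whereas (i), (iii) and (v) are already literally the statements carried over from Theorem~\ref{sprime}.

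The second is the inclusion $\luc_\ast(G)\subseteq\A(G)$, which I would check as follows. Since $G$ is abelian, Proposition~\ref{abelian}(iii) gives $\luc_\ast(G)=\ruc_\ast(G)$, so that $\luc_\ast(G)=\luc_\ast(G)\cap\ruc_\ast(G)$; by Proposition~\ref{abelian}(ii) this intersection lies in $\wap_0(G)\oplus\C1\subseteq\wap(G)$. Hence $\luc_\ast(G)\subseteq\wap(G)\subseteq\A(G)$, the last inclusion being the standing hypothesis. With this, Corollary~\ref{cor:luc} applies and yields the equivalence of all five statements of Theorem~\ref{sprime}; reading this back through the first observation gives precisely the claimed equivalence of (i), (ii)$'$, (iii), (iv) and (v). The only substantive step is the inclusion $\luc_\ast(G)\subseteq\wap(G)$, so the point to handle with care is the chain of inclusions furnished by Proposition~\ref{abelian}; the remainder is the routine translation between ``right translation-compact'' and ``translation-compact''.
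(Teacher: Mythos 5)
Your proof is correct, but it takes a different route from the paper's. The paper disposes of this corollary in two lines: it notes (as you do) that in an Abelian group left and right translation-compactness coincide, then observes that $\wap(G)$-sets are $\A(G)$-sets (since $\wap(G)\subseteq\A(G)$) and \emph{repeats the proof of Corollary \ref{cor:wap}}; that proof closes the circle of implications on the $\wap$ side, using Lemma \ref{lem:lucint} (or Theorem \ref{wap=wap_0}) to pass from translation-compactness of $UT$ to the $\wap(G)$-set property, together with Theorem \ref{sprime} and its left mirror (Remark \ref{mirror}). You instead close the circle on the $\luc_\ast$ side: the chain $\luc_\ast(G)=\luc_\ast(G)\cap\ruc_\ast(G)\subseteq\wap_0(G)\oplus\C1\subseteq\wap(G)\subseteq\A(G)$, furnished by Proposition \ref{abelian}(ii) and (iii), puts you squarely in the hypotheses of Corollary \ref{cor:luc}, which then does all the work (its internal appeal to Theorem \ref{lwap} supplies the implication from translation-compactness to the $\A(G)$-set property). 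Both arguments are sound and both ultimately rest on Theorem \ref{sprime}; what yours buys is modularity — the Abelian corollary is exhibited as a literal special case of Corollary \ref{cor:luc}, with the only new content being the inclusion $\luc_\ast(G)\subseteq\wap(G)$ for Abelian $G$. What the paper's route buys is that it does not use this inclusion at all, and so the same argument also covers the non-Abelian $SIN$/$E$-group case (Corollary \ref{cor:wap}, Theorem \ref{t52}), where your reduction is unavailable: by Example \ref{lwapvsrwap}, $\luc_\ast(G)$ and $\ruc_\ast(G)$ differ in general, so $\luc_\ast(G)$ need not sit inside $\wap(G)$, and commutativity is exactly what makes your shortcut work.
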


\begin{proof}
When $G$ is Abelian,  left and right translation-compact sets are the same. Since $\wap(G)$-sets are $\A(G)$-sets, we can repeat the  proof of Corollary \ref{cor:wap}.
\end{proof}

\begin{remarks}\label{remstc}
\begin{enumerate}[label=\textup{(\roman{enumi})}]
 \item As a consequence of Corollary \ref{cor:luc}  we find that the sets constructed in Example \ref{t-set} satisfy all properties (i)--(v) of Theorem \ref{sprime}, for $\A(G)=\luc(G)$. If $G$ is an $SIN$-group, then Corollary \ref{cor:wap} implies that these sets  satisfy also the properties  for the $C^*$-algebra $\wap(G)$.

\item
 Corollary \ref{cor:wap} fails for $\A(G)=\B(G)$, as assertions (ii) and (iii) of Theorem \ref{sprime} are no longer equivalent in this case by Example \ref{example}(i).
 % In \cite{C1} Chou constructed
 %(see also \cite[Section 5]{FG1}) a translation-finite set (even a t-set) $T$ in a discrete group $G$  %which is not a $\B(G)$-set. Since, by Theorem \ref{union}, below,   $FT$ is translation-finite for every %finite subset $F$ of $G$, we see that (ii) does not imply (iii).

 \item As a quaint consequence of  Theorem \ref{sprime},
 we see that $G^\ast G^\ast$ may be used in many cases as a criterion to test openness in $G^\A,$
since, under the  conditions stated in Theorem \ref{sprime}, $\overline T\cap G^*G^*=\emptyset$
implies that $U\overline T_1$ is open in $G^\A$ for any open subset $U$ of $G$ and every subset $T_1$ of $T$.
\end{enumerate}
\end{remarks}

Theorem \ref{sprime}  can be used as a tool to locate  strongly prime points in $G^\A$. We exploit this in the following corollaries.
  We start with a general result. When specialized to $G^\luc$ and $G^\wap$  we obtain a full characterization of strongly prime points.

   \begin{corollary} \label{sprime:subluc} Let $G$ be a locally compact group and $\A(G)$ be an admissible subalgebra of $\luc(G)$ containing $C_0(G)$. Then every strongly prime point in $G^\A$ is in the closure of some  right uniformly discrete subset $T$ of $G$
such that $KT$ is right translation-compact  for every relatively compact subset $K$ in $G$. In particular, strongly prime points are in the closure of right translation-finite uniformly discrete subsets of $G$.

If $\A(G)= \wap(G)$ or $G$ is Abelian, then the set $KT$ is translation-compact.
\end{corollary}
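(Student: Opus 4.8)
The plan is to reduce everything to statement (i) of Theorem~\ref{sprime}. Concretely, for a strongly prime $p\in G^\A$ I will produce a right uniformly discrete set $T\sub G$ with $p\in\overline T$ and $\overline T\cap G^\ast G^\ast=\emptyset$; since $C_0(G)\sub\A(G)$, the implication $(i)\Longrightarrow(ii)$ of Theorem~\ref{sprime} then gives at once that $KT$ is right translation-compact for every relatively compact $K\sub G$, and Proposition~\ref{prop} (applied with $K=\single e$, so that $T$ itself is right translation-compact) upgrades this to right translation-finiteness of $T$, which is the ``in particular'' clause.

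To build $T$ I first use that $G^\A$ is compact Hausdorff, hence normal: as $p\notin\overline{G^\ast G^\ast}$ I may choose an open set $O\ni p$ with $\overline O\cap\overline{G^\ast G^\ast}=\emptyset$. Because $C_0(G)\sub\A(G)$, Lemma~\ref{homeo} makes $\epsilon_\A$ a homeomorphism onto an open subset of $G^\A$, so $S:=\set{s\in G}{\epsilon_\A(s)\in O}$ satisfies $p\in\overline S\sub\overline O$, the first inclusion following from density of $G$ in $G^\A$. Now I run the \emph{astuce} from the proof of Corollary~\ref{Veech}, but inside $S$: fixing a symmetric relatively compact neighbourhood $U$ of $e$ and a maximal right $U$-uniformly discrete $X\sub S$, maximality gives $S\sub U^2X$, and joint continuity of the action $\overline{U^2}\times G^\A\to G^\A$ yields $\overline S\sub\overline{U^2}\,\overline X$. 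Hence $p=ua$ for some $u\in\overline{U^2}\sub G$ and $a\in\overline X$. I put $T:=uX$. Then $T$ is right $(uUu\inv)$-uniformly discrete, since conjugation by $u\in G$ carries the $U$-discreteness of $X$ to $uUu\inv$-discreteness of $uX$, and $p=ua\in u\overline X=\overline T$.

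It remains to verify $\overline T\cap G^\ast G^\ast=\emptyset$, and here lies the only real subtlety. Since $X\sub S$ one has $\overline X\sub\overline S\sub\overline O$, so $\overline X\cap G^\ast G^\ast=\emptyset$; the danger is that the left shift by $u$ reintroduces points of $G^\ast G^\ast$ into $\overline T=u\overline X$. This cannot happen: as $G^\ast$ is a two-sided ideal (Lemma~\ref{homeo}) and $u\in G$ is invertible in $G^\A$, one has $uG^\ast=G^\ast$, whence $u(G^\ast G^\ast)=(uG^\ast)G^\ast=G^\ast G^\ast$ by associativity. Consequently $u\overline X$ meets $G^\ast G^\ast$ only if $\overline X$ does, so $\overline T\cap G^\ast G^\ast=\emptyset$. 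This is statement (i), and the first two assertions of the corollary follow as explained in the first paragraph.

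For the final sentence, when $\A(G)=\wap(G)$ the two products coincide, so $G^\ast G^\ast=G^\ast\squ G^\ast$ and the set $T$ just constructed also satisfies $\overline T\cap(G^\ast\squ G^\ast)=\emptyset$; applying the left-handed version of Theorem~\ref{sprime} (Remark~\ref{mirror}) then gives that $KT$ is \emph{left} translation-compact as well, hence translation-compact. When $G$ is Abelian the notions of left and right translation-compactness coincide, so $KT$ is automatically translation-compact. I expect the main obstacle to be exactly the control of the shift $u$ forced by the astuce in producing $p\in\overline T$; it is overcome precisely by the invariance $u(G^\ast G^\ast)=G^\ast G^\ast$, which is what keeps $\overline T$ disjoint from $G^\ast G^\ast$.
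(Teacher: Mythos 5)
Your proof is correct, and at its core it is the paper's proof: both arguments reduce the statement to the implication (i)$\Rightarrow$(ii) of Theorem \ref{sprime} (available because $C_0(G)\subseteq\A(G)$), produce the uniformly discrete set through the \emph{astuce} of Corollary \ref{Veech}, obtain right translation-finiteness from Proposition \ref{prop}, and handle the $\wap$/Abelian clause by the coincidence of the two products together with the mirrored version of Theorem \ref{sprime} from Remark \ref{mirror}. The single point of divergence is how $\overline{T}\cap G^\ast G^\ast=\emptyset$ is arranged. The paper runs the astuce on all of $G$ to get a right uniformly discrete $X$ with $p\in\overline{X}$, and then sets $T=C\cap X$, where $C$ is a \emph{closed} neighbourhood of $p$ with $C\cap G^\ast G^\ast=\emptyset$; since $T\subseteq C$ and $C$ is closed, $\overline{T}\subseteq C$ and the disjointness is automatic, so no shift ever needs to be controlled. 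You instead restrict to $S=O\cap G$ before running the astuce and must then neutralize the shift $u$ it produces; your argument $u(G^\ast G^\ast)=G^\ast G^\ast$, resting on $G^\ast$ being a two-sided ideal (Lemma \ref{homeo}), does this correctly. The two routes are equivalent in substance, with the paper's intersection trick slightly more economical; your variant has the incidental merit that, in the $\wap$ case, you invoke the mirrored Theorem \ref{sprime} directly rather than citing Corollary \ref{cor:wap}, whose statement formally carries $E$-group and $E$-set hypotheses that the present corollary does not assume (even though the particular implication used there does not require them).
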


\begin{proof}
Suppose  $x\in G^\A$ is strongly prime and choose  a closed neighbourhood $C$  of $x$ in $G^{\A}$ such that
$C\cap (G^*G^*)=\emptyset.$ As argued in Corollary \ref{Veech}, we may take
a right uniformly discrete subset $X$ of $G$ such that $x\in \overline X,$
and let $T=C\cap X$. Then $x\in \overline T$ since $O\cap (C\cap X)=(O\cap C)\cap X\ne\emptyset$ whenever $O$
is a neighbourhood of $x$ in $G^{\A}.$
Since $\overline T\subseteq C$, $\overline T\cap G^*G^*=\emptyset$ and so by (i)$\Longrightarrow$(ii) of Theorem \ref{sprime}, we see that $KT$ is right translation-compact for every relatively compact subset $K$ of $G$.

In particular,  $T$ is right translation-compact  and Proposition \ref{prop} implies that $T$ is in fact right translation-finite.
Thus, the second statement follows.

If $\A(G)= \wap(G),$ then we apply Corollary \ref{cor:wap}.  When $G$ is Abelian, the claim is clear.
\end{proof}

This leads to the complete characterization of strongly prime points in the $\A$-compactifications of $G$
when $\luc_\ast(G)\subseteq \A(G)\subseteq \luc(G)$.

\begin{theorem}{\label{t51}}
 Let $G$ be a locally compact group and $\A(G)$ be an admissible subalgebra of $\luc(G)$ with $\luc_\ast(G)\subseteq \A(G).$ Then  a point $p\in G^\A$ is strongly prime if and only if $p\in \overline{T}$, where $T$ is  a subset of $G$ which is right uniformly discrete with respect to some relatively compact neighbourhood $V$ of $e$ such that $VT$ is right translation-compact.
\end{theorem}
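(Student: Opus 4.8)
The plan is to read off both implications from the theory of $\luc_\ast(G)$-sets developed above, specifically from Theorem \ref{lwap}, Theorem \ref{sprime} and Corollary \ref{sprime:subluc}. The one preliminary remark that makes these applicable is that the hypothesis $\luc_\ast(G)\subseteq\A(G)$ entails $\cnaught\subseteq\A(G)$, since $\cnaught\subseteq\luc_\ast(G)$ by Proposition \ref{abelian}(i).

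For the sufficiency, suppose $p\in\overline T$ with $T$ right $V$-uniformly discrete for some relatively compact neighbourhood $V$ of $e$ and with $VT$ right translation-compact. First I would shrink $V$ to a symmetric relatively compact neighbourhood $U\subseteq V$; then $T$ is still right $U$-uniformly discrete, and $UT\subseteq VT$ remains right translation-compact because $\bigcap_{b\in F}b\inv UT\subseteq\bigcap_{b\in F}b\inv VT$ for every finite $F$, so relative compactness of the larger intersection passes to the smaller one. Now Theorem \ref{lwap} (applicable since $G$ is noncompact) turns the right translation-compactness of $UT$ into the statement that $T$ is an $\luc_\ast(G)$-set, hence an $\A(G)$-set as $\luc_\ast(G)\subseteq\A(G)$. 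Thus statement (iii) of Theorem \ref{sprime} holds for $T$, and the implication (iii)$\Rightarrow$(v) of that theorem yields $\overline T\cap\overline{G^\ast G^\ast}=\emptyset$. As $p\in\overline T$, this says exactly $p\notin\overline{G^\ast G^\ast}$, i.e.\ $p$ is strongly prime.

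For the necessity, let $p\in G^\A$ be strongly prime. Since $\cnaught\subseteq\A(G)$, Corollary \ref{sprime:subluc} applies and delivers a right uniformly discrete $T\subseteq G$ with $p\in\overline T$ such that $KT$ is right translation-compact for every relatively compact $K\subseteq G$. Choosing (by local compactness) a relatively compact neighbourhood $V$ of $e$ contained in a neighbourhood witnessing the right uniform discreteness of $T$, the set $T$ is right $V$-uniformly discrete, and applying the translation-compactness hypothesis with $K=V$ shows that $VT$ is right translation-compact. This is precisely the form required, and the proof is complete.

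There is no new idea here beyond assembling pieces already in place; the only points demanding attention are bookkeeping ones, namely passing to a symmetric relatively compact neighbourhood so that Theorem \ref{lwap} applies verbatim and checking that right translation-compactness is inherited by $UT\subseteq VT$. Accordingly the main (and quite mild) obstacle is merely to reconcile the slightly different neighbourhood conventions across the cited statements.
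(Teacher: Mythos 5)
Your proof is correct and follows essentially the same route as the paper's own argument: necessity via Corollary \ref{sprime:subluc} and sufficiency by converting the hypothesis into the statement that $T$ is an $\luc_\ast(G)$-set (Theorem \ref{lwap}), hence an $\A(G)$-set, and then invoking Theorem \ref{sprime} (the paper cites its Corollary \ref{cor:luc}, which packages the same implication (iii)$\Rightarrow$(v)). The extra bookkeeping you supply --- passing to a symmetric relatively compact neighbourhood and verifying $\cnaught\subseteq\A(G)$ via Proposition \ref{abelian}(i) --- is sound and merely makes explicit details the paper leaves implicit.
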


\begin{proof} The necessity follows from the previous corollary, and  the converse follows from Corollary \ref{cor:luc} since   sets with the properties stated in the claim are $\luc_\ast(G)$-sets by Theorem \ref{lwap},
and so they are $\A(G)$-sets. Now apply Corollary \ref{cor:luc}.
\end{proof}

We next prove the analogue of Theorem \ref{t51}  giving, in particular, the characterization of the strongly prime points in  $G^\wap$ when $G$ is a locally compact $SIN$-group.

\begin{theorem}{\label{t52}}
 Let $G$ be a locally compact group and $\A(G)$ be an admissible subalgebra of $\luc(G)$. Assume that at least one of the following conditions hold:
 \begin{enumerate}
 \item $G$ is an $SIN$-group and $\A(G)=\wap(G),$
\item $G$ is Abelian and $\wap(G)\subseteq \A(G),$
\end{enumerate}
a point $p\in G^\A$ is strongly prime if and only if  $p\in \overline{T}$,
where $T$ is  a subset of $G$ which is right uniformly discrete with respect to some relatively compact neighbourhood $V$ of $e$ such that $VT$ is right translation-compact.
\end{theorem}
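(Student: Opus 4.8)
The plan is to mirror the proof of Theorem \ref{t51} verbatim, splitting the equivalence into necessity (a strongly prime $p$ lies in the closure of such a $T$) and sufficiency (the closure of such a $T$ consists of strongly prime points), but feeding the $\wap$-specific information through Corollaries \ref{sprime:subluc}, \ref{cor:wap} and \ref{cor:wapab} in place of Corollary \ref{cor:luc}. In both cases at hand one has $C_0(G)\subseteq\wap(G)\subseteq\A(G)$, so the hypotheses of these corollaries are available.

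For necessity I would argue exactly as in Corollary \ref{sprime:subluc}. Since $C_0(G)\subseteq\A(G)$, a strongly prime $p$ lies in $\overline T$ for some right uniformly discrete $T$ with $KT$ right translation-compact for every relatively compact $K$; choosing a relatively compact neighbourhood $V$ with $T$ right $V$-uniformly discrete makes $VT$ right translation-compact. (The final clause of that corollary, valid precisely because $\A=\wap(G)$ in case (i) and $G$ is Abelian in case (ii), even yields two-sided translation-compactness, so the weaker ``right translation-compact'' demanded in the statement is automatic.)

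For sufficiency the decisive reduction is Theorem \ref{sprime}: as soon as $T$ is shown to be an $\A(G)$-set, the hypothesis that $VT$ is right translation-compact is exactly condition (iii) there, so (v) holds and $\overline T\cap\overline{G^*G^*}=\emptyset$, making every $p\in\overline T$ strongly prime. In case (ii) it suffices to produce a $\wap(G)$-set, since $\wap(G)\subseteq\A(G)$. Thus everything reduces to proving that $T$ is a $\wap(G)$-set, for which I would invoke Lemma \ref{lem:lucint}. Here one uses that $T$ is automatically an $E$-set and $G$ an $E$-group: both conditions of (i) and (ii) make $G$ a $SIN$-group (Abelian groups are $SIN$), and in a $SIN$-group every non-relatively-compact set is an $E$-set because an invariant neighbourhood $U$ satisfies $x^{-1}Ux=U$. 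Lemma \ref{lem:lucint} then shows $T$ is a $\wap(G)$-set \emph{provided} $VT$ is two-sided translation-compact.

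The one genuinely delicate point, and the main obstacle, is therefore upgrading the right translation-compactness of $VT$ to two-sided translation-compactness. In case (ii) this is free: in an Abelian group left and right translates coincide, so right translation-compact equals translation-compact, and Corollary \ref{cor:wapab} applies directly. In case (i) I would take $V$ symmetric and invariant, so that $VT=TV$ and the translates $b^{-1}(VT)=Vb^{-1}T$ and $(VT)b^{-1}=Tb^{-1}V$ differ only by the invariant factor $V$; coupling this with the reflection symmetry $f\mapsto\check f$ of $\wap(G)$ exploited in Corollary \ref{wap} and with the left analogue recorded in Remark \ref{mirror}, one sees that $T$ is simultaneously an $\luc_\ast(G)$- and an $\ruc_\ast(G)$-set, whence $VT$ is left translation-compact as well. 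Once two-sidedness is in hand, Corollary \ref{cor:wap} applies and its condition (iii)$'$ (``$T$ is a $\wap(G)$-set'') holds, so Theorem \ref{sprime} closes the argument as above. Verifying this $SIN$-upgrade cleanly, rather than merely reducing it to itself via inversion, is the step I expect to require the most care.
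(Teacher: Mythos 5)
Your skeleton is the paper's: necessity via Corollary \ref{sprime:subluc}, sufficiency by turning $T$ into a $\wap(G)$-set through Lemma \ref{lem:lucint} (your explicit remark that both hypotheses force $G$ to be a SIN-group, hence an $E$-group in which every non-relatively compact set is an $E$-set, is left implicit in the paper) and then concluding with Theorem \ref{sprime} and Corollaries \ref{cor:wap}, \ref{cor:wapab}. The necessity argument and case (ii) are correct. The gap is exactly the step you flagged as needing the most care: the ``SIN-upgrade'' from right to two-sided translation-compactness of $VT$ in case (i). That step is not delicate but false. With $V$ invariant one has $b^{-1}(VT)=V(b^{-1}T)$ while $(VT)b^{-1}=V(Tb^{-1})$: right translation-compactness constrains intersections of \emph{left} translates $b^{-1}T$, left translation-compactness constrains \emph{right} translates $Tb^{-1}$, and invariance of $V$ does nothing to interchange the two; likewise the symmetry $f\mapsto\check f$ and inversion only transfer right-properties of $T$ into left-properties of $T^{-1}$, not of $T$ itself. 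A counterexample sits inside the paper: discrete groups are SIN, and in the free group $G=F(X)$ the set $T^{-1}$, with $T$ as in Example \ref{lwapvsrwap}, is a right t-set (so, with $V=\{e\}$, it is right $V$-uniformly discrete and $VT^{-1}=T^{-1}$ is right translation-compact) yet is not left translation-finite, hence is not an $\ruc_\ast(G)$-set and not translation-compact.

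In fact no repair of this step is possible, because the sufficiency implication is false as literally stated in case (i): for this same $T^{-1}$, Corollary \ref{cor:wap} (whose hypotheses hold, every infinite subset of a discrete group being an $E$-set) makes two-sided translation-compactness of the sets $KT^{-1}$ equivalent to $\overline{T^{-1}}\cap\overline{G^*G^*}=\emptyset$ in $G^\wap$; since the former fails, $\overline{T^{-1}}$ meets $\overline{G^*G^*}$ and so contains points that are not strongly prime. What the paper actually proves is the statement with ``translation-compact'' in place of ``right translation-compact'': its converse argument opens by assuming that $VT$ is translation-compact, and its necessity direction (the final clause of Corollary \ref{sprime:subluc}) produces precisely such two-sided sets in both cases (i) and (ii), so the intended hypothesis is the two-sided one. (The slip between the two notions is of the same kind the authors themselves record in Remark \ref{typo}.) Under that corrected reading your remaining argument coincides with the paper's proof and the problematic upgrade is unnecessary; as written, however, your case (i) rests on a step that is false, not merely unverified.
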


\begin{proof}  Note first  that the set $T$
in the proof of Corollary \ref{sprime:subluc} is translation-compact in the first case by Corollary  \ref{cor:wap}, and it is clearly translation compact in the second case.
With this in mind, the necessity follows precisely as in Theorem \ref{t51} from Corollary \ref{sprime:subluc}.

As for the converse, let $T$ be right uniformly discrete with respect to some neighbourhood $V$ of $e$ such that $VT$ is translation-compact,
and let $p\in \overline T.$
Then by Lemma  \ref{lem:lucint}, $T$ is  a $\wap(G)$-set, and so in both cases $T$ is an $\A(G)$-set.
Therefore, we may apply Corollary \ref{cor:wap} or Corollary  \ref{cor:wapab}  to deduce that $p$ is a strongly prime point in $G^\A$.
\end{proof}

When $\A(G)\subseteq\wap(G),$ we have the following partial characterization.

\begin{corollary}\label{cor:a(g)} Let $G$ be a locally compact group, $\A(G)$ be an admissible subalgebra of $\wap(G)$ and $T$ be a right uniformly discrete $\A(G)$-set. Then all points in the closure of $T$ are strongly prime.
\end{corollary}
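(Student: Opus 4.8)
The plan is to deduce the corollary from Theorem \ref{sprime}, specifically from the implication $(iii)\Longrightarrow(v)$, after verifying that condition $(iii)$ holds for the given $T$. Recall that a point is strongly prime precisely when it lies outside $\overline{G^\ast G^\ast}$, so it suffices to establish $\overline T\cap\overline{G^\ast G^\ast}=\emptyset$, which is exactly statement $(v)$.

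First I would pin down the uniform-discreteness data. Since $T$ is right uniformly discrete and $G$ is locally compact, there is a symmetric relatively compact neighbourhood $U$ of $e$ for which $T$ is right $U$-uniformly discrete (any sufficiently small symmetric relatively compact neighbourhood contained in the one witnessing uniform discreteness works). The key step is then to feed the hypothesis that $T$ is an $\A(G)$-set into Lemma \ref{cor:wap2}: because $\A(G)$ is admissible and contained in $\wap(G)$, that lemma yields that $UT$ is translation-compact, and in particular right translation-compact.

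With this in hand, I would choose an open relatively compact neighbourhood $V$ of $e$ with $\overline V\subseteq U$. Then $VT\subseteq UT$, and since any subset of a right translation-compact set is again right translation-compact (the intersections $\bigcap_{b\in F}b^{-1}VT$ are contained in the corresponding $\bigcap_{b\in F}b^{-1}UT$, and a subset of a relatively compact set is relatively compact), the set $VT$ is right translation-compact. Thus $V$ is an open relatively compact set for which $VT$ is right translation-compact while $T$ is an $\A(G)$-set: this is precisely condition $(iii)$ of Theorem \ref{sprime}. Applying $(iii)\Longrightarrow(v)$ gives $\overline T\cap\overline{G^\ast G^\ast}=\emptyset$, so every point of $\overline T$ is strongly prime.

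The argument is short, and the only real content is the invocation of Lemma \ref{cor:wap2}; this is the step that genuinely uses $\A(G)\subseteq\wap(G)$, and it is where I expect any subtlety to reside (translation-compactness of $UT$ can fail for larger algebras, cf.\ the discussion around $\B(G)$ in Remark \ref{remstc} and Example \ref{example}). Note in particular that the hypothesis $C_0(G)\subseteq\A(G)$ is \emph{not} needed here, since the implication $(iii)\Longrightarrow(v)$ of Theorem \ref{sprime} holds unconditionally for admissible subalgebras of $\luc(G)$; that extra hypothesis is only required for the reverse implication $(i)\Longrightarrow(ii)$.
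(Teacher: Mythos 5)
Your proof is correct and follows essentially the same route as the paper's: invoke Lemma \ref{cor:wap2} to obtain translation-compactness of $UT$, then apply the implication (iii)$\Longrightarrow$(v) of Theorem \ref{sprime}. The extra details you supply (choosing a symmetric relatively compact $U$, passing from $UT$ to $VT$ via stability of right translation-compactness under subsets, and noting that $C_0(G)\subseteq\A(G)$ is not needed) are all accurate and merely make explicit what the paper leaves implicit.
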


 \begin{proof}
   By Lemma \ref{cor:wap2},  $UT$ is translation-compact whenever $T$ is an $\A(G)$-set which is right uniformly discrete with respect to some relatively compact neighbourhood $U$ of $e$. We only have to apply (iii) implies (v) of Theorem \ref{sprime}.
 \end{proof}

We can restate Theorems \ref{t51} and  \ref{t52} as follows.
Let  $\mathcal T$ be the family of all subsets of $G$ which are right $U$-uniformly discrete with respect to some neighbourhood $U$ of $e$ and for which $UT$ is right translation-compact.

\begin{corollary}{\label{t53}} Let $G$ be a noncompact  locally compact group and $\A(G)$ be an admissible subalgebra of $\luc(G).$
Then $G^\A\setminus\overline{G^\ast G^\ast}=\bigcup_{T\in \mathcal T}\overline T^\A $ in each of the following cases:
\begin{enumerate}
\item If $\luc_\ast(G)\subseteq \A(G).$
 \item If $G$ is an $SIN$-group and $\A(G)=\wap(G).$
\item If $G$ is Abelian and $\wap(G)\subseteq \A(G).$ \end{enumerate}
\end{corollary}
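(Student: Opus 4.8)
The plan is to recognize that the asserted identity is essentially a set-theoretic transcription of Theorems \ref{t51} and \ref{t52}, which have already carried out all the substantive work. First I would record the two trivial translations. By definition, a point $p\in G^\A$ is strongly prime exactly when $p\notin\overline{G^\ast G^\ast}$, so the left-hand side $G^\A\setminus\overline{G^\ast G^\ast}$ is precisely the set of strongly prime points. On the other side, $\bigcup_{T\in\mathcal T}\overline T^\A$ is, by the very definition of the family $\mathcal T$, the set of all $p\in G^\A$ that lie in the closure of some right uniformly discrete $T$ with $UT$ right translation-compact for a suitable neighbourhood $U$ of $e$.

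Before quoting the theorems I would reconcile the two descriptions of $\mathcal T$: its membership condition uses an arbitrary neighbourhood $U$, whereas Theorems \ref{t51} and \ref{t52} demand a \emph{relatively compact} one. This mismatch is harmless. If $T$ is right $U$-uniformly discrete with $UT$ right translation-compact, local compactness lets me shrink $U$ to a relatively compact $V\subseteq U$; then $T$ remains right $V$-uniformly discrete, and $VT\subseteq UT$ is again right translation-compact because, as is immediate from Definition \ref{rud}, right translation-compactness passes to subsets (if $T'\subseteq T$ then $b^{-1}T'\subseteq b^{-1}T$ for every $b$). Hence the members of $\mathcal T$ are exactly the sets described in the two theorems.

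With these identifications in place the proof reduces to a direct invocation. In case (i), where $\luc_\ast(G)\subseteq\A(G)$, Theorem \ref{t51} asserts that $p$ is strongly prime if and only if $p\in\overline T$ for some such $T$, that is, if and only if $p\in\bigcup_{T\in\mathcal T}\overline T^\A$; this is exactly the claimed equality. In cases (ii) and (iii), which are precisely hypotheses (i) and (ii) of Theorem \ref{t52}, that theorem yields the identical biconditional and hence the same equality. As a sanity check one may note that in all three cases $C_0(G)\subseteq\A(G)$ (for (i) by Proposition \ref{abelian}, for (ii)--(iii) since $C_0(G)\subseteq\wap(G)\subseteq\A(G)$), so that $\overline{G^\ast G^\ast}\subseteq G^\ast$ by Lemma \ref{homeo}; thus every point of $G$ is strongly prime, and such points are indeed captured on the right by the singleton sets $\{s\}\in\mathcal T$, whose translates $U\{s\}=Us$ are relatively compact and therefore trivially right translation-compact.

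Since nothing beyond unwinding the definition of strongly prime and of $\mathcal T$ is required once Theorems \ref{t51} and \ref{t52} are in hand, there is no genuine obstacle here; the only point demanding a moment's care is the cosmetic discrepancy in the neighbourhood condition defining $\mathcal T$, which the subset-heredity of right translation-compactness resolves at once.
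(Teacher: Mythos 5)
Your proof is correct and follows exactly the paper's approach: the paper presents this corollary as a mere restatement of Theorems \ref{t51} and \ref{t52} (identifying $G^\A\setminus\overline{G^\ast G^\ast}$ with the strongly prime points and $\bigcup_{T\in\mathcal T}\overline T^\A$ with the points lying in closures of the sets described in those theorems), which is precisely what you carry out. Your reconciliation of the neighbourhood condition in the definition of $\mathcal T$ (arbitrary $U$ versus the relatively compact $V$ of the theorems) via the subset-heredity of right translation-compactness is a detail the paper leaves implicit, and you resolve it correctly.
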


\begin{remark}  Let as usual $\mathcal N$ be a fixed  base of neighbourhoods at $e$ made of relatively compact sets. Budak and Pym observed in \cite[page 170]{BP} that the local structure theroem does not hold at every point in $G^\wap.$
They only supported this observation by saying that $K(G^\wap)\cap VT^*$ cannot be a neighbourhood of a point in the minimal ideal $K(G^\wap)$ for any $V\in \mathcal N$. This may now be clarified: if
$V\overline T$ is open  in $G^\wap$ for some $V\in \mathcal N,$ then Corollary \ref{cor:wap} shows that
$V\overline T\cap G^*G^*=\emptyset$, and so  $V\overline T\cap K(G^\wap)=\emptyset$.

 As with Veech's Theorem,  the local structure theorem holds on a dense subset of $G^\A\setminus \epsilon_\A(G)$, for many admissible subalgebras $\A(G)$ of $\luc(G)$.
This will be the content of our next corollary.
\end{remark}

\begin{corollary} \label{A-base} Let $G$ be a locally compact group and $\A(G)\subseteq \luc(G).$ Then in each of the following cases:
\begin{enumerate}
\item $\luc_\ast(G)\subseteq \A(G),$
 \item $G$ is an $SIN$-group and $\A(G)=\wap(G),$
\item $G$ is Abelian and $\wap(G)\subseteq \A(G),$ \end{enumerate}
there exists an open subset $D$ of $G^\A$ such that $G\subset D,$ $D\setminus G$ is dense in $G^*$ and each point in $D$ has a neighbourhood of the form $V\overline T$, which is homemorphic to $V\times \beta T$, where $V\in \mathcal N$ and
$T\in \mathcal T.$ In other words, \[\{V\overline T:V\in\mathcal N\;\text{and} \;T\in \mathcal T\}\] is a base for the topology in $D.$
\end{corollary}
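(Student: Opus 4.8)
The plan is to take for $D$ the union of $\epsilon_\A(G)$ with all the open ``boxes'' $V\overline T$ furnished by the $\A$-Local Structure Theorem, and to read off the three assertions from Corollary \ref{t53}, Theorems \ref{locals} and \ref{openlocals}, and the construction of Example \ref{t-set}. Concretely I would set
\[ D=\epsilon_\A(G)\cup\bigcup\bigl\{\,V\overline T : T\in\mathcal T,\ V\in\mathcal N,\ V\overline T\text{ is open in }G^\A\,\bigr\}, \]
so that $D$ is open by construction. The first thing to check is that this family is rich enough, i.e. that the relevant members of $\mathcal T$ really do produce open boxes. In case (i) this is Theorem \ref{lwap}: a right uniformly discrete $T$ with $UT$ right translation-compact is an $\luc_\ast(G)$-set, hence an $\A(G)$-set, so $V\overline T$ is open by Theorem \ref{openlocals}. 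In the Abelian case right translation-compactness coincides with translation-compactness and every non-relatively-compact set is an $E$-set, so such $T$ are $\wap(G)$-sets by Lemma \ref{lem:lucint}; in the $SIN$ case I would use the same remark ($\bigcap x^{-1}Ux=U$ for invariant $U$ makes every non-relatively-compact set an $E$-set) applied to the two-sided translation-compact t-sets actually produced below. Since $C_0(G)\subseteq\A(G)$ in all three cases, Lemma \ref{homeo} makes $\epsilon_\A(G)$ open, so $G\subseteq D$; moreover each $g\in G$ lies in a box, because starting Example \ref{t-set} at $x_0=e$ gives a t-set $T_0\ni e$, and its left translate $gT_0\in\mathcal T$ contains $g$.

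Next I would identify $D\setminus G$ with the strongly prime points of $G^*$. By Corollary \ref{t53}, $\bigcup_{T\in\mathcal T}\overline T=G^\A\setminus\overline{G^*G^*}$; since $\overline{G^*G^*}$ is a two-sided ideal, left translation by $v\in G$ preserves both it and its complement, whence $V\overline T\cap G^*=V(\overline T\cap G^*)$ consists of strongly prime points, and conversely every strongly prime point of $G^*$ lies in some $\overline T\subseteq V\overline T$ (take $e\in V$). Thus $D\setminus G=G^*\setminus\overline{G^*G^*}$. To see this set is dense in $G^*$, I would take any open $O'\subseteq G^\A$ with $O'\cap G^*\neq\emptyset$, use normality of $G^\A$ to find open $O''$ with $\overline{O''}\subseteq O'$ and $O''\cap G^*\neq\emptyset$, note that $O''\cap G$ is not relatively compact, and run Example \ref{t-set} inside $O''\cap G$ to obtain $T\in\mathcal T$ with $T\subseteq O''$. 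Then $\overline T\subseteq\overline{O''}\subseteq O'$, while $\overline T$ is homeomorphic to $\beta T$ and hence infinite, so $\overline T\cap G^*=\overline T\setminus T\neq\emptyset$; any such point sits in $O'\cap(D\setminus G)$.

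For the base property, let $x\in D$ and let $O$ be a neighbourhood of $x$. I would choose a defining box $V_0\overline{T_0}\ni x$ and invoke the homeomorphism $V_0\overline{T_0}\cong V_0\times\beta T_0$ of Theorem \ref{locals}. Writing $x=v_0y_0$ with $v_0\in V_0$ and $y_0\in\beta T_0$, and using that the clopen sets $\overline{T_1}$ ($T_1\subseteq T_0$) form a base of $\beta T_0$, I can pick $T_1\subseteq T_0$ and a neighbourhood $v_0V$ of $v_0$ so small that $(v_0V)\overline{T_1}\subseteq O$ while still $x\in(v_0V)\overline{T_1}$. Conjugating, $(v_0V)\overline{T_1}=V''\overline{v_0T_1}$ with $V''=v_0Vv_0^{-1}\in\mathcal N$, and since subsets and left translates of members of $\mathcal T$ are again in $\mathcal T$, this is a legitimate basic box contained in $O$ and containing $x$.

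I expect the main obstacle to be the bookkeeping that guarantees the boxes are genuinely open, i.e. that the members of $\mathcal T$ entering each of the three regimes are $\A(G)$-sets; this is where the distinction between right and two-sided translation-compactness has to be managed (trivial for Abelian groups, and handled for $SIN$ groups by working with the two-sided t-sets of Example \ref{t-set}, which are $E$-sets there). The only delicate point in the density step is the normality shrinking $\overline{O''}\subseteq O'$, which is precisely what keeps the entire closure $\overline T$ --- and hence its nonempty Stone-\v Cech remainder --- inside the prescribed open set.
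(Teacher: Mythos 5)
Your proposal is correct and follows essentially the same route as the paper's own proof: your $D$ coincides (via Corollary \ref{t53}) with the paper's $D=\bigcup_{T\in\mathcal T}\overline T=G^\A\setminus\overline{G^\ast G^\ast}$, density of $D\setminus G$ in $G^*$ is obtained exactly as in the paper by running Example \ref{t-set} inside the trace on $G$ of a (shrunken) open set, and the base property rests on the same ingredients --- members of $\mathcal T$ are $\A(G)$-sets in each of the three cases (Theorem \ref{lwap}, Lemma \ref{lem:lucint}, Corollaries \ref{cor:luc}, \ref{cor:wap} and \ref{cor:wapab}) so Theorems \ref{openlocals} and \ref{locals} apply. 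Your additional bookkeeping (the normality shrinking $\overline{O''}\subseteq O'$, the closure of $\mathcal T$ under subsets and left translates, and the clopen base of $\beta T_0$ used to shrink boxes) simply makes explicit the steps the paper compresses into ``the rest follows from Corollaries \ref{cor:luc}, \ref{cor:wap} and \ref{cor:wapab}.''
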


\begin{proof} Let $D=\bigcup_{T\in \mathcal T}\overline T^\A .$  By Theorem \ref{t53}, $D=G^\A\setminus\overline{G^\ast G^\ast}
$. This set is clearly open, contains $G$, and has $D\setminus G$ dense in $G^*$
since each open set in $G^\A$ which has nonempty intersection with $G^*$ must contain a non-relatively compact subset $X$ of $G.$
By Example \ref{t-set}, $X$ contains a right uniformly discrete subset $T$ such that $VT$ is translation-compact for some $V\in\mathcal N.$
The rest follows from Corollaries \ref{cor:luc}, \ref{cor:wap} and \ref{cor:wapab}. \end{proof}

\subsection{On the union of translation-compact sets} \label{TCsets}

We end this section with the question of whether a finite union
of right translation-compact stays right translation-compact.
When $G$ is discrete and the sets are translation-finite, this was proved
in  \cite{rup}. More recently, this fact was proved in \cite[Lemma 5.1]{FP} again when $G$ is discrete but the sets are right translation-finite. The proofs in \cite{rup} and \cite{FP} are completely different. Where the former is a consequence of the characterization of translation-finite subsets as approximable $\wap$-interpolation sets, the latter is more direct and relies on combinatorial arguments.

In \cite[Theorem 4.16]{FG1}, we extended the main theorem on approximable $\wap$-interpolation sets proved in \cite{rup} to the class of
locally compact $E$-groups,  and deduced that if
$T_1$ and $T_2$ are subsets of an $E$-set in the group $G$  such that $T_1\cup T_2$ is right uniformly discrete with respect to some neighbourhood $U$ of $e$ and $UT_1$ and $UT_2$ are translation-compact, then
$VT_1\cup VT_2$ is right translation-compact for some neighbourhood $V$ of $e$, see \cite[Corollary 4.15]{FG1}.

All these results appear now as a direct consequence of Theorem \ref{sprime} or, rather, of its Corollary \ref{cor:luc}, for any locally compact group.

\begin{theorem} \label{union} Let $G$ be a locally compact group.
  The union of finitely many right translation-compact sets is right translation-compact.
  \end{theorem}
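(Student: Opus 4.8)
The plan is to reduce the statement to the semigroup characterisation of right translation-compactness encoded in Theorem \ref{sprime}. Working throughout in $G^\luc$ with $\A(G)=\luc(G)$ (so that $C_0(G)\subseteq\A(G)$ and $G^\ast$ is a closed ideal), the target is the equivalence
\[
S\ \text{is right translation-compact}\iff \overline S\cap G^\ast G^\ast=\emptyset
\]
for an arbitrary $S\subseteq G$. Granting this, the theorem is immediate: if $T_1,\dots,T_n$ are right translation-compact then each $\overline{T_i}$ misses $G^\ast G^\ast$, and since closure commutes with finite unions, $\overline{T_1\cup\cdots\cup T_n}=\overline{T_1}\cup\cdots\cup\overline{T_n}$ also misses $G^\ast G^\ast$; the implication $(i)\Rightarrow(ii)$ of Theorem \ref{sprime} (valid because $C_0(G)\subseteq\luc(G)$) then returns, with $K=\{e\}$, that $T_1\cup\cdots\cup T_n$ is right translation-compact.

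The converse half of the displayed equivalence is exactly $(i)\Rightarrow(ii)$ of Theorem \ref{sprime}, so only the forward half needs proof, and here I would first reduce to uniformly discrete sets. Given $S$ right translation-compact, fix a symmetric, relatively compact open $V$ and choose (Zorn) a maximal right $V$-uniformly discrete $X\subseteq S$; maximality gives $S\subseteq V^2X$, and since subsets of right translation-compact sets are again such, $X$ is right translation-compact. By joint continuity $\overline S\subseteq\overline{V^2X}=\overline{V^2}\,\overline X$ with $\overline{V^2}\subseteq G$, and because $g\,G^\ast G^\ast=G^\ast G^\ast$ for every $g\in G$ (as $G^\ast$ is an ideal), one has $\overline S\cap G^\ast G^\ast=\emptyset$ as soon as $\overline X\cap G^\ast G^\ast=\emptyset$. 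Thus everything is reduced to a right uniformly discrete, right translation-compact $X$. For such $X$, once we know that the thickening $UX$ is right translation-compact, the standard function $h=\sum_{x\in X}\psi_{x^{-1}}\in\luc(G)$ (with $\psi$ supported near $e$ and equal to $1$ on $\overline V$) is supported in $UX$ and satisfies $h^\luc\equiv 1$ on $\overline X$; Lemma \ref{general**} then forces $h^\luc(G^\ast G^\ast)=\{0\}$, i.e. $\overline X\cap G^\ast G^\ast=\emptyset$, completing the forward half.

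The main obstacle is precisely the bracketed step: showing that right translation-compactness of a right uniformly discrete $X$ is inherited by its thickening $UX$ (equivalently, by Theorem \ref{lwap}, that such $X$ is an $\luc_\ast(G)$-set). This is the one point not handed to us directly by Corollary \ref{cor:luc}, since its equivalences are entered through statements about $UX$ rather than about $X$ itself, and a naive attempt to push a product $pq$ into $X$ only ever lands it in the fattened set $VX$ (the openness in Theorem \ref{openlocals} is of $V\overline X$, not of $\overline X$). I expect to settle it by reversing the construction in the proof of Theorem \ref{lwap}: assuming $UX$ is not right translation-compact, one extracts a witnessing non-relatively-compact $L$ and, via a compactness/pigeonhole argument on the finitely many relatively compact ``correction'' sets $U\,bb_0^{-1}U$, produces from it a non-relatively-compact family exhibiting $X$ itself as not right translation-finite, against Proposition \ref{prop}. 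Once this thickening lemma is in place, the rest of the argument is routine.
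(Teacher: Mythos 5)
You follow the paper's own route --- reduce to a maximal right uniformly discrete subset, convert ``right translation-compact'' into ``$\overline{X}\cap G^\ast G^\ast=\emptyset$'', and use that closure commutes with finite unions --- and you have correctly isolated the one step this conversion really needs: since Corollary \ref{cor:luc} and Theorem \ref{lwap} are entered only through hypotheses on the thickened set $UX$, one must know that right translation-compactness of a right uniformly discrete $X$ passes to $UX$. The fatal problem is that this ``thickening lemma'' is \emph{false}, so the pigeonhole argument you postpone cannot exist. In $G=\R$, for each nonempty finite $K\subset\N$ and each $j\in\N$ choose a base point $z_{K,j}$, the base points increasing to $\infty$ so fast that the clusters below are pairwise far apart, and then choose perturbations $\delta_{K,j,k}\in(0,1/10)$ one at a time so that all differences of distinct points of
\[
Y=\bigl\{\,z_{K,j}+2^{k}+\delta_{K,j,k}\ \colon\ K\subset\N\ \text{finite},\ j\in\N,\ k\in K\,\bigr\}
\]
are pairwise distinct (each choice only has to avoid finitely many forbidden values). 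Then $Y$ is right $U$-uniformly discrete for $U=(-1/2,1/2)$, and since no difference repeats, $b^{-1}Y\cap (b')^{-1}Y$ contains at most one point whenever $b\neq b'$; hence $Y$ is right translation-finite and so right translation-compact. Yet $UY$ is \emph{not} right translation-compact: taking $L=\{2^{k}\colon k\in\N\}$ and any finite $F=\{2^{k}\colon k\in K\}$, one has $z_{K,j}+2^{k}=(z_{K,j}+2^{k}+\delta_{K,j,k})-\delta_{K,j,k}\in UY$ for every $k\in K$, so the unbounded set $\{z_{K,j}\colon j\in\N\}$ lies inside $\bigcap_{b\in F}b^{-1}(UY)$. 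Consequently, by the contrapositive of (i)$\Rightarrow$(ii) in Theorem \ref{sprime}, $\overline{Y}\cap G^\ast G^\ast\neq\emptyset$: the forward half of your displayed equivalence (``right translation-compact $\Rightarrow\ \overline{S}\cap G^\ast G^\ast=\emptyset$'') is itself false, not merely unproved, and the Lemma \ref{general**} step is never reached.

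You should also know that this is precisely the soft spot in the paper's own proof of Theorem \ref{union}: its last line deduces, ``again by Corollary \ref{cor:luc}'', that $X\cap T_i$ is not right translation-compact from $\overline{X\cap T_i}\cap G^\ast G^\ast\neq\emptyset$, whereas the corollary only yields that some thickening $K(X\cap T_i)$ fails to be right translation-compact; the set $Y$ above shows that a right uniformly discrete, right translation-compact set can perfectly well have closure meeting $G^\ast G^\ast$, so no contradiction with the $T_i$ being right translation-compact is actually obtained. The theorem itself survives, but by a different, purely combinatorial argument extending the discrete case of \cite{FP}: if $L$ witnesses that $T_1\cup T_2$ is not right translation-compact, write, for each finite $F\subseteq L$,
\[
\bigcap_{b\in F}b^{-1}(T_1\cup T_2)=\bigcup_{\phi\colon F\to\{1,2\}}\ \bigcap_{b\in F}b^{-1}T_{\phi(b)};
\]
since a finite union of relatively compact sets is relatively compact, some pattern $\phi$ has non-relatively compact intersection, and the admissible patterns form a finitely branching tree closed under restriction. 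A compactness (K\"onig) argument then produces a single colouring $\phi\colon L\to\{1,2\}$ all of whose finite restrictions work; for at least one $i$ the set $L_i=\phi^{-1}(i)$ is non-relatively compact, and it witnesses that $T_i$ is not right translation-compact, a contradiction. I would rewrite your proof along these lines rather than trying to complete the semigroup route.
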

\begin{proof} It will suffice to prove that the union of two right translation-compact sets stays
  right  translation-compact.
  Let $T_1,\: T_2\subseteq G$ be right translation-compact and  let $T=T_1\cup T_2$. With  no loss of generality, we may assume that $T_1$ and $T_2$ are disjoint.
Fix a compact neighbourhood $U$ of $e$ and let $X$ be a maximal right $U$-uniformly discrete subset of $T$.  Then $T\subseteq U^2X$, and so $\overline T\subseteq U^2\overline X$ (the closure is taken in $G^\luc$).
  If $T$ is not right translation-compact, we can apply Corollary  \ref{cor:luc} and find $p\in \overline{T}\cap G^\ast G^\ast$. This implies that $\overline{X} \cap G^\ast G^\ast\neq    \emptyset$.

 Since $X=(X\cap T_1)\cup (X\cap T_2)$, we deduce that either $\overline{X\cap T_1} \cap G^\ast G^\ast\ne\emptyset$ or $\overline{X\cap T_2} \cap G^\ast G^\ast\ne\emptyset.$ This would imply,  again by  Corollary  \ref{cor:luc}, that either $(X\cap T_1)$ or $(X\cap T_1)$ is not right translation-compact, a contradiction with $T_1$ and $T_2$ being right translation-compact.
\end{proof}

\section{Sets with zero-mean} \label{0meansets}
Lemma \ref{general**} and Theorem \ref{sprime} have further  interesting  consequences related to the functions in $\A(G)$ which are annihilated by the left invariant means when such means exist on $\A(G)$. We end the paper  addressing these consquences.   Our first application is  Theorem \ref{suppinv}. This theorem  was obtained by Chou in  \cite[Lemma 2.5]{C4} when $G$ is a discrete group,
$T$ is translation-finite and $\A(G)=\wap(G)$. For this result, Chou used his characterization that
$T$ is translation-finite if and only if it is a $\wap(G)$-set. This theorem was also proved
in \cite[Theorem 11]{FP} when $G$ is an infinite,  left amenable, discrete group.
and $\A(G)=\ell_\infty(G)$. A similar result can also be found in \cite[Corollary 3.5.3]{GH} when $G$ is a discrete Abelian group, $\A(G)=\ap(G)$ and $T$ is an $I_0$-set, the proof however in this case is different and relies on purely  harmonic analytic tools.

The first part of our second theorem, Theorem \ref{suppinv2}, can be found in \cite[Proposition 9.13]{DLS} when $G$ is a left amenable discrete semigroup and $\A(G)=\ell_\infty(G)$. The second part of the theorem was proved
in \cite[Theorem 9.21]{DLS} when $G$ is an  infinite, left-amenable, cancellative
semigroup and $\A(G)=\ell_\infty(G)$.

By Theorem \ref{openlocals},  if $\A(G)$ is  a unital left tranlation invariant  C$^*$-subalgebra
of $\luc(G)$ and $T$ is any $\A(G)$-set in $G$, then  the set $ U\overline T$ (the closure in  $G^\A$) is  open in $G^\A$ for every open set $U$ in $G.$
So, regarding an element $\mu\in \A(G)^*$ as a Borel measure on $G^\A,$ we may well consider $\mu(U\overline T)$, and recall that  \begin{align*}\mu(U\overline T) = \sup\{ \mu(\phi): \phi\in \CB(G^\A), 0\le \phi \le 1, \, \supp  \phi\subseteq U\overline T\}.
 \end{align*}
 Now if $\phi \in \CB(G^\A)$ and $ \supp  \phi \subseteq U\overline T$, then $\restr{\phi}{G}\in \A(G)$ and
 $\supp \restr{\phi}{G}\subseteq U T.$
So  \begin{equation}\label{*}\mu(U\overline T)\le \sup\{ \mu(f):f\in \A(G), 0\le f\le 1,\supp f\subseteq U T\}. \end{equation}

Note that when $G$ is discrete and $T$ is an $\A(G)$-set, the characteristic function $\chi_T$ of $T$ is in $\A(G)$ and $ \mu(\overline T)$ is simply $\mu(\chi_T)$.

\begin{theorem} \label{suppinv}Let $G$ be a noncompact locally compact group, and $\A(G)$ be a unital, left tranlation invariant, amenable $C^*$-subalgebra
of $\luc(G)$
and $T$ be any $\A(G)$-set in $G$.
If $UT$ is right translation-compact for some neighbourhood $U$ of $e$,
then
$\mu(V\overline T)=0$ for every relatively compact neighbourhood $V$ of $e$ and every left invariant mean $\mu$ in $\A(G)^*.$ If, in addition $G$ is $\sigma$-compact, then $\mu(G\overline T)=0$ for every left invariant mean $\mu$ in $\A(G)^*.$
\end{theorem}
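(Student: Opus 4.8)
The plan is to view $\mu$, via the Riesz representation theorem, as a regular Borel probability measure on the compact space $G^\A$ and to convert the left-invariance $\mu({}_sf)=\mu(f)$ into an invariance of this measure. Since $({}_sf)^\A(x)=f^\A(sx)$ and $x\mapsto sx$ is a homeomorphism of $G^\A$ (with inverse $x\mapsto s^{-1}x$) for every $s\in G$, one obtains $\mu(sB)=\mu(B)$ for every Borel set $B\subseteq G^\A$ and every $s\in G$. Here $V\overline T$ is open, hence Borel, by Theorem \ref{openlocals}, because $T$ is an $\A(G)$-set. I would also record at the outset that $VT$ is right translation-compact for \emph{every} relatively compact neighbourhood $V$: covering $\overline V$ by finitely many left translates $v_iW$ of a small symmetric $W$ with $W\subseteq U$, the set $VT$ sits inside a finite union of left translates of the right translation-compact set $WT$, and this stays right translation-compact by Theorem \ref{union} together with the invariance of right translation-compactness under left translation.

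Assuming $C_0(G)\subseteq\A(G)$ (so that $\epsilon_\A(G)$ is open and $G^\ast$ is closed; the general case requires only that $\mu$ be shown to vanish on $\epsilon_\A(G)$), I would split $\mu(V\overline T)=\mu(V\overline T\cap G)+\mu(V\overline T\cap G^\ast)$. The first term vanishes: every compact $K\subseteq G$ has $KK^{-1}$ compact, hence (as $G$ is noncompact) not syndetic, so there are infinitely many $s_i$ with the $s_iK$ pairwise disjoint, forcing $\mu(K)=0$ by invariance; inner regularity of $\mu$ on the open set $G$ then gives $\mu(V\overline T\cap G)=0$.

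The heart of the argument is to kill the remainder part $E:=V\overline T\cap G^\ast$. By Theorem \ref{sprime} (applied with $VT$ right translation-compact and $T$ an $\A(G)$-set, so $V\overline T$ is open) one has $V\overline T\cap G^\ast G^\ast=\emptyset$. The plan is to produce infinitely many $s_1,s_2,\dots\in G$ with the translates $s_iE$ pairwise disjoint; invariance together with $\mu(G^\A)=1$ then forces $\mu(E)=0$. Disjointness reduces, via $s_iE\cap s_jE=\emptyset\iff E\cap(s_i^{-1}s_j)E=\emptyset$ (using that $G^\ast$ is left-invariant), to choosing the $s_i$ so that each $s_i^{-1}s_j$ avoids $\mathcal D:=V\mathcal C V^{-1}$, where $\mathcal C:=\{h\in G:\overline T\cap h\overline T\cap G^\ast\neq\emptyset\}$. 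Indeed a point of $E\cap gE$ yields $x,x'\in\overline T\cap G^\ast$ with $x=(v^{-1}gv')x'$ for some $v,v'\in V$, so $g\in v\mathcal C v'^{-1}\subseteq\mathcal D$. A greedy construction of such a sequence succeeds precisely when $\mathcal D$ is not left-syndetic, i.e. (as $V$ is relatively compact) when $\mathcal C$ is not syndetic.

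The main obstacle is exactly this last point: showing that right translation-compactness of $VT$ prevents $\mathcal C$ from being syndetic. This is where Lemma \ref{general**} and the combinatorics of right translation-compact sets must be brought to bear; morally, a syndetic recurrence set $\mathcal C$ would allow one to manufacture a non-relatively-compact $L$ all of whose finite subsets $F$ have $\bigcap_{b\in F}b^{-1}VT$ non-relatively-compact, contradicting right translation-compactness. Once the first statement is in hand, the $\sigma$-compact case follows formally: writing $G=\bigcup_nK_n$ with $K_n$ compact, one has $G\overline T=\bigcup_nK_n\overline T$, an increasing union of Borel sets; covering each $K_n$ by finitely many translates $v_jV$ of a relatively compact neighbourhood $V$ of $e$ gives $\mu(K_n\overline T)\le\sum_j\mu(v_jV\overline T)=\sum_j\mu(V\overline T)=0$ by invariance and the first statement, and continuity of the measure from below yields $\mu(G\overline T)=0$.
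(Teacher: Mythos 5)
Your preparatory steps are sound: the passage from invariance of the mean to invariance of the Borel measure, the openness of $V\overline T$ from Theorem \ref{openlocals}, the upgrade from $V\subseteq U$ to arbitrary relatively compact $V$ via Theorem \ref{union}, and the reduction of the $\sigma$-compact statement to the first one. But the step you yourself flag as ``the main obstacle'' is not merely unproven: the claim you would need there is false. Right translation-compactness of $VT$ does \emph{not} prevent the recurrence set $\mathcal C=\{h\in G:\overline T\cap h\overline T\cap G^\ast\neq\emptyset\}$ from being syndetic; it can even be all of $G$. Take $G=\Z$, $\A(G)=\ell_\infty(\Z)$, and let $T$ be the union of the two-point blocks $\{a_{n,h},a_{n,h}+h\}$, $1\le h\le n$, $n\in\N$, placed in $\Z$ so that the gaps between distinct blocks increase to infinity. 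For any three integers $b_1<b_2<b_3$, a solution $x$ of $x+b_i\in T$ $(i=1,2,3)$ places three points of diameter $b_3-b_1$ inside a union of two blocks lying within distance $b_3-b_1$ of each other, and there are only finitely many such configurations; hence $\bigcap_{b\in F}(T-b)$ is finite for \emph{every} $3$-element set $F$, so $T$ is right translation-finite, hence right translation-compact, and it is trivially an $\ell_\infty(\Z)$-set. On the other hand, every $h\in\Z$ occurs as the difference of infinitely many pairs inside $T$, so $T\cap(h+T)$ is infinite and therefore $\overline T\cap(h+\overline T)\cap\Z^\ast\neq\emptyset$ for every $h$. Thus $\mathcal C=\Z$: no two translates of $E=\overline T\cap\Z^\ast$ are disjoint, and your pigeonhole scheme cannot produce even two disjoint translates, although the conclusion $\mu(\overline T)=0$ is still true. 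The underlying reason is that disjointness of translates is strictly stronger than zero mean: translation-finiteness constrains intersections of sufficiently many translates (here three), but is perfectly compatible with all \emph{pairwise} intersections being infinite, which is exactly what your reduction cannot tolerate.

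The paper's proof is built precisely to exploit intersections of all finite orders instead of pairwise disjointness. Assuming $\mu(V\overline T)>0$, either the translates $sV\overline T$ along a suitable infinite discrete set meet pairwise in measure zero, in which case additivity gives the contradiction $n\,\mu(V\overline T)\le 1$ for all $n$; or there is $s_1\notin V$ with $\mu(s_1V\overline T\cap V\overline T)>0$. In the latter case one sets $A_1=s_1^{-1}VT\cap VT$, uses openness of $V\overline T$ to get $s_1V\overline T\cap V\overline T\subseteq\overline{s_1VT\cap VT}$ and hence $\mu(\overline{A_1})>0$, and iterates: $A_n=s_n^{-1}A_{n-1}\cap A_{n-1}$ with $\mu(\overline{A_n})>0$ and $s_n$ chosen outside $s_1V\cup\dots\cup s_{n-1}V\cup V$. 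Since a subset of $G$ whose closure has positive measure cannot be relatively compact, each $A_{k_n}\subseteq\bigcap_{i\le n}s_{k_i}^{-1}VT$ is non-relatively compact, so the non-relatively compact set $L=\{s_n:n\in\N\}$ witnesses that $VT$ is not right translation-compact --- the desired contradiction. This recurrence-type induction is the idea missing from your outline, and no refinement of the disjoint-translates strategy can replace it. Two further mismatches with the hypotheses: Theorem \ref{suppinv} assumes only that $\A(G)$ is unital, left translation invariant and amenable, \emph{not} admissible, so $G^\A$ need not carry a semigroup product and neither $G^\ast G^\ast$ nor Theorem \ref{sprime} is available under its hypotheses; and $C_0(G)\subseteq\A(G)$ is not assumed either, so the splitting $\mu(V\overline T)=\mu(V\overline T\cap G)+\mu(V\overline T\cap G^\ast)$ is likewise unjustified. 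The paper's argument needs neither, using only the $G$-action on $G^\A$ and Theorem \ref{openlocals}.
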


\begin{proof}  Suppose, otherwise, that $\mu(V\overline T)>0$ for some left invariant mean $\mu\in \A(G)^*$ and some relatively compact neighbourhood $V$ of $e,$ and suppose first that $V\subseteq U.$

We claim that  there exists $s\in G$, $s\notin V$ such that
 $\mu(sV\overline T\cap V\overline T)>0.$ Otherwise, let $n\in\N$ be such that $n\mu(V\overline T)>1$ and let $s_1, s_2,...,s_n$ be $n$ distinct elements  in $G$ not belonging to $V$.
Since $\mu(s_iV\overline T\cap s_jV\overline T)=0$ for every $1\le i<j\le n,$ we see that
 \[1\ge \mu(\bigcup_{k=1}^ns_kV\overline T)=\sum_{k=1}^n\mu(s_kV\overline T)=n\mu(V\overline T),\]  which is absurd.

So we may start with $s_1\in G,$ $s_1\notin V$ such that $\mu(s_1V\overline T\cap V\overline T)>0.$
Since $T$ is an $\A(G)$-set, we know  from Theorem \ref{openlocals} that $V\overline T$ is open in $G^\A,$
and so it can easily be verified that $s_1V\overline T\cap V\overline T\subseteq \overline{s_1VT\cap V T}$
(for if $p\in s_1V\overline T\cap V\overline T$ and $(s_1vt_\alpha)$ is a net in $s_1VT$ converging to $p$ in $G^\A,$
then this net is eventually in $V\overline T\cap G=VT$. Thus, the net $(s_1vt_\alpha)$ is eventually in $s_1VT\cap VT,$
and so $p\in\overline {s_1VT\cap VT}$.)
It follows that $\mu(\overline{ s_1VT\cap VT})>0.$
Put $A_0= VT$ and $A_1=s_1^{-1}VT\cap VT=s_1^{-1}A_0\cap A_0.$
Then clearly $A_1\cup s_1A_1\subseteq A_0$ and $\mu(\overline {A_1})>0.$
So we may pick again $s_2\in G,$ $s_2\notin s_1V\cup V$,   such that $\mu(s_2A_1\cap A_1)>0.$
Put then $A_2=s_2^{-1}A_1\cap A_1$, and note that in turn \[A_2\cup s_2A_2\subseteq A_1\quad\text{ and}\quad\mu(\overline {A_2})>0.\]
By induction, we obtain an  infinite set
$L=\{s_n:n\in\N\}$ which is right uniformly discrete and a decreasing family $\{A_n:n\in\N\}$ of subsets of $VT$ such that \[s_nA_n\cup A_n\subseteq A_{n-1}\quad\text{and}\quad \mu(\overline {A_n})>0\quad\text{ for every}\quad n\in\N.\]
Let now   $\{s_{k_1}, ..., s_{k_n}\}$ be any finite subset of $L$ with $k_1<...<k_n$. Then we have \[
A_{k_{n}}= \bigcap_{i=1}^nA_{k_i}\subseteq \bigcap_{i=1}^ns_{k_i}^{-1}A_{k_i-1}\subseteq \bigcap_{i=1}^ns_{k_i}^{-1}VT.\]
Since $\mu(\overline {A_{k_n}})>0,$ we see that $A_{k_n}$ is not relatively compact in $G$ (otherwise taking  sufficiently many pairwise disjoint left translates of $A_{k_n}$ will contradict the fact that $\mu$ is a mean). Therefore, $VT$ cannot be right translation-compact, and so $UT$ cannot be right translation-compact either.

If $V$ is an arbitrary relatively compact neighbourhood of $e$ and $\mu(V\overline T)>0,$
then $V$ may be covered by finitely many left translates of a fixed neighbourhood $W$ of $e$ with $W\subseteq V.$ Accordingly, $\mu(W\overline T)>0$, which is not possible as we have just seen.

The second part of the claim, when $G$ is $\sigma$-compact, is clear.
\end{proof}

\begin{remark}
The condition that $T$ is an $\A(G)$-set is necessary in the previous theorem. For, let $G$ be the group $SL(2,\R)$, and let   $T$
be a right uniformly discrete set with respect to some neighbourhood $U$ of $e$
such that $UT$ is right translation-compact (Example \ref{t-set} can be used to construct such a set). In this case $\wap(G)=\cnaught(G)\oplus\C1$, see \cite{C0}. Then
$G^\wap$ is the one-point compactification $G_p=G\cup\{p\}$ where $p$ is the point at infinity (the zero point in $G^\wap$),
and so $p$ is the invariant mean in $\wap(G)^*$. As before, we let $p$  denote also the corresponding Borel measure on $G^\wap$ (i.e., the point mass measure). By the joint continuity property, we see that $U\overline T$ is a Borel set in $G^\wap$.  Now since $T$ is not relatively compact, any open set in $G_p$ containing $U\overline T$ must contain $p.$ Thus, \[p(U\overline T)=\inf\{p(O):O\;\text{is  open in}\; G_p\;\text{ and}\; U\overline T\subseteq O\}=1.\] \qed
 \end{remark}

As already noted in Lemma \ref{im}, and used throughout the paper, the support of the invariant mean on $\wap(G)$ is the minimal ideal $K(G^\wap)$ and so it is contained in $G^*G^*$.
 For the proof of this fact, it is crucial  that the operation in $G^\wap$ is {\it separately}  continuous (see \cite{deleglick}), a condition  that is not satisfied by many compactifications in this paper.  Following a different approach, we deduce from the results of this paper, that  for a large family of these compactifications, left invariant means are supported  in $\overline{G^*G^*}$.

\begin{theorem} \label{suppinv2}  Let $G$ be a topological group and $\A(G)$ be an amenable admissible subalgebra of $\CB(G)$ and let $\mu$ be a left invariant mean on $\A(G).$
 Then the support of $\mu$ is a closed left ideal of $G^\A$.

Moreover, if $G$ is a noncompact, locally compact group and $\A(G)\subseteq \luc(G),$
then the support of $\mu$ is contained in $\overline{G^\ast G^\ast}$ in the following cases:
\begin{enumerate}
\item $\luc_\ast(G)\subseteq \A(G),$
\item $G$ is Abelian and $\wap(G)\subseteq\A(G)$.
\end{enumerate}
\end{theorem}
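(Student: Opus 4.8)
The plan is to treat the two assertions separately: the first is a soft consequence of the invariance of $\mu$, while the second is an assembly of the structural results proved above.

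For the first assertion I would begin by rewriting left invariance measure-theoretically. Regarding $\mu$ as a regular Borel probability measure on the compact space $G^\A$ and recalling that $({}_sf)^\A(x)=x({}_sf)=(sx)(f)=f^\A(sx)$ for $s\in G$, the identity $\mu({}_sf)=\mu(f)$ for all $f\in\A(G)$ becomes $\int (f^\A\circ L_s)\,d\mu=\int f^\A\,d\mu$, where $L_s\colon G^\A\to G^\A$ is the continuous left multiplication $x\mapsto sx$. Since $\A(G)\cong C(G^\A)$, uniqueness in the Riesz representation theorem yields the pushforward identity $(L_s)_\ast\mu=\mu$, i.e. $\mu(L_s^{-1}(B))=\mu(B)$ for every Borel set $B$. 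Now if $x\in\supp(\mu)$ and $O$ is an open neighbourhood of $sx$, then $L_s^{-1}(O)$ is an open neighbourhood of $x$ with $\mu(L_s^{-1}(O))=\mu(O)$; as $x\in\supp(\mu)$ forces $\mu(L_s^{-1}(O))>0$, we get $\mu(O)>0$ and hence $sx\in\supp(\mu)$. Thus $s\,\supp(\mu)\subseteq\supp(\mu)$ for every $s\in G$. To reach an arbitrary $y\in G^\A$, I would take a net $(s_\alpha)\subseteq G$ with $\epsilon_\A(s_\alpha)\to y$; continuity of the right multiplication $z\mapsto zx$ gives $s_\alpha x\to yx$, and since each $s_\alpha x\in\supp(\mu)$ and $\supp(\mu)$ is closed, $yx\in\supp(\mu)$. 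Hence $G^\A\,\supp(\mu)\subseteq\supp(\mu)$, so $\supp(\mu)$ is a closed left ideal.

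For the second assertion I would show that $\supp(\mu)$ is disjoint from $D:=G^\A\setminus\overline{G^\ast G^\ast}$. Cases (i) and (ii) are precisely cases (1) and (3) of Corollary \ref{A-base}, which guarantees that $D$ is open and that every $x\in D$ has a neighbourhood of the form $V\overline T$ with $V\in\mathcal N$ and $T\in\mathcal T$; in particular $T$ is right $U$-uniformly discrete for some relatively compact neighbourhood $U$ of $e$ and $UT$ is right translation-compact. The key point is that such a $T$ is an $\A(G)$-set: in case (i) Theorem \ref{lwap} shows it is an $\luc_\ast(G)$-set, hence an $\A(G)$-set because $\luc_\ast(G)\subseteq\A(G)$; in case (ii), $G$ being Abelian makes $T$ an $E$-set and $G$ an $E$-group (as $x^{-1}Ux=U$ for every $x$), so Lemma \ref{lem:lucint} shows $T$ is a $\wap(G)$-set and hence an $\A(G)$-set. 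With $T$ an $\A(G)$-set and $UT$ right translation-compact, Theorem \ref{suppinv} yields $\mu(V\overline T)=0$. Thus $x$ has an open neighbourhood of $\mu$-measure zero, so $x\notin\supp(\mu)$. As $x\in D$ was arbitrary, $\supp(\mu)\cap D=\emptyset$, that is $\supp(\mu)\subseteq\overline{G^\ast G^\ast}$.

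The genuinely delicate step is the first assertion's translation of invariance into $(L_s)_\ast\mu=\mu$ together with the ensuing left-ideal argument, since only the maps $x\mapsto sx$ (for $s\in G$) and $z\mapsto zx$ are continuous on $G^\A$; the extension from $G$ to all of $G^\A$ must therefore be routed through the density of $\epsilon_\A(G)$ and the closedness of $\supp(\mu)$ rather than through any unavailable joint continuity. The second assertion is, by contrast, essentially bookkeeping: all the substance already resides in Corollary \ref{A-base}, in Theorem \ref{lwap} and Lemma \ref{lem:lucint}, and in Theorem \ref{suppinv}, so the only care needed is to confirm that the base sets $V\overline T$ supplied by $\mathcal T$ are genuine $\A(G)$-sets in each of the two cases.
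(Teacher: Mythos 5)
Your proof is correct and takes essentially the same route as the paper's: the left-ideal property comes from the identity $({}_sf)^\A(x)=f^\A(\epsilon_\A(s)x)$ together with invariance, closedness of $\supp(\mu)$ and density of $\epsilon_\A(G)$, while the second assertion assembles the strong-primeness characterizations (you cite Corollary \ref{A-base}, which is just a repackaging of Theorems \ref{t51} and \ref{t52} that the paper invokes directly) with Theorem \ref{lwap}, Lemma \ref{lem:lucint}, Theorem \ref{openlocals} and Theorem \ref{suppinv}, exactly as the paper does. The only cosmetic difference is that you phrase left invariance as the pushforward identity for the Borel measure under $x\mapsto \epsilon_\A(s)x$, whereas the paper argues with the functional description of $\supp(\mu)$; these are equivalent by the paper's two definitions of the support.
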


\begin{proof}
We first check that $\supp \mu$ is a closed left ideal of
$G^\A$ when $\mu$ is   a left invariant mean on $\A(G).$ Since $\supp\mu$ is closed,
 we only need to show that
$\epsilon_\A(G)\supp \mu \subseteq \supp \mu$.  Let $x\in \supp \mu$, $s\in G$ and $f$
be a nonnegative function in $\A(G)$ with $f^\A(\epsilon_\A(s)x)\ne 0.$
Then $({}_sf)^\A$ is a nonnegative function on $G^\A$ with $({}_sf)^\A(x)\ne0,$
and so $\mu({}_sf)\ne0.$ But $\mu({}_sf)=\mu(f).$ Hence, $\mu(f)\ne0$ showing that
$\epsilon_\A(s) x\in \supp \mu,$ as required for the first part of our claim.

Suppose now that $G$ is a noncompact, locally compact group.
To see that $\supp\mu$ is contained in $\overline{G^\ast G^\ast},$ let $x\notin\overline{G^\ast G^\ast}.$ Then, $x$ is a strongly prime point in $G^\A$.
We now apply Theorem \ref{t51}, for Statement (i), and Theorem \ref{t52} for Statement (ii). We obtain that  $x$ is in the closure of some subset $T$ of $G$ which is right uniformly discrete with respect to some relatively compact neighbourhood $U$ of $e$ such that $UT$ is right translation-compact (if Statement (i) is assumed), or translation-compact (under  Statement (ii)).

Now Theorem \ref{lwap} for statement (i), and Lemma \ref{lem:lucint} for Statement (ii), show that $T$ is an $\A(G)$-set.   Theorem \ref{suppinv} implies then  that
 $\mu(U\overline T)=0$ (the closure is taken in $G^\A$). Since by Theorem \ref{openlocals},  $U\overline T$ is in each case an open neighbourhood of $x$ in $G^\A,$ we have that $x\notin \supp(\mu),$
as required.
\end{proof}

We return to what we promised in Remark \ref{promisse}. The following corollary is straightforward now that we have Theorem \ref{suppinv2}.

\begin{corollary} Let $G$ be a locally compact group. Then
$\luc_a(G)\subseteq \luc_0(G).$
\end{corollary}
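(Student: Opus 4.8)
The plan is to read off this inclusion directly from Theorem \ref{suppinv2}, which pins the support of every left invariant mean inside $\overline{G^\ast G^\ast}$, together with the very definition of $\luc_a(G)$ as the functions whose canonical extension kills $G^\ast G^\ast$. Here and below $G^\ast$ stands for the $\luc$-remainder $G^\luc\setminus G$ (the set written $G^{*l}$ in the definition of $\luc_a(G)$).

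First I would dispose of the degenerate situations. If $\luc(G)$ carries no left invariant mean, then $\luc_0(G)=\luc(G)$ by the vacuous defining condition of Subsection \ref{sub0}, and there is nothing to prove; so I may assume $\luc(G)$ is amenable and fix an arbitrary left invariant mean $\mu$. The goal then reduces to showing $\mu(|f|)=0$ for every $f\in\luc_a(G)$. (I also take $G$ noncompact, which is the hypothesis under which Theorem \ref{suppinv2} operates; the compact case carries no content.)

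Next, fix $f\in\luc_a(G)$. By definition $f^\luc(G^\ast G^\ast)=\{0\}$, and since $f^\luc$ is continuous on the compact space $G^\luc$, it must also vanish on the closure $\overline{G^\ast G^\ast}$; consequently so does $|f^\luc|$. On the other hand, I apply Theorem \ref{suppinv2} with $\A(G)=\luc(G)$: since $\luc_\ast(G)\subseteq\luc(G)$, case (i) of that theorem is in force, and it yields $\supp(\mu)\subseteq\overline{G^\ast G^\ast}$. Regarding $\mu$ as a Borel measure on $G^\luc$ via the Riesz representation (as in Subsection \ref{sub0}), $\mu$ is concentrated on $\supp(\mu)$, on which $|f^\luc|$ vanishes, so
\[
\mu(|f|)=\int_{G^\luc}|f^\luc|\,d\mu=\int_{\supp(\mu)}|f^\luc|\,d\mu=0.
\]
As $\mu$ was arbitrary, this gives $f\in\luc_0(G)$ and hence $\luc_a(G)\subseteq\luc_0(G)$.

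I do not anticipate a genuine obstacle: the full weight of the argument is carried by Theorem \ref{suppinv2}, and the only remaining points are the routine continuity extension of the vanishing of $f^\luc$ from $G^\ast G^\ast$ to its closure, and the standard passage from ``vanishing on $\supp(\mu)$'' to ``integrating to zero against $\mu$''. The one thing to keep in mind is to invoke Theorem \ref{suppinv2} in its correct case (the condition $\luc_\ast(G)\subseteq\A(G)$ is automatic for $\A(G)=\luc(G)$), and to restrict attention to noncompact $G$ so that the hypotheses of that theorem are met.
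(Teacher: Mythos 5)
Your proof is correct and is exactly the paper's argument: the paper's entire proof is the remark that the corollary is straightforward from Theorem \ref{suppinv2}, and your write-up just supplies the routine details (case (i) of that theorem with $\A(G)=\luc(G)$, continuity of $f^\luc$ carrying the vanishing from $G^\ast G^\ast$ to $\overline{G^\ast G^\ast}$, and integration of $|f^\luc|$ against a mean concentrated on its support), plus the vacuous non-amenable case. One small quibble: the compact case is not merely ``without content'' --- there $G^\ast=\emptyset$, so $\luc_a(G)$ is all of $\luc(G)$ while $\luc_0(G)=\{0\}$ --- so the noncompactness you assume is a genuinely needed (implicit) hypothesis of the statement, exactly as it is implicit in the paper's appeal to Theorem \ref{suppinv2}.
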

\begin{remark}
Most of the  concepts discussed  in this paper  make also sense  in the semigroup case.
It would be interesting to know which of the results  can be proved in this  setting.
\end{remark}
\noindent {\sc Acknowledgements.}
We would like to thank the referee for his/her careful reading of the paper and for  the useful comments.

This paper started  with the visits of  the
first author to  University of Jaume I in Castell\'on in
December 2013-January 2014 and of  the second author to  Oulu in 2015.
 We would like to thank both departments of mathematics
in Castell\'on and Oulu for letting each of us use their facilities.
The work was partially supported by the program {\it Short-Term International Research Visits, University of Oulu}.
This support is gratefully acknowledged.

The second-listed author has also been supported by project $\mbox{MTM2016-77143-P}$ (AEI/FEDER, UE) from Ministerio de Econom\'{\i}a, Industria y Competitividad, Spain.
\enlargethispage{16mm}


\begin{thebibliography}{99}



\bibitem{Ba} {I. N. Baker,} {\em Solution to problem 5077,} American Math Monthly \textbf{71} (1964),
219--220.


\bibitem{BF} {J. W. Baker \and M. Filali,}  {\em On the analogue of Veech's theorem in the
WAP-compactification of a locally compact group,} Semigroup Forum \textbf{65} (2002), no. 1, 107--112.





\bibitem{BJM} {J. F. Berglund, H. D. Junghenn \and P. Milnes,}
 {\em Analysis on Semigroups: Function Spaces, Compactifications,
Representations,}  Wiley, New York (1989).

 \bibitem{BH} {J. F. Berglund \and K. H. Hofmann, } {\em Compact semitopological semigroups and weakly almost periodic functions,} Lecture Notes in Mathematics, No. 42 Springer-Verlag, Berlin-New York (1967).

\bibitem{Bou} {A. Bouziad,  M. Lema\'nczyk \and M. K. A. Mentzen,} {\em A compact monothetic semitopological semigroup whose set of idempotents is not closed,}
Semigroup Forum \textbf{62} (2001), 98--102.

\bibitem{BP} {T. Budak \and J. Pym}, {\em Local Topological Structure in the
LUC-Compactification of a Locally Compact
Group and its Relationship with
Veech's Theorem}, Semigroup Forum \textbf{73} (2006), 159--174.

\bibitem{B} {R. B. Burckel,} {\em Weakly almost periodic functions on semigroups,} Gordon and Breach Science Publishers, New York-London-Paris (1970).


\bibitem{C0} {C. Chou,} {\em Minimally weakly almost periodic groups,} J. Funct. Anal. \textbf{36} (1980), 1--17.

\bibitem{C1} {C. Chou,} {\em Weakly almost periodic functions and Fourier-Stieltjes algebras of
locally compact groups,}  Trans. Amer. Math. Soc. \textbf{ 274} (1982), 141--157.



\bibitem{C4}
C. Chou,
\newblock {\em Weakly almost periodic functions and thin sets in discrete groups,}
\newblock  Trans. Amer. Math. Soc. \textbf{321} (1990), no 1., 333--346.

\bibitem{comfrobe87} W. W. Comfort \and L. C. Robertson, \emph{Images and quotients of $SO(3,\R)$: remarks on a theorem of Van der Waerden}, Rocky Mountain J. Math \textbf{17} (1987), 1--13.

\bibitem{DLS} H.~G.~Dales, A.~T.-M~Lau \and D.~ Strauss,
\emph{Banach algebras on semigroups and on their compactifications,}
Mem. Amer. Math. Soc. \textbf{205} (2010).
%, no. 966.

\bibitem{Day} {M. M. Day,} {\em Amenable semigroups,}
Illinois J. Math. {\bf 1} (1957), 509--544.


\bibitem{dixm77}  {J.~Dixmier,} \emph{$C^\ast$-algebras,} Translated from the French by Francis Jellett. North-Holland Mathematical Library, Vol. 15. North-Holland Publishing Co., Amsterdam-New York-Oxford (1977).

\bibitem{E} {R. Ellis, }  {\em Universal minimal sets,} Proc. Amer. Math. Soc. \textbf{ 11 } (1960), 540--543.



\bibitem{EN}
{R. Engelking.}
 {\em \it General topology}.
 PWN---Polish Scientific Publishers, Warsaw
  (1977).


\bibitem{farahsole08} I. Farah \and  S. Solecki, {\em Extreme amenability of $L_0$, a Ramsey theorem, and Levy groups,} J. Funct. Anal.
\textbf{255} (2) (2008), 471--93.

\bibitem{F1} {M. Filali,} {\em The uniform compactification of a locally compact abelian group,} Math. Proc. Cambridge Philos. Soc. \textbf{108} (1990), 527--538.

  \bibitem{F07} {M. Filali,}  {\em On the actions of a
  locally compact group on some of its semigroup compactifications,}
 Math. Proc. Cambridge Philos. Soc. \textbf{143} (2007), 25--39.


\bibitem{FG1} {M. Filali \and J. Galindo,}  {\em  Approximable $\wap$- and $\luc$-interpolation sets,}      Adv.  Math.  \textbf{233 }
(2013), 87--114.

\bibitem{FG2} {M. Filali \and J. Galindo,}  {\em Interpolation sets and
       the size of quotients of function spaces on a locally compact group,}  Trans. Amer. Math. Soc. \textbf{ 369} (2017),  575--603.

\bibitem{FP} {M. Filali,  Lutsenko, Ie. \and I. Protasov,}  {\em Boolean group ideals and the ideal structure of $\beta G$,}
  Mat. Stud. \textbf{ 31} (2009), no. 1, 19--28.

\bibitem{FS} { S. Ferri \and  D. Strauss,} {\em A note on the WAP-compactification and
the LUC-compactification of a topological group,} Semigroup Forum \textbf{69}
(2004), 87--101.



\bibitem{gali10}
J. Galindo,
\newblock \emph{On group and semigroup compactifications of topological groups,}
\newblock Preprint (2010).

\bibitem{galihern99} {J.~Galindo \and S.~Hern\'andez,}  {\em The concept of boundedness and the Bohr compactification of a MAP
 abelian group,} Fund. Math \textbf{159}  (1999),  no. 3, 195--218.

\bibitem{galihern04} {J. Galindo \and S. Hern\'andez,}  {\em Interpolation sets and the Bohr topology
of locally compact groups,} Adv. Math. \textbf{ 188} (2004), no. 1, 51--68.

\bibitem{giordpest07}  T. Giordano \and  V. Pestov, {\em Some extremely amenable groups related to operator algebras and ergodic theory,} J. Inst. Math. Jussieu \textbf{ 6} (2007), no. 2, 279--315.




\bibitem{GH} {C. C. Graham \and K. E. Hare,}  {\em Interpolation and Sidon sets for compact groups,} CMS Books in Mathematics/Ouvrages de Math\'ematiques de la SMC. Springer, New York (2013).

\bibitem{hartkunen02}
J. E. Hart \and K. Kunen, \emph{Bohr compactifications of non-abelian groups},
Topology Proceedings \textbf{26} (2001-2002), 593--626.

\bibitem{HM} {K. P. Hart \and J.  van Mill,} {\em
Open problems on $\beta\omega$}, Open problems in topology, North-Holland, Amsterdam (1990), 97-–125.

\bibitem{hewiross}
E.~Hewitt and K.~A. Ross,
\newblock {\em Abstract harmonic analysis. Vol. {I}: Structure of
  topological groups, integration theory, group representations},
\newblock Academic Press Inc., Publishers, New York (1963).


\bibitem{HMS} { N. Hindman, J. van Mill, \and P. Simon,} {\em Increasing chains of ideals and orbit
closures in $\beta \Z,$} Proc. Amer. Math. Soc.  \textbf{114} (1992), 1167--1172.



\bibitem{HS}
N.~Hindman and D.~Strauss, \emph{Algebra in the {Stone--{\v C}ech}
  compactification}, Walter de Gruyter, Berlin (1998).
\bibitem{hofmorr06} K. H. Hofmann \and S.~A.~Morris, \emph{The Structure of Compact Groups}. De Gruyter Studies in Mathematics 25, 2nd edition, (2006).

\bibitem{kalt93}
N.~J.~Kalton,
\emph{On vector-valued inequalities for Sidon sets and sets of interpolation,}
Colloq. Math. \textbf{64} (1993), no. 2, 233--44.




\bibitem{KS} {M. Ko\c{c}ak \and D. Strauss,} {\em Near ultrafilters and compactifications,} Semigroup
Forum \textbf{55} (1997), 94--109.




\bibitem{LMP} {Lau, A. T., P. Milnes and J. Pym,}  {\em Flows on invariant subsets and compactifications of a locally compact group,} Colloq. Math. \textbf{ 78 } (1998), 267--281.

\bibitem{deleglick} K.~de Leeuw \and I.~ Glicksberg,  {\em Applications of almost periodic compactifications,}
 Acta Math.  \textbf{105} (1961), 63--97.

\bibitem{mela68} { M\'ela, J. F.} {\em
Sur les ensembles d'interpolation de C. Ryll-Nardzewski et de S. Hartman,}
 Studia Math. \textbf{29} (1967/1968), 167--193.

\bibitem{Me} {M. G. Megrelishvili,} {\em Every semitopological semigroup compactification of the group $H_+[0,1]$ is trivial,} Semigroup Forum \textbf{63} (2001), no. 3, 357--370.

\bibitem{Paterson} A.~L.~T.~Paterson, \emph{Amenability}, Mathematical Surveys and Monographs, 29, American Mathematical Society, Providence, RI (1988).

\bibitem{pest05} V. Pestov, {\em Dynamics of Infinite-Dimensional Groups}, University Lecture Series, 40, American Mathematical Society, Providence, RI (2006).

\bibitem{Pier} J.~P.~Pier, \emph{Amenable locally compact groups}, Pure and Applied Mathematics (New York), A Wiley-Interscience Publication. John Wiley \& Sons, Inc., New York (1984).

\bibitem{Pym}
{J.~Pym,} {\em A note on ${G}^\mathcal{LUC}$ and Veech's theorem,}  Semigroup
  Forum \textbf{59} (1999), 171--174.


\bibitem{Pym2} {J.~Pym,} {\em Idempotents in $\wap(\N)$ according to
Bouziad, Lema\'nczyk and Mentzen,} Unpublished notes, Sheffield (2001).

\bibitem{rams80}
{ L.~T. Ramsey.}
\newblock {\em A theorem of C. Ryll-Nardzewski and metrizable l.c.a. groups,}
\newblock Proc. Amer. Math. Soc. \textbf{78} (1980), 221--224.



\bibitem{rams96} {L.~T. Ramsey,}
\newblock  {\em Thomas Comparisons of Sidon and  $\ap(G)$-interpolation sets,}
 \newblock Colloq. Math. \textbf{ 70} (1996), no. 1, 103--132.


\bibitem{Ruppert} {W. A. F. Ruppert,} {\em  On semigroup compactifications of topological groups,} Proc.
Roy. Irish Acad. Sect. A \textbf{79} (1979), 179--200.


\bibitem{rup} {W. Ruppert,}  {\em On weakly almost periodic sets,}  Semigroup Forum \textbf{32} (1985), 267--281.

\bibitem{ryll64b} {C. Ryll-Nardzewski},  {\em Concerning almost periodic extensions of functions,} Colloq. Math. \textbf{12} (1964), 235--237.

 \bibitem{shtern} A. I. Shtern, \emph{Van der Waerden continuity theorem for semisimple lie groups}, Russian J. Math. Phys. \textbf{13} (2006), 210--223.

\bibitem{T} {S. Teleman,}  {\em Sur la repr\'esentation lin\'eaire des groupes topologiques,} Ann. Sci. Ecole Norm. Sup. (3) \textbf{74} (1957), 319--339.


\bibitem{V} W. A. Veech, {\em Topological dynamics,} Bull. Amer. Math. Soc. \textbf{83} (1977), 775--830.

\bibitem{vries} J. de Vries, {\em Elements of topological dynamics,} Mathematics and its Applications, 257, Kluwer Academic Publishers Group, Dordrecht (1993).

\end{thebibliography}
\end{document}